\def\R{\mathbb{R}}
\def\P{\mathbb{P}}
\def\C{\mathbb{C}}
\def\Cp{\mathbb{C}_p}
\def\Fp{\mathbb{F}_p}
\def\Fq{\mathbb{F}_q}
\def\Fqb{\overline{\mathbb{F}}_q}
\def\Fpb{\overline{\mathbb{F}}_p}
\def\L{\mathscr{L}}
\def\A{\text{{\bf A}}}   
\def\O{\mathcal{O}} 
\def\F{\mathscr{F}}
\def\*{^\times }
\def\dpt{\displaystyle}
\def\l{\lambda}
\def\a{\alpha}
\def\b{\beta}
\def\s{\sigma}
\def\ph{\varphi}
\def\e{\varepsilon}
\def\lssi{\Longleftrightarrow}
\def\drt{\rightarrow}
\def\ldrt{\longrightarrow}
\def\Q{\mathbb{Q}}
\def\Qb{\overline{\mathbb{Q}}}
\def\Qlb{\overline{\mathbb{Q}}_\ell}
\def\Qp{\mathbb{Q}_p}
\def\Qpb{\overline{\mathbb{Q}}_p}
\def\Zp{\mathbb{Z}_p}
\def\Z{\mathbb{Z}}
\def\N{\mathbb{N}}
\def\Hom{\text{Hom}}
\def\hom{\mathscr{H}om}
\def\Rpsi{\text{R}\Psi_{\bar{\eta}}}
\def\Gal{\text{Gal}}
\def\={\! = \!}
\def\spec{\text{Spec}}
\def\spf{\text{Spf}}
\def\E{\mathscr{E}}
\def\limp{\underset{\longleftarrow}{\text{ lim }}\;}
\def\limi{\underset{\longrightarrow}{\text{ lim }}\;}
\def\iso{\xrightarrow{\;\sim\;}}
\def\Aut{\text{Aut}}
\def\GL{\hbox{GL}}
\def\xrig{\xrightarrow}
\def\M{\mathcal{M}}
\def\GG{\Gamma}
\def\bc{\backslash}
\def\spa{\text{Spa}}
\def\Lie{\text{Lie}}
\def\Fil{\mathrm{Fil}}
\def\unp{ \big [ {\textstyle\frac{1}{p}}\big ]}
\def\et{\rm{\acute{e}t}}
\def\Sh{\mathrm{Sh}}
\def\div{\mathrm{div}}
\def\DD{\mathbb{D}}
\def\unpi{\big [\tfrac{1}{\pi}\big ]}
\def\LT{\mathcal{LT}}
\def\<<{\langle\langle}
\def\>>{\rangle\rangle}
\def\phmod{\ph\text{-Mod}}
\def\Rep{\text{Rep}}
\def\llparent{( \! ( }
\def\rrparent{) \! ) }
\def\Perf{\text{Perf}}
\def\Eb{\overline{E}}
\def\Hecke{\text{Hecke}}
\def\SL{\text{SL}}
\def\Bun{\text{Bun}}
\def\Gr{\text{Gr}}
\def\hg{\overset{\leftarrow}{h}}
\def\hd{\overset{\rightarrow}{h}}
\def\Fl{\mathcal{F}\ell}
\def\Qpbreve{\breve{\Q}_p}
\def\Sht{\text{Sht}}
\newcommand*{\longhookrightarrow}{\ensuremath{\lhook\joinrel\relbar\joinrel\rightarrow}}
 \DeclareMathSymbol{B}{\mathalpha}{operators}{`B}
\author{Laurent Fargues}
\address{Laurent Fargues, CNRS, Institut de Math\'ematiques de Jussieu, 4 place Jussieu 75252 Paris}
\email{laurent.fargues@imj-prg.fr}
\thanks{The author is supported by the project  ANR-14-CE25 "PerCoLaTor"}
\begin{document}

\title{Geometrization of the local Langlands correspondence: an overview}

\date{\today}

\maketitle

\selectlanguage{english}

\begin{abstract}
This article is an overview of the geometrization conjecture for the local Langlands correspondence formulated by the author.
\end{abstract}

\newtheorem{theo}{Theorem}[section]

 \newtheorem{prop}[theo]{Proposition}

\newtheorem{coro}[theo]{{Corollary}}

\newtheorem{lemme}[theo]{Lemma}
\newtheorem{question}[theo]{Question}

\newtheorem{defi}[theo]{Definition}
\newtheorem{exem}[theo]{Example}
\newtheorem{conj}[theo]{Conjecture}
\newtheorem{quest}[theo]{Question}
\newtheorem{Hope}[theo]{Hope}

\theoremstyle{remark}
\newtheorem{rema}[theo]{Remark}

\setcounter{tocdepth}{1}
\tableofcontents

\selectlanguage{english}

\section*{Introduction}

In this article we give an informal exposition of a conjecture 
linking $p$-adic Hodge theory, the geometric Langlands program and the local Langlands correspondence. A more precise but technical article will appear later
(\cite{Geometrisation}). 
\\

Fix a quasi-split reductive group $G$ over a local field $E$, either $E=\Fq\llparent \pi\rrparent$ or $[E:\Qp]<+\infty$, $\Fq=\O_E/\pi$. Fix a discrete Langlands parameter $\ph:W_E\drt \, ^L G$ where $\,^L G$ is the $\ell$-adic Langlands dual of $G$, $\ell\neq p$.
This conjecture associates to $\ph$ a $S_\ph$-equivariant Hecke eigensheaf $\F_\ph$ on the stack of $G$-bundles over the curve we constructed and studied in our joint work with Fontaine (\cite{Courbe}). 

More precisely, given an $\Fq$-perfectoid space $S$ one can construct an $E$-adic space 
$$
X_S
$$
that one has to think of as being the family of curves $(X_{k(s)})_{s\in S}$ where $X_{k(s)}$ is the adic version of the fundamental curve of $p$-adic Hodge theory associated to the perfectoid field $k(s)$ (\cite{Courbe},\cite{ConfLaumon}). When $S=\spa (R,R^+)$ is affinoid perfectoid this has a schematical counterpart as an $E$-scheme equipped with a GAGA equivalence between $G$-bundles on this $E$-scheme and $G$-bundles on $X_S$. 
\\

The stack of $G$-bundles is the stack 
$$
\Bun_G: S\longmapsto \text{groupoid of }G\text{-bundles on }X_S.
$$
This is a stack for the pro-étale topology. In section \ref{sec:defi and prop of Bung} we explain how one should be able to put a geometric structure on this stack via a system of perfectoid charts locally for the "smooth" topology. This notion of a smooth morphism is still vague but we try to convince the reader (and ourselves) that the Quot schemes techniques could be applied in our context. From this point of view, the analog of algebraic spaces is the notion of diamond introduced by Scholze (\cite{ScholzeBerkeley}). One hopes to have a good notion of perverse $\ell$-adic sheaves in this geometric context. 
There is an indication that such a theory of perverse sheaves may exists thanks to recent work of Caraiani-Scholze (\cite{CaraianiScholze}), see the local/global compatibility part of our conjecture in sec.\ref{sec:localglobal}. 
There is another point of view to put a geometric structure on $\Bun_G$ via Beauville-Laszlo uniformization, see sec. \ref{sec:BL}.
\\

Let us note
\begin{itemize}
\item 
 $\spa (E)^\diamond=\spa (E)$ when $E=\Fq\llparent\pi\rrparent$ 
\item  $\spa (E)^\diamond$ is the sheaf of untilts as defined by Scholze (\cite{ScholzeBerkeley}) when $E|\Qp$.
\end{itemize}
Any morphism $S\drt \spa (E)^\diamond$  defines a Cartier divisors $$S^\sharp \hookrightarrow X_S$$ where $S^\sharp=S$ if $E=\Fq\llparent \pi\rrparent$ and $S^\sharp$ is the corresponding untilt of $S$ when $E|\Qp$. When $S=\spa (R,R^+)$, the formal completion of $X_S$ along this Cartier divisor is the formal spectrum of Fontaine's ring $B^+_{dR}(R^\sharp)$. All of this allows us to define Hecke correspondences 
$$
\xymatrix{
& \Hecke^{\leq \mu} \ar[rd]\ar[ld] \\
\Bun_G & & \Bun_G\times \spa (E)^\diamond
}
$$
where the right hand map is a locally trivial fibration in a closed Schubert cell associated to $\mu$ in the $B_{dR}$-affine Grassmanian associated to $G$ (\cite{ScholzeBerkeley}). When $E=\Fq\llparent\pi\rrparent$ this $B_{dR}$-affine Grassmanian is the rigid analytic counterpart of Pappas-Rapoport twisted affine Grassmanian (\cite{Rapoport_Twisted}). The conjecture then says that {\it $\F_\ph$ should be an eigensheaf for those Hecke correspondences with eigenvalue the local system $r_\mu\circ \ph$} where $r_\mu$ is the usual representation of $\,^L G$ dualy associated to $\mu$.
\\

If $B(G)$ is Kottwitz set of $\s$-conjugacy classes in $G( \widehat{E^{un}})$ (\cite{Ko2}) the main theorem of \cite{Gtorseurs} translates in a bijection 
$$
B(G)\iso \big |\Bun_{G,\Fqb}\big |.
$$
This means concretely the following: let $\E$ be a $G$-bundle over $X_S$ with $S$ a perfectoid space over $\Fqb$. Then for each geometric point  $\bar{s}\drt S$, $\E_{|X_{k(\bar{s})}}$ is classified by an element of Kottwitz set $B(G)$. This defines an application $|S|\drt B(G)$ classifying 
the restriction of $\E$ to each geometric fiber.

If $[b]\in B(G)$ is basic then $J_b$, the $\s$-centralizer of $b$, is an inner form of $G$. Moreover the residual gerb at the corresponding point of $\Bun_{G,\Fqb}$ is isomorphic is the classifying space of pro-étale $J_b(E)$-torsors. As a consequence, the stalk of $\F_\ph$ at this point  gives a smooth $\Qlb$-representation of $J_b(E)$ together with a commuting action of $S_\ph$ coming from the action of $S_\ph$ on $\F_\ph$. The conjecture then says that {\it this linear action of $S_\ph$ on this stalk cuts out an L-packet of representations of $J_b(E)$ for a local Langlands correspondence associated to the extended pure inner form $J_b$ of $G$}.

Such extended pure inner forms show up in the work of Kaletha (\cite{Kaletha_Rigid_Inner}, \cite{Kaletha_Supercuspidal} for example). They generalize Vogan's pure inner forms via the embedding $H^1(E,G)\subset B(G)$. One of the motivations for this conjecture comes from the fact 
that after pullback to the curve those reductive group schemes $J_b$ become pure inner forms of $G$ and we thus fall back in Vogan's context.
\\

Let us mention another property of the sheaf $\F_\ph$, the so called {\it character sheaf property}. The fact is that $\F_\ph$ should not be a perverse sheaf on $\Bun_G$ but rather a Weil-perverse sheaf on $\Bun_G\otimes \Fqb$. There is an application $\{G(E)\}_{ell}\drt B(G)_{basic}$. If $\delta \in G(E)$ is such an elliptic element then 
the corresponding point $x_\delta$ of $\Bun_G$ is defined over $\Fq$ and thus the preceding stalk $x_\delta^*\F_\ph$ should be equipped with a Frobenius action. The character sheaf property implies that the function
\begin{eqnarray*}
\{G(E)\}_{ell-reg} & \ldrt & \Qlb \\
\{\delta\} & \longmapsto & \text{Tr} \big ( Frob ; x_\delta^*\F_\ph \big )
\end{eqnarray*}
is the restriction to the elliptic regular subset of the stable distribution of $G(E)$ associated to $\ph$.
\\

The structure of this article is the following:
\begin{itemize}
\item In sections \ref{sec:The curve}, \ref{sec:The stack BunG} and \ref{sec:The BdR affine Grassmanian and the Hecke stack} we explain the construction and the geometry of the objects showing up in our conjecture: the curve $X_S$, the stack $\Bun_G$, the associated Hecke stack and the $B_{dR}$-affine grassmanian. At some point we recall quickly some constructions of Scholze (\cite{ScholzeBerkeley}), for example his theory of diamonds.
\item  In section \ref{sec:Statement of the conjecture} we give the precise statement of the conjecture. 
\item The purpose of section \ref{sec:The character sheaf property} is to explain in more details the character sheaf property that appears in the conjecture.
\item Section \ref{sec:isocris BT et vect} is a preliminary section for sections \ref{sec:localglobal} and \ref{sec:Kottwitz conj}. We explain that 
the Hodge filtration of the $F$-isocrystal of a $p$-divisible group in unequal characteristic defines a morphism of pre-stacks from the pre-stack of $p$-divisible groups toward the Hecke stack. From this point of view the 
two morphisms $\hg$ and $\hd$ that define the Hecke stack as a correspondence can be interpreated as the de-Rham period morphism and the Hodge-Tate period morphism.
\item In section \ref{sec:localglobal} we explain the local/global compatibility part of the conjecture. This says there is a compatibility between our conjectural local perverse sheaf $\F_\ph$ and Caraiani-Scholze perverse sheaf $R\pi_{HT*}\Qlb$ constructed from the relative cohomology of a Hodge type Shimura variety over its Hodge-Tate period space.
\item In section \ref{sec:Kottwitz conj} we explain why our conjecture implies Kottwitz conjecture describing the discrete part of the cohomology of moduli spaces of $p$-divisible groups as defined by Rapoport and Zink (\cite{RZ}). We begin first by explaining the Lubin-Tate/Drinfeld case (sec.\ref{sec:Drinfeld and Lubin Tate case}): the study of this key case is the main ingredient that lead the author to his conjecture. 
\item In section \ref{sec:The abelian case} we verify the conjecture for tori. In this case the conjecture is implied by local class field theory. 
\end{itemize}

\vspace{0.8cm}

{\it Thanks: The author would like to thank Kiran Kedlaya, Peter Scholze 
and more generally all the participants of the program "New Geometric Methods in Number Theory and Automorphic Forms" that took place in fall 2014 at the MSRI.
}

\section{The curve}
\label{sec:The curve}

\label{sec:the curve}

\subsection{The adic curve}

Let $E$ be a local field with finite residue field $\Fq=\O_E/\pi$. Thus, either $[E:\Qp]<+\infty$ or $E=\Fq\llparent \pi\rrparent$.
We note $$\text{Perf}_{\Fq}$$
the category of perfectoid spaces over $\Fq$. Let us just note that in this text perfectoid spaces don't live over a fixed base perfectoid field
as this is the case in Scholze's original definition (\cite{Perfectoid}). Here we take the more general definition that can be found in \cite{BourbakiPerfectoid} or \cite{ScholzeBerkeley}.
\\

Given $S\in \Perf_{\Fq}$ one can the define an $E$-adic space $X_S$ that is, by definition, uniformized by a space $Y_S$
$$
X_S = Y_S/\ph^\Z
$$
where $\ph$ is a Frobenius action properly discontinuously on $Y_S$.

\subsubsection{The equal characteristic case}
\label{sec:the curve equal}

The case when $E=\Fq \llparent\pi\rrparent$ is more simple. In fact we have the following definition.

\begin{defi}
If $E=\Fq \llparent \pi\rrparent$ set
$$
Y_S=\DD^*_S\subset \mathbb{A}^1_S
$$
an {\it open punctured disk of the variable $\pi$} where $\DD^*_S =\{ 0<|\pi|<1\}$. The Frobenius $\ph$ acting on $\DD^*_S$ is the one of $S$.
\end{defi}

 If
$S=\spa (R,R^+)$ is affinoid perfctoid then 
$$
\DD^*_S= \spa (R^\circ\llbracket\pi\rrbracket, R^++\pi R^\circ \llbracket \pi\rrbracket)\setminus V(\pi \varpi_R )
$$
where $\varpi_R\in R^{\circ\circ}\cap R^\times$ is a so called {\it pseudo-uniformizing element} (\cite{BourbakiPerfectoid}). Moreover, $\ph$ is induced by  
$$
\ph\big  (\sum_{n\geq 0} x_n\pi^n\big )=\sum_{n\geq 0} x_n^q\pi^n.
$$
as an automorphism of $R^\circ \llbracket \pi\rrbracket$. 
The choice of $\varpi_R$ defines a radius function
\begin{eqnarray}\label{eq:fonction delta equal char}
\delta:|\DD^*_S| &\ldrt  & ]0,1[ \\
\nonumber
y & \longmapsto & p^{-\frac{\log |\pi (y^{max})|}{\log | \varpi_R (y^{max})|}}
\end{eqnarray}
where $y^{max}$ is the maximal generalization of $y$, a valuation whose value group may be taken to be a subgroup of $\R$ ($y^{max}$ is the Berkovich point associated to $y$ via the identification of the Berkovich spectrum as a quotient of the adic one). This continuous function satisfies $$\delta (\ph (y))=\delta (y)^{1/q}$$ and {\it the action of $\ph$ is thus properly discontinuous without fixed points}. One then has two structural maps 
$$
\xymatrix{
  & \DD^*_S \ar[ld] \ar[rd] \\
S  & & \DD^*_{\Fq}=\spa (E)
}
$$
where:
\begin{itemize}
\item The first one is the standard one of the punctured disk, locally of finite type but {\it  not $\ph$-invariant}.
\item The second one is obtained via the identification of the open punctured disk over $\Fq$ (equipped with the trivial valuation) with $\spa (E)$. This one is $\ph$-invariant but {\it not locally of finite type}. This is the structural morphism we are interested in. 
\end{itemize}

\begin{defi} Set
$$
X_S=\DD^*_S/\ph^\Z
$$
as an $E$-adic space.
\end{defi}

{\it This space $X_S$ is not of finite type over $E$.
Moreover it has  has no structural map to $S$}. Nevertheless, for $\tau\in \{\text{analytic},\text{étale}, \text{pro-étale}\}$ the functor $T/S\mapsto X_T/X_S$
defines a continuous morphism of sites 
$$
(X_S)_\tau \ldrt S_{\tau}.
$$
One can think of this as being a consequence of the fact that even if the space "$S/\text{Frob}^\Z$" makes no sense, the associated site "$(S/\text{Frob}^\Z)_\tau$" has to coincide with $S_\tau$. In particular there is a continuous application 
$$
|X_S|\ldrt |S|
$$
that is quasicompact specializing ($f$ is specializing if whenever $f(x)\succ y$ there exists $x\succ x'$ with $f(x')=y$) and thus in particular closed when $S$ is quasicompact quasiseparated (a spectral map between spectral spaces is closed if and only if it is specializing). 
{\it One can thus think of $X_S$ as being  "proper over $S$".}
\\

When $S$ is the spectrum of a perfectoid field $F$ the space $X_F$ satisfies strong finiteness properties:
\begin{itemize}
\item It is strongly noetherian in Huber's sense
\item  It is "a curve": it is covered by a finite set of spectra of $E$-Banach algebras that are P.I.D.
\end{itemize}
Those properties are clear since for $\DD^*_F$, seen as a classical Tate rigid space over $F$, the algebras of holomorphic functions on closed annuli are P.I.D.. 
{\it But this is not the case anymore for general $S$: the space $X_S$ does not satisfy any reasonable finiteness properties for a general $S$.} That being said, one can think of $X_S$ as being the family of curves
$$
(X_{k(s)})_{s\in S}
$$
although there is no way to give a precise meaning to this.

\subsubsection{The unequal characteristic case}
\label{sec:courbe unequal}

Suppose now $[E:\Qp]<+\infty$. Let $S=\spa (R,R^+)$ be affinoid perfectoid. 
The analog of the preceding ring $R^\circ \llbracket\pi\rrbracket$ is {\it Fontaine's ring $A_{inf}$} noted $\A$.

\begin{defi}
Define  
\begin{eqnarray*}
\A &=& W_{\O_E} (R^\circ) \\
 &=& W(R^\circ)\otimes_{W(\Fq)} \O_E \\
 &=& \Big \{ \sum_{n\geq 0}[x_n]\pi^n \ | \ x_n\in R^\circ \Big \} \ \ \text{ (unique writing)}
\end{eqnarray*}
where the notation $W_{\O_E}$ means the ramified Witt vectors. Set 
$$
\A^+=\Big \{ \sum_{n\geq 0}[x_n]\pi^n\in \A\ |\ x_0\in R^+\Big \}.
$$
\end{defi}

\begin{defi}
Equip $\A$ with the $(\pi,[\varpi_R])$-adic topology and set
$$
Y_{R,R^+}= \spa (\A,\A^+)\setminus V(\pi [\varpi_R]).
$$
\end{defi}

Replacing $\varpi_R$ by $[\varpi_R]$ in the formula (\ref{eq:fonction delta equal char}) one has a continuous function
$$
\delta:|Y_S|\ldrt ]0,1[. 
$$
Let us fix a power multiplicative Banach algebra norm $\|.\|:R\ldrt \R_+$ defining the topology of $R$ (once $\|\varpi_R\|$ is fixed such a norm is unique). {\it The space $Y_{R,R^+}$ is Stein} and thus completely determined by $\O(Y_{R,R^+})$. This is described in the following way. Consider
\begin{eqnarray*}
\O(Y_{R,R^+})^b &=& \A[\tfrac{1}{\varpi_E},\tfrac{1}{[\varpi_R]}] \\
&=& \Big \{ \sum_{n\gg -\infty} [x_n]\pi^n\ |\ x_n \in R,\ \sup_n \|x_n\|<+\infty\Big \}
\end{eqnarray*}
the ring of holomorphic functions on $Y_{R,R^+}$ meromorphic along the divisors $\pi=0$ and $[\varpi_F]=0$. 
For each $\rho\in ]0,1[$ there is a power-multiplicative Gauss norm $\|.\|_\rho$ on $\O(Y_{R,R^+})^b$ defined by
$$
\big \|\sum_{n\gg-\infty} [x_n]\pi^n \big \|_\rho = \sup_{n\in \Z} \|x_n\|\rho^n.
$$
Then $\O(Y_{R,R^+})$ is the $E$-Frechet algebra obtained by completion of $\O(Y_{R,R^+})^b$ with respect to $(\|.\|_\rho)_{\rho\in ]0,1[}$.
\\

The action of Frobenius $\ph$ on $Y_{R,R^+}$ is given by the usual Frobenius of the Witt vectors
$$
\ph ( \sum_{n\geq 0} [x_n]\pi^n)=\sum_{n\geq 0} [x_n^q]\pi^n.
$$
One then checks that {\it the functor $(R,R^+)\mapsto Y_{R,R^+}$ glues to a functor $S\mapsto Y_S$} and defines $X_S=Y_S/\ph^\Z$. Moreover all properties of section \ref{sec:courbe unequal} apply to $X_S$ (although some of them are much more difficult to prove, typically the fact that $X_F$ is "a strongly noetherian curve"  for a perfectoid field $F$, see \cite{Courbe} and \cite{KedlayaFortementNoetherien}).

\subsection{The link between the equal and unequal characteristic case}

{\it In fact the preceding $E$-adic spaces $Y_{S,E}$ and $X_{S,E}=Y_{S,E}/\ph^\Z$ can be defined for any complete non-archimedean field $E$ with residue field $\Fq$.} We are only interested in the case when $E$ has discrete valuation (the preceding case) of $E$ is perfectoid. The fact is that this construction is compatible with scalar extensions:
$$
Y_{S,E'}=Y_{S,E}\hat{\otimes}_E E',\ \ X_{S,E'}=X_{S,E}\hat{\otimes}_E E'.
$$
Of course, when $E|\Qp$ those formulas determine completely $Y_{S,E}$ and $X_{S,E}$ from the case when $E$ has discrete valuation since if $E_0=W(\Fq)\unp$ 
they give
$$
Y_{S,E} = Y_{S,E_0}\hat{\otimes}_{E_0} E,\ \ X_{S,E} = X_{S,E_0}\hat{\otimes}_{E_0} E.
$$
This is not the case anymore when $E$ has unequal characteristic since à priori those formulas give a definition of our spaces that could depend on the choice of a sub-discrete valuation field inside $E$. What we say is that there is still a natural definition for any $E$. To be precise, here is a definition/statement.

\begin{defi}
Consider $E$ with residue field $\Fq$ but no necessarily of discrete valuation.
\begin{enumerate}
\item If $E|\Qp$  set $Y_{S,E}=Y_{S,E_0}\hat{\otimes}_{E_0} E$ and the same for $X_{S,E}$.
\item For $E$ of characteristic $p$ with residue field $\Fq$ and $(R,R^+)$ affinoid perfectoid over $\Fq$ set 
$$
Y_{R,R^+} = \spa ( R^\circ \hat{\otimes}_{\Fq} \O_E) \setminus V ( \varpi_E \varpi_R)
$$
where $\varpi_E$ is a pseudo-uniformizing element in $E$. This glues to a functor $S\mapsto Y_{S,E}$. 
\end{enumerate}
\end{defi}

In fact when $E$ has characteristic $p$ one has the simple formula (whose proof is quite elementary):
\begin{equation}\label{eq:formule produit egale}
Y_{S,E} = S\times_{\spa (\Fq)} \spa (E)
\end{equation}
as a fiber product in the category of (analytic) adic spaces over $\Fq$. {\it This formula, although being quite simple and elegant, does not say anything about the geometry of $Y_{S,E}$} (typically what is a vector bundle on it ?). Let us note that via this formula $$\ph=\text{Frob}_S\times Id$$ and we thus have 
 the simple formula 
\begin{equation}
X_{S,E} = (S\times_{\spa (\Fq)} \spa (E))/\text{Frob}_S^\Z\times Id
\end{equation}
where the quotient is the sheaf quotient for the analytic topology.
\\

Suppose for example that $E=\Fq\llparent \pi^{1/p^\infty}\rrparent$. Then 
\begin{eqnarray*}
Y_{S,E} &=& \underset{\text{Frob}}{\limp} \DD^*_S \\
&=& \DD^*_S\hat{\otimes}_{\Fq\llparent \pi\rrparent} \Fq \llparent \pi^{1/p^\infty}\rrparent
\end{eqnarray*}
the perfectoïd open punctured disk.
\\

We just remarked that when $E=\Fq \llparent\pi^{1/p^\infty}\rrparent$ the space $Y_{S,E}$, and thus $X_{S,E}$, is perfectoid. {\it This is in fact more general in equal and unequal characteristic}.

\begin{prop}
 {\it for any perfectoid $E$ the spaces $Y_{S,E}$ and $X_{S,E}$ are perfectoid}. For such a perfectoid $E$ one has moreover the very simple formulas
$$
Y_{S,E}^\flat = Y_{S,E^\flat},\  \ X_{S,E}^\flat=X_{S,E^\flat}.
$$
\end{prop}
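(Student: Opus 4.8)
The plan is to reduce everything to the case of the ambient "period ring" and then invoke the tilting equivalence for perfectoid spaces. First I would treat the equal characteristic case, where we have the clean formula $Y_{S,E} = S\times_{\spa(\Fq)}\spa(E)$. For perfectoid $E$, this is a fiber product of perfectoid spaces over $\spa(\Fq)$ (note $S$ is perfectoid over $\Fq$ by assumption), hence perfectoid; and tilting commutes with such fiber products, giving $Y_{S,E}^\flat = S^\flat\times_{\spa(\Fq)}\spa(E^\flat) = Y_{S,E^\flat}$ since $S=S^\flat$ already lives in characteristic $p$. Passing to the quotient by $\ph^\Z=\mathrm{Frob}_S^\Z\times\mathrm{Id}$ is harmless: the quotient is a sheaf quotient for the analytic topology by a properly discontinuous free action, so $X_{S,E}$ is perfectoid as soon as $Y_{S,E}$ is, and tilting, being a local construction, commutes with passing to this quotient, yielding $X_{S,E}^\flat = X_{S,E^\flat}$.

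Next I would handle the unequal characteristic case, $E|\Qp$, by reduction to equal characteristic via the scalar-extension compatibility $Y_{S,E} = Y_{S,E_0}\hat\otimes_{E_0}E$. The key observation is that when $E$ is a perfectoid field over $\Qp$, Scholze's tilting equivalence gives a perfectoid field $E^\flat$ of characteristic $p$ with $\O_{E^\flat}\cong \varprojlim_{x\mapsto x^p}\O_E/p$, and there is a canonical map $W_{\O_{E^\flat}}(\O_{E^\flat})\to\O_E$ — essentially Fontaine's $\theta$ — realizing $\O_E$ as a quotient of $\A_{\inf}$ built from $E^\flat$. Concretely one checks, working locally on $S=\spa(R,R^+)$, that $Y_{R,R^+,E}$ is covered by rational subsets on which the structure ring is a completed tensor product of $\O(Y_{R,R^+,E_0})$ with a perfectoid $E$-algebra, hence perfectoid. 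For the tilting formula, the point is that $Y_{R,R^+,E}^\flat$, computed rational subset by rational subset, is built from the tilt of that $E$-algebra and the tilt of $\O(Y_{R,R^+,E_0})$, which by the equal-characteristic analysis already matches $Y_{R,R^+,E_0^\flat}=Y_{R,R^+,\Fq\llparent\pi\rrparent}\hat\otimes\cdots$; assembling these identifications and comparing with the definition $Y_{S,E^\flat}=S\times_{\spa(\Fq)}\spa(E^\flat)$ gives $Y_{S,E}^\flat = Y_{S,E^\flat}$, and then quotienting by $\ph^\Z$ as before gives the statement for $X$.

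I expect the main obstacle to be the unequal-characteristic tilting formula $Y_{S,E}^\flat = Y_{S,E^\flat}$, where one must check that tilting genuinely commutes with the completed tensor product $\hat\otimes_{E_0}E$ defining $Y_{S,E}$ from $Y_{S,E_0}$, and that the resulting characteristic-$p$ space is the same whether one tilts first and extends scalars or extends scalars and tilts. This is subtle because $\hat\otimes_{E_0}E$ is a non-trivial analytic operation (it is not a fiber product of perfectoid spaces over a perfectoid base until after one knows $Y_{S,E_0}$ interacts well with $E$), so one has to argue via an explicit affinoid/rational cover where each piece is manifestly perfectoid and where the tilting equivalence can be applied termwise, then glue. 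The verification that $Y_{S,E}$ is Stein (noted earlier in the excerpt for the discrete-valuation case and extending to perfectoid $E$) is what makes the gluing tractable, since a Stein space is determined by its global sections and one reduces to a statement about Fr\'echet algebras. The rest — freeness and proper discontinuity of the $\ph$-action, so that $X=Y/\ph^\Z$ inherits the perfectoid property and its tilt — is routine given the radius function $\delta$ and its transformation law $\delta(\ph(y))=\delta(y)^{1/q}$ already recorded above.
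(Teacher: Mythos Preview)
The paper does not give a proof of this proposition; it is an overview article and the statement is simply asserted, with the remark following it only noting that this fact is what makes $Y_{(R,R^+),E}$ an honest adic space in the discretely valued case. So there is no argument in the paper to compare your proposal against.

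That said, your outline is essentially the correct approach. Two comments on execution:

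In equal characteristic the tilting formula is simpler than you make it. Once $Y_{S,E}$ is known to be perfectoid (and this follows directly from the affinoid description $Y_{R,R^+}=\spa(R^\circ\hat\otimes_{\Fq}\O_E)\setminus V(\varpi_E\varpi_R)$, the ring $R^\circ\hat\otimes_{\Fq}\O_E$ being perfect because both factors are), there is nothing further to check: in characteristic $p$ the tilt is the identity and $E^\flat=E$, so $Y_{S,E}^\flat=Y_{S,E}=Y_{S,E^\flat}$ tautologically. Invoking ``tilting commutes with fiber products'' is unnecessary here.

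In unequal characteristic you have correctly located the only real content, namely that tilting commutes with $-\hat\otimes_{E_0}E$. Your plan to work on an affinoid cover and tilt termwise is right, but the phrase ``rational subsets on which the structure ring is a completed tensor product of $\O(Y_{R,R^+,E_0})$ with a perfectoid $E$-algebra'' is vague. A cleaner route is to use the annuli $Y_I=\delta^{-1}(I)$ for compact $I\subset\,]0,1[$: each $\O(Y_{I,E_0})\hat\otimes_{E_0}E$ is a perfectoid Tate ring, and its tilt is computed by reducing an integral model modulo $p$ and taking the inverse limit under Frobenius. Since $\O_E/p\cong\O_{E^\flat}/\varpi$ and the integral side of $Y_{I,E_0}$ is built from $W(R^\circ)$, this reduction matches what one obtains from $R^\circ\hat\otimes_{\Fq}\O_{E^\flat}$, giving $\O(Y_{I,E})^\flat\cong\O(Y_{I,E^\flat})$. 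Gluing over $I$ yields the formula for $Y$, and your treatment of the passage to $X=Y/\ph^\Z$ via the radius function $\delta$ is fine.
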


\begin{rema}
The fact that for $E$ discretely valued $Y_{(R,R^+),E}$ becomes perfectoid after scalar extension  to a perfectoid field is the main point to prove that $Y_{(R,R^+),E}$ is 
an adic space i.e. Huber's presheaf is a sheaf. It should thus have been pointed before this section but since this is a review article the author chosed to care less about strict rigor and focus more on exposition.
\end{rema}

Let us treat an example. Take $[E:\Qp]<+\infty$. Let $E_{\infty}|E$ be the completion of the extension generated by the torsion points of a Lubin-Tate group law $\LT$ over $\O_E$.
The Lubin-Tate character induces an isomorphism $\Gal (E_\infty|E)\xrig{\sim}\O_E^\times$. 
 We then have
$$
E_\infty^\flat = \Fq \llparent T^{1/p^\infty}\rrparent 
$$
where if $\e= (\e^{(n)})_{n\geq 0}$ is a generator of the Tate-module of $\LT$, $[\pi]_{\LT} \big ( \e^{(n+1)}\big ) = \e^{(n)}$, then 
\begin{eqnarray*}
T &=& (\e^{(n)} \text{ mod }\pi )_{n\geq 0} \in \underset{Frob}{\limp} \O_{E_{\infty}}/\pi = \O_{E_\infty}^\flat.
\end{eqnarray*}
The action of $a\in \O_E^\times=\Gal (E_\infty|E)$ on $E_\infty^{\flat,\times}$ is then given by $[a]_\LT\in \Aut (\Fq\llbracket T\rrbracket)$. The action {\it $[-]_\LT$ induces an action of $\O_E^\times$ on 
$\DD^*_S$} and there is an $\O_E^\times$-equivariant identification
\begin{eqnarray*}
\big (Y_{S,E}\hat{\otimes}_E E_\infty \big )^\flat &=&
Y_{S,E_\infty}^\flat \\
&=& Y_{S,E_\infty^\flat} \\
&=& \underset{\text{Frob}}{\limp} \DD^*_S. 
\end{eqnarray*}

\subsection{The diamond product formula}\label{sec:diamond product formula}

We will explain that formula (\ref{eq:formule produit egale}) extends to
the unequal characteristic using Scholze's theory of diamonds (\cite{ScholzeBerkeley}). We will moreover use systematically diamonds later in this text, this is why we give a quick review of the theory.

\subsubsection{Diamonds}

Consider the case $[E:\Qp]<+\infty$. We just saw that although $Y_{F,E}$ is not perfectoid, $Y_{F,E}\hat{\otimes} E_\infty$ is perfectoid with tilting $\DD^{*,1/p^\infty}_S$, the perfectoid punctured disk. One is then tempted to {\it go down} and say that $Y_{S,E}$ is 
\begin{eqnarray}\label{eq:premier quotient diamand}
"\DD^{*,1/p^\infty}_S /\O_E^\times".
\end{eqnarray}
This is is resumed in the following {\it diamond} diagram
$$
\xymatrix@C=7mm{
 & Y_{S,E_\infty} \ar[ld]_{(-)^\flat} \\
  \DD^{*,1/p^\infty}_S \ar[rd] && Y_{S,E} \ar[lu]_{-\hat{\otimes} E_\infty} \\
  & \DD^{*,1/p^\infty}_S/\O_E^\times \ar@{-}[ru]_{=}
}
$$
This is a more general phenomenon. Typically consider the torus $\spa (\Qp\langle T^{\pm 1}\rangle)$. It has a pro-Galois perfectoid cover 
$$
\spa (\Qp^{cyc}\langle T^{\pm 1/p^\infty} \rangle )\ldrt \spa (\Qp\langle T^{\pm 1}\rangle).
$$
The same argument as before leads us to write 
\begin{eqnarray}\label{eq:deuxieme quotient diamand}
"\spa (\Qp\langle T^{\pm 1}\rangle) = \spa (\Qp^{cyc,\flat} \langle T^{\pm 1/p^\infty}\rangle )/\Zp (1)\rtimes \Gal (\Qp^{cyc}|\Qp)".
\end{eqnarray}
The theory of diamonds allows us to give a meaning to the formulas (\ref{eq:premier quotient diamand}) and (\ref{eq:deuxieme quotient diamand}).
\\
 
 For this,
equip $\Perf_{\Fq}$ and $\Perf_E$ with the {\it pro-étale topology.}

\begin{defi}
\begin{enumerate}
\item If $S\in \Perf_{\Fq}$, {\it an untilt of $S$} is by definition a couple $(S^\sharp,\iota)$ where $S^\sharp\in \Perf_E$ and $\iota:S\xrig{\sim} S^{\sharp,\flat}$. 
\item If $\F$ is a pro-étale sheaf on $\Perf_E$ define $\F^\diamond$ to be the sheaf on $\Perf_{\Fq}$ defined by
$$
\F^\diamond (S) = \big \{ (S^\sharp,\iota,s)\ |\ (S^\sharp,\iota)\text{ is an untilt of } S\text{ and } s\in \F (S^\sharp)\big \}/\sim.
$$
\end{enumerate}
\end{defi}

\begin{rema}
The fact that $\F^{\diamond}$ is a sheaf relies on Scholze's purity. In fact this is deduced from the fact that tilting induces an equivalence of pro-étale sites $\Perf_{S^{\sharp}}\iso \Perf_S$.
\end{rema}

For example {\it $\spa (E)^\diamond$ is the sheaf  of untilts to $E$} (up to the evident equivalence relation). Most of the time, to simplify the notations, we will say {\it  "$S^\sharp$ is an untilt of $S$"} although one should say we consider a couple $(S^\sharp,\iota)$ up to the evident equivalence relation.
\\

The preceding diamond functor defines an equivalence of sites and topoi
\begin{eqnarray*}
\Perf_{E} & \iso & \Perf_{\Fq}/\spa (E)^\diamond \\
\widetilde{\Perf}_E & \iso & \widetilde{\Perf}_{\Fq}/\spa (E)^\diamond.
\end{eqnarray*}
Of course, if $\F$ is representable by some $T\in \Perf_E$ then $T^\diamond=T^\flat$. 
\\

We then fall on the following question:
 If $X$ is a rigid $E$-space it defines naturally a sheaf on $\Perf_E$. Do we loose some properties of $X$ by looking at this sheaf ? Said in another way: does $X^\diamond/\spa (E)^\diamond$ characterize $X$ ?

The answer is positive for normal rigid spaces thanks to the following two properties.

\begin{prop}
\begin{enumerate}
\item 
 Any  rigid analytic space  has a perfectoid pro-étale cover.
 \item If $X$ is moreover normal and $\nu:X_{\text{pro-ét}}\drt X_{\et}$ is the associated morphism of sites $\nu_*\widehat{\O}_X=\O_X$  where $\widehat{\O}_X (R,R^+)= R$ if $\spa (R,R^+)\drt X$ is pro-étale affinoïde perfectoid (\cite{ScholzeComparaisonHodge}).
\end{enumerate}
\end{prop}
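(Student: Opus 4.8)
The plan is to argue locally, reducing to $X=\spa(A,A^+)$ affinoid over $E$ (here $[E:\Qp]<+\infty$, so $E$ has characteristic $0$), and to carry out the proof for $X$ smooth — hence $A$ normal — this being the case treated in \cite{ScholzeComparaisonHodge} and the one used in the sequel, while the argument for (2) rests only on the normality of $A$, in accordance with the stated generality.

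For (1), I would use the torus trick. Fix a perfectoid field $C\supseteq E$ with $\mu_{p^\infty}\subset C$ and $C|E$ a filtered union of finite separable extensions $E=C_0\subset C_1\subset\cdots$, for instance $C=\widehat{E(\mu_{p^\infty})}$. Since $X$ is smooth, after shrinking it admits an \'etale map to an affinoid subdomain $U\subset\mathbb{T}^d_E:=\spa(E\langle T_1^{\pm1},\dots,T_d^{\pm1}\rangle)$ avoiding the coordinate hypersurfaces. The space $\mathbb{T}^d_{\infty,C}:=\spa(C\langle T_1^{\pm1/p^\infty},\dots,T_d^{\pm1/p^\infty}\rangle)$ is affinoid perfectoid, and $\mathbb{T}^d_{\infty,C}\to\mathbb{T}^d_E$ — the composite of the base change $\mathbb{T}^d_C\to\mathbb{T}^d_E$ with the Kummer tower $\mathbb{T}^d_{\infty,C}\to\mathbb{T}^d_C$ — is a pro-(finite \'etale) cover and a pro-$G$-torsor for $G=\Zp(1)^d\rtimes\Gal(C|E)$, the hypothesis $\mu_{p^\infty}\subset C$ being what makes the Kummer tower \'etale. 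Pulling it back to $U$, so that $U_\infty:=U\times_{\mathbb{T}^d_E}\mathbb{T}^d_{\infty,C}$ is again affinoid perfectoid (a rational subset of $\mathbb{T}^d_{\infty,C}$), and then along $X\to U$, produces a pro-\'etale, pro-$G$-torsor surjection $\widetilde X\to X$ with $\widetilde X$ \'etale over the perfectoid space $U_\infty$. Since an \'etale morphism of analytic adic spaces is, locally on the source, a composite of an open immersion and a finite \'etale map to an open of the target, and since open subspaces of, respectively finite \'etale algebras over, perfectoid spaces are again perfectoid (the latter by almost purity), $\widetilde X$ is affinoid perfectoid; letting the chart $X\to U$ vary over a covering family of $X$ gives the required perfectoid pro-\'etale cover.

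For (2), I would keep this cover $\widetilde X=\spa(B,B^+)\to X$, a pro-$G$-torsor with $B$ perfectoid. Since $\widehat{\O}_X$ is a sheaf on the pro-\'etale site of $X$ with $\widehat{\O}_X(\widetilde X)=B$, descent along the torsor gives $(\nu_*\widehat{\O}_X)(X)=\widehat{\O}_X(X)=B^G$, so everything reduces to proving $B^G=A$. This I would establish in two steps. First, over the weight-zero subring $A\,\hat\otimes_E C$ the ring $B$ admits a Fourier decomposition indexed by the characters of $\Zp(1)^d$, orthogonal for the Banach norm and with $\Zp(1)^d$ acting through those characters; hence passing to the completion produces no invariants off the weight-zero summand and $B^{\Zp(1)^d}=A\,\hat\otimes_E C$. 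Second — the crux — one proves $\big(A\,\hat\otimes_E C\big)^{\Gal(C|E)}=A$: since $A$ is normal, $A^\circ=A^+$ is $\pi$-adically complete and $A$ is closed in $A\,\hat\otimes_E C$, and one constructs $\Gal(C|E)$-equivariant Tate normalised trace operators $R_n\colon A\,\hat\otimes_E C\to A\otimes_E C_n$ converging to the identity, so that any invariant $x$ equals $\lim_n R_n(x)$ with each $R_n(x)\in(A\otimes_E C_n)^{\Gal(C_n|E)}=A$ by ordinary finite Galois descent, whence $x\in\overline A=A$. Combining the two steps gives $B^G=A$, i.e. $\nu_*\widehat{\O}_X=\O_X$.

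The step I expect to be the main obstacle is the second one in (2): it is an Ax--Sen--Tate type decompletion theorem relative to the normal affinoid $A$, and it is precisely in constructing the normalised traces $R_n$ and controlling their integral behaviour that the normality hypothesis is genuinely used — without it the invariants at finite level would be strictly larger than $A$ and the descent would break down.
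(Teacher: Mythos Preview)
Your proposal is correct and follows essentially the same approach as the paper: the paper merely illustrates both statements on the torus $X=\spa(E\langle T_1^{\pm 1},\dots,T_d^{\pm 1}\rangle)$ with the perfectoid pro-\'etale Galois cover $\spa(\Cp\langle T_1^{\pm 1/p^\infty},\dots,T_d^{\pm 1/p^\infty}\rangle)\to X$ of group $\Zp(1)^d\rtimes\Gal(\overline{E}|E)$, declares this proves (1) for smooth $X$, and then asserts the invariant computation $\big[\Cp\langle T_i^{\pm 1/p^\infty}\rangle\big]^{\Zp(1)^d\rtimes\Gal(\overline{E}|E)}=E\langle T_i^{\pm 1}\rangle$ proves (2), referring to \cite{ScholzeComparaisonHodge} for details. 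Your Fourier decomposition plus Ax--Sen--Tate argument is exactly how one carries out that ``explicit computation'', so you have simply fleshed out what the paper leaves as a sketch; the only cosmetic difference is your choice of $C=\widehat{E(\mu_{p^\infty})}$ versus the paper's $\Cp$, which is immaterial.
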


To illustrate those properties consider $X=\spa (E \langle T_1^{\pm 1},\dots, T_d^{\pm 1}\rangle )$. Then 
$$
\spa (\Cp  \langle T_1^{\pm 1/p^\infty},\dots, T_d^{\pm 1/p^\infty}\rangle )\ldrt X
$$
is a a perfectoid pro-étale Galois cover with Galois group $\Zp (1)^d \rtimes \Gal (\overline{E}|E)$. This proves property (1) for smooth rigid spaces over $E$.  
One then checks by an explicit computation that 
$$
\big [ \Cp  \langle T_1^{\pm 1/p^\infty},\dots, T_d^{\pm 1/p^\infty}\rangle \big ]^{\Zp (1)^d \rtimes \Gal (\overline{E}|E)} = E \langle T_1^{\pm 1},\dots, T_d^{\pm 1}\rangle.
$$
This proves property (2) for smooth rigid spaces over $E$.

\begin{coro}
 We  have an embedding of the category of normal rigid spaces over $E$ inside $\widetilde{\Perf}_{\Fq}/\spa (E)^\diamond$.
\end{coro}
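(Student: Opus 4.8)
The plan is to show that the functor $\Phi$ sending a normal rigid analytic space $X$ over $E$ to the pro-\'etale sheaf $h_X\colon S\mapsto \Hom_E(S,X)$ on $\Perf_E$ (morphisms of adic spaces over $E$) is fully faithful; since the pro-\'etale topology on $\Perf_E$ is subcanonical, $h_X$ is indeed a sheaf, and via the equivalence $\widetilde{\Perf}_E \iso \widetilde{\Perf}_{\Fq}/\spa(E)^\diamond$ recalled above this $\Phi$ is precisely the claimed embedding. Faithfulness already uses normality: given $f,g\colon X\to Y$ with $h_f=h_g$, choose a perfectoid pro-\'etale cover $p\colon\widetilde X\to X$ by part $(1)$ of the Proposition above; then $f\circ p=g\circ p$, so $|f|=|g|$ as $|p|$ is a topological quotient, and on structure sheaves $f^\sharp=g^\sharp$ locally on $Y$ because, $X$ being normal, part $(2)$ identifies $\O_X(U)=(\nu_*\widehat{\O}_X)(U)$ with a subobject of $\widehat{\O}_X$ of the cover $p^{-1}(U)$, hence the two maps into it that agree there are equal.

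For fullness, fix $p\colon\widetilde X\to X$ a perfectoid pro-\'etale cover by $(1)$ and set $R:=\widetilde X\times_X\widetilde X$; since $R\to\widetilde X$ is pro-\'etale and $\widetilde X$ is perfectoid, $R$ is again perfectoid, so $R,\widetilde X\in\Perf_E$. As $p$ is a covering, $h_{\widetilde X}\to h_X$ is an epimorphism of sheaves with kernel pair $h_R=h_{\widetilde X}\times_{h_X}h_{\widetilde X}$, so $h_X=\operatorname{coeq}(h_R\rightrightarrows h_{\widetilde X})$. Applying $\Hom_{\widetilde{\Perf}_E}(-,h_Y)$ and the Yoneda lemma (legitimate since $\widetilde X,R\in\Perf_E$) gives
\[
\Hom_{\widetilde{\Perf}_E}(h_X,h_Y)\;=\;\operatorname{eq}\big(\Hom_E(\widetilde X,Y)\rightrightarrows\Hom_E(R,Y)\big),
\]
the equalizer of the two maps induced by the projections $R\rightrightarrows\widetilde X$. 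The natural map $\Hom_E(X,Y)\to$ this equalizer, $f\mapsto f\circ p$, is injective by the faithfulness argument, and it remains to prove surjectivity, i.e.\ a descent statement.

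So take $g\colon\widetilde X\to Y$ with $g\circ\pi_1=g\circ\pi_2$ on $R$. The map $|g|$ descends to a continuous $|f|\colon|X|\to|Y|$ since $|p|$ is a quotient for the equivalence relation $|R|\rightrightarrows|\widetilde X|$. Then work locally on $Y$: for an affinoid open $V=\spa(A,A^+)\subseteq Y$ put $U=|f|^{-1}(V)$ and $\widetilde U=p^{-1}(U)$; the morphism $g|_{\widetilde U}\colon\widetilde U\to V$ is a continuous $E$-algebra map $A\to\O(\widetilde U)=\widehat{\O}_X(\widetilde U)$ whose two pullbacks to $\widehat{\O}_X(\widetilde U\times_U\widetilde U)$ coincide, hence by part $(2)$ (using that $X$ is normal) it factors through $\O_X(U)=(\nu_*\widehat{\O}_X)(U)$, yielding a morphism $U\to V$ of adic spaces over $E$ (the integrality against $A^+$ being inherited from $\widetilde U$); these glue over an affinoid cover of $Y$ to $f\colon X\to Y$ with $f\circ p=g$. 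The main obstacle is exactly this descent step: converting a pro-\'etale-descent-compatible morphism $\widetilde X\to Y$ into a genuine morphism of rigid spaces, which is where normality is indispensable, through part $(2)$ identifying $\O_X$ with $\nu_*\widehat{\O}_X$. The remaining points --- matching $\widehat{\O}$ with $\O$, the open integral subring conditions, the pro-\'etale sheaf property of $T\mapsto\Hom_E(T,Y)$, and the compatibility of all the glueings over admissible covers of $X$ and of $Y$ --- are routine given parts $(1)$ and $(2)$.
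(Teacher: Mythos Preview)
Your proposal is correct and follows exactly the approach the paper intends: the paper does not spell out a proof of this corollary, but the surrounding discussion makes clear that one is meant to use part~(1) of the preceding Proposition to obtain a perfectoid pro-\'etale cover and part~(2) to recover $\O_X$ from the completed structure sheaf on that cover, which is precisely the descent argument you carry out. The one point you gloss over slightly---that $R=\widetilde X\times_X\widetilde X$ is again perfectoid---is in fact addressed explicitly by the paper a few lines after the corollary (``One can check (this needs a little work but is not such difficult) that the fibre products $\mathscr{R}=\widetilde{X}\times_X\widetilde{X}$ \dots\ exist as perfectoid spaces''), so your use of it is legitimate in context.
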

 
 But in fact the essential image of the functor $X\mapsto X^\diamond$ is of particular type as we could see for example in formula (\ref{eq:deuxieme quotient diamand}): {\it this is an algebraic space for the pro-étale topology in $\Perf_{\Fq}$}. This is exactly the definition of a diamond. 

\begin{defi}[Scholze \cite{ScholzeBerkeley}]
A diamond is an algebraic space for the pro-étale topology in $\Perf_{\Fp}$. 
\end{defi}

If $X$ is a normal rigid space over $E$ choose some perfectoid pro-étale covering $\widetilde{X}\drt X$. One can check (this needs a little work but is not such difficult) that:
\begin{itemize}
\item 
 the fibre products $\mathcal{R}=\widetilde{X}\times_X \widetilde{X}$ and $\widetilde{X}\times_{\spa (E)} \widetilde{X}$ exist as perfectoid spaces with
  $(\widetilde{X}\times_{\spa (E)} \widetilde{X})^\flat= \widetilde{X}^\flat\times_{\spa (\Fq)} \widetilde{X}^\flat$. 
\item The projections $\xymatrix{\mathscr{R}\ar@<.8ex>[r] \ar@<-.8ex>[r] & \widetilde{X}}$ are pro-étale.
\end{itemize}  
   Then $\mathscr{R}^\flat$ is a pro-étale equivalence relation on $\widetilde{X}^\flat$ and 
$$
X^{\diamond} = \widetilde{X}^\flat/\mathscr{R}^\flat.
$$
We can thus state the following.

\begin{coro}
There is a fully faithfull functor 
\begin{eqnarray*}
\text{Normal rigid spaces over }E &\ldrt & \text{Diamonds over } E^\diamond \\
X & \longmapsto & X^{\diamond}/E^\diamond.
\end{eqnarray*}
\end{coro}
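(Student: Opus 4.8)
The plan is to promote the embedding from the preceding corollary (the category of normal rigid spaces over $E$ sitting inside $\widetilde{\Perf}_{\Fq}/\spa(E)^\diamond$) to a statement with values in diamonds, i.e. to check both that $X^\diamond$ is actually a diamond and that the functor remains fully faithful when the target is cut down to diamonds over $E^\diamond$. Since $\widetilde{\Perf}_{\Fq}/\spa(E)^\diamond \simeq \widetilde{\Perf}_E$, full faithfulness is exactly what the earlier corollary already gave us; restricting the essential image to a full subcategory (diamonds) cannot destroy full faithfulness. So the real content is the essential image claim: for $X$ a normal rigid space over $E$, the sheaf $X^\diamond$ is a diamond.

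First I would fix a perfectoid pro-\'etale covering $\widetilde X \to X$, which exists by property (1) of the displayed proposition (any rigid analytic space has a perfectoid pro-\'etale cover). Then I would verify the two bullet points asserted in the excerpt: that the fibre products $\RR = \widetilde X \times_X \widetilde X$ and $\widetilde X \times_{\spa(E)} \widetilde X$ exist as perfectoid spaces, with $(\widetilde X \times_{\spa(E)} \widetilde X)^\flat = \widetilde X^\flat \times_{\spa(\Fq)} \widetilde X^\flat$, and that the two projections $\RR \rightrightarrows \widetilde X$ are pro-\'etale. The existence of $\RR$ as a perfectoid space is the point requiring "a little work": one covers $X$ by affinoids, pulls back, and uses that a fibre product of affinoid perfectoids over an affinoid perfectoid base is again affinoid perfectoid (tensor product of perfectoid Tate rings, completed), gluing these local pieces along the diagonal; the pro-\'etaleness of the projections is stable under base change. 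Passing to tilts, $\RR^\flat$ is then a pro-\'etale equivalence relation on the perfectoid space $\widetilde X^\flat \in \Perf_{\Fq}$, and by construction the sheaf quotient $\widetilde X^\flat / \RR^\flat$ represents $X^\diamond$ (this is the identity $X^\diamond = \widetilde X^\flat/\RR^\flat$ displayed just before the statement, which itself follows by unwinding the definition of $(-)^\diamond$ together with tilting being an equivalence of pro-\'etale sites). By the very definition of a diamond as an algebraic space for the pro-\'etale topology on $\Perf_{\Fq}$, i.e. a pro-\'etale sheaf of the form (perfectoid)/(pro-\'etale equivalence relation), this exhibits $X^\diamond$ as a diamond. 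Finally, everything is compatible with the structure map to $\spa(E)^\diamond = E^\diamond$, so we land in diamonds over $E^\diamond$.

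I expect the main obstacle to be the existence and descent properties of the fibre product $\widetilde X \times_{\spa(E)} \widetilde X$ as a perfectoid space, together with the tilting formula $(\widetilde X \times_{\spa(E)} \widetilde X)^\flat = \widetilde X^\flat \times_{\spa(\Fq)} \widetilde X^\flat$: the subtlety is that $\widetilde X \times_{\spa(E)} \widetilde X$ is a fibre product over the discretely valued (non-perfectoid) base $\spa(E)$, so one cannot directly invoke the affinoid perfectoid tensor-product construction and must instead argue as in the analysis of $Y_{S,E}$ earlier in this section — extend scalars to a perfectoid field like $E_\infty$, form the fibre product there (now over a perfectoid base, where it is perfectoid and the tilting formula is the elementary product formula), and descend along the pro-Galois action of $\O_E^\times$ (or $\Gal(\Qpb|\Qp)$-type groups), the same descent that makes $X^\diamond$ a diamond in the torus example (\ref{eq:deuxieme quotient diamand}). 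Once this fibre product is in hand, checking that the projections are pro-\'etale and that $\RR^\flat$ is genuinely an equivalence relation is routine bookkeeping, and the identification with the sheaf quotient is formal.
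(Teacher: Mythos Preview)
Your proposal is correct and follows essentially the same approach as the paper: the paper's argument is precisely the paragraph preceding the corollary, which fixes a perfectoid pro-\'etale cover $\widetilde{X}\to X$, forms $\mathscr{R}=\widetilde{X}\times_X\widetilde{X}$ (noting ``this needs a little work but is not such difficult'' that $\mathscr{R}$ and $\widetilde{X}\times_{\spa(E)}\widetilde{X}$ are perfectoid with the expected tilting formula), observes the projections are pro-\'etale, and concludes $X^\diamond=\widetilde{X}^\flat/\mathscr{R}^\flat$ is a diamond. You have even correctly isolated the one genuinely delicate point (the fibre product over the non-perfectoid base $\spa(E)$) that the paper glosses over.
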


\subsubsection{The product formula}

{\it The same principles apply to pre-perfectoid spaces over $E$}: $E$-adic spaces that become perfectoid after extensions to any perfectoid extension of $E$. This is the case for example of $\spa (E\langle T^{\pm 1/p^\infty}\rangle )$ or $Y_{S,E}$. We can now compute $Y_{S,E}^\diamond$:
\begin{eqnarray*}
Y_{S,E}^{\diamond} &=& Y_{S,E_\infty^\flat} /\O_E^\times \\
&=& S\times \spa (E_\infty^\flat)/\O_E^\times \\
&=& S\times \spa (E)^\diamond. 
\end{eqnarray*}
We thus find a formula analog to (\ref{eq:formule produit egale}). We thus have at the end 
$$
X_{S,E}^\diamond = (S\times \spa (E)^\diamond)/\ph^\Z
$$
where $\ph= \text{Frob}_S\times Id$.

\begin{rema}
The computation of $Y_{S,E}^\diamond$ can in fact be done without using the auxiliary extension $E_\infty$. Digging a little bit on finds this is a consequence of the fact that we have two adjoint functors
$$
\xymatrix{
\text{Perfect }\Fp\text{-algebras} \ar@<1ex>[r]^-{W(-)} & \ar@<1ex>[l]^-{(-)^\flat} p\text{-adic rings}
}
$$
where the adjunction morphisms are Fontaine's theta $\theta: W(A^\flat)\drt A$ and $R\xrig{\sim} W(R)^\flat$ given by $x\mapsto \big ( \big [ x^{p^{-n}}\big ]\big )_{n\geq 0}$.
\end{rema}

\subsection{Untilts as Cartier divisors on the curve}

In \cite{Courbe} the authors proved that the closed points on the curve they constructed associated to a perfectoid field $F|\Fq$ corresponds to untilts of $F$ up to powers of Frobenius. This generalizes in the relative setting.

\subsubsection{The equal characteristic case}

Suppose $E=\Fq \llparent \pi\rrparent$. Let $S\in \Perf_{\Fq}$. Since $Y_S=S\times \spa (E)$, any morphism $S\drt \spa (E)$ induces a section of the structural morphism 
$Y_{S}\drt \spa (E)$. This is in fact a closed embedding 
$$
S\longhookrightarrow Y_S=\DD^*_S
$$
locally defined by $(\pi-a)$ where $a\in R^{\circ\circ}\cap R^\times$ if $S=\spa (R,R^+)$. One thinks about it as being a degree $1$ Cartier divisor on $Y_S$. 

\begin{defi}
an element $f=\sum_{n\geq 0} a_n\pi^n\in R^\circ \llbracket\pi\rrbracket$ is primitive of degree $1$ if $a_0\in R^{\circ\circ}\cap R^\times$ and $a_1\in R^{\circ \times}$.
\end{defi}

 According to {\it Weierstrass factorization} any degree $1$ primitive element in $R^\circ \llbracket\pi\rrbracket$ can be written as a unit times $\pi-a$ for some uniquely defined $a\in R^{\circ\circ}\cap R^\times$. We deduce the following lemma.

\begin{lemme}
There is  a bijection
\begin{eqnarray*}
&& \big \{ \text{morphisms } S\drt \spa (E)\big \} \\ &\simeq & \big \{ \text{closed immersions } T\hookrightarrow Y_S\text{ locally defined by a degree 1 primitive element}\big \}.
\end{eqnarray*}
\end{lemme}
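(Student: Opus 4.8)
The plan is to construct the bijection in both directions and check they are mutually inverse, with Weierstrass factorization doing the real work. First I would set up the map from left to right. Given a morphism $S\drt \spa (E)$, since $E=\Fq\llparent\pi\rrparent$ and $Y_S=\DD^*_S=S\times_{\spa(\Fq)}\spa(E)$ by the product formula (\ref{eq:formule produit egale}), this is the same as a section of the structural projection $Y_S\drt \spa(E)$. On an affinoid piece $S=\spa(R,R^+)$ this section corresponds to a continuous $\O_E$-algebra map $\O_E=\Fq\llbracket\pi\rrbracket\drt R^\circ\llbracket\pi\rrbracket$, i.e.\ to the choice of the image $a$ of $\pi$; continuity forces $a$ to be topologically nilpotent, and the fact that the section lands in $\DD^*_S=\{0<|\pi|<1\}$ (the locus where $\pi$ is invertible) forces $a\in R^{\circ\circ}\cap R^\times$. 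The associated closed immersion is then $V(\pi-a)\hookrightarrow Y_S$, and $\pi-a$ is manifestly a degree $1$ primitive element in the sense of the definition ($a_0=-a\in R^{\circ\circ}\cap R^\times$, $a_1=1\in R^{\circ\times}$). One should check this construction is independent of the affinoid chart and glues, which is routine since $a$ is intrinsically the image of $\pi$ under the coordinate of the section.

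Next I would construct the inverse map. Given a closed immersion $T\hookrightarrow Y_S$ locally defined by a degree $1$ primitive element $f$, invoke Weierstrass factorization as quoted just above the lemma: $f=u\cdot(\pi-a)$ with $u\in R^\circ\llbracket\pi\rrbracket^\times$ and $a\in R^{\circ\circ}\cap R^\times$ uniquely determined. Since $u$ is a unit, $V(f)=V(\pi-a)$, so the closed immersion is the one cut out by $\pi-a$, and the assignment $\pi\mapsto a$ defines the corresponding $\O_E$-algebra map $R^\circ\llbracket\pi\rrbracket\drt R^\circ\llbracket\pi\rrbracket$, hence the morphism $S\drt\spa(E)$. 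Again one checks the locally defined $a$'s on overlapping charts agree: on an overlap both $\pi-a$ and $\pi-a'$ generate the same ideal (the ideal of $T$) up to a unit, and uniqueness in Weierstrass factorization forces $a=a'$, so the morphisms glue.

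Finally I would verify that the two constructions are mutually inverse: starting from $S\drt\spa(E)$ with coordinate $a$, the cutting-out element is literally $\pi-a$, whose Weierstrass form is itself, returning $a$; conversely, starting from $T\hookrightarrow Y_S$, Weierstrass gives $\pi-a$, whose associated section has coordinate $a$, cutting out $V(\pi-a)=T$. Compatibility with the degree $1$ primitive/Cartier-divisor structure is automatic. The main obstacle, such as it is, is purely bookkeeping: making sure the adic-space-theoretic statements (that a section of $Y_S\drt\spa(E)$ is the same as a continuous $\O_E$-algebra homomorphism out of the ring of functions, that ``locally defined by a degree $1$ primitive element'' glues into a genuine closed immersion, and that Weierstrass factorization holds in the relative affinoid perfectoid setting over a general $R^\circ$) are all legitimate; none of these is deep here, and Weierstrass factorization over $R^\circ\llbracket\pi\rrbracket$ is precisely the input we are told we may use. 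So I expect no serious difficulty, only careful gluing.
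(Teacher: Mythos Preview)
Your proposal is correct and follows exactly the same route as the paper: the product formula $Y_S=S\times_{\spa(\Fq)}\spa(E)$ turns a morphism $S\drt\spa(E)$ into a section cut out by $\pi-a$, and Weierstrass factorization (a unit times $\pi-a$ with $a$ unique) furnishes the inverse. One small slip to clean up: the ring map encoding a morphism $\spa(R,R^+)\drt\spa(E)$ is a continuous $\Fq$-algebra map $\Fq\llbracket\pi\rrbracket\drt R^+$ (equivalently $E\drt R$), not a map into $R^\circ\llbracket\pi\rrbracket$; the rest is exactly right.
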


\subsubsection{The unequal characteristic case}

Suppose now $[E:\Qp]<+\infty$. Let $R$ be a perfectoid $\Fq$-algebra. 

\begin{defi}
  $f=\sum_{n\geq 0}[a_n]\pi^n\in \A=W_{\O_E}(R^\circ)$ is a degree $1$ primitive element if $a_0\in R^{\circ\circ}\cap R^\times$ and $a_1\in R^{\circ\times}$.
\end{defi}
  
   {\it The preceding statement about Weierstrass factorization is then not true anymore.} 
     Nevertheless we have the following.
   
\begin{prop}   
 \begin{enumerate}
 \item If $R^\sharp$ is an untilt of $R$ over $E$ then the kernel of Fontaine's $\theta:W_{\O_E}(R^\circ)\twoheadrightarrow R^{\sharp,\circ}$ is generated by a degree $1$ primitive element.
 \item Reciprocally, if $f\in W_{\O_E}(R^\circ)$ is primitive then $W_{\O_E} (R^{\circ})\unpi/f$ is a perfectoid $E$-algebra that is an untilt of $R$.
\end{enumerate}  
\end{prop}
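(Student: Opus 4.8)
The plan is to prove both assertions of the proposition by reducing them to the corresponding facts about $A_{cris}$-style rings, or rather to the results of Fargues--Fontaine on $\theta$ in the absolute case, and then to check that everything is uniform in the perfectoid base ring $R$. I would structure the argument around three steps.

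\textbf{Step 1: reduction to the case of discretely valued $E$.} First I would handle $E = W(\Fq)\unp$ (i.e.\ $\O_E = W(\Fq)$), where $W_{\O_E} = W$ is the ordinary ring of Witt vectors and $\pi = p$; this is exactly the setting of Fontaine's theory as used in \cite{Courbe}. For general $[E:\Qp]<+\infty$ one passes to the ramified Witt vectors by the standard formalism, the only new point being that a degree $1$ primitive element in $W_{\O_E}(R^\circ)$ has the shape $[a_0]\pi + \cdots$ with $a_0$ topologically nilpotent and a unit on the relevant locus; the Weierstrass-type preparation still produces a generator of $\ker\theta$ even though the generator is no longer canonically $\pi - [a_0]$. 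I would simply note this and carry the ramified case alongside the unramified one, since the proofs are identical word for word.

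\textbf{Step 2: proof of (1).} Given an untilt $R^\sharp$ of $R$ over $E$, the map $\theta : W_{\O_E}(R^\circ) \to R^{\sharp,\circ}$ is surjective by definition of an untilt (this is Fontaine's $\theta$, which appears already in the remark on the two adjoint functors $W(-)$ and $(-)^\flat$ in section~\ref{sec:diamond product formula}). To see $\ker\theta$ is generated by one primitive element, I would pick any element $\varpi_{R^\sharp} \in R^{\sharp}$ that is a pseudo-uniformizer with a compatible system of $p$-power roots coming from $R$, so that $\theta([\varpi_R]) = \varpi_{R^\sharp}$ up to a unit; then $\xi := \pi - [\,\text{something}\,]$ — more precisely a suitable element of the form $\sum [a_n]\pi^n$ with $a_0 = \varpi_R \cdot(\text{unit})$ reducing the problem — lies in $\ker\theta$, is primitive of degree $1$, and one checks $W_{\O_E}(R^\circ)/\xi \xrightarrow{\ \sim\ } R^{\sharp,\circ}$ is an isomorphism by reducing mod $\pi$ (where it becomes $R^\circ/\varpi_R \cong R^{\sharp,\circ}/\varpi_{R^\sharp}$, the tilting isomorphism) and then using $\pi$-adic completeness and $\pi$-torsion-freeness to conclude. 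The honest way to produce the generator is: $R^{\sharp,\circ}$ is $\pi$-adically complete and $\pi$-torsion-free, $\theta$ is surjective with $\ker\theta \ni \pi - [p]$-type element (in the unramified case literally $\ker\theta = (\,\xi_{\mathrm{FF}}\,)$), so one invokes the absolute statement of \cite{Courbe} fibrewise-cum-integrally.

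\textbf{Step 3: proof of (2), the main obstacle.} Conversely, given a degree $1$ primitive $f = \sum_{n\geq 0}[a_n]\pi^n$ with $a_0 \in R^{\circ\circ}\cap R^\times$, $a_1 \in R^{\circ\times}$, I want $W_{\O_E}(R^\circ)\unpi / f$ to be a perfectoid $E$-algebra untilting $R$. The hard part is perfectoidness: one must exhibit a pseudo-uniformizer $\varpi^\sharp$ of $B := W_{\O_E}(R^\circ)/f$ with $\varpi^\sharp \mid \pi$, with $B$ $\varpi^\sharp$-adically complete, and with the Frobenius $B/\varpi^\sharp \to B/\varpi^{\sharp p}$ being an isomorphism lifting the tilt. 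The clean strategy is: reduce $f$ modulo $\pi$, getting $a_0 \in R^\circ$; since $f$ is primitive, $W_{\O_E}(R^\circ)/(f,\pi) \cong R^\circ/a_0$ (one shows $\pi \in (f, [a_0])$ using $a_1$ a unit, an explicit $(\pi,[\varpi_R])$-adic successive-approximation argument — this is the computational heart), and $R^\circ/a_0$ is an integral perfectoid $\Fq$-algebra because $R$ is perfectoid. Then $[a_0]$ is (up to unit) a pseudo-uniformizer $\varpi^\sharp$ of $B$ dividing $\pi$, $B$ is $\varpi^\sharp$-adically complete since $W_{\O_E}(R^\circ)$ is $(\pi,[\varpi_R])$-adically complete and $f$ is a non-zero-divisor, and the Frobenius-on-$B/\varpi^{\sharp p}$ condition follows from $R^\circ/a_0^p \cong R^\circ/\varpi_R^{p\cdot(\ldots)}$ being handled by the perfectoidness of $R^\circ$ again. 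Finally $B^{\flat\circ} = \varprojlim_{x\mapsto x^p} B^\circ/\varpi^\sharp \cong \varprojlim R^\circ/a_0 \cong R^\circ$, giving the untilt statement. The delicate points I expect to spend real effort on are: (i) checking $f$ is a non-zero-divisor in $W_{\O_E}(R^\circ)$ and that dividing by it commutes with the completions in play; (ii) the successive-approximation lemma showing $(f,\pi) = ([a_0],\pi)$ as ideals; and (iii) verifying the perfectoid axioms for $B$ with uniform (not pointwise) control over $\spec R^\circ$, which is where one genuinely uses that $R$ is perfectoid as a ring rather than argue fibre by fibre.
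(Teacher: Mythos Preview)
The paper does not actually prove this proposition: it is stated without proof, followed immediately by an example and a remark. This is consistent with the paper's nature as an overview article; the result is a standard foundational fact about perfectoid rings, versions of which appear in Fontaine's original work, in Scholze's perfectoid spaces paper, in Kedlaya--Liu, and in Bhatt--Morrow--Scholze.

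Your sketch is essentially the standard argument and is correct in outline. A few small comments. In Step~1 the reduction to $E_0 = W(\Fq)[\tfrac{1}{p}]$ is harmless but not really needed: the ramified Witt vectors $W_{\O_E}$ satisfy the same formal properties (Teichm\"uller expansion, $\pi$-adic completeness, Frobenius lift), so one may as well work directly with them. In Step~2 your description of the generator is a bit garbled; the clean statement is that if $\varpi^\sharp \in R^{\sharp,\circ}$ is a pseudo-uniformizer admitting a compatible system of $p$-th power roots, with image $\varpi \in R^\circ$ under the tilting identification, then $\pi$ and $[\varpi]$ generate the maximal ideal of $W_{\O_E}(R^\circ)$ modulo which one recovers $R^{\sharp,\circ}/\varpi^\sharp$, and any element of $\ker\theta$ congruent to a unit times $\pi$ modulo $[\varpi]$ is a primitive generator. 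In Step~3 the key algebraic point you isolate --- that $(f,\pi) = ([a_0],\pi)$ because $a_1 \in R^{\circ\times}$ makes $f - [a_0]$ equal to $\pi$ times a unit in $W_{\O_E}(R^\circ)$ --- is exactly right, and from it $B/\pi \simeq R^\circ/a_0$ and the tilt computation $B^\flat \simeq R$ follow as you indicate. The non-zero-divisor claim for $f$ is immediate from primitivity (reduce modulo $\pi$ and use that $R^\circ$ is $a_0$-torsion-free, then lift by $\pi$-adic completeness).
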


\begin{exem}
Take $F=\Qp^{cyc,\flat} =\Fp \llparent T^{1/p^\infty}\rrparent$ with $T=\big ( ( \zeta_{p^n}-1)\text{ mod }p\big )_{n\geq 0}\in \underset{\text{Frob}}{\limp} \Zp^{cyc}/p=\Zp^{cyc,\flat}$. Then $\Zp^{cyc}= W ( \Fp\llbracket T^{1/p^\infty}\rrbracket)/ 1+[1+T^{1/p}]+\dots + [1+T^{\frac{p-1}{p}}]$. In this case the primitive element 
$1+[1+T^{1/p}]+\dots+ [1+T^{\frac{p-1}{p}}]$ has no Weierstrass factorization as a unit times $p-[a]$ for some $a$ since $p$ has no $p$-root in $\Qp^{cyc}$.
\end{exem}

\begin{rema}
If $R^\sharp= W_{\O_E} ( R^\circ)\unpi/f$ then after a pro-étale covering of $R$ the primitive element $f$ has a Weierstrass factorization as a unit times $\pi-[a]$. In fact, it suffices to tilt the pro-étale covering of $R^\sharp$ given by the completion of  $\cup_{n\geq 0} R^\sharp (\pi^{1/p^n})$. Nevertheless {\it such an $a$ is not unique like in the equal characteristic case}.
\end{rema}
 
 From this we deduce this corollary.
 
 \begin{coro}
 We  have a bijection 
 $$
 \spa (E)^\diamond (R,R^+) \simeq \big \{ \text{degree 1 primitive elements in }W_{\O_E}(R^\circ) \big \}/W_{\O_E}(R^\circ )^\times.
 $$
 \end{coro}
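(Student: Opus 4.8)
The corollary should follow by assembling the two parts of the preceding proposition into a bijection at the level of $R$-points, and then checking that the equivalence relation defining $\spa(E)^\diamond$ matches the quotient by $W_{\O_E}(R^\circ)^\times$. First I would unwind the definition: a point of $\spa(E)^\diamond(R,R^+)$ is an untilt $(R^\sharp,\iota)$ of $(R,R^+)$ over $E$, taken up to the evident isomorphism. Part (1) of the proposition attaches to such an untilt the kernel of Fontaine's $\theta\colon W_{\O_E}(R^\circ)\twoheadrightarrow R^{\sharp,\circ}$, which is a principal ideal generated by a degree $1$ primitive element; choosing a generator gives a primitive element, well defined up to $W_{\O_E}(R^\circ)^\times$. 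Conversely, part (2) sends a degree $1$ primitive element $f$ to the untilt $W_{\O_E}(R^\circ)\unpi/f$, and I would note that multiplying $f$ by a unit does not change this quotient ring nor its identification with $R^\sharp$, so the assignment descends to the quotient $\{\text{primitive }f\}/W_{\O_E}(R^\circ)^\times$.

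Next I would verify that these two assignments are mutually inverse. In one direction, starting from $f$, forming $R^\sharp = W_{\O_E}(R^\circ)\unpi/f$, and then taking $\ker\theta$ for the canonical projection recovers the ideal $(f)$, hence the class of $f$; this uses that $f$ is a non-zero-divisor and that $\theta$ for this untilt is literally the quotient map, which is the content of how part (2) is set up. In the other direction, starting from an untilt $R^\sharp$, choosing a primitive generator $f$ of $\ker\theta$, and then forming $W_{\O_E}(R^\circ)\unpi/f$ recovers $R^{\sharp}$ with its structure map, since $\theta$ identifies $W_{\O_E}(R^\circ)\unpi/\ker\theta$ with $R^\sharp$ (after inverting $\pi$); here one needs that $\theta$ is surjective onto $R^{\sharp,\circ}$ and that its kernel is exactly $(f)$, again supplied by the proposition. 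Both round trips are thus immediate once one is careful that "untilt" is taken up to isomorphism and "primitive element" up to units.

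Finally I would address the one genuinely non-formal point, which I expect to be the main obstacle: the two equivalence relations must coincide. On the sheaf side, two untilts $(R^\sharp,\iota)$ and $(R^{\sharp\prime},\iota')$ are identified in $\spa(E)^\diamond(R,R^+)$ when there is an isomorphism $R^\sharp\iso R^{\sharp\prime}$ of $E$-algebras compatible with $\iota,\iota'$ under tilting. I would argue that such an isomorphism, composed with the two $\theta$ maps, forces $\ker\theta = \ker\theta'$ inside $W_{\O_E}(R^\circ)$ (tilting being functorial, and $W_{\O_E}(-)$ applied to the tilt of the isomorphism being the identity of $W_{\O_E}(R^\circ)$), hence the associated primitive elements differ by a unit; conversely if $f' = uf$ with $u$ a unit then $W_{\O_E}(R^\circ)\unpi/f = W_{\O_E}(R^\circ)\unpi/f'$ visibly as $E$-algebras with the same tilting datum. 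The mild subtlety is checking that an abstract isomorphism of untilts induces the identity on $W_{\O_E}(R^\circ)$ rather than merely some automorphism — this is where one invokes that $W_{\O_E}(-)$ is a functor of the tilt $R^\circ$ and that the tilting equivalence is compatible with $\theta$, which is exactly the adjunction between $W(-)$ and $(-)^\flat$ recalled in the remark above. Granting that, the bijection of the corollary drops out.
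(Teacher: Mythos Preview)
Your proposal is correct and follows exactly the approach the paper has in mind: the paper simply writes ``From this we deduce this corollary'' after the proposition, treating the bijection as an immediate consequence of parts (1) and (2), and your argument is a faithful unpacking of that deduction. The care you take with the equivalence relations (and the invocation of the $W(-)\dashv(-)^\flat$ adjunction, which the paper records in the remark following the next proposition) is precisely what is needed to make the informal ``we deduce'' rigorous.
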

 
Now, the formula $Y_{S,E}^\diamond =S\times \spa (E)^\diamond$ tells us that any untilt $S^\sharp$ of $S$ defines a morphism $S=S^{\sharp,\diamond}\drt Y_{S,E}^\diamond$ that is to say a morphism $S^{\sharp}\drt Y_{S,E}$ that is in fact a closed immersion locally defined by a degree $1$ primitive element. We thus have:

\begin{prop}
There is a bijection 
\begin{eqnarray*}
&& \big \{ \text{untilts of }S\big \} \\
&\simeq & \big \{ \text{closed immersions } T\hookrightarrow Y_S\text{ locally defined by a degree 1 primitive element}\big \}.
\end{eqnarray*} 
\end{prop}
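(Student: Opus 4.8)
Here is my plan for proving the final proposition.

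\medskip

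\emph{Strategy.} The statement asserts a bijection between untilts of $S$ and closed immersions $T \hookrightarrow Y_S$ that are locally cut out by a degree $1$ primitive element. The natural approach is to reduce to the affinoid case $S = \spa(R,R^+)$, assemble the two halves of the correspondence from results already established above, and then check that the constructions are inverse to each other and glue. The key inputs are: the product formula $Y_{S,E}^\diamond = S \times \spa(E)^\diamond$ from section \ref{sec:diamond product formula}; the corollary identifying $\spa(E)^\diamond(R,R^+)$ with degree $1$ primitive elements in $W_{\O_E}(R^\circ)$ modulo $W_{\O_E}(R^\circ)^\times$; and the previous proposition describing $\ker\theta$ for an untilt as generated by such a primitive element, together with its converse.

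\medskip

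\emph{Steps.} First I would reduce to $S$ affinoid perfectoid, $S = \spa(R,R^+)$; the general case follows since both sides are sheaves on $\Perf_{\Fq}$ (untilts form the sheaf $\spa(E)^\diamond$ by the remark after the definition of $(-)^\diamond$, and closed immersions locally of the stated form also glue, as primitivity is a local condition on $S$). Second, given an untilt $S^\sharp = \spa(R^\sharp, R^{\sharp +})$, Fontaine's $\theta\colon W_{\O_E}(R^\circ) \twoheadrightarrow R^{\sharp,\circ}$ has kernel generated by a degree $1$ primitive element $f$ (part (1) of the proposition above); since $Y_S = \spa(\A,\A^+)\setminus V(\pi[\varpi_R])$ and $f$ is invertible exactly on the locus where one inverts $\pi$ or $[\varpi_R]$ (primitivity of $f$ forces $a_0 \in R^{\circ\circ}\cap R^\times$, so $V(f) \subset V(\pi[\varpi_R])^c$), the vanishing locus $V(f) \hookrightarrow Y_S$ is the desired closed immersion $T$, and $T \cong S^\sharp$ because $\O_{Y_S}/f$ localizes $W_{\O_E}(R^\circ)\unpi/f = R^\sharp$. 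Conversely, given a closed immersion $T \hookrightarrow Y_S$ locally defined by a degree $1$ primitive element, the ideal sheaf is globally generated by a single $f \in \A = W_{\O_E}(R^\circ)$ (the ambient $Y_S$ is Stein, so the ideal of functions vanishing on $T$ glues from the local primitive generators, and one checks the global generator can be taken primitive), and part (2) of the previous proposition says $W_{\O_E}(R^\circ)\unpi/f$ is a perfectoid $E$-algebra that is an untilt of $R$. Third, I would verify these two assignments are mutually inverse: $\theta$ applied to the untilt $W_{\O_E}(R^\circ)\unpi/f$ recovers the ideal $(f)$ (the corollary's bijection $\spa(E)^\diamond(R,R^+) \simeq \{\text{primitive elements}\}/W_{\O_E}(R^\circ)^\times$ encodes exactly this two-sided inverse), and different primitive generators of the same ideal differ by a unit, matching the equivalence relation on untilts.

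\medskip

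\emph{Main obstacle.} The delicate point is the passage from a \emph{locally} defined degree $1$ primitive element on $Y_S$ to a \emph{global} primitive generator $f \in W_{\O_E}(R^\circ)$ of the ideal of $T$, together with the claim that $W_{\O_E}(R^\circ)\unpi/f$ is genuinely an untilt of $R$ rather than merely a quotient of $Y_S$ by a degree $1$ divisor. Going from local to global uses that $Y_S$ is Stein (stated above) so that the closed subspace $T$ corresponds to a closed ideal of $\O(Y_S)$, and one must control the behaviour of this ideal across the annuli $\delta(y) = \rho$; the cleanest route is to invoke that in the equal-characteristic model $\DD^*_S$ the analogous statement follows from Weierstrass factorization (the lemma above), transport it to the unequal case via the scalar-extension compatibility $Y_{S,E'} = Y_{S,E}\hat\otimes_E E'$ and descent along the Lubin-Tate tower $E_\infty|E$ (where $f$ acquires a factorization $\pi - [a]$), and then descend the divisor back down using the $\O_E^\times$-equivariant identification $(Y_{S,E}\hat\otimes_E E_\infty)^\flat = \underset{\text{Frob}}{\limp}\DD^*_S$. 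Once the global primitive generator exists, identifying the quotient with an untilt is exactly part (2) of the previous proposition, which I am assuming.
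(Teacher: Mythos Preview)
Your proof is correct but takes a more hands-on route than the paper, and the ``main obstacle'' you identify is self-imposed.

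The paper's argument is essentially a one-liner via the diamond product formula, which you list among your key inputs but never actually deploy. Since $Y_{S,E}^\diamond = S \times \spa(E)^\diamond$, an untilt $S^\sharp$ of $S$ is the same thing as a morphism $S \to \spa(E)^\diamond$, hence a section of $Y_{S,E}^\diamond \to S$, which unwinds to a morphism $S^\sharp \to Y_{S,E}$. That this is a closed immersion locally cut out by a degree $1$ primitive element is then read off affinoid-locally from the corollary identifying $\spa(E)^\diamond(R,R^+)$ with such elements modulo units. Both directions of the bijection are thus packaged into the product formula: each side is visibly the set of sections of $Y_{S,E}^\diamond \to S$. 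Your approach instead works directly with Fontaine's $\theta$ and primitive elements; this is fine and arguably more concrete, but it forfeits this structural shortcut.

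Your obstacle comes from reading ``locally defined by a degree $1$ primitive element'' as locally on $Y_S$ rather than locally on $S$. A primitive element is by definition an element of $\A = W_{\O_E}(R^\circ)$, so on an affinoid $S = \spa(R,R^+)$ the datum already \emph{is} a single $f \in \A$ cutting out $T$; the word ``locally'' refers only to passing to an affinoid cover of a general perfectoid $S$ (compare the phrasing in the equal-characteristic lemma just before). With this reading, your steps 2 and 3 are exactly the previous proposition and its corollary, and the Lubin-Tate descent and Weierstrass transport you sketch are unnecessary. Even under your reading, the right fix is not descent but the observation you yourself make in step 1: the local untilts glue because $\spa(E)^\diamond$ is a sheaf.
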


Moreover on checks that if $S^\sharp$ is an untilt of $S$ then the closed immersion $S^\sharp \hookrightarrow Y_S$ induces a closed immersion
$$
S^\sharp \longhookrightarrow X_S.
$$
There is a bijective action of Frobenius on untilts of $S$ via the formula $(S^\sharp,\iota)\mapsto (S^\sharp,\iota\circ \text{Frob})$. This is given by the action of $\ph$ on the set of degree $1$ primitive elements. We then have the following proposition.

\begin{prop}
There is a map
\begin{eqnarray*}
&& \big \{ \text{untilts of }S\big \}/\text{Frob}^\Z \\
&\ldrt & \big \{ \text{closed immersions } T\hookrightarrow X_S\text{ locally defined by a degree 1 primitive element}\big \}.
\end{eqnarray*} 
\end{prop}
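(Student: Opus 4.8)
The plan is to produce the map simply by composing the bijection of the preceding proposition with the quotient map $q\colon Y_S\drt X_S=Y_S/\ph^\Z$, and then to verify that the resulting construction is invariant under the Frobenius action on untilts, so that it descends to $\{\text{untilts of }S\}/\mathrm{Frob}^\Z$. Since the statement only asserts the existence of a map (not a bijection), once these two points are settled there is nothing more to do.

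First I would take an untilt $S^\sharp$ of $S$. By the preceding proposition it gives a closed immersion $S^\sharp\longhookrightarrow Y_S$ which on a suitable cover of $Y_S$ is cut out by a degree $1$ primitive element $f\in\A$; let $Z_f\subset Y_S$ denote its image. Composing with $q$ yields a morphism $S^\sharp\drt X_S$, and this is a closed immersion: this is the fact already recorded in the excerpt just before the statement, which concretely holds because, $\ph$ acting properly discontinuously without fixed points, one has $Z_f\cap\ph^n(Z_f)=\emptyset$ for all $n\neq0$, so $q$ maps $Z_f$ isomorphically onto a closed subspace of $X_S$. The same observation, together with the fact that $q$ is a local isomorphism (a $\ph^\Z$-torsor), shows that this closed immersion into $X_S$ is again locally defined by a degree $1$ primitive element: any small enough open of $X_S$ meeting $q(Z_f)$ is, through $q$, identified with an open of $Y_S$ over which $f$ provides the equation.

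It remains to check well-definedness with respect to $\mathrm{Frob}^\Z$, i.e. that the untilts $(S^\sharp,\iota)$ and $(S^\sharp,\iota\circ\mathrm{Frob})$ yield the same closed immersion into $X_S$. By the Corollary above, passing from $(S^\sharp,\iota)$ to its Frobenius translate replaces the generating primitive element $f$ by $\ph(f)$, and the associated Fontaine map $\theta$ gets precomposed with $\ph^{-1}$. On the other hand $\ph$ acts on $Y_S$ as $\spa$ of the Witt-vector Frobenius of $\A$, hence on valuations by $v\mapsto v\circ\ph$; a direct check then shows that this automorphism of $Y_S$ carries $Z_{\ph(f)}$ isomorphically onto $Z_f$, and that this isomorphism $Z_{\ph(f)}\iso Z_f$ is compatible with the two identifications of the source with $S^\sharp$ coming from Fontaine's $\theta$ (the two $\theta$'s differing exactly by $\ph^{-1}$). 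Thus the two closed immersions $S^\sharp\longhookrightarrow Y_S$ differ by the automorphism $\ph$ of $Y_S$; since $q\circ\ph=q$, composing with $q$ makes them coincide. In other words the $Y_S$-level construction is $\mathrm{Frob}$-equivariant, where $\mathrm{Frob}$ acts on the target by translation by $\ph$, and $q$ coequalizes $\ph^\Z$, so the composite factors through untilts modulo Frobenius, as desired.

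I expect the only genuinely delicate point to be the bookkeeping in the previous paragraph: reconciling the \emph{arithmetic} $\ph$-action on degree $1$ primitive elements (together with the way $\theta$ changes) with the geometric $\ph$-action on $Y_S$, and checking that the induced isomorphism $Z_{\ph(f)}\iso Z_f$ respects the identifications of both sides with $S^\sharp$. Everything else is either literally stated in the excerpt (the existence of the closed immersion $S^\sharp\hookrightarrow X_S$) or an immediate consequence of $q$ being a local isomorphism. Finally, one should note that injectivity and surjectivity are genuinely more subtle and are not claimed here: a closed immersion into $X_S$ locally of this type need not lift to a closed immersion into $Y_S$, and even when it does the data of a full $\ph^\Z$-orbit of such divisors need not come from a single untilt.
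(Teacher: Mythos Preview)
Your argument is correct and follows essentially the same route as the paper: the paper does not give a separate proof but records, in the paragraph immediately preceding the statement, that an untilt yields $S^\sharp\hookrightarrow Y_S$ which in turn induces $S^\sharp\hookrightarrow X_S$, and that the Frobenius action on untilts is the $\ph$-action on degree~$1$ primitive elements. You have simply fleshed out those two sentences, adding the (useful) observation that $q$ being a local isomorphism ensures the image in $X_S$ is again locally cut out by a degree~$1$ primitive element, and carrying out the Frobenius bookkeeping explicitly.
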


\subsubsection{Some Hilbert diamonds}

Finally let us say that for any $d\geq 1$, there is a notion of degree $d$ primitive element. More precisely, $f=\sum_{n\geq 0}[a_n]\pi^n \in \A$ is primitive of degree $d$ if 
$a_0\in R^{\circ\circ}\cap R^\times$, $a_1,\dots,a_{d-1}\in R^{\circ\circ}$ and $a_d\in R^{\circ\times}$. Then degree $d$ primitive elements in $\A$ up to $\A^\times$ are in bijection with 
$$
\big [(\spa (E)^{\diamond})^d/\mathfrak{S}_d \big ](R,R^+)
$$
where the quotient by the symmetric group is for the faithfull topology introduced in \cite{ScholzeBerkeley}. Thus, degree $d$ primitive elements are related to the symmetric product of the curve 
$$
X_S^{[d],\diamond} = (S \times \spa (E)^{\diamond,d})/\ph^{\Z}\times \mathfrak{S}_d.
$$
Those {\it Hilbert-diamonds} seem quite interesting but we won't say more since we don't need them to formulate our conjecture (although there is no doubt they will show up in the proof of the conjecture).

\subsection{The schematical curve}
\subsubsection{Definition}

Let $S=\spa (R,R^+)$ be affinoid perfectoid over $\Fq$. For each $d\in \Z$ there is a line bundle $\O(d)$ on $Y_{R,R^+}$ whose geometric realization is
$$
Y\underset{\ph^\Z}{\times} \mathbb{A}^1\ldrt Y/\ph^\Z
$$
where $\ph$ acts on $\mathbb{A}^1$ via $\times \pi^{-d}$. {\it We now declare $\O(1)$ is ample} via the following definition.

\begin{defi}\label{defi:schematical curve}
\begin{enumerate}
\item Set 
\begin{eqnarray*}
P_R &=& \bigoplus_{d\geq 0} H^0 ( X_{R,R^+}, \O(d)) \\
&=& \bigoplus_{d\geq 0} \O(Y_{R,R^+})^{\ph=\pi^d}.
\end{eqnarray*}
\item Define $X_R=\text{Proj} (P_R)$. We will use the notation $X_R^{sch}$, resp. $X_{R,R^+}^{ad}$, to specify we use the schematical curve, resp. the adic one, when there is an ambiguity.
\end{enumerate}
\end{defi}

The case when $R$ is a perfectoid field $F$ is the object of study of \cite{Courbe}. {\it In this case $X_F$ is a Dedekind scheme}. 
In general it does not satisfy any finiteness properties. 
\\

There is always a morphism of ringed spaces 
$$
X_{R,R^+}^{ad}\ldrt X_R^{sch}.
$$

\subsubsection{A computation in equal characteristic}\label{sec:a computation equal}

Maybe it is a good idea to explain in more details the characteristic $p$ case. Suppose thus $E=\Fq\llparent\pi\rrparent$. There is then an isomorphism of $\Fq$-vector spaces 
\begin{eqnarray*}
(R^{\circ\circ})^d & \iso & \O(Y_{R,R^+})^{\ph=\pi^d} \\
(x_0,\dots,x_{d-1}) & \longmapsto & \sum_{i=0}^{d-1}\sum_{n\in  \Z} x_i^{q^{-n}}\pi^{nd+i}.
\end{eqnarray*}
Consider the formal group $\mathscr{G}_d=\widehat{\mathbb{G}}_a^d$ over $\Fq$ equipped with the  action of $\O_E=\Fq\llbracket\pi\rrbracket$ where the action of $\pi$ is given by 
$$
(X_0,\dots,X_{d-1}) \longmapsto (X_{d-1}^q,X_0,\dots,X_{d-2}).
$$
Then the preceding isomorphism is an isomorphism of $E$-vector spaces 
$$
\mathscr{G}_d (R^\circ) \iso \O(Y_{R,R^+})^{\ph=\pi^d}.
$$
Consider now the power series
$$
f(T)=\sum_{n\geq 0} \frac{T^{q^{n}}}{\pi^n} \in E \llbracket T\rrbracket.
$$
One checks 
$$
f^{-1} (\pi f) \in \O_E \llbracket T\rrbracket,
$$
where $f^{-1}$ is the inverse with respect to composition, 
and satisfies 
$$
f^{-1} (\pi f) \equiv  \pi T+ \a T^q  \text{ mod }T^{q^2}\O_E\llbracket T\rrbracket
$$
with $\a\in 1+\pi\O_E$. Note $\mathscr{G}$ for $\mathscr{G}_1$. 
There is thus a lift $\widetilde{\mathscr{G}}$ of $\mathscr{G}$ over $\O_E$ whose underlying formal group is $\widehat{\mathbb{G}}_a$ and such that the action of $\pi$ is given by
$$
[\pi]_{\widetilde{\mathscr{G}}} = f^{-1}(\pi f)
$$
that is to say
$$
f=\log_{\widetilde{\mathscr{G}}}.
$$
Choose $\e\in R^{\circ\circ}\cap R^\times$ a pseudo-uniformising element. We're seeking for a Weierstrass product expansion of 
$$
t_\e:=\sum_{n\in \Z} \e^{q^{-n}}\pi^n \in \O(Y_{R,R^+})^{\ph=\pi}.
$$
  Define
$$
\eta = \underset{k\drt +\infty}{\lim} [\pi^k]_{\widetilde{\mathscr{G}}} ( \e^{q^{-k}})\in R^{\circ\circ}\llbracket\pi\rrbracket
$$
where the limit is for the $(\varpi_R,\pi)$-adic topology of $\A=R^\circ \llbracket\pi\rrbracket$. Then set 
\begin{eqnarray*}
u_\e &=& \frac{\eta}{\ph^{-1}(\eta)} \\
&=& \frac{[\pi]_{\widetilde{\mathscr{G}}} (\ph^{-1}(\eta))}{\ph^{-1}(\eta)} \in \A.
\end{eqnarray*}
One checks that $u_\e$ is primitive of degree $1$ and moreover $u_\e= \e^{\frac{q-1}{q}} + \pi (1+ \beta )$ with $\beta\in R^{\circ\circ}\llbracket\pi\rrbracket$. We can thus form the convergent Weierstrass product 
$$
\Pi^+ (u_\e)= \prod_{k\geq 0} \frac{\ph^k(u_\e)}{\pi} \in \O (Y_{R,R^+}).
$$
Now set
$$
\Pi^- (u_\e) = \ph^{-1}(\eta) 
$$
which we can think of as being "$\dpt{\prod_{k<0}\ph^k (u_\e)}$" although this product is not convergent. Even if this product has no meaning let us remark that we have partial factorizations
$$
\eta = \ph^{i-1}(\eta). \prod_{i\leq k<0} \ph^{k} (u_\e)
$$ 
for any $i<0$.
We have 
$$
\Pi (u_\e) = \Pi^+(u_\e) \Pi^- (u_\e) \in \O(Y_{R,R^+})^{\ph=\pi} 
$$
and
$$
\Pi(u_\e) = t_\e.
$$
This last equality is a consequence of the Weierstrass product expansion of the rigid analytic function $\log_{\widetilde{\mathscr{G}}}$ as an holomorphic function on $\mathring{\mathbb{B}}^1_E$:
$$
\log_{\widetilde{\mathscr{G}}} = \underset{k\drt +\infty}{\lim} \frac{[\pi^k]_{\widetilde{\mathscr{G}}}}{\pi^k}.
$$
Consider 
$$
R_\e = \A\unpi /u_\e
$$
the "untilt" of $R$ associated to the primitive element $u_\e$ (since we are in equal characteristic, an untilt is nothing else than giving a continuous morphism $E\drt R$, that is to say in fact $R_\e=R$ once we have applied Weierstrass factorization to $u_\e$
but we don't want to fix such an identification). Note
$$
\theta_\e: \A\unpi\drt R_\e
$$
the projection.  Then $\big ( \theta_\e (\ph^{-n}(\eta))\big )_{n\geq 0}$ is a basis of the Tate module  $T_\pi (\widetilde{\mathscr{G}}\hat{\otimes}_{\O_E} R_\e^{\circ})$. 
\\

Let now $I\subset ]0,1[$ be an interval whose extremities are in $p^\Q$ and consider $\DD^*_{(R,R^+),I}$ the corresponding annulus of $\DD^*_{R,R^+}$ via the function $\delta:|Y_{R,R^+}|\drt ]0,1[$ defined after fixing  some $\varpi_R$. Let us fix $\|\varpi_R\|=1/p$. 

\begin{defi}
If $\xi=\sum_{n\geq 0} a_n\pi^n$ is primitive of degree $1$ set $\|\xi\|=\|a_0\|$.
\end{defi}

Let us remark that if $u\in R^\circ\llbracket\pi\rrbracket^\times$ then $\|u\xi\|=\|\xi\|$. Thus, if $\xi=u(\pi-a)$ is the Weierstrass fatorization of $\xi$ then $\|\xi\|=\|a\|$.

Let's come back to the preceding situation. We have $\|u_\e\|=\|\e\|^{\frac{q-1}{q}}$. Then one checks that
\begin{itemize}
\item 
if $A=\{k\in \Z \|\ \|u_{\e^{q^k}}\|\in I$ then  
 $t_\e =  \prod_{k\in A} \ph^k (u_\e)\times u$ with $u\in \O(\DD^*_I)^\times$.
\item For any $n\geq 2$, $O(\DD^*) u_\e + \O(\DD^*) u_{\e^n} =\O(\DD^*)$.
\end{itemize}

One deduces the following: if $(n,p)=1$ then 
$$
\O(\DD^*_I) t_\e + \O(\DD^*_I) t_{\e^n} = \O(\DD^*_I).
$$
We have proved the following.

\begin{prop}
There exists $\e_1,\e_2\in R^{\circ\circ}\cap R^\times$ such that 
$$
\O(\DD^*_I) t_{\e_1}+ \O (\DD^*_I) t_{\e_2}=\O(\DD^*_I).
$$
\end{prop}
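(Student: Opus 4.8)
The plan is to take $\e_1$ and $\e_2$ to be powers of one and the same pseudo-uniformizing element. Fix $\e\in R^{\circ\circ}\cap R^\times$ and an integer $n\geq 2$ prime to $p$ --- concretely $n=2$ if $p$ is odd and $n=3$ if $p=2$ --- and put $\e_1=\e$ and $\e_2=\e^n$. First one records that $\e^n$ is again admissible: $R^{\circ\circ}$ is an ideal of $R^\circ$, hence closed under multiplication, and $R^\times$ is a group, so $\e^n\in R^{\circ\circ}\cap R^\times$. With this choice the assertion is exactly the relation $\O(\DD^*_I)\,t_\e+\O(\DD^*_I)\,t_{\e^n}=\O(\DD^*_I)$ obtained above under the hypothesis $(n,p)=1$, so formally there is nothing left to do.

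Since that relation was only sketched, let me indicate how I would prove it, as that is where the content lies. Over the rational annulus $\DD^*_I$ one has the finite factorizations $t_\e=\big(\prod_{k\in A}\ph^k(u_\e)\big)\cdot v$ and $t_{\e^n}=\big(\prod_{l\in B}\ph^l(u_{\e^n})\big)\cdot w$, where $A=\{k\in\Z:\|u_{\e^{q^k}}\|\in I\}$, $B$ is the analogous set for $\e^n$, and $v,w\in\O(\DD^*_I)^\times$; both sets are finite since $\|u_{\e^{q^k}}\|=\|\e\|^{q^k(q-1)/q}$ runs monotonically to $0$ and to $1$ as $k\to\pm\infty$. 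Hence $(t_\e)+(t_{\e^n})=\prod_{k\in A}(\ph^k(u_\e))+\prod_{l\in B}(\ph^l(u_{\e^n}))$ in $\O(\DD^*_I)$, and since a product of principal ideals is comaximal with another such product as soon as every factor of the first is comaximal with every factor of the second, it suffices to show $\O(\DD^*_I)\,\ph^k(u_\e)+\O(\DD^*_I)\,\ph^l(u_{\e^n})=\O(\DD^*_I)$ for all $k\in A$, $l\in B$.

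Here $\ph^k(u_\e)$ and $\ph^l(u_{\e^n})$ are primitive of degree $1$, and their Weierstrass zeros on $\DD^*_I$ lie on the circles $|\pi|=\|\e\|^{q^k(q-1)/q}$ and $|\pi|=\|\e\|^{nq^l(q-1)/q}$; these coincide only if $q^k=n\,q^l$, i.e. $n=q^{k-l}$, which is impossible for $n\geq 2$ prime to $p$. This is the one point where the hypothesis $(n,p)=1$ is used; note it already applies to $k=l=0$, showing that $u_\e$ and $u_{\e^n}$ themselves vanish on distinct circles. Having disjoint zero circles, I would conclude comaximality by covering $\DD^*_I$ by two overlapping closed sub-annuli, each avoiding one of the two circles so that on it the corresponding factor is a unit, and then gluing (Tate acyclicity of this finite rational cover).

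The step I expect to be the main obstacle is exactly this passage from disjoint zero loci to the unit ideal in the relative setting. When $R$ is a perfectoid field $\O(\DD^*_I)$ is a principal ideal domain and there is nothing to prove, but for general $R$ one must genuinely produce a B\'ezout identity --- either by the sub-annulus decomposition above, using that a degree-$1$ primitive element restricted to an annulus avoiding its zero circle is a unit times a unit, or, along the lines indicated in the excerpt, from the explicit shape $u_\e=\e^{(q-1)/q}+\pi(1+\beta)$ with $\beta\in R^{\circ\circ}\llbracket\pi\rrbracket$ together with estimates in the Gauss norms $\|\cdot\|_\rho$. Everything else --- the choice of $\e_1,\e_2$, the finiteness of $A$ and $B$, and the reduction to pairs of factors --- is routine.
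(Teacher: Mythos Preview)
Your proposal is correct and follows essentially the same route as the paper: take $\e_1=\e$, $\e_2=\e^n$ with $(n,p)=1$, factor $t_\e$ and $t_{\e^n}$ on $\DD^*_I$ as finite products of Frobenius-translates $\ph^k(u_\e)$ (times units), and reduce to pairwise comaximality of these degree-$1$ primitive factors. The paper records the key input as the bald assertion $\O(\DD^*)\,u_\e+\O(\DD^*)\,u_{\e^n}=\O(\DD^*)$ for $n\geq 2$ (``one checks'') and deduces the proposition from it exactly as you do; your discussion of disjoint zero circles and norm estimates is a reasonable way to substantiate that assertion, and you are right to flag the relative B\'ezout identity as the step requiring genuine care.
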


We thus have a covering 
$$
\spec ( \O(\DD^*_I)) = \bigcup_{\e\in R^{\circ\circ}\cap R^\times} D (t_\e).
$$
For such an $\e$, there is a morphism
$$
\spec \big ( \O(\DD^*_I)[\tfrac{1}{t_\e}]\big ) \ldrt D^+(t_\e)\subset \text{Proj} (P).
$$
When $\e$ and $I$ vary this gives rise to a {\it $\ph$-invariant} morphism of ind-schemes 
$$
\underset{I}{\limi} \spec\big ( \O(\DD^*_I)\big )\ldrt \text{Proj} (P) = X^{sch}.
$$
Via the morphism or ringed spaces 
$$
\DD^*_I\ldrt \spec \big ( \O (\DD^*_I)\big )
$$
(support of a valuation at the level of topological spaces) this induces the morphism
$$
X^{ad}\ldrt X^{sch}.
$$

\subsubsection{The analog computation in equal characteristic}

Almost all the results of the preceding section extend to the case when $[E:\Qp]<+\infty$. 
The almost here is due to the fact that when $d\geq 2$ then $(R,R^+)\mapsto \O (Y_{R,R^+})^{\ph=\pi^d}$ is not representable by a formal group (or rather its universal cover). But for $d=1$, if we take $\widetilde{\mathscr{G}}$ the Lubin-Tate group law over $\O_E$ with logarithm 
$$
\log_{\widetilde{\mathscr{G}}} = \sum_{n\geq 0} \frac{T^{q^{n}}}{\pi^n}
$$
with reduction $\mathscr{G}$
then 
\begin{eqnarray*}
\mathscr{G} (R^\circ) & \iso & \O(Y_{R,R^+})^{\ph=\pi} \\
\e & \longmapsto & t_\e:=\sum_{n\in \Z} \big [\e^{q^{-n}}\big ]\pi^n.
\end{eqnarray*}
Moreover, for $\e\in R^{\circ\circ}\cap R^\times$, $t_\e$ has a Weierstrass product expansion 
$$
t_\e = \Pi^- (u_\e) .\Pi^+ (u_\e)
$$
with 
$$
u_\e = \frac{\eta}{\ph^{-1}(\eta)}
$$
and 
$$
\eta = \underset{k\drt+\infty}{\lim} [\pi^k]_{\widetilde{\mathscr{G}}} \big ([\e^{q^{-k}}]\big ).
$$
All of this allows us to construct the morphism of ringed spaces 
$$
X^{ad}_{R,R^+}\ldrt X_R^{sch}.
$$

\subsubsection{Intrinsic definition of the schematical curve and GAGA}

We defined $X^{sch}$ as the Proj of the graded algebra $P$. It is thus equipped canonically with a line bundle $\O(1)=\widetilde{P[1]}$. But in fact there is no reason for fixing such a line bundle: {\it the curve exists in itself without a fixed line bundle $\O(1)$ on it}. For example the definition of $\O(1)$ we took depends on the choice of the uniformizing element $\pi$ and another choice may lead to a non-isomorphic $\O(1)$. It will moreover appear clearly in the next section that there are plenty of other choices of other ample line bundles on $X^{sch}$ associated to untilts of $R$, the one of the preceding section being the untilts $R^\sharp$ such that $T_\pi (\widetilde{\mathscr{G}}\hat{\otimes}_{\O_E} R^{\sharp,\circ})=\underline{\O_E}$ (a condition that depends on the choice of $\pi$ since by definition $\widetilde{\mathscr{G}}$ depends on this choice).
\\

Nevertheless we have the following proposition whose proof is not such difficult (see section 6.7 of \cite{Courbe} for the case of a base field).

\begin{prop}
The $\ph$-invariant morphism of ind-schemes 
$$
\underset{I}{\limi} \spec ( \O (Y_I))\ldrt X^{sch}
$$
makes $X^{sch}$ a categorical quotient of the ind-scheme $\underset{I}{\limi} \spec ( \O (Y_I))$ by $\ph^\Z$.
\end{prop}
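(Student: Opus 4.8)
The plan is to verify the universal property of a categorical quotient directly. We must show: given any scheme $Z$ and a $\ph$-invariant morphism $\psi:\varinjlim_I \spec(\O(Y_I))\drt Z$, there is a unique factorization through $X^{sch}$. Since $X^{sch}=\Proj(P)$ with $P=\bigoplus_{d\geq 0}\O(Y)^{\ph=\pi^d}$, the natural strategy is to compare the two sides on an explicit affine cover. First I would record the affine cover of $X^{sch}$ by the standard opens $D^+(t)$ for $t\in P_1=\O(Y)^{\ph=\pi}$ nonzero; the previous proposition (the computation with $u_\e$, $\Pi^\pm$, together with the statement that the $t_\e$ generate the unit ideal on each annulus $\DD^*_I$) shows that the preimages of the $D^+(t_\e)$ in each $\spec(\O(Y_I))$ form an open cover, so it suffices to check the factorization locally over each $D^+(t_\e)$ and then glue.

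The key computation is therefore a local one: identify $\O(D^+(t))=P[\tfrac{1}{t}]_0$, the degree-zero part of the localization, with the $\ph$-invariants of the ring of functions on the locus $\{t\neq 0\}$ inside $\varinjlim_I\spec(\O(Y_I))$. Concretely, $\{t_\e\neq 0\}$ is a $\ph$-stable open of $Y$ obtained by deleting the (discrete, $\ph$-translated) zero divisor of $t_\e$; quotienting by $\ph^\Z$ replaces an infinite annulus by a finite one, and one checks $\O(\{t_\e\neq 0\})^{\ph=1}=P[\tfrac1{t_\e}]_0$ — this is exactly the content of the morphism $\spec(\O(\DD^*_I)[\tfrac{1}{t_\e}])\drt D^+(t_\e)$ constructed in the previous section, and I would argue it is an isomorphism onto a compatible ind-system by exhibiting the inverse: a function invariant under $\ph$ and regular away from $\operatorname{div}(t_\e)$ can be multiplied by a suitable power of $t_\e$ to clear the pole, landing in some $\O(Y)^{\ph=\pi^N}=P_N$. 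Given this, the universal property of $\Proj$ (gluing the affine universal properties $\Hom(\spec A,D^+(t))=\{$homogeneous localizations$\}$) yields the desired unique factorization, because a $\ph$-invariant map out of the ind-scheme is the same as a map out of the "quotient" computed chart by chart.

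The main obstacle I expect is the local identification $\O(\{t_\e\neq 0\})^{\ph=1}\iso P[\tfrac1{t_\e}]_0$, i.e. showing the comparison map is an isomorphism of ind-rings rather than merely a ring map: one direction (a global section of $\O(d)$ gives a degree-$d$ function hence an element of the localization) is formal, but the surjectivity — that every $\ph$-invariant meromorphic-along-$\operatorname{div}(t_\e)$ function on $Y_I$ is a ratio of honest sections of powers of $\O(1)$ — requires the Weierstrass-product analysis of $t_\e$ (the factorization $t_\e=\Pi^-(u_\e)\Pi^+(u_\e)$ and the explicit pole structure) to control exactly which poles can occur and to exhibit the clearing power of $t_\e$. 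This is essentially the non-field version of the argument in section~6.7 of \cite{Courbe}, and once the P.I.D.\ statements there are replaced by the coherence/coprimality statements established above (the proposition giving $\e_1,\e_2$ with $\O(\DD^*_I)t_{\e_1}+\O(\DD^*_I)t_{\e_2}=\O(\DD^*_I)$), the patching is routine. The uniqueness part of the universal property is immediate from $\varinjlim_I\spec(\O(Y_I))\drt X^{sch}$ being scheme-theoretically dominant onto each chart.
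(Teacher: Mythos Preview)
The paper does not actually give a proof of this proposition: it simply remarks that the proof ``is not such difficult'' and refers the reader to section~6.7 of \cite{Courbe} for the base-field case. Your plan is therefore not so much to be compared against the paper's proof as it is an honest attempt to flesh out what that reference is pointing to, and in that respect it is on the right track: the reduction to the affine charts $D^+(t_\e)$, the use of the coprimality proposition to know these preimages cover each $\spec(\O(Y_I))$, and the identification of $P[\tfrac{1}{t_\e}]_0$ with the $\ph$-invariants on the locus $\{t_\e\neq 0\}$ are exactly the ingredients one needs, and you correctly isolate the surjectivity of the comparison map as the one nontrivial step requiring the Weierstrass analysis.

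One small point worth tightening: you should make explicit that the reduction to charts on $X^{sch}$ (rather than on the target $Z$) is legitimate. The reason is that a categorical quotient can be checked Zariski-locally on the candidate quotient: if $\{U_i\}$ covers $X^{sch}$ and each $p^{-1}(U_i)\to U_i$ is a categorical quotient by $\ph^\Z$, then so is $p$, because the local factorizations $\bar\psi_i:U_i\to Z$ glue by uniqueness on overlaps. You implicitly use this but do not state it; once said, the rest of your outline goes through. The uniqueness argument via scheme-theoretic dominance is fine.
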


\begin{coro}
We have to change the definition of the schematical curve $X^{sch}$ to make it intrinsic: $X^{sch}$ is the the categorical quotient of $\underset{I}{\limi} \spec ( \O (Y_I))$ by $\ph^\Z$.
\end{coro}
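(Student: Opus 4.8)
The plan is to deduce this statement directly from the preceding Proposition together with the uniqueness of categorical quotients. First I would recall that a categorical quotient of an object by a group action is, by definition, initial among all invariant morphisms out of that object (for the $\ph^\Z$-action it is the coequalizer of $\ph$ and $\mathrm{id}$); consequently, whenever it exists it is unique up to a unique isomorphism. Hence, once the Proposition is granted, the scheme $X^{sch}=\mathrm{Proj}(P_R)$ is pinned down, up to canonical isomorphism, by the single datum of the $\ph$-invariant morphism of ind-schemes $\underset{I}{\limi}\,\spec(\O(Y_I))\ldrt X^{sch}$ and the universal property it enjoys.

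The second step is to observe that the source ind-scheme $\underset{I}{\limi}\,\spec(\O(Y_I))$, together with its Frobenius action $\ph$, is constructed purely from the pair $(R,R^+)$ (and the local field $E$), with no reference to the uniformizer $\pi$ or to the resulting ample line bundle $\O(1)=\widetilde{P_R[1]}$ on $X^{sch}$. Indeed $Y_{R,R^+}$ itself is intrinsic (it is $\spa(\A,\A^+)\setminus V(\pi[\varpi_R])$ with $\A=W_{\O_E}(R^\circ)$), each $\O(Y_I)$ is the ring of global sections of the corresponding quasi-Stein open subspace, and a different choice of pseudo-uniformizer $\varpi_R$ merely reparametrizes the filtered family of annuli $I$ (with endpoints in $p^{\Q}$) in a cofinal way, leaving the colimit unchanged. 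Thus the ind-scheme-with-$\ph$-action is canonically associated to $(R,R^+)$.

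Combining the two steps, one simply declares $X^{sch}$ to be the categorical quotient of $\underset{I}{\limi}\,\spec(\O(Y_I))$ by $\ph^\Z$: by the Proposition this quotient exists and coincides with the old $\mathrm{Proj}(P_R)$, so nothing changes except that the definition is now manifestly independent of the choices of $\pi$ and of $\O(1)$, and functoriality in $(R,R^+)$ becomes automatic from the universal property. The morphism of ringed spaces $X^{ad}_{R,R^+}\ldrt X^{sch}_R$ built in the previous subsections is then recovered as the composite of the $\ph$-invariant morphism $X^{ad}_{R,R^+}\ldrt \underset{I}{\limi}\,\spec(\O(Y_I))$ with the quotient map.

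The only real point requiring care — and the place where the Proposition does the actual work — is fixing the ambient category in which the categorical quotient is formed (ind-schemes, or locally ringed spaces) and checking that the Proposition's statement applies verbatim there; once that is settled the statement is formal. I would also be slightly careful to verify that the cofinal reparametrization argument genuinely suffices to eliminate the dependence on $\varpi_R$, since the relevant system of annuli is filtered and its colimit is insensitive to rescaling, so that the ind-scheme one quotients is truly canonical.
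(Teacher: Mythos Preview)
Your approach is correct and matches the paper's: this Corollary is really a redefinition, and the only content is that the preceding Proposition guarantees the categorical quotient exists and coincides with the old $\mathrm{Proj}(P_R)$. The paper's entire justification is the single sentence ``Of course, to prove such a categorical quotient exists the best way is to take definition~\ref{defi:schematical curve} as we did.'' Your elaboration on why the ind-scheme with its $\ph$-action is intrinsic (cofinal reparametrization of the annuli under change of $\varpi_R$, independence of the uniformizer $\pi$) is a welcome clarification the paper leaves implicit.

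One small correction in your final paragraph: there is no morphism $X^{ad}_{R,R^+}\to \underset{I}{\limi}\,\spec(\O(Y_I))$, $\ph$-invariant or otherwise. What one has is a $\ph$-\emph{equivariant} morphism from $Y^{ad}$ (assembled from $Y_I\to \spec(\O(Y_I))$) to the ind-scheme; its composite with the quotient map to $X^{sch}$ is then $\ph$-invariant on $Y^{ad}$ and hence descends to $X^{ad}=Y^{ad}/\ph^{\Z}$. This is how the paper recovers $X^{ad}\to X^{sch}$ at the end of the equal-characteristic computation.
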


Of course, to prove such a categorical quotient exists the best way is to take definition \ref{defi:schematical curve} as we did.
\\

We will need the following theorem later.

\begin{theo}[\cite{KedlayaLiuRelative1}]\label{theo:GAGA}
The GAGA functor associated to the morphism of ringed spaces $X^{ad}_{R,R^+}\drt X^{sch}_R$ is an equivalence 
$$
\text{Bun}_{X_R}\iso \text{Bun}_{X^{ad}_{R,R^+}}.
$$
\end{theo}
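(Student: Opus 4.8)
The plan is to reduce the GAGA statement to a local computation on a cover of $X^{sch}_R$ by affine pieces of the form $\spec\big(\O(\DD^*_I)[\tfrac{1}{t_\e}]\big)$, matched against the corresponding analytic annuli $\DD^*_I$ on $X^{ad}_{R,R^+}$, and then invoke a descent-and-ampleness argument in the style of Serre's GAGA. First I would recall the map of ind-schemes $\underset{I}{\limi}\spec(\O(Y_I))\drt X^{sch}$ from section \ref{sec:a computation equal}, which is $\ph$-invariant and, by the categorical-quotient statement just quoted, realizes $X^{sch}$ as $\big(\underset{I}{\limi}\spec(\O(Y_I))\big)/\ph^\Z$. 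A vector bundle on $X^{sch}$ is therefore the same as a $\ph$-equivariant vector bundle on this ind-scheme, i.e. a compatible family of finite projective $\O(Y_I)$-modules $M_I$ together with an isomorphism $\ph^*M_I\simeq M_{\ph(I)}$; symmetrically, a bundle on $X^{ad}_{R,R^+}=Y_{R,R^+}/\ph^\Z$ is a $\ph$-equivariant bundle on the Stein adic space $Y_{R,R^+}$, which (since $Y$ is covered by the affinoids cut out by the annuli $\DD^*_I$, resp. their analogues in unequal characteristic) amounts to a compatible family of finite projective $\O(Y_I)$-modules with the same equivariance datum. So the heart of the matter is the comparison between coherent modules on $\spec(\O(Y_I))$ and coherent sheaves on the adic space $Y_I$ — a purely "GAGA over an annulus" statement, with the $\ph$-equivariance carried along for free on both sides.

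The key steps, in order, would be: (1) For a fixed interval $I$ with endpoints in $p^{\Q}$, the adic space $Y_I$ is affinoid with ring of global sections $\O(Y_I)$; show that pullback along $Y_I\drt \spec(\O(Y_I))$ induces an equivalence between finite projective $\O(Y_I)$-modules and vector bundles on $Y_I$. In equal characteristic this is classical for a Tate algebra, and in the relative perfectoid setting it is exactly the content of the relative GAGA results of Kedlaya--Liu (\cite{KedlayaLiuRelative1}) — this is where Theorem \ref{theo:GAGA}'s citation really does the work. (2) Check compatibility of these equivalences as $I$ shrinks or grows, so that one gets an equivalence at the level of the ind-scheme $\underset{I}{\limi}\spec(\O(Y_I))$ versus the space $Y_{R,R^+}$; here one uses that $\O(Y_{R,R^+})=\varprojlim_I \O(Y_I)$ as a Fr\'echet algebra and that finitely generated projective modules glue. (3) Descend the equivalence along the free action of $\ph^\Z$: a $\ph$-equivariant object on either side is the same as an object on the quotient, and the equivalence of (2) is manifestly $\ph$-equivariant because $\ph$ acts compatibly on $\spec(\O(Y_I))$ and on $Y_I$ (it sends $I$ to $\ph(I)$). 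Concluding, $\Bun_{X^{sch}_R}\simeq(\ph\text{-equivariant bundles on }Y)\simeq\Bun_{X^{ad}_{R,R^+}}$.

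I expect the main obstacle to be step (1): relative GAGA over an annulus in a perfectoid family $S=\spa(R,R^+)$ is genuinely harder than the classical case, because $\O(Y_I)$ need not be Noetherian and one cannot simply quote Serre — finiteness of cohomology, properness, and the comparison of $\spec$ with the adic space all have to be established with the techniques of \cite{KedlayaLiuRelative1} (pseudocoherence, Fredholm-type arguments, and the sheafiness of the relevant Banach algebras). A secondary subtlety is ensuring that "finite projective module on the ind-scheme $\underset{I}{\limi}\spec(\O(Y_I))$" is correctly interpreted — one wants the colimit of categories, i.e. a module defined on some $\O(Y_I)$ up to refinement, and one must check this matches the notion of coherent sheaf used in defining $\Bun_{X^{sch}}$ via the $\mathrm{Proj}$. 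Once these two points are in hand the descent along $\ph^\Z$ and the final assembly are formal. (For the case where $R=F$ is a perfectoid field this is precisely section 6.7 of \cite{Courbe}, and the strategy above is the natural relativization of that argument.)
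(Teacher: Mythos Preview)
The paper gives no proof of this theorem: it is stated with attribution to \cite{KedlayaLiuRelative1} and nothing further. So there is no argument in the paper to compare against; the result is simply imported.

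Your sketch is a reasonable outline of how such a proof could be organized, and you correctly locate the substantive input --- that vector bundles on the affinoid $Y_I$ coincide with finite projective $\O(Y_I)$-modules is exactly the Kedlaya--Liu contribution, and is the only genuinely hard step in the non-Noetherian relative setting. One caution on the schematical side of your reduction: the proposition you invoke says $X^{sch}$ is the \emph{categorical} quotient of the ind-scheme $\varinjlim_I\spec(\O(Y_I))$ by $\ph^{\Z}$, and categorical quotients are characterized by a universal property for morphisms out, not by an equivalence of sheaf categories. So the sentence ``a vector bundle on $X^{sch}$ is therefore the same as a $\ph$-equivariant vector bundle on this ind-scheme'' is not a formal consequence of that proposition; it requires its own argument. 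In practice one either works directly with the $\mathrm{Proj}$ description (graded $P$-modules, ampleness of $\O(1)$, resolutions by $\O(d)$'s) or proves that pullback identification separately. In the field case this is indeed section 6.7 of \cite{Courbe}, as you note; relatively it is part of what the Kedlaya--Liu machinery provides, so your citation already covers it --- but you should not present it as automatic from the categorical-quotient statement.
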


\subsection{Untilts as Cartier divisors on the schematical curve}

Let $S\in \Perf_{\Fq}$. As we saw any untilt $S^{\sharp}$ of $S$ gives rise to a {\it Cartier divisor }
$$
S^{\sharp}\longhookrightarrow Y_S.
$$
If $S=\spa (R,R^+)$ is affinoid perfectoid then this cartier divisor is defined by the invertible ideal $\O_{Y_S}.\xi\subset \O_{Y_S}$ where $\xi$ is a primitive element associated to $S^{\sharp}$.
\\
This Cartier divisor $D$ gives rise to a  $\ph$-invariant Cartier divisor 
$$
\bigoplus_{k\in \Z} \ph^{k*}D
$$
that is to say a Cartier divisor on $X_{S}^{ad}$. If $S=\spa (R,R^+)$, 
according to GAGA this has to come from a Cartier divisor on $X^{sch}$. Here is its description. 

\subsubsection{The equal characteristic case}

Let $\xi\in R^\circ\llbracket\pi\rrbracket$ be primitive of degree $1$. The associated Cartier divisor on $Y=\DD^*_{R,R^+}$ is given by the line bundle 
$\xi^{-1}\O_Y$ together with its section $1\in \O(Y)$. Let $\L_\xi$ be the associated line bundle on $X^{ad}$. One computes that for any $d\geq 0$
\begin{equation}\label{eq:sections de L xi tordues par pi puissance d}
\GG ( X^{ad},\L_\xi (d))\subset \O(Y)[\tfrac{1}{\xi}]^{\ph=\pi^d}
\end{equation}
and the section of $\L_\xi^{ad}$ defining the Cartier divisor on $X^{ad}$ is given by $1\in \O(Y)[\tfrac{1}{\xi}]^{\ph=Id}$. Up to multiplying $\xi$ by an element of $R^{\circ}\llbracket\pi\rrbracket^\times$ we can suppose $\xi = \pi-a$ with $a\in R^{\circ\circ}\cap R^\times$. Let us form the convergent {\it Weierstrass product }
\begin{eqnarray*}
\Pi^+(\xi) &:=&
\prod_{n\geq 0} \frac{\ph^n(\xi)}{\pi}  \\ &=& \prod_{n\geq 0} \Big ( 1- \frac{a^{q^n}}{\pi}\Big ).
\end{eqnarray*}
It satisfies the functional equation
$$
\ph \big ( \Pi^+(\xi)\big ) = \frac{\xi}{\pi}. \Pi^+(\xi).
$$
Then one checks that via (\ref{eq:sections de L xi tordues par pi puissance d})
$$
\times \Pi^+(\xi):
\GG ( X^{ad},\L_\xi^{ad} (d))  \iso  \O(Y)^{\xi\ph =\pi^{d-1}}.
$$
Define now the following graded module over $P=\bigoplus_{d\geq 0} \O(Y)^{\ph=\pi^d}$
$$
N_\xi = \bigoplus_{d\geq 0} \O(Y)^{\xi\ph=\pi^{d-1}}.
$$
One then checks the following. 
\begin{prop}\label{prop:Cartier via GAGA equal}
The quasi-coherent sheaf $\L_\xi=\widetilde{N_\xi}$ is a line bundle on $X^{sch}=\text{Proj} (P)$. Moreover, equipped with the section $\Pi^+(\xi)\in \GG(X^{sch},\L_\xi)$ this defines a Cartier divisor on $X^{sch}$ that corresponds to the Cartier divisor  on $X^{ad}$ defined by $\xi$ via GAGA.
\end{prop}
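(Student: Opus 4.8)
The plan is to bootstrap from what has already been established for the adic line bundle $\L_\xi^{ad}$, using GAGA (Theorem~\ref{theo:GAGA}) together with the ampleness of $\O(1)$ on $X^{sch}$ to transfer everything to the schematical curve. First I would record that $N_\xi$ is indeed a graded $P$-module: if $f\in\O(Y)^{\ph=\pi^e}$ and $g\in\O(Y)^{\xi\ph=\pi^{d-1}}$ then $\xi\ph(fg)=\ph(f)\cdot\xi\ph(g)=\pi^{e+d-1}fg$, so $P_e\cdot(N_\xi)_d\subseteq(N_\xi)_{e+d}$. Next, the isomorphisms $\times\Pi^+(\xi)\colon\GG(X^{ad},\L_\xi^{ad}(d))\iso\O(Y)^{\xi\ph=\pi^{d-1}}=(N_\xi)_d$ obtained above are compatible with multiplication by sections of $\O(d)$ — multiplication by $\Pi^+(\xi)$ in $\O(Y)$ commutes with multiplication by functions — so they assemble into an isomorphism of graded $P$-modules
$$\bigoplus_{d\geq 0}\GG\big(X^{ad},\L_\xi^{ad}(d)\big)\;\iso\;N_\xi.$$

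Now I would bring in GAGA. By Theorem~\ref{theo:GAGA} the bundle $\L_\xi^{ad}$ is the analytification of a line bundle $\L$ on $X^{sch}_R$, unique up to isomorphism; since the GAGA functor carries $\O_{X^{sch}}(d)$ to $\O_{X^{ad}}(d)$ by construction and is fully faithful on bundles, one gets $\GG(X^{sch},\L(d))=\GG(X^{ad},\L_\xi^{ad}(d))$ for every $d\geq 0$. Writing $M:=\bigoplus_{d\geq 0}\GG(X^{sch},\L(d))$, the previous paragraph gives $M\cong N_\xi$ as graded $P$-modules, so it remains to see that the canonical comparison map $\widetilde M\to\L$ is an isomorphism; this yields $\widetilde{N_\xi}\cong\widetilde M\cong\L$, and in particular shows $\widetilde{N_\xi}$ is a line bundle. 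This last point is checked on the affine cover $X^{sch}=\bigcup_{\e}D^+(t_\e)$ of the previous subsection (by two of the $t_\e$ after localizing, in view of the covering $\spec\O(\DD^*_I)=\bigcup_\e D(t_\e)$): on $D^+(t_\e)=\spec\big((P[\tfrac{1}{t_\e}])_0\big)$ one has $\widetilde M|_{D^+(t_\e)}=(M[\tfrac{1}{t_\e}])_0$, and a line bundle on such an affine chart is recovered from its module of global sections, so the comparison map is an isomorphism locally, hence globally. This is exactly the argument of section~6.7 of \cite{Courbe}, which goes through over the general, possibly non-noetherian, base $R$.

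It remains to treat the section and the divisor. The tautological section $1\in\GG(X^{ad},\L_\xi^{ad})$ is by construction the one cutting out the Cartier divisor $\div(\xi)$ on $X^{ad}$. Transporting it through the degree-$0$ isomorphism $\times\Pi^+(\xi)$ and the identifications above, it becomes the section $\Pi^+(\xi)$ of $\widetilde{N_\xi}=\L$. Since GAGA is an equivalence of categories of $\O$-modules compatible with the structure sheaves, it matches effective Cartier divisors with effective Cartier divisors; therefore the zero locus of $\Pi^+(\xi)$ on $X^{sch}$ is an effective Cartier divisor whose analytification is $\div(\xi)\subset X^{ad}$, which is precisely the assertion of the Proposition.

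The only genuinely delicate point is the passage from the graded module to the sheaf over the non-noetherian scheme $X^{sch}$ — the chart-by-chart step in the second paragraph. One cannot simply quote Serre's theorem; one must use the explicit description of the affine pieces $D^+(t_\e)$ and the finiteness of this cover, which itself rests on the covering statement of the previous subsection. Everything else is bookkeeping with isomorphisms already in hand, so the write-up should be short.
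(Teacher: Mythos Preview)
The paper does not actually prove this proposition; it simply says ``One then checks the following'' and states it. So there is no argument in the text to compare yours against, and you were genuinely supplying the omitted verification.

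Your approach is sound. The observation that the isomorphisms $\times\Pi^+(\xi)$ are compatible with multiplication by elements of $P$ and hence assemble into an isomorphism of graded $P$-modules $N_\xi\cong\bigoplus_{d\geq 0}\GG(X^{ad},\L_\xi^{ad}(d))$ is correct, and invoking GAGA to rewrite the right-hand side as $\bigoplus_{d\geq 0}\GG(X^{sch},\L(d))$ for the algebraic line bundle $\L$ underlying $\L_\xi^{ad}$ is legitimate since GAGA is an equivalence and global sections are $\Hom(\O,-)$. The remaining step---that the comparison map $\widetilde{M}\to\L$ is an isomorphism for $M=\bigoplus_d\GG(X^{sch},\L(d))$---is precisely the ampleness of $\O(1)$ on $X^{sch}$, and you are right both that this is the only non-formal point and that in this non-noetherian setting it must be checked by hand on the affine charts $D^+(t_\e)$; the reference to \cite{Courbe} sec.~6.7 is the correct one. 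Your handling of the section is fine: the tautological $1\in\GG(X^{ad},\L_\xi^{ad})$ transports to $\Pi^+(\xi)$ under the degree-zero isomorphism, and GAGA matches effective Cartier divisors.

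One could alternatively argue more directly, in the spirit the author presumably intended by ``one then checks'': compute $(N_\xi[\tfrac{1}{t_\e}])_0$ explicitly using the interaction of $\xi$ with the Weierstrass factorization of $t_\e$, verify it is free of rank~1 over $(P[\tfrac{1}{t_\e}])_0$, and then identify the analytification with $\L_\xi^{ad}$ chart by chart. This reorders the same ingredients---the explicit affine cover and the functional equation for $\Pi^+(\xi)$---without passing through GAGA first. The two routes are equivalent in substance; yours is arguably tidier to write up.
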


\subsubsection{The unequal characteristic case}
\label{sec:untilt as cart unequal}

Let $\xi\in \A=W_{\O_E}(R^\circ)$ be primitive of degree $1$. If $\widetilde{R}=R^\circ/R^{\circ\circ}$ then by definition the reduction $\widetilde{\xi}$ of $\xi$ to $W_{\O_E} (\widetilde{R})$ can be written as $\pi$ times a unit. From this one deduces that up to multiplying $\xi$ by a unit in $\A$ we can suppose that $\widetilde{\xi}=\pi$ that is to say 
$$
\xi \in \pi + W_{\O_E}( R^{\circ\circ}).
$$
We can then form the convergent Weirstrass product 
$$
\Pi^+(\xi):= \prod_{n\geq 0} \frac{\ph^{n}(\xi)}{\pi}\in \O(Y).
$$
We then have the analog of proposition \ref{prop:Cartier via GAGA equal}.

\begin{prop}
The quasi-coherent sheaf $\L_\xi = \big ( \bigoplus_{d\geq 0} \O(Y)^{\xi\ph=\pi^{d-1}} \big )^{\widetilde{\ \ \ \ }}$ is a line bundle on $X^{sch}_R$. It is equipped with the section $\Pi^+(\xi)\in \GG (X^{sch},\L_\xi)$. The couple $(\L_\xi,\Pi^+(\xi))$ defines a Cartier divisor on $X^{sch}_R$ that corresponds via GAGA to the Cartier divisor on $X^{ad}_{R,R^+}$ associated to the untilt of $R$ defined by $\xi$.
\end{prop}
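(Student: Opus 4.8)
The plan is to mirror, in the ramified Witt vector setting, the computation carried out in the equal characteristic case (Proposition \ref{prop:Cartier via GAGA equal}), exploiting GAGA (Theorem \ref{theo:GAGA}) to transport the already-understood adic Cartier divisor $S^\sharp\hookrightarrow X^{ad}_{R,R^+}$ to the schematical curve. First I would normalise the primitive element: using the fact that $\widetilde\xi$ reduces to $\pi$ times a unit in $W_{\O_E}(\widetilde R)$, multiply $\xi$ by a unit of $\A$ so that $\xi\in\pi+W_{\O_E}(R^{\circ\circ})$, exactly as stated. This makes the partial products $\ph^n(\xi)/\pi$ lie in $1+W_{\O_E}(R^{\circ\circ})$, hence $(\varpi_R,\pi)$-adically close to $1$, so the infinite product $\Pi^+(\xi)=\prod_{n\geq 0}\ph^n(\xi)/\pi$ converges in the Fr\'echet algebra $\O(Y_{R,R^+})$; one records the functional equation $\ph(\Pi^+(\xi))=\tfrac{\xi}{\pi}\Pi^+(\xi)$, which is immediate from telescoping.

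Next I would identify the graded module. For each $d\geq 0$ there is an isomorphism $\Gamma(X^{ad},\L_\xi(d))\iso \O(Y_{R,R^+})^{\xi\ph=\pi^{d-1}}$ obtained by multiplication by $\Pi^+(\xi)$, the point being that the line bundle $\L_\xi$ on $X^{ad}$ underlying the Cartier divisor $\xi=0$ is $\xi^{-1}\O_{Y}$ descended along $\ph^{\Z}$, and the functional equation for $\Pi^+(\xi)$ converts the twist by $\xi$ into a twist by $\pi$. Setting $N_\xi=\bigoplus_{d\geq 0}\O(Y_{R,R^+})^{\xi\ph=\pi^{d-1}}$, this is a graded module over $P_R=\bigoplus_{d\geq 0}\O(Y_{R,R^+})^{\ph=\pi^d}$, and I would check that $\widetilde{N_\xi}$ is a line bundle on $X^{sch}_R=\Proj(P_R)$: locally on the standard affine charts $D^+(t)$ for $t\in P_{R,1}$ a generator one computes $(N_\xi)_{(t)}$ and sees it is free of rank one over $(P_R)_{(t)}$, using that $\xi$ becomes a unit away from the divisor it cuts out. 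The section $\Pi^+(\xi)\in\O(Y_{R,R^+})^{\xi\ph=\pi^{-1}}=(N_\xi)_1\subset\Gamma(X^{sch}_R,\L_\xi)$ then defines a Cartier divisor, whose vanishing locus is exactly $\spec(\A\unpi/\xi)=S^\sharp$ viewed inside $X^{sch}_R$.

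Finally I would match the two constructions. By Theorem \ref{theo:GAGA} the GAGA functor $\Bun_{X^{sch}_R}\iso\Bun_{X^{ad}_{R,R^+}}$ is an equivalence, so it suffices to check that the analytification of $(\L_\xi,\Pi^+(\xi))$ on $X^{sch}_R$ is the line-bundle-with-section on $X^{ad}_{R,R^+}$ defining the $\ph$-invariant divisor $\bigoplus_{k\in\Z}\ph^{k*}(S^\sharp\hookrightarrow Y_S)$; this is a local check on the charts $\spec(\O(Y_I))$ covering $X^{sch}_R$, where both sides are visibly $\xi^{-1}\O$ with section $1$ after the substitution $\Pi^+(\xi)\leftrightarrow 1$ dictated by the functional equation. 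I expect the main obstacle to be the freeness/line-bundle verification for $\widetilde{N_\xi}$ on $\Proj(P_R)$: over a general perfectoid $R$ (as opposed to a perfectoid field, where $X^{sch}$ is Dedekind) one has no finiteness, so one must argue directly that $\O(Y_{R,R^+})^{\xi\ph=\pi^{d-1}}$ is, locally on $\spec P_R$, a rank-one projective module, which in practice means producing enough global sections of $\L_\xi$ and their ratios to trivialise it on a cover by basic opens $D^+(t_\e)$ — the analogue here of the covering result established in the equal characteristic computation, now with the Lubin-Tate elements $t_\e$ in place of the additive ones. Granting that, the functional equation does all the remaining bookkeeping and the GAGA comparison is formal.
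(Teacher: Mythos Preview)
The paper does not give a proof of this proposition at all: it simply states it as ``the analog of proposition \ref{prop:Cartier via GAGA equal}'', and that equal-characteristic proposition is itself only asserted after the words ``One then checks the following''. Your proposal is precisely the verification the paper invites the reader to carry out, transported from $R^\circ\llbracket\pi\rrbracket$ to $W_{\O_E}(R^\circ)$; the normalisation of $\xi$, the convergence of $\Pi^+(\xi)$, the functional equation, the identification of $\Gamma(X^{ad},\L_\xi(d))$ with the $\xi\ph=\pi^{d-1}$ eigenspace via multiplication by $\Pi^+(\xi)$, and the appeal to GAGA are all exactly the ingredients the paper lays out in the equal-characteristic discussion preceding the statement.

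One small bookkeeping slip: you place $\Pi^+(\xi)$ in $(N_\xi)_1$, but under the grading $N_\xi=\bigoplus_{d\geq 0}\O(Y)^{\xi\ph=\pi^{d-1}}$ the global sections $\Gamma(X^{sch},\L_\xi)=\Gamma(\text{Proj}(P_R),\widetilde{N_\xi})$ correspond to the degree-$0$ piece, so $\Pi^+(\xi)\in(N_\xi)_0$. (There is also a sign in the functional equation to watch: telescoping gives $\ph(\Pi^+(\xi))=\tfrac{\pi}{\xi}\Pi^+(\xi)$, not $\tfrac{\xi}{\pi}$; the paper's displayed formula appears to contain the same typo, so be sure your eigenspace computations are internally consistent rather than copied from the text.) These are cosmetic and do not affect the argument. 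Your identification of the main honest difficulty --- proving local freeness of $\widetilde{N_\xi}$ over a general perfectoid $R$ by producing a cover $D^+(t_\e)$ via Lubin-Tate elements, parallel to the explicit computation of \S\ref{sec:a computation equal} --- is exactly right and is the only point where real work beyond formal manipulation is needed.
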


\subsubsection{Formal completion along the Cartier divisor}

Let $R$ be a perfectoid $\Fq$-algebra. Let $R^\sharp$ be an untilt of $R$ and $\xi\in \A$ an associated degree $1$ primitive element.

\begin{defi}
We note $B^+_{dR} ( R^\sharp)$ for the $\xi$-adic completion of $\A\unpi$. We note $B_{dR} ( R^\sharp)=B^+_{dR} ( R^\sharp)[\tfrac{1}{\xi}]$.
\end{defi}

One then has the following. 

\begin{prop}
The formal completion of $X^{sch}_R$ along the Cartier divisor defined by $R^\sharp$ is 
$\spf \,( B^+_{dR} (R^\sharp))$. 
\end{prop}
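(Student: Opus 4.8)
The plan is to compute both sides explicitly as completions of the graded ring $P = \bigoplus_{d \geq 0} \O(Y)^{\ph = \pi^d}$ along the homogeneous prime ideal cutting out the Cartier divisor, and to identify the resulting complete local ring with $B^+_{dR}(R^\sharp)$. First I would recall that the Cartier divisor on $X^{sch}_R = \mathrm{Proj}(P)$ associated to $R^\sharp$ is the one constructed in section~\ref{sec:untilt as cart unequal} (resp. its equal-characteristic analogue): it is the vanishing locus of the section $\Pi^+(\xi) \in \GG(X^{sch}, \L_\xi)$, which on the affine chart $D^+(t)$ for a suitable degree $1$ element $t \in P_1$ is cut out by $\Pi^+(\xi)/t$. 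So the formal completion along this divisor is the $\Pi^+(\xi)$-adic (equivalently, on the chart, the $(\Pi^+(\xi)/t)$-adic) completion of the relevant degree-zero piece of $P[\tfrac1t]$.

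Next I would translate this into a $Y$-side computation. By GAGA (Theorem~\ref{theo:GAGA}) and the fact that $X^{sch}$ is the categorical quotient of $\varinjlim_I \spec(\O(Y_I))$ by $\ph^\Z$, the Cartier divisor on $X^{sch}$ pulls back to the $\ph$-invariant divisor $\bigoplus_{k \in \Z} \ph^{k*}D$ on $\varinjlim_I \spec(\O(Y_I))$, where $D \hookrightarrow Y$ is cut out by $\xi$. Since the $\ph$-translates of $D$ are disjoint in $Y$, the completion of the quotient along $\bar D$ agrees with the completion of $Y$ along the single divisor $D = V(\xi)$, i.e. with the $\xi$-adic completion of a ring of functions on a small annulus neighbourhood of $D$. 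One checks that this ring, localized and $\xi$-completed, is canonically $A_{inf}\unpi$ completed $\xi$-adically — because near $V(\xi)$ the functions meromorphic along $\pi = 0, [\varpi_R] = 0$ already generate, after $\xi$-completion, the same ring as $\A\unpi$ does (the extra poles become units $\xi$-adically). This gives exactly $B^+_{dR}(R^\sharp) = \varprojlim_n \A\unpi/\xi^n$ by definition.

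The main obstacle I expect is the passage from the graded/Proj picture to the honest $\xi$-adic completion of $\A\unpi$: one must verify that completing $P[\tfrac1t]_0$ along $\Pi^+(\xi)/t$ recovers completing $\A\unpi$ along $\xi$, and in particular that the infinite Weierstrass product $\Pi^+(\xi) = \prod_{n \geq 0} \ph^n(\xi)/\pi$ differs from $\xi$ itself only by a unit in the completed local ring. This amounts to showing $\prod_{n \geq 1} \ph^n(\xi)/\pi$ is a unit $\xi$-adically, which follows because each factor $\ph^n(\xi)/\pi$ for $n \geq 1$ is congruent to $1$ modulo the maximal ideal of the local ring at the divisor (indeed $\ph^n(\xi) \equiv \pi \bmod W_{\O_E}(R^{\circ\circ})$ and $\xi$ generates the relevant ideal), so the product converges to a unit. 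Once this is established, compatibility of the two formal completions with the GAGA equivalence of Theorem~\ref{theo:GAGA} is formal, and the identification with $\spf B^+_{dR}(R^\sharp)$ is immediate. A clean way to package the argument is to first prove it for the equal characteristic case, where $\A = R^\circ\llbracket\pi\rrbracket$ and Weierstrass factorization makes everything transparent, and then observe that the unequal characteristic case is identical once one works modulo $\xi^n$ for each $n$, since the relevant statements only involve the structure of $\A/\xi^n$.
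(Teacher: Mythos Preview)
The paper does not actually prove this proposition: it is stated without argument (as befits an overview article), and the text that follows merely unpacks $B^+_{dR}(R^\sharp)$ explicitly in the equal-characteristic case, exhibiting the canonical section $R \hookrightarrow B^+_{dR}(R^\sharp)$ and the isomorphism $R\llbracket T\rrbracket \simeq B^+_{dR}(R^\sharp)$. So there is no ``paper's own proof'' to compare against.

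Your plan is essentially the standard argument and is sound in outline. The decisive observation you make --- that the $\ph$-translates $\ph^k(V(\xi))$ are pairwise disjoint in $Y$ (via the radius function $\delta$, since $\delta(\ph(y)) = \delta(y)^{1/q}$), so that completing $X = Y/\ph^\Z$ along the image of $V(\xi)$ is the same as completing $Y$ along the single divisor $V(\xi)$ --- is exactly the point. A cleaner packaging than the Proj/Weierstrass-product route is to work directly on a small annulus $Y_I$ containing $V(\xi)$ and no other $\ph$-translate; the completion of $\O(Y_I)$ along $(\xi)$ is then visibly the $\xi$-adic completion of $\A\unpi$, since $[\varpi_R]$ maps to a unit in $\A\unpi/\xi = R^\sharp$.

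One genuine slip: your justification that $\prod_{n\geq 1}\ph^n(\xi)/\pi$ is a unit is wrong as written. You claim each factor is $\equiv 1$ modulo the maximal ideal $(\xi)$ ``because $\ph^n(\xi)\equiv \pi \bmod W_{\O_E}(R^{\circ\circ})$''. But $W_{\O_E}(R^{\circ\circ})\not\subset (\xi)$ --- indeed $\xi$ itself lies in $\pi + W_{\O_E}(R^{\circ\circ})$ yet $\xi\equiv 0\not\equiv \pi$ modulo $(\xi)$. What you actually need is that $\theta(\ph^n(\xi))\in (R^\sharp)^\times$ for $n\geq 1$, i.e.\ that the Cartier divisors $V(\xi)$ and $V(\ph^n(\xi))$ are disjoint as closed subschemes of $\spec(\A\unpi)$, not merely as subsets of $|Y|$; and that the product converges to a unit. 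Both hold, but require a short separate argument (for instance via the norms $\|\cdot\|_\rho$), not the congruence you wrote.
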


A usual, the equal characteristic case is much simpler. In this case $R^\sharp$ is given by a morphism $\Fq\llparent\pi\rrparent\drt R$ such that $\pi\mapsto a$ and one can take $\xi=\pi-a$.
Then $B^+_{dR} ( R^\sharp)$ is the $\xi$-adic completion of $R^\circ \llbracket\pi\rrbracket \unpi$. There is thus a {\it canonical section}
$$
\xymatrix@R=4mm{
B^+_{dR} ( R^\sharp)\ar@{->>}[r]^-{\theta} & R^\sharp \ar@{=}[d] \\
& R \ar@{_(->}[lu].
}
$$
This gives a canonical isomorphism 
\begin{eqnarray*}
R\llbracket T\rrbracket &\iso & B^+_{dR} (R^\sharp) \\
T & \longmapsto & \xi.
\end{eqnarray*}
But one has to be a little bit careful: this is an isomorphism of $E$-algebras where $R\llbracket T \rrbracket$ is equipped with the $E$-algebra structure given by
\begin{eqnarray*}
\Fq\llparent \pi \rrparent & \ldrt & R\llbracket T\rrbracket \\
\pi & \longmapsto & T+a.
\end{eqnarray*}

\section{The stack $\Bun_G$}
\label{sec:The stack BunG}

\subsection{Definition and first properties}
\label{sec:defi and prop of Bung}

Let $E$ be either $\Fq\llparent\pi\rrparent$ or a finite degree extension of $\Qp$ with residue field $\Fq$. Let $G$ be a reductive group over $E$.
Consider $\Perf_{\Fq}$ {\it equipped with the pro-étale topology.} 

\begin{defi}
We note $\text{Bun}_G$ the fibred category over $\Perf_{\Fq}$
$$
S\longmapsto \text{Groupoid of}\otimes\text{exact functors from } \text{Rep} (G)\text{ to }\Bun_{X_{S}}.
$$ 
\end{defi}

One has the following technical result that the reader can assume.

\begin{prop}
$\Bun_G$ is a stack on $\Perf_{\Fq}$.
\end{prop}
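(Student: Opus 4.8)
The plan is to verify the two descent conditions defining a stack: that morphisms glue and that objects glue, both for the pro-\'etale topology on $\Perf_{\Fq}$. Since $\Bun_G$ is defined via $\otimes$-exact functors from $\Rep(G)$ to $\Bun_{X_S}$, and since such functors and their isomorphisms are controlled by the underlying vector bundles together with tensor-compatibility data, it suffices to know that $S\mapsto \Bun_{X_S}$ is a stack for the pro-\'etale topology; the Tannakian bookkeeping (that a cocycle of tensor-functor isomorphisms descends to a tensor-functor isomorphism, and that a descent datum of tensor functors glues) is then formal. So the real content is the statement: vector bundles on the relative curve satisfy pro-\'etale descent along $S'\to S$.

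First I would reduce to the affinoid case. A pro-\'etale cover of $S\in\Perf_{\Fq}$ can be refined by covers coming from affinoid perfectoids, and both sheaf conditions are local, so we may assume $S=\spa(R,R^+)$ and that the cover is $S'=\spa(R',R'^+)\to S$ with $R\to R'$ a pro-\'etale morphism of perfectoid $\Fq$-algebras (and then pass to the \v{C}ech nerve $S'^{\times_S n}$, which again consists of affinoid perfectoids by the results recalled in the excerpt on existence of fibre products of perfectoid spaces). Next I would use the GAGA equivalence $\Bun_{X_R}\iso\Bun_{X^{ad}_{R,R^+}}$ (Theorem~\ref{theo:GAGA}) to work on the schematical curve $X^{sch}_R$, or equivalently work directly on $Y_{R,R^+}$ and then take $\ph$-descent, which is unproblematic since $\ph^\Z$ acts freely and properly discontinuously. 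The point is that a pro-\'etale morphism $S'\to S$ of affinoid perfectoids induces, functorially, a morphism of the associated curves $X_{S'}\to X_{S}$ (and of the $Y$'s), and more importantly induces a morphism of sites $(X_S)_{\text{pro-\'et}}\to S_{\text{pro-\'et}}$ as discussed in the excerpt; so a pro-\'etale cover of $S$ pulls back to a pro-\'etale (indeed faithfully flat, quasicompact) cover of $X_S$ in an appropriate sense.

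The core descent statement is then: for a faithfully flat, quasi-compact (or suitably pro-\'etale) morphism of the relevant rings of functions on the curve, finite projective modules, with their tensor structure, satisfy descent. On the $Y$-side the rings $\O(Y_{R,R^+})$ are (Fr\'echet–)Stein, built from the Banach algebras $\O(Y_{R,R^+,I})$ on closed annuli via the norms $\|\cdot\|_\rho$; for these, descent of vector bundles along pro-\'etale covers is exactly the kind of statement established in the Kedlaya–Liu relative $p$-adic Hodge theory framework (cf.\ \cite{KedlayaLiuRelative1}), namely that $\O(Y)$-bundles are equivalent to compatible systems of $\O(Y_I)$-bundles and that each of the latter descends along the perfectoid pro-\'etale covers coming from $R\to R'$. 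I would assemble this: (i) vector bundles on $X^{sch}_{R}$ (equivalently $X^{ad}_{R,R^+}$) descend along the pro-\'etale cover $S'\to S$, using faithfully flat / pro-\'etale descent for quasi-coherent sheaves together with the finiteness built into Theorem~\ref{theo:GAGA}; (ii) isomorphisms of such bundles descend likewise, since $\hom$-sheaves are again coherent; (iii) the $\otimes$-structure is preserved because pullback along $X_{S'}\to X_S$ is a $\otimes$-functor and descent is compatible with tensor products of modules; (iv) conclude via the Tannakian formalism that $\otimes$-exact functors $\Rep(G)\to\Bun_{X_S}$ and their isomorphisms form a stack.

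The main obstacle I anticipate is step (i)–(ii): honest pro-\'etale (as opposed to finite \'etale or merely analytic) descent of vector bundles on the non-noetherian, non-finite-type curve $X_S$ for general $S$. For $S$ a point this is Kedlaya–Fontaine theory, but for general affinoid perfectoid $S$ one genuinely needs the relative results — that $\O(Y_{R,R^+})$ and its annulus quotients behave well under base change along pro-\'etale $R\to R'$ and that "vector bundle'' is a pro-\'etale-local notion there. This is precisely where one invokes \cite{KedlayaLiuRelative1} (and the sheafiness/adic-space structure of $Y$ that, as the remark in the excerpt notes, itself rests on $Y$ becoming perfectoid after a perfectoid base change). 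Everything else — reduction to the affinoid case, the \v{C}ech-nerve formalism, $\ph$-descent, the passage between $X^{ad}$ and $X^{sch}$, and the Tannakian packaging — is routine once that input is granted.
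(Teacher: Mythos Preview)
The paper does not prove this proposition at all: it introduces it with ``One has the following technical result that the reader can assume'' and then moves on. So there is nothing to compare your argument to on the paper's side.

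Your outline is the correct strategy and is essentially how one proves this in the literature: reduce from $G$-bundles to vector bundles by Tannakian formalism, reduce to affinoid perfectoid bases, and then invoke pro-\'etale descent for finite projective modules over the rings $\O(Y_I)$ (equivalently, for vector bundles on $X_S$) as established by Kedlaya--Liu in \cite{KedlayaLiuRelative1}. One small refinement: rather than going through GAGA and the schematical curve, it is cleaner to work directly with $\ph$-equivariant vector bundles on $Y_S$, since Kedlaya--Liu's descent results are formulated for the adic/Banach-algebra picture and the passage to $\ph$-equivariant objects is, as you say, unproblematic. Your identification of the genuine input --- that pro-\'etale descent for vector bundles on $Y_{R,R^+}$ over general affinoid perfectoid $R$ requires the relative theory of \cite{KedlayaLiuRelative1} and is not a formality --- is exactly right, and is presumably why the paper defers the proof.
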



\begin{Hope}
$\Bun_G$ is a "smooth diamond stack". 
\end{Hope}

{\it There is no precise definition of this notion up to now. That being said, here are a few facts to try to convince the reader.} We treat the $\GL_n$-case.  One can reduce to this case by standard techniques. In fact, if one fixes an embedding $G\subset \GL_n$ then there exists a linear representation $\rho:\GL_n\drt \GL(W)$ and a line $D\subset W$ such that $G$ is the stabilizer of $D$ inside $\GL_n$. Then, $G$-bundles are the same as a vector bundle $\E$ of rank $n$ together with a locally direct factor sub-line bundle of $\rho_*\E$.
\\

\begin{prop}
Let $S\in \Perf_{\Fq}$ and $\E_1,\E_2$ two vector bundles on $X_S$ of the same constant rank. Then the sheaf
$$
T/S\longmapsto \text{Isom} (\E_{1|X_T},\E_{2|X_T})
$$
is a diamond over $S$. When $E=\Fq\llparent \pi\rrparent$ this is representable by a perfectoid space over $S$.
\end{prop}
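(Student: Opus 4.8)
The plan is to reduce the statement to the representability of a single affine Grassmannian-type functor, namely the functor of sections of a vector bundle, and then invoke the structural theory of the curve. First I would note that $\mathrm{Isom}(\mathcal{E}_1,\mathcal{E}_2)$ is an open subfunctor of the functor $T/S \mapsto \mathrm{Hom}(\mathcal{E}_{1|X_T},\mathcal{E}_{2|X_T}) = H^0(X_T, \mathcal{E}_1^\vee \otimes \mathcal{E}_2)$, since being an isomorphism is detected fiberwise on the curve (a map of vector bundles of the same rank on the Dedekind-type curve $X_{k(\bar s)}$ is an isomorphism iff it is so at every point, and the locus where it fails is closed, hence its complement in $|S|$ — or rather in the total space of $\mathrm{Hom}$ — is open). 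So it suffices to show $T/S \mapsto H^0(X_T,\mathcal{F})$ is a diamond over $S$ for $\mathcal{F}=\mathcal{E}_1^\vee\otimes\mathcal{E}_2$, i.e. that this "Banach space" functor is a diamond; openness plus the fact that open subdiamonds of diamonds are diamonds finishes the argument.

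Next I would handle $\mathcal{F}$ by a dévissage using the Harder–Narasimhan / Dieudonné–Manin classification of vector bundles on the curve: after a pro-étale cover of $S$ one may assume $\mathcal{F}$ is a successive extension of line bundles $\mathcal{O}(d_i)$, and global sections fit into exact sequences, so one reduces to understanding $T/S\mapsto H^0(X_T,\mathcal{O}(d))$. For $d<0$ this vanishes (fiberwise, hence after using that $X_S$ is "proper over $S$" and the relevant cohomology vanishing from \cite{Courbe}/\cite{KedlayaLiuRelative1}); for $d=0$ it is $\underline{E}$, which is a diamond; for $d\ge 1$ one has $H^0(X_S,\mathcal{O}(d)) = \mathcal{O}(Y_S)^{\varphi=\pi^d}$, the Fontaine-type period module. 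The key point is then that this period module functor is representable by a diamond. In equal characteristic, by the computation of section \ref{sec:a computation equal} this functor is literally $\widetilde{\mathscr{G}}_d(R^\circ)$ — global sections of (the universal cover of) a formal group — whose generic fibre/diamond is a perfectoid space: concretely, $\mathcal{O}(Y_S)^{\varphi=\pi^d}$ is a quotient of copies of $R^{\circ\circ}$ and one presents it as an inverse limit along multiplication-by-$\pi$ maps, yielding a perfectoid space over $S$ (this gives the stronger representability claim in the equal characteristic case). In unequal characteristic the same module $\mathcal{O}(Y_S)^{\varphi=\pi}$ is the universal cover of a Lubin–Tate formal group $\mathscr{G}$ base-changed to $R^\circ$, whose associated functor on $\Perf$ is a diamond (it is $\varprojlim$ of the $\pi$-adic Tate module tower, and a countable inverse limit of diamonds along qcqs maps is a diamond); for $d\ge 2$ one no longer has a formal group but one still has, after the extension $E_\infty$ from the Lubin–Tate tower, a perfectoid model, so the diamond property descends.

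Finally I would assemble these pieces: extensions of diamonds (in the sense of fibrations with diamond fibres over a diamond base, in the pro-étale topology) are diamonds, so $H^0(X_S,\mathcal{F})$ is a diamond over $S$; intersecting with the open isomorphism locus gives that $\mathrm{Isom}(\mathcal{E}_1,\mathcal{E}_2)$ is a diamond over $S$, and in equal characteristic a perfectoid space since each layer is. The main obstacle I anticipate is not the formal-group identifications but the passage through the dévissage: one must check that the HN-type filtration of $\mathcal{F}$ can be arranged to exist pro-étale locally on $S$ in families (not just fiberwise), that the resulting short exact sequences of $H^0$-functors are exact as sheaves on $\Perf_{\Fq,\text{proét}}$ (i.e. the $H^1$-obstruction functor is also controlled), and that "extension of diamonds is a diamond" applies in the somewhat infinite-dimensional setting here — this is where the input from \cite{ScholzeBerkeley} on diamonds and from \cite{Courbe},\cite{KedlayaLiuRelative1} on the cohomology of $X_S$ in families does the real work.
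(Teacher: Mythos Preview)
Your overall architecture matches the paper: reduce $\mathrm{Isom}$ to an open subfunctor of $\mathrm{Hom}$, identify $\mathrm{Hom}$ with $f_*\mathcal{F}$ for $\mathcal{F}=\mathcal{E}_1^\vee\otimes\mathcal{E}_2$, and build everything from the functors $f_*\mathcal{O}(d)$, which are Banach--Colmez spaces (diamonds) in the $p$-adic case and perfectoid open balls in the equal characteristic case. The openness argument and the identification of the building blocks are correct.

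The gap is in the middle step, and it is exactly the obstacle you flagged yourself: the Harder--Narasimhan d\'evissage does \emph{not} work in families. The HN polygon of $\mathcal{F}$ jumps as the point of $S$ varies, so there is no pro-\'etale cover of $S$ over which $\mathcal{F}$ becomes a successive extension of line bundles $\mathcal{O}(d_i)$ with fixed $d_i$. One only gets such a filtration on each HN stratum separately, and gluing representability across a locally closed stratification is not available here. Even on a single stratum, the graded pieces of the HN filtration are semi-stable of rational slope, not line bundles, so the reduction to $\mathcal{O}(d)$ needs a further argument. Your secondary concern about ``extension of diamonds is a diamond'' is also real: an $A$-torsor over $C$ with $A,C$ diamonds is not obviously a diamond without further input.

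The paper bypasses both problems with a single move. By Kedlaya--Liu, $\mathcal{O}(1)$ is ample on $X_S$ for $S$ affinoid perfectoid: for $d\gg 0$ the bundle $\mathcal{F}^\vee(d)$ is globally generated. Applying this twice and dualizing produces a \emph{global} partial resolution
\[
0 \longrightarrow \mathcal{F} \longrightarrow \mathcal{O}(d_1)^{n_1} \longrightarrow \mathcal{O}(d_2)^{n_2}
\]
on $X_S$ itself, with no cover required and no dependence on the HN type. Since $f_*$ is left exact, $f_*\mathcal{F}$ is then the \emph{kernel} of a morphism $f_*\mathcal{O}(d_1)^{n_1}\to f_*\mathcal{O}(d_2)^{n_2}$ of group diamonds (resp.\ group perfectoid spaces). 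A kernel is a closed subgroup, hence again a diamond (resp.\ perfectoid space). This replaces your inductive extension argument by a single closed-subfunctor step, and it is what makes the proof go through.
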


In fact $\text{Isom} \subset \Hom$ is relatively representable by an open subset. To this this let $u:\E_{1|X_T}\drt \E_{2|X_T}$ be a morphism 
and consider $U\subset X_T$ the biggest open subset on which $u$ is an isomorphism. Note $f:|X_T|\drt |T|$, a  continuous closed application. 
Then $T\setminus f (|X_T|\setminus U)\subset T$ is an open subset such that $T'\drt T$ factorizes through $U$ if and only if $u_{|X_{T'}}$ is an isomorphism.
\\

It thus suffices to prove that $T/S\mapsto \GG ( X_T,(\E_1^\vee \otimes \E_2)_{|X_T})$ is representable. 
This is a consequence of the following.

\begin{prop}
Let $\E$ be a vector bundle on $X_S$ and let $f:(X_S)_{\text{pro-ét}}\ldrt S_{\text{pro-ét}}$. Then $f_*\E$ is representable by a diamond, resp. a perfectoid space when $E=\Fq\llparent\pi\rrparent$. 
\end{prop}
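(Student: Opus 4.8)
The plan is to reduce the statement to a local computation on the Stein space $Y_S$ and then to recognize the resulting functor as a (relative) diamond. First I would reduce to the case where $S=\spa(R,R^+)$ is affinoid perfectoid; since the question of representability by a diamond is local on $S$ for the pro-\'etale topology and diamonds glue, it suffices to treat this affinoid situation. Next, using GAGA (Theorem \ref{theo:GAGA}) to pass between $X^{ad}_{R,R^+}$ and $X^{sch}_R$ as needed, I would trivialize the situation over the covering $\spec(\O(Y_I))$ indexed by intervals $I$, reducing $\GG(X_S,\E)$ to the $\ph$-equivariant global sections over the Stein space $Y_S$, i.e. to the kernel of $\ph-\mathrm{id}$ (or rather $\ph$ twisted appropriately) acting on the module of global sections of a vector bundle on $Y_S$.

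The heart of the matter is then: for a vector bundle $\E$ on $Y_S$ with its Frobenius structure, show $T/S\mapsto \GG(Y_T,\E_{|Y_T})^{\ph=\mathrm{id}}$ is representable by a diamond (a perfectoid space in equal characteristic). The strategy is to present $Y_S$ as an increasing union of affinoids (the annuli $Y_I$), so that $\O(Y_S)$ is a Fr\'echet algebra obtained as an inverse limit of Banach algebras $\O(Y_I)$. On each $Y_I$ the vector bundle $\E$ is a finite projective module, and the functor of its global sections $T/S\mapsto \E(Y_{I,T})$ should be representable: over an affinoid annulus one can write down an explicit model, realizing sections as power series $\sum_{n\gg-\infty}[x_n]\pi^n$ with $x_n\in R$ and suitable growth conditions, and the corresponding moduli functor is (pro-\'etale locally, via the product formula $Y_{S,E}^\diamond = S\times\spa(E)^\diamond$) an iterated fibre product of copies of the "affine line diamond" $\Gb_a^\diamond$, hence a diamond; in equal characteristic it is literally an affine space bundle over $S$, hence perfectoid. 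Passing to the Fr\'echet limit $\GG(Y_S,\E)=\varprojlim_I \GG(Y_I,\E)$ corresponds to an inverse limit of diamonds along closed (or at least separated) transition maps, and such a limit is again a diamond (resp. a perfectoid space). Finally, imposing the closed condition $\ph=\mathrm{id}$ cuts out a closed subfunctor, and a closed subfunctor of a diamond is a diamond (resp. a closed subspace of a perfectoid space is perfectoid).

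The main obstacle I expect is twofold. First, the representability of the functor of sections of a vector bundle over a single affinoid annulus $\spa(B,B^+)$ with $B$ a perfectoid Banach $E$-algebra: one must show that $T/S\mapsto M\otimes_B B_T$ for $M$ finite projective is a diamond, which amounts to handling the Banach-module structure carefully and identifying the "coordinates" $x_n\in R$ as a pro-\'etale sheaf that is representable — this uses Scholze's theory of diamonds and, crucially in the unequal characteristic case, the absence of a naive perfectoid structure on $Y_{S,E}$ itself, forcing one to work with the tilt $Y_{S,E_\infty}$ and descend along the $\O_E^\times$-action as in section \ref{sec:diamond product formula}. Second, controlling the inverse limit over the intervals $I$: one needs that the transition maps $Y_I\hookrightarrow Y_{I'}$ induce, on the level of section functors, maps of diamonds whose inverse limit is still a diamond with the expected topological properties (in particular quasi-separatedness, so that the resulting object deserves to be called a diamond over $S$ rather than merely a pro-\'etale sheaf). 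The equal-characteristic case is genuinely easier here because everything in sight is honestly perfectoid and the section functors are genuine affine spaces, so the limit is a (quasi-compact-exhaustion of a) perfectoid space; the unequal-characteristic statement follows by applying $(-)^\diamond$ and the product formula, transporting the equal-characteristic computation over $E_\infty^\flat$ back down.
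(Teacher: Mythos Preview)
Your proposal takes a genuinely different route from the paper, and it has a real gap.

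The paper's argument is much more direct: after reducing to $S$ affinoid perfectoid, it invokes the Kedlaya--Liu ampleness theorem (that $\O(1)$ is ample on $X_S$) to produce a partial resolution
\[
0 \longrightarrow \E \longrightarrow \O(d_1)^{n_1} \longrightarrow \O(d_2)^{n_2}.
\]
Pushing forward by $f$ exhibits $f_*\E$ as the kernel of a morphism $f_*\O(d_1)^{n_1} \to f_*\O(d_2)^{n_2}$ between group diamonds. The representability of each $f_*\O(d)$ is then a known, concrete fact: for $E|\Qp$ these are Banach--Colmez spaces, and for $E=\Fq\llparent\pi\rrparent$ they are universal covers of the formal groups $\mathscr{G}_d$, i.e.\ perfectoid open balls. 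A kernel of a morphism of group diamonds (resp.\ perfectoid spaces) is again one.

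Your approach, by contrast, tries to represent the functor $T/S \mapsto \Gamma(Y_{I,T},\E)$ of \emph{all} sections over an annulus $Y_I$, then pass to the Fr\'echet limit over $I$, then impose $\ph=\mathrm{id}$. The gap is in the first step: $\Gamma(Y_I,\E)$ is an infinite-dimensional Banach module (Laurent series with growth conditions), and the claim that this is ``an iterated fibre product of copies of $\Gb_a^\diamond$'' is not correct---there is no evident reason for this functor to be a diamond, and it is not one of the objects the theory handles. Moreover $\ph$ does not preserve a single annulus $Y_I$, so the condition $\ph=\mathrm{id}$ is not a closed condition on a fixed $\Gamma(Y_I,-)$; it only makes sense after the limit, which compounds the difficulty. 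The essential idea you are missing is precisely the ampleness input: it is what collapses the problem from ``sections of an arbitrary bundle on $Y_S$'' to ``$\ph$-eigenspaces $\mathbb{B}^{\ph=\pi^d}$'', which are the finite-dimensional (in Colmez's sense) objects one can actually represent.
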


One can suppose $S$ is affinoid perfectoid. Then a theorem of Kedlaya and Liu (\cite{KedlayaLiuRelative1}) says that for $d\gg 0$, $\E(d)$ is generated by its global sections (that is to say $\O(1)$ is ample). Applying this to $\E^\vee$ one finds that there exists $d_1,d_2\in \Z$, $n_1,n_2\in \N$ and a partial resolution 
$$
0\ldrt \E \ldrt \O(d_1)^{n_1} \ldrt \O(d_2)^{n_2}. 
$$
This gives the partial resolution 
$$
0\ldrt f_*\E \ldrt f_*\O(d_1)^{n_1}\ldrt f_*\O (d_2)^{n_2}.
$$
But now, for all $d\in \Z$, $f_*\O(d)$ is representable by a diamond, resp. a perfectoid space if $E=\Fq\llparent \pi\rrparent$. For $E|\Qp$ this is a consequence of the theory of Banach-Colmez spaces (\cite{Colmez2}). For $E=\Fq\llparent\pi\rrparent$ this is $0$ if $d<0$, $\underline{E}$ is $d=0$ and represented by a $d$-dimensional perfectoid open ball over $S$ if $d>0$. More precisely, for the case $E=\Fq\llparent \pi\rrparent$:
\begin{itemize}
\item if $S=\spa (R,R^+)$ then $\underline{E}_{|S} =\cup_{n\geq 0}\spa \big ( \mathscr{C} (\pi^{-n}\O_E,R),\mathscr{C}(\pi^{-n}\O_E,R^+)\big )$.
\item if $d>0$, with the notations of section \ref{sec:a computation equal}, $f_*\O(d)= (\underset{\text{Frob}}{\limp} \mathscr{G}_d)\times S$ where $\underset{\text{Frob}}{\limp} \mathscr{G}_d =\underset{\times \pi}{\limp} \mathscr{G}_d$ is the universal covering of the formal group $\mathscr{G}_d$ (a formal $E$-vector space).
\end{itemize}
Thus, $f_*\E$ is the kernel of a morphism of group diamonds, resp. perfectoid spaces, and is thus representable.

\begin{coro}\label{coro:diagonale diamond}
The diagonal of $\text{Bun}_G$ is representable by a diamond, resp. a perfectoid space if $E=\Fq\llparent \pi\rrparent$.
\end{coro}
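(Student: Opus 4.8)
The statement to prove is Corollary \ref{coro:diagonale diamond}: the diagonal of $\Bun_G$ is representable by a diamond (resp. a perfectoid space when $E = \Fq\llparent\pi\rrparent$).

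The plan is to reduce the diagonal of $\Bun_G$ to the Isom-sheaf statement already established in the preceding Proposition, using the standard tannakian description of $G$-bundles and the reduction to $\GL_n$ outlined in the text.

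First I would recall what the diagonal being representable means concretely. For a stack $\Bun_G$ one must show that for any $S \in \Perf_{\Fq}$ and any two $G$-bundles $\E_1, \E_2$ on $X_S$, the sheaf on $\Perf_{\Fq}/S$ given by $T/S \mapsto \Isom(\E_{1|X_T}, \E_{2|X_T})$ (isomorphisms of $\otimes$-exact functors, i.e.\ of $G$-bundles) is representable by a diamond over $S$, resp.\ by a perfectoid space when $E = \Fq\llparent\pi\rrparent$. Equivalently, the fibre product $S \times_{\Bun_G \times \Bun_G} \Bun_G$ over a pair $(\E_1,\E_2)$ is such an $\Isom$-sheaf, so representability of all these $\Isom$-sheaves is exactly representability of the diagonal.

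Second, I would pass from $G$ to $\GL_n$. Fixing an embedding $G \subset \GL_n$, choose a representation $\rho: \GL_n \to \GL(W)$ and a line $D \subset W$ with $G = \mathrm{Stab}_{\GL_n}(D)$, as in the discussion following the Hope statement. Then a $G$-bundle on $X_T$ is the data of a rank-$n$ vector bundle $\E$ together with a locally-direct-factor sub-line-bundle $\mathscr{D} \subset \rho_*\E$. An isomorphism of $G$-bundles $(\E_1, \mathscr{D}_1) \to (\E_2, \mathscr{D}_2)$ is an isomorphism $u: \E_1 \to \E_2$ of vector bundles such that $(\rho_* u)(\mathscr{D}_1) = \mathscr{D}_2$. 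So $\Isom_G((\E_1,\mathscr{D}_1),(\E_2,\mathscr{D}_2))$ sits inside $\Isom_{\GL_n}(\E_1,\E_2)$ as the locus where the line-bundle-compatibility condition holds. By the preceding Proposition, $\Isom_{\GL_n}(\E_1, \E_2)$ is a diamond over $S$ (a perfectoid space when $E = \Fq\llparent\pi\rrparent$); since $G$-bundles here are cut out in $\GL_n$-bundles, it remains to check that the compatibility condition is representable by a locally closed — in fact a closed, since $\mathscr{D}_i$ are direct factors, hence the condition "$u$ sends $\mathscr{D}_1$ into $\mathscr{D}_2$" is closed, and composed with the open $\Isom$ condition — immersion over $S$. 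Concretely, over the universal $\Isom_{\GL_n}$-space the composite $\mathscr{D}_1 \hookrightarrow \rho_*\E_1 \xrightarrow{\rho_* u} \rho_*\E_2 \twoheadrightarrow \rho_*\E_2/\mathscr{D}_2$ is a map of vector bundles on the pullback of $X$, and its vanishing locus is the desired sub-diamond (one argues as in the text via $f: |X_T| \to |T|$ closed to push the support to a closed subset of the base); being a closed subdiamond of a diamond, resp.\ a closed subspace of a perfectoid space, it is again of the required type. Since diamonds, resp.\ perfectoid spaces, are stable under these operations, the diagonal is representable.

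The main obstacle is not any single deep input — the hard analytic content (Banach--Colmez spaces, Kedlaya--Liu ampleness, representability of $f_*\E$) has already been absorbed into the preceding Proposition — but rather the bookkeeping of the tannakian reduction: one must be careful that "$\otimes$-exact functor" isomorphisms really are controlled by a single $\GL_n$-Isom together with finitely many closed conditions coming from the chosen $(\rho, D)$, and that the resulting locus really is relatively representable over $S$ by a closed (not merely locally closed) subobject, using that $\mathscr{D}_i$ are \emph{locally direct factor} sub-bundles. Once that reduction is in place, stability of the class of diamonds (resp.\ perfectoid spaces over $S$ when $E = \Fq\llparent\pi\rrparent$) under fibre products and closed immersions finishes the argument.
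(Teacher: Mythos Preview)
Your proposal is correct and follows the same route the paper indicates: the paper states the corollary immediately after the two propositions on $\mathrm{Isom}$ and $f_*\E$, having already said ``One can reduce to this case by standard techniques'' via the embedding $G\subset \GL_n$ and the pair $(\rho,D)$; you have simply written out those standard techniques explicitly. One small point: rather than arguing that the compatibility locus is topologically closed via $|X_T|\to|T|$, it is cleaner to present $\mathrm{Isom}_G$ directly as the fibre product $\mathrm{Isom}_{\GL_n}\times_{f_*(\mathscr{D}_1^\vee\otimes(\rho_*\E_2/\mathscr{D}_2))}S$ along the zero section, which is a fibre product of diamonds (resp.\ perfectoid spaces) and hence of the required type---this avoids any worry about whether ``closed subset of a diamond'' is the right notion here.
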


Now comes the problem of the existence of a {\it "smooth" presentation of $\text{Bun}_G$}. First, let us remark that $\spa (\Fq \llparent T^{1/p^\infty}\rrparent)\drt \spa (\Fq)$ is a an épimorphism for the analytic topology. In fact, is $(R,R^+)$ is affinoid perfectoid then to each choice of a pseudo-uniformizing element $\varpi_R$ there is associated a morphism $(\Fq\llparent T^{1/p^\infty}\rrparent,\Fq\llbracket T^{1/p^\infty}\rrbracket)\drt (R,R^+)$ via $T\mapsto \varpi_R$. The projection $\Bun_G\times \spa (\Fq \llparent T^{1/p^\infty}\rrparent )\ldrt \Bun_G$ is thus an epi and let's declare it to be smooth. We are thus reduced to find a "smooth" presentation of $\Bun_G\times \spa (F)$ with $F=\Fq\llparent T^{1/p^\infty}\rrparent$. Now, {\it we take inspiration from the theory of  Quot schemes}. 
\\

Let's take the following definition. Let $S\in \Perf_{\Fq}$. Note $f:(X_S)_{\text{pro-ét}}\drt S_{\text{pro-ét}}$. For $d\geq 0$ we define 
$$
\mathbb{V}_{d/S} = f_* \O_{X_S}(d).
$$
This is a Banach-Colmez $E$-vector space. More precisely, if $\mathbb{B}$ is the following sheaf of $E$-algebras on $\Perf_{\Fq}$,
$$
\mathbb{B}(S) = \O(Y_S),
$$
then
$$
\mathbb{V}_d= \mathbb{B}^{\ph=\pi^d}.
$$
This has  a crystalline description when $E=\Qp$ (and in fact more generally when $E|\Qp$ using $\pi$-divided powers). In fact, for $R$ a perfectoid $\Fq$-algebra consider 
\begin{eqnarray*}
B^+_{cris} (R^\circ/\varpi_R) &=& H^0_{cris} \big ( \spec ( R^\circ/\varpi_R)/\spec (\Zp),\O\big )\unp \\
&=& \widehat{W (R^\circ)\big [ \tfrac{[\varpi_R^n]}{n!}\big ]} \unp.
\end{eqnarray*}
There is an inclusion (see sec.\ref{sec:F isoc and vb} for more details)
$$
B^+_{cris} (R^\circ/\varpi_R) \subset \O(Y_{R,R^+})
$$
that induces an isomorphism
$$
B^+_{cris} (R^\circ/\varpi_R)^{\ph=\pi^d}\iso \mathbb{B} (R)^{\ph=\pi^d}.
$$

 This is a diamond and even a perfectoid space when $E=\Fq\llparent\pi\rrparent$. We have $\mathbb{V}_0=\underline{E}$. This forms a graded $E$-algebra 
$$
\bigoplus_{d\geq 0} \mathbb{V}_{d/S}
$$
in the category of diamonds over $S$ (resp. perfectoid spaces over $S$ when $E=\Fq\llparent\pi\rrparent$).

\begin{exem}
If $E=\Fq\llparent\pi\rrparent$ and $d>0$ consider the perfect formal scheme $\mathcal{V}_d=\spf ( \Fq\llbracket x_0^{1/p^\infty},\dots,x_{d-1}^{1/p^\infty}\rrbracket)$. This represents the universal cover of the formal group $\mathscr{G}_d$ of section (\ref{sec:a computation equal}), a formal $E$-vector space. There are morphisms 
$\mathcal{V}_{d_1}\times \mathcal{V}_{d_2}\drt \mathcal{V}_{d_1+d_2}$ obtained by expanding the product 
$$
\Big ( \sum_{i=0}^{d_1-1}\sum_{n\in \Z} x_i^{q^{-n}} \pi^{nd_1+i}\Big ).\Big ( \sum_{j=0}^{d_2-1}\sum_{m\in \Z} y_j^{q^{-m}} \pi^{md_2+j}\Big ).
$$
For example, $\mathcal{V}_1\times \mathcal{V}_1\drt \mathcal{V}_2$ is given by $$(x,y)\longmapsto \Big ( \sum_{n\in \Z} x^{q^{-n}}y^{q^n},
\sum_{n\in\Z} x^{q^{-n}} y^{q^{n-1}}\Big )
.$$
 Then, taking the the generic fiber over $S$, 
$$
\bigoplus_{d>0} \mathbb{V}_{d/S} = \bigoplus_{d>0} \mathcal{V}_d\times S
$$
where $\mathcal{V}_d\times S$ is a $d$-dimensional perfectoid open ball.
\end{exem}

Any closed subset of a diamond is a diamond. Any "locally Zariski closed" subset of a perfectoid space is a perfectoid space. The following definition thus makes sense. It is inspired by the following: an affine algebraic variety is defined by the vanishing/non-vanishing of a finite set of polynomials in a finite dimension vector space...we thus can do the same with a finite dimensional Banach space in Colmez sense (\cite{Colmez2}).

\begin{defi}
An algebraic diamond, resp. perfectoid space when $E=\Fq\llparent\pi\rrparent$, over $S$ is a subsheaf of $\mathbb{V}_{d/S}^n$ for some $d,n\geq 1$ defined by the vanishing and non-vanishing of a finite collection of polynomials in $E[X_1,\dots,X_n]$.
\end{defi}

\begin{prop}\label{prop:Pres est un diamant}
For $\mathcal{D}=(d_1,d_2,m_1,m_2,n)$ with $d_1,d_2\in \Z$ satisfying $d_2<d_1$, $m_1,m_2\in \N_{\geq 1}$ and $n\leq m_1$
the functor 
\begin{eqnarray*}
\text{Pres}_{\mathcal{D}}:\Perf_F & \ldrt & \text{Sets} \\
S & \longmapsto & \big \{ u\in \Hom \big ( \O_{X_S}(d_2)^{m_2},\O_{X_S} (d_1)^{m_1} \ |\ \text{coker } u\text{ is locally free of rank }n\big \}
\end{eqnarray*}
is representable by an algebraic diamond, resp. perfectoid space, over $F$.
\end{prop}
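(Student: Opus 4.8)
The strategy is to exhibit $\mathrm{Pres}_{\mathcal D}$ as a locally closed subsheaf of a product of the Banach--Colmez vector diamonds $\mathbb V_{d/S}$, and then invoke that such locally closed subsheaves are diamonds (resp.\ perfectoid spaces when $E=\Fq\llparent\pi\rrparent$) together with the preceding representability results. First I would identify the Hom-sheaf: since $\O_{X_S}(d_2)^\vee\otimes\O_{X_S}(d_1)=\O_{X_S}(d_1-d_2)$ and $d_1-d_2>0$, we have
$$
\Hom\big(\O_{X_S}(d_2)^{m_2},\O_{X_S}(d_1)^{m_1}\big)
= \GG\big(X_S,\O_{X_S}(d_1-d_2)\big)^{m_1 m_2}
= \mathbb V_{(d_1-d_2)/S}^{\,m_1 m_2},
$$
which by the previously established representability of $f_*\E$ is a diamond (resp.\ perfectoid space) over $S$, in fact a Banach--Colmez $E$-vector space. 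So the total space of maps $u$ is already of the required type; what remains is to cut out the locus where $\coker u$ is locally free of rank $n$.

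\textbf{Cutting out the Quot-type condition.} The condition ``$\coker u$ locally free of rank $n$'' should be analyzed pointwise on $|S|$ and shown to define a locally closed subset. Over a geometric point $s$, $u_s$ is a map of vector bundles on the Dedekind curve $X_{k(s)}$; one needs that $u_s$ is injective with locally free cokernel of rank $n$, equivalently that the torsion-free quotient has rank $n$ and there is no torsion. The first point I would handle by expressing the rank of $\coker u$ in terms of vanishing of suitable minors: after twisting $u$ by $\O(e)$ for $e\gg 0$ and taking global sections, $u$ induces $E$-linear maps of finite-dimensional Banach--Colmez spaces, and the rank-$n$ condition translates into the vanishing of all $(m_1-n+1)$-size minors together with the non-vanishing of at least one $(m_1-n)$-minor --- but done uniformly in families this is exactly the kind of polynomial (non-)vanishing condition in the coordinates of the $\mathbb V_{d/S}$'s that the definition of ``algebraic diamond'' (resp.\ algebraic perfectoid space) was tailored to capture. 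Local freeness of the cokernel (no torsion) is an open condition: torsion in $\coker u$ over a point $s$ is detected on the formal completions at the various Cartier divisors, and its absence is open on $|X_S|$, hence --- pushing forward along the closed specializing map $|X_S|\to|S|$ --- open on $|S|$.

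\textbf{Assembling.} Combining: $\mathrm{Pres}_{\mathcal D}$ is the intersection inside $\mathbb V_{(d_1-d_2)/S}^{\,m_1 m_2}$ of a Zariski-closed locus (the rank is $\le n$, via vanishing of minors), a Zariski-open locus (the rank is $\ge n$, via non-vanishing of one minor), and an open locus (cokernel torsion-free). Since Zariski-closed subsets of diamonds are diamonds, ``locally Zariski closed'' subsets of perfectoid spaces are perfectoid, and open subsheaves preserve both classes, the conclusion follows; in the equal-characteristic case everything is a perfectoid space because each $\mathbb V_d$ then is, by the explicit description $f_*\O(d)=(\varprojlim_{\mathrm{Frob}}\mathscr G_d)\times S$. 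The hard part will be step two: making precise, in families over an arbitrary perfectoid $S$ (where $X_S$ has no finiteness), that ``$\coker u$ is locally free of rank $n$'' is genuinely a \emph{locally closed} condition expressible by finitely many polynomial equalities and inequalities in Banach--Colmez coordinates --- this requires a uniform bound $e\gg 0$ valid over all of $S$ (Kedlaya--Liu ampleness again, applied after restricting to the affinoid cover and using quasicompactness) and a careful check that the minor conditions, a priori depending on the twist $e$, really detect the rank of the cokernel rather than just of the map $\GG(X_S,u(e))$.
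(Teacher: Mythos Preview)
Your approach is essentially the paper's, but you take an unnecessary detour. The paper simply observes that the matrix entries $X_{ij}$ of $u$ already live in $\mathbb{V}_{d_1-d_2}$, and the $k\times k$ minors of $u$ (i.e.\ the entries of $\wedge^{k}u$) are homogeneous degree-$k$ polynomials in the $X_{ij}$, hence land in $\mathbb{V}_{k(d_1-d_2)}$ via the graded algebra structure on $\bigoplus_d \mathbb{V}_d$. The locus is then cut out directly by the vanishing of the size-$(m_1-n+1)$ minors and the non-vanishing of the size-$(m_1-n)$ minors, which is precisely the shape the definition of ``algebraic diamond'' was designed for. There is no need to twist by $\O(e)$ and pass to global sections first: that step creates exactly the uniformity and ``does the induced map on sections have the same rank as $u$?'' issues you flag at the end, and it is simply not needed.

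Your separate treatment of torsion-freeness is in fact more scrupulous than the paper's informal argument, which writes ``non-vanishing of one of the $Q_\beta$'' where one really wants ``the $(m_1-n)$-minors have no common zero on $X_S$''. Your observation that this is open on $|S|$ via the closed specializing map $|X_S|\to|S|$ is the right way to close that gap; but again, the remedy is not the $\O(e)$-twist. (Incidentally, your indices $(m_1-n+1)$ and $(m_1-n)$ for the minors are the correct ones; the paper's $\wedge^{n+1}u$, $\wedge^{n}u$ appears to be a slip.)
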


In fact,
the sheaf  
$$
S\longmapsto \Hom \big ( \O_{X_S} (d_2)^{m_2},\O_{X_S}(d_1)^{m_1}).
$$
 is isomorphic to $\mathbb{V}_{d_1-d_2}^{m_1m_2}$ and we see it as $M_{m_1,m_2} ( \mathbb{V}_{d_1-d_2})$, size $m_1\times m_2$ matrices with coefficients in $\mathbb{V}_{d_1-d_2}$. Suppose now that $n\leq m_1$. For $u\in M_{m_1,m_2}$, $u=(X_{ij})_{1\leq i\leq m_1,1\leq j\leq m_2}$, look at the polynomials $(P_\a)_\a$, resp. $(Q_\beta)_\beta$,
 in $E[X_{ij}]_{1\leq i\leq m_1,1\leq j\leq m_2}$ that are the coefficients of $\wedge^{n+1}u$, resp. $\wedge^n u$. Then the vanishing of all the $(P_\a)_\a$ and the non-vanishing of one of the $(Q_\beta)_\beta$ defines an algebraic diamond in $\mathbb{V}_{d_1-d_2}^{m_1m_2}$. This 
represents $\text{Perf}_{\mathcal{D}}$.
\\

There is an evident morphism
$$
pres_{\mathcal{D}}:\text{Pres}_{\mathcal{D}}\ldrt \text{Bun}_G\underset{\spa (\Fq)}{\times} \spa ( F).
$$
One can then check the following.

\begin{prop}
We can write $\Bun_G$ as a countable union of quasi-compact open subsets 
$$
\Bun_G= \bigcup_{n\geq 1} U_n 
$$
such that for each $n$ there exists a datum $\mathcal{D}_n$ as in proposition \ref{prop:Pres est un diamant} satisfying:
\begin{enumerate}
\item $U_n\times \spa (F)\subset \text{Im} (pres_{\mathcal{D}_n})$
\item the geometric fibers of $pres_{\mathcal{D}_n}$ restricted to $pres_{\mathcal{D}_n}^{-1}(U_n\times \spa (F))$ satisfy the following:
\begin{itemize}
\item if $E=\Fq\llparent\pi\rrparent$ they are isomorphic to perfectoid open balls of the same dimension (independently of the geometric point of $U_n$) 
\item if $E|\Qp$ there exists $d\geq 1$ and $h\in \N$ such that for each geometric fiber 
$Z$ there exists a pro-étale Galois cover $\widetilde{Z}\drt Z$ with Galois group $E^h$ and $\widetilde{Z}$ is isomorphic to a perfectoid open ball of dimension $d$.
\end{itemize}
\end{enumerate}
\end{prop}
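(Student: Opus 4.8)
\emph{Proof proposal.} The plan is to carry this out in three steps: build the exhaustion by bounding Harder--Narasimhan slopes and degrees; produce the numerical data $\mathcal D_n$ by a relative form of the ``$\O(1)$ is ample'' argument used above; and compute the geometric fibres of $pres_{\mathcal D_n}$ by hand from the known shape of $H^0$ of a vector bundle on the $X_S$'s. By the Tannakian reduction recalled above (for $G\subset\GL_n$ the stabiliser of a line $D$ in a representation $\rho$, a $G$-bundle is a rank-$n$ bundle $\E$ together with a locally direct factor sub-line bundle of $\rho_*\E$) it is enough to treat $\GL_n$ and to carry the line $D$ along: for general $G$ one adjoins to $\mathcal D_n$ a presentation of $D$ as a rank-one locally free quotient of a twist of a trivial bundle, handled by the same method. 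So assume $G=\GL_n$.

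\textbf{Step 1: the exhaustion.} Since $\Bun_{\GL_n}=\bigsqcup_{e\in\Z}\Bun_{\GL_n}^{\,e}$ splits according to the (locally constant) degree, and the HN-polygon of a vector bundle on $X_S$ is upper semicontinuous in pro-\'etale families by the relative theory of Kedlaya--Liu (\cite{KedlayaLiuRelative1}), for every $(e,c)\in\Z\times\N$ the locus
$$
U_{e,c}=\big\{\,\E\in\Bun_{\GL_n}^{\,e}\ \big|\ \text{all }\HN\text{-slopes of }\E\text{ lie in }[-c,c]\,\big\}
$$
is open. Relabelling $\{U_{e,c}\}$ as $\{U_n\}_{n\ge 1}$ gives an increasing exhaustion of $\Bun_{\GL_n}$, because over a quasi-compact quasi-separated $S$ the degree is locally constant and the extreme slopes, being semicontinuous on the spectral space $|S|$, are bounded. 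Each $U_n$ has constant degree, and it is quasi-compact: $|\Bun_{\GL_n}|$ is identified (cf.\ the introduction) with $B(\GL_n)$, and only finitely many classes have degree $e$ and all slopes in $[-c,c]$ (the slopes are rationals of bounded denominator).

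\textbf{Step 2: the datum $\mathcal D_n$ and assertion (1).} Fix $U_n=U_{e,c}$ and put $d_1=-c-1$, so that for $\E\in U_n$ the bundle $\E(-d_1)$ has all slopes $\ge 1$; hence $\E(-d_1)$ is globally generated, and the relative ampleness of $\O(1)$ provides an integer $m_1=m_1(n,e,c)$ and, pro-\'etale locally on $S$, a surjection $\O(d_1)^{m_1}\twoheadrightarrow\E$. Its kernel $\E'$ has rank $m_1-n$ and degree $m_1d_1-e$ (both determined by $(n,e,c)$), and being a sub-bundle of the semistable bundle $\O(d_1)^{m_1}$ it has $\mu_{\max}(\E')\le d_1$, whence
$$
\mu_{\min}(\E')\ \ge\ \deg\E'-(\mathrm{rk}\,\E'-1)\,\mu_{\max}(\E')\ \ge\ \deg\E'-(\mathrm{rk}\,\E'-1)\,d_1\ =\ (n+1)d_1-e .
$$
Choosing an integer $d_2\le (n+1)d_1-e-1\ (<d_1)$, the bundle $\E'(-d_2)$ has all slopes $\ge 1$, hence is pro-\'etale locally a quotient of $\O(d_2)^{m_2}$ for a uniform $m_2=m_2(n,e,c)$; composing yields $u\colon\O(d_2)^{m_2}\to\O(d_1)^{m_1}$ with $\coker u\cong\E$. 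Thus every point of $U_n\times\spa(F)$ lies, pro-\'etale locally, in the image of $pres_{\mathcal D_n}$ for $\mathcal D_n=(d_1,d_2,m_1,m_2,n)$, which is (1).

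\textbf{Step 3: the fibres, and the main obstacle.} Over a geometric point $\E$ of $U_n\times\spa(F)$ (a bundle on $X_C$, $C$ algebraically closed perfectoid) the fibre $pres_{\mathcal D_n}^{-1}(\E)$ is the space of exact sequences $0\to\O(d_2)^{m_2}\xrightarrow{u}\O(d_1)^{m_1}\to\E\to 0$; it fibres over the space $Q_1$ of surjections $\O(d_1)^{m_1}\twoheadrightarrow\E$, with fibre over $q$ the space $Q_2$ of surjections $\O(d_2)^{m_2}\twoheadrightarrow\ker q$. Now $Q_1,Q_2$ are open (surjectivity is an open condition) in the Banach--Colmez spaces $\Hom(\O(d_1)^{m_1},\E)=\Gamma(X_C,\E(-d_1))^{m_1}$ and $\Hom(\O(d_2)^{m_2},\ker q)$, whose arguments have all slopes $\ge 1$; in particular $H^1$ of every bundle that occurs vanishes, so that these spaces, and the iterated fibration built from them, are (by a calculation of the type performed for Quot schemes) in equal characteristic perfectoid open balls, while in the case $E|\Qp$ the same calculation produces a Banach--Colmez space whose ``$\underline E$-part'' of some dimension $h$ yields a pro-\'etale $\underline{E}^{\,h}$-torsor by a perfectoid open ball of dimension $d$; this is (2). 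The main obstacles are all about uniformity: one needs the ampleness and semicontinuity results of Kedlaya--Liu with numerical bounds independent of the geometric point of $U_n$ (so that a single $\mathcal D_n$ works, and $U_n$ is genuinely quasi-compact), the vanishing of $R^1f_*$ of positive-slope bundles in families (this is what turns the naive open subsets $Q_1,Q_2$ into honest perfectoid balls, resp.\ balls up to the lattice $\underline{E}^{\,h}$, rather than arbitrary opens), and finally a careful identification of the fibre --- including the precise values of $d$ and $h$ and the constancy of the dimension along $U_n$, which is really the assertion that $pres_{\mathcal D_n}$ has constant relative dimension over the stack $\Bun_{\GL_n}$, i.e.\ part of the hoped-for smoothness.
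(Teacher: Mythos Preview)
The paper does not prove this proposition: it is a survey, and the statement is introduced only with ``One can then check the following''. Your Steps~1 and~2 are the natural construction and are essentially sound; the uniformity of $m_1,m_2$ along $U_{e,c}$ follows because any bundle on $X_C$ with all slopes $\ge 1$ is generated by a number of global sections bounded in terms of its rank alone, and the ranks and degrees of $\E(-d_1)$ and of $\ker q$ are fixed by $(n,e,c)$.

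The genuine gap is Step~3. Your identification of the geometric fibre as an iterated fibration with pieces the surjection loci $Q_1\subset H^0(X_C,\E(-d_1))^{m_1}$ and $Q_2\subset H^0(X_C,(\ker q)(-d_2))^{m_2}$ is correct (modulo the harmless slip that $u$ need not be injective, so the ``$0\to$'' in your exact sequence is spurious). But the assertion that these \emph{open} subsets of Banach--Colmez spaces are themselves perfectoid balls is not justified, and your appeal to $H^1$-vanishing does not give it: vanishing of $R^1f_*$ only controls the dimension of the ambient $H^0$ (no jumps in families), not the shape of the surjection locus inside it. In the classical analogue over an algebraic curve the locus $\Hom_{\mathrm{surj}}(\O(d_1)^{m_1},\E)$ is an open in an affine space but essentially never an affine space itself --- already $\Hom_{\mathrm{surj}}(k^2,k)=\mathbb{A}^2_k\setminus\{0\}$ --- so ``a calculation of the type performed for Quot schemes'' points in the wrong direction. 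Concretely, in equal characteristic the surjections $\O(-1)^2\twoheadrightarrow\O$ are exactly the $E$-linearly independent pairs in the one-dimensional ball $H^0(\O(1))\simeq C^{\circ\circ}$ (with its formal $\O_E$-module structure), and it is not at all clear that this open subset is itself a ball. If the fibres really are balls (resp.\ $\underline{E}^{\,h}$-torsors over balls), the argument must exploit a feature specific to $X_C$ --- for instance a normal form for the matrix of sections via the decomposition $\E\simeq\bigoplus_j\O(\lambda_j)$, or the action of $\Aut(\O(d_1)^{m_1})\times\Aut(\O(d_2)^{m_2})$ on presentations --- and supplying that argument is the actual content of part~(2), not a formality.
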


In any sense we can think about it, the morphisms 
$$
\big (pres_{\mathcal{D}_n}\big )_{|pres_{\mathcal{D}_n}^{-1}(U_n)} 
$$
are smooth and surjective. This gives us a a "presentation" of $\text{Bun}_G$.

%
%

%
%

\subsection{Points of the stack}

Fix an algebraic closure $\Fqb$ of $\Fq$ and note $L=\widehat{E^{un}}$ the completion of the corresponding maximal unramified extension of $E$. Let $\s$ be its Frobenius. Recall (Kottwitz \cite{Ko2}) that  
$$
B(G):= G (L)/\s\text{-conjugation}.
$$
We note
$$
\phmod_L =\{(D,\ph)\}
$$
for the associated category of isocrystals as classified by Dieudonné and Manin where $D$ is a finite dimensional $L$-vector space and $\ph$ a $\s$-linear automorphism of $D$.
To each $b\in G(L)$ there is associated an isocrystal with a $G$-structure
\begin{eqnarray*}
\mathcal{F}_b: \text{Rep} (G)&\ldrt & \phmod_L \\
(V,\rho) & \longmapsto & (V\otimes_E L,\rho (b)\s).
\end{eqnarray*}
And this defines a bijection
\begin{eqnarray*}
G\text{-isocrystals}/\sim & \iso & B(G) \\
\ [b] & \longmapsto & [\mathcal{F}_b].
\end{eqnarray*}

\subsubsection{Classification of vector bundles}

Fix now $F|\Fqb$ perfectoid and consider the corresponding curve $X^{ad}$ or $X^{sch}$. There is a functor 
\begin{eqnarray*}
\phmod_L & \ldrt & \text{Bun}_{X^{ad}} \\
(D,\ph) & \longmapsto & \E(D,\ph)^{ad}
\end{eqnarray*}
defined in the following way. Since $F|\Fqb$, the space $Y$ sits over $\spa (L)$. Then the geometric realization of $\E(D,\ph)^{ad}$ is given by 
$$
Y\underset{\ph}{\times} D \ldrt Y/\ph^{\Z}=X^{ad}.
$$
That is to say the vector bundle $\E(D,\ph)^{ad}$ becomes trivial with fiber $D$ when pulled back to $Y$ via the covering $Y\drt X^{ad}$ and {\it the corresponding automorphy factor is given by $\ph$ acting on $D$.} Via GAGA 
$$
\text{Bun}_{X^{sch}}\iso \text{Bun}_{X^{ad}}
$$
the vector bundle $\E(D,\ph)^{ad}$ is the analytification of the algebraic vector bundle $\E(D,\ph)$ associated to the graded algebra
$$
\bigoplus_{d\geq 0} \big (D\otimes_L \O(Y)\big )^{\ph\otimes \ph=\pi^d}
$$
over $P=\oplus_{d\geq 0} \O(Y)^{\ph=\pi^d}$, $X^{sch}=\text{Proj}(P)$.
\\
We then have the following theorem.

\begin{theo}[\cite{Courbe}]
If  $F$ is algebraically closed
 the functor $\E(-):\phmod_L\drt \text{Bun}_X$ is essentially surjective.
\end{theo}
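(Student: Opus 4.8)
The plan is to show that every vector bundle $\E$ on $X = X_F$ (with $F$ algebraically closed) is isomorphic to some $\E(D,\ph)$. The key structural input is the classification of vector bundles on the Fargues--Fontaine curve from \cite{Courbe}: over an algebraically closed perfectoid base field, $X$ is a Dedekind scheme whose vector bundles are all direct sums of the stable bundles $\O(\lambda)$ for $\lambda \in \Q$, where $\O(d/h)$ (in lowest terms, $h>0$) is the bundle attached to the simple isocrystal of slope $-d/h$ (sign conventions depending on normalization). Since every $\O(\lambda)$ is visibly in the essential image of $\E(-)$ — it is by construction $\E(D,\ph)$ for the corresponding simple isocrystal — and the functor $\E(-)$ is visibly compatible with direct sums, essential surjectivity follows at once from the classification theorem. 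So at the coarsest level, the proof is: \emph{invoke the classification of $\Bun_X$ and match summands with simple isocrystals.}

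**Reduction to the classification theorem.** First I would set up the comparison dictionary: to each simple isocrystal $(D_\lambda, \ph_\lambda)$ of slope $\lambda$ over $L$ one attaches via $\E(-)$ a bundle on $X$, and one checks (this is essentially the Dieudonn\'e--Manin side of the story, carried out in \cite{Courbe}) that this bundle is stable of slope $-\lambda$ (or $+\lambda$, per the fixed sign convention) and is the standard generator $\O(\cdot)$ of the corresponding Harder--Narasimhan stratum. Since by Dieudonn\'e--Manin every isocrystal over $L$ (here $L = \widehat{E^{un}}$ has algebraically closed residue field $\Fqb$) is a direct sum of simple ones, and $\E(D_1 \oplus D_2, \ph_1 \oplus \ph_2) \cong \E(D_1,\ph_1) \oplus \E(D_2,\ph_2)$ — which is immediate from the ``automorphy factor'' description $Y \times_\ph D \to X$ being additive in $D$ — the essential image of $\E(-)$ is exactly the set of finite direct sums of the $\O(\lambda)$'s.

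**The main obstacle.** The entire weight of the argument sits in the theorem from \cite{Courbe} that \emph{every} vector bundle on $X$ (for $F$ algebraically closed) decomposes as such a direct sum — equivalently, that the Harder--Narasimhan filtration of any bundle splits and that semistable bundles of a fixed slope are direct sums of the standard stable one. That is genuinely hard: it rests on the fact that $X$ is a ``complete curve'' in the appropriate sense, on the computation $H^0(X,\O(\lambda)) \neq 0$ for $\lambda \geq 0$ together with the ``fundamental exact sequence'' of $p$-adic Hodge theory, and on a devissage showing $\Ext^1(\O(\lambda),\O(\mu)) = 0$ when $\lambda < \mu$ (so higher slopes can be peeled off). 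Since the excerpt allows me to assume results stated earlier — and this theorem is attributed to \cite{Courbe} and quoted as the very next statement — I would simply cite it. Concretely: take $\E$ on $X^{sch}$; by \cite{Courbe} write $\E \cong \bigoplus_i \O(\lambda_i)$; choose for each $i$ a simple isocrystal $(D_i,\ph_i)$ over $L$ with $\E(D_i,\ph_i) \cong \O(\lambda_i)$, which exists by Dieudonn\'e--Manin and the slope dictionary above; then $\E(\bigoplus_i (D_i,\ph_i)) \cong \bigoplus_i \O(\lambda_i) \cong \E$, so $\E$ is in the essential image. The only non-formal point one must verify by hand is the slope-matching dictionary between simple isocrystals and the standard line/vector bundles $\O(\lambda)$, and that the isomorphism classes of the latter exhaust the stable bundles — both of which are part of the cited classification.
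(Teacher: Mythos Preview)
The paper does not actually prove this theorem here --- it is simply cited from \cite{Courbe}, and the text that follows is an explicit \emph{restatement} of the same result in the equivalent form ``every bundle is isomorphic to $\bigoplus_i \O(\lambda_i)$'' (and then again in terms of semistable bundles and split Harder--Narasimhan filtrations). Your proposal matches this treatment: you correctly identify that the content lives entirely in \cite{Courbe}, and your sketch of what goes into that proof (Ext vanishing for $\lambda<\mu$, sections of $\O(\lambda)$ for $\lambda\geq 0$, the fundamental exact sequence) is accurate and goes beyond what the overview paper records.

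One caution on the logic of your write-up: the ``classification theorem'' you invoke as input is not a separate prior result you may cite to deduce essential surjectivity --- the paper says explicitly that it is a \emph{more concrete way to state this theorem}. Your ``reduction'' is therefore a tautological equivalence (Dieudonn\'e--Manin on one side, the definition of $\O(\lambda)$ on the other), not a genuine proof step. That is fine as long as you are clear that what you are doing is unpacking the statement and pointing to \cite{Courbe} for the substance, which is exactly what the paper does.
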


There is a more concrete way to state this theorem via Dieudonné-Manin classification. In fact we have defined for each $d\in \Z$ a line bundle 
$\O(d)$ on $X$. One has $\O(d)=\E ( L,\pi^{-d}\s)$. For an integer $h\geq 1$ note $E_h$ the degree $h$ unramified extension of $E$. One has
$$
X_E\otimes_E E_h = X_{E_h} 
$$
a formula that reflects the fact that replacing $E$ by $E_h$ does not change Fontaine's ring $\A$ but replaces $\ph$ by $\ph^h$. The cyclic Galois covering 
$$
X_{E_h}^{ad}\ldrt X_E^{ad}
$$
is then identified with the {\it unfolding covering }
$$
Y/\ph^{h\Z}\ldrt Y/\ph^\Z.
$$
Let us note $X_h:=X_{E_h}$, $X_1=X$, and $\pi_h:X_h\drt X$. We thus have a canonical $\widehat{\Z}$-Galois cover
$$
(X_h)_{h\geq 1}\ldrt X.
$$ 
For $\l\in \Q,\l=\frac{d}{h}$ with $(d,h)=1$ define 
$$
\O(\l) := \pi_{h*} \O_{X_h} (d).
$$
One has $\O(\l)\simeq \E(D,\ph)$ where $(D,\ph)$ is simple with Dieudonné-Manin slope $-\l$.  The preceding theorem can then be restated as saying that, {\it when $F$ is algebraically closed, for any $\E\in \Bun_X$ there exists a finite collection of slopes $(\l_i)_{i\in I}$ such that  }
$$
\E\simeq \bigoplus_{i\in I} \O(\l_i).
$$
One can go further in restating this theorem. In fact, the curve $X^{sch}$ is complete in the sense that for any $f\in E(X^{sch})^\times$, $\deg ( \div f)=0$. As a consequence there is a good degree function on vector bundles on $X^{sch}$ and  {\it Harder-Narasimhan filtrations on vector bundles}. For each $\l\in \Q$, $\O(\l)$ is stable of slope $\l$. Then the preceding theorem can be restated in the following way.

\begin{theo}
If $F$ is algebraically closed:
\begin{enumerate}
\item For any $\l\in\Q$, any slope $\l$ semi-stable vector bundle is isomorphic to a finite direct sum of $\O(\l)$.
\item The Harder-Narasimhan filtration of a vector bundle on $X^{sch}$ is split.
\end{enumerate}
\end{theo}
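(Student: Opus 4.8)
The plan is to derive both assertions from the decomposition theorem just recalled --- that over an algebraically closed $F$ every vector bundle on $X^{sch}$ is isomorphic to a finite direct sum $\bigoplus_{i\in I}\O(\l_i)$ --- combined with the stability of each $\O(\l)$ and the Harder--Narasimhan formalism available on the complete curve $X^{sch}$. Since the statement is presented as a reformulation, it will be enough to prove the implication ``decomposition $\impl$ (1) and (2)''; the converse is immediate, for given (1) and (2) one splits the Harder--Narasimhan filtration of $\E$ and applies (1) to each graded piece. Two elementary observations will be used throughout: a direct summand of a vector bundle is a saturated subsheaf, hence a subbundle and, via the complementary projection, also a quotient bundle; and on the complete curve $\deg$ and $\rg$ are additive in short exact sequences, with $\mu(\O(\l))=\l$, so that $\mu\big(\bigoplus_{i\in I}\O(\l_i)\big)$ is the weighted mean $\big(\sum_i\deg\O(\l_i)\big)/\big(\sum_i\rg\O(\l_i)\big)$ of the $\l_i$.

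For (1), I would argue as follows: writing a slope-$\l$ semistable bundle $\E$ as $\bigoplus_{i\in I}\O(\l_i)$, each $\O(\l_i)$ is simultaneously a subbundle and a quotient bundle of $\E$. Semistability applied to the subbundle forces $\l_i=\mu(\O(\l_i))\le\mu(\E)=\l$, and applied to the quotient bundle it forces $\l_i\ge\l$; hence every $\l_i$ equals $\l$ and $\E$ is a finite direct sum of copies of $\O(\l)$.

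For (2), I would first establish that a finite direct sum of stable bundles of a common slope $\l$ is semistable of slope $\l$ --- equivalently, that the slope-$\l$ semistable bundles are stable under finite direct sums. For a subbundle $\F$ of $\E_1\oplus\E_2$ one bounds $\deg\F\le\deg(\F\cap\E_1)+\deg(\overline{\F_2})$, where $\F_2$ is the image of $\F$ in $\E_2$ and $\overline{\F_2}$ its saturation, and combines this with $\deg(\F\cap\E_1)\le\l\,\rg(\F\cap\E_1)$, $\deg(\overline{\F_2})\le\l\,\rg(\overline{\F_2})$ to obtain $\mu(\F)\le\l$; the general case follows by induction. Then, given an arbitrary $\E\iso\bigoplus_{i\in I}\O(\l_i)$, I would regroup the summands by slope, $\E\iso\bigoplus_{\l}\E_\l$ with $\E_\l\iso\O(\l)^{n_\l}$, list the occurring slopes as $\l_{(1)}>\cdots>\l_{(r)}$, and set $\Fil^j\E=\bigoplus_{k\le j}\E_{\l_{(k)}}$. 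This is a filtration by subbundles whose successive quotients $\Fil^j/\Fil^{j-1}\iso\E_{\l_{(j)}}$ are semistable with strictly decreasing slopes, so by uniqueness of the Harder--Narasimhan filtration it coincides with the Harder--Narasimhan filtration of $\E$, and it is split by construction. The main point requiring care will be this last part: keeping the slope inequalities correct for subbundles that are not themselves direct summands (which is what the ``direct sum of stables is semistable'' lemma needs) and appealing to the standard existence and uniqueness of Harder--Narasimhan filtrations on the complete curve $X^{sch}$; once the decomposition theorem and the stability of the $\O(\l)$ are granted, the rest is formal.
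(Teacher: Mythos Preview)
Your argument is correct and fills in precisely the details the paper leaves implicit when it presents this theorem as a mere restatement of the decomposition $\E\simeq\bigoplus_i\O(\l_i)$; there is no separate proof in the paper to compare against. One point worth noting: the paper remarks immediately after the statement that (2) holds for \emph{any} perfectoid field $F$, not only algebraically closed ones, which signals a more direct argument for (2) (via $H^1$-vanishing for semistable bundles of positive slope on $X^{sch}$, as developed in \cite{Courbe}) that does not pass through the full classification --- your decomposition-based proof of (2) is valid for the theorem as stated but does not recover this stronger fact.
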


Let us remark that the second point (2) is still true for any $F$ not necessarily algebraically closed.

We thus have a bijection
\begin{eqnarray*}
\big \{  \l_1\geq \dots\geq \l_n\ |\ n\in \N, \l_i\in \Q\big \}&\iso & \text{Bun}_X /\sim \\
(\l_1,\dots,\l_n) & \longmapsto & \Big [ \bigoplus_{i=1}^n \O(\l_i)\Big ].
\end{eqnarray*}

\subsubsection{Classification of $G$-bundles}

Let $b\in G(L)$. The composite 
$$
\text{Rep} (G) \xrig{\ \mathcal{F}_b\ } \phmod_L \xrig{\ \E(-)\ } \Bun_X
$$
defines a bundle with a $G$-structure on $X$, that is to say a $G$-torsor on the scheme $X^{sch}$ locally trivial for the étale topology
$$
\E_b.
$$
We then have the following generalization of the classification of vector bundles theorem.

\begin{theo} 
\label{theo:classification des G torseurs}
If $E|\Qp$ and $F$ is algebraically closed 
there is a bijection of pointed sets
\begin{eqnarray*}
B(G) & \iso & H^1_{\et} ( X^{sch},G) \\
\ [b] & \longmapsto & \big [ \E_b\big ].
\end{eqnarray*}
\end{theo}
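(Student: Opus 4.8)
First note the map is well defined: $\s$-conjugating $b$ by $g\in G(L)$ yields an isomorphism $\E_b\iso\E_{b'}$ induced by $g$ on the cover $Y\to X$, and $b=1$ gives the trivial bundle, so the base points match. The plan is to prove injectivity and surjectivity separately. The two external inputs are the classification of vector bundles on $X$ recalled above (the $\GL_n$-case, together with splitness of the Harder--Narasimhan filtration) and Kottwitz's combinatorial description of $B(G)$. Throughout one works Tannakianly, identifying a $G$-bundle on $X^{sch}$ with a $\otimes$-exact functor $\Rep(G)\to\Bun_X$, and uses functoriality of $\E_{(-)}$ in $G$ so as to reduce assertions to a faithful representation $G\hookrightarrow\GL_n$, to the torus quotient $G^{\mathrm{ab}}=G/G^{\mathrm{der}}$, and to Levi subgroups.

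\textbf{Two invariants of an arbitrary $G$-bundle.} To \emph{any} $G$-bundle $\E$ on $X$ I would attach first a \emph{Newton point} $\bar\nu(\E)$: each $\E(V)$, $V\in\Rep(G)$, carries a Harder--Narasimhan slope datum, and as $V$ varies these weights are functorially realized by a conjugacy class of morphisms $\DD\to G$ (a slope cocharacter in the Tannakian sense), which I call $\bar\nu(\E)$. Second a \emph{Kottwitz point} $\kappa(\E)\in\pi_1(G)_\Gamma$: pushing $\E$ along $G\to G^{\mathrm{ab}}$ gives a torus torsor on $X$, and since $\mathrm{Pic}(X^{sch})\cong\Z$ (the curve being complete, with $\deg\O(\lambda)=\lambda$) one identifies $H^1_{\et}(X,T)\cong X_*(T)_\Gamma$ for a torus $T$ by d\'evissage from $\Gm$; $\kappa(\E)$ is the resulting class. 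A Dieudonn\'e--Manin computation --- done for $\GL_n$ and for tori, hence in general by functoriality --- shows that $\bar\nu(\E_b)$ and $\kappa(\E_b)$ recover, with the standard normalisations, the Newton and Kottwitz invariants of $b\in B(G)$. Since Kottwitz proved $b\mapsto(\bar\nu_b,\kappa(b))$ is injective on $B(G)$, it follows at once that $[b]\mapsto[\E_b]$ is injective.

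\textbf{Surjectivity; the main obstacle.} Let $\E$ be a $G$-bundle on $X$ with $F$ algebraically closed. The crucial step, and the one I expect to be the main difficulty, is a \emph{canonical Harder--Narasimhan reduction}: $\E$ admits a reduction $\E_P$ to a standard parabolic $P=MN$, unique up to $P$-conjugacy, whose induced $M$-bundle $\E_M$ is semistable with Newton point $\nu$ satisfying $M=\mathrm{Cent}_G(\nu)$. Existence and uniqueness would follow from the usual convexity/Tannakian arguments of Atiyah--Bott and Behrend, whose only input is the slope formalism for vector bundles on $X$; the delicate point is that $X$ has essentially no finiteness properties, so one must check that the HN machinery (boundedness of reductions, a maximal destabilizing one) still functions --- and this is precisely why the splitness of the HN filtration on $X$, recalled above, is the right hypothesis. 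Granting this, a $P$-reduction of $\E$ amounts to the semistable $M$-bundle $\E_M$, and since every class of $B(G)$ is induced from a basic class of some Levi via $B(M)\to B(G)$, compatibly under $\E_{(-)}$ with parabolic induction of bundles, surjectivity follows once one knows that a \emph{semistable} connected reductive $G$-bundle lies in the image of $\E_{(-)}$ restricted to basic classes.

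\textbf{The semistable case.} Let $\E$ be a semistable $G$-bundle, $G$ now any connected reductive group (e.g. a Levi as above). Then each $\E(V)$ is semistable of slope $\langle\chi_V,\bar\nu(\E)\rangle$, where $\chi_V$ is the central character of $V$. Twisting $\E$ by $\E_{b_0}$ for the unique basic $b_0\in B(G)$ with the same central Newton point --- equivalently, replacing $G$ by the inner form $J_{b_0}$ --- reduces to $\bar\nu(\E)=0$, so that $\E(V)\cong\O_X^{\dim V}$ for every $V$, functorially. Then $V\mapsto H^0(X,\E(V))$ is an $E$-valued fibre functor on $\Rep(G)$, hence $\E$ is the pullback to $X$ of a $G$-torsor over $\spec E$; under the embedding $H^1(E,G)\hookrightarrow B(G)$ into the basic locus, and by the very construction of $\E_{(-)}$, this torsor has class $[b']$ with $\E\cong\E_{b'}$. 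Untwisting, $\E\cong\E_b$ for a suitable basic $b$, which completes surjectivity and hence the proof.
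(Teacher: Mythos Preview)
The paper does not give a proof of this theorem; it is an overview article and the result is imported from the author's paper \cite{Gtorseurs}. So there is nothing to compare against directly. Your overall architecture --- define bundle-theoretic Newton and Kottwitz invariants, use Kottwitz's injectivity of $(\nu,\kappa)$ on $B(G)$ for injectivity, and reduce surjectivity to the semistable case via a Harder--Narasimhan reduction --- is indeed the shape of the argument in \cite{Gtorseurs}.

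There is, however, a genuine gap in your construction of $\kappa(\E)$. You define it by pushing along $G\to G^{\mathrm{ab}}$ and reading off the class of the resulting torus bundle; this produces an element of $X_*(G^{\mathrm{ab}})_\Gamma$, not of $\pi_1(G)_\Gamma$. The natural map $\pi_1(G)_\Gamma\to X_*(G^{\mathrm{ab}})_\Gamma$ is an isomorphism precisely when $G^{\mathrm{der}}$ is simply connected, and outside that case your invariant is too coarse. For instance, for $G=\mathrm{PGL}_n$ one has $G^{\mathrm{ab}}=1$ but $\pi_1(G)_\Gamma=\Z/n$, and the $n$ basic classes in $B(\mathrm{PGL}_n)$ all have trivial Newton point; your $\kappa$ cannot separate them, so your injectivity argument collapses. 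The paper alludes to exactly this difficulty a few lines below the theorem (``easy when $G_{der}$ is simply connected \dots\ but much more subtle in general''), and in \cite{Gtorseurs} the correct $\kappa$ is constructed as a $G$-equivariant first Chern class, which is considerably more work.

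A second, smaller imprecision: in the semistable step you speak of ``twisting $\E$ by $\E_{b_0}$'', but for nonabelian $G$ there is no tensor product of $G$-bundles. What one actually does is pass to the inner form $J_{b_0}$: the automorphism sheaf of $\E_{b_0}$ is a form of $G$ over $X$, and $\E$ becomes a torsor under it; after this change of group the Newton point is trivialised and your fibre-functor argument goes through. You gesture at this with ``equivalently, replacing $G$ by the inner form $J_{b_0}$'', which is the right idea, but the sentence as written conflates two different operations.
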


Of course this has to be true when $E=\Fq\llparent \pi\rrparent$.

\begin{conj}
Theorem \ref{theo:classification des G torseurs} still holds when $E=\Fq\llparent\pi\rrparent$. 
\end{conj}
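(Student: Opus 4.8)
The plan is to transport the proof of Theorem~\ref{theo:classification des G torseurs} essentially verbatim: each of its steps is either geometric --- a property of the curve $X_F^{sch}$ for $F$ algebraically closed --- or Tannakian, and none of it uses $E|\Qp$ in an essential way. The $\GL_n$ case is already at our disposal in equal characteristic, since the classification of vector bundles on $X_F$ and the splitting of Harder--Narasimhan filtrations have been recalled above with no restriction on the characteristic of $E$, and the geometric inputs of \cite{Courbe} --- $X_F^{sch}$ is a complete Dedekind scheme with $H^0(X_F^{sch},\O)=E$, the bundle $\O(1)$ is ample, and GAGA (Theorem~\ref{theo:GAGA}) holds --- are either literally available or readily checked for $F|\Fqb$ of characteristic $p$. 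So the task is to propagate the statement from $\GL_n$ to an arbitrary connected reductive $G$.

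For surjectivity, let $\E$ be a $G$-bundle on $X_F^{sch}$. Behrend's canonical (Harder--Narasimhan--Atiyah--Bott) reduction, which applies to $G$-bundles on the complete curve $X_F^{sch}$, furnishes a standard parabolic $P\subset G$ with Levi quotient $M$ and a reduction $\E_P$ of $\E$ to $P$ whose induced $M$-bundle $\E_M$ is semistable of central slope $\nu$; since the Harder--Narasimhan filtration of every vector bundle on $X_F$ splits, a Tannakian argument promotes $\E_P$ to a reduction to $M$, so that $\E\simeq\E_M\times^MG$. This reduces everything to the classification of semistable $G$-bundles. Twisting by $\E_{b_\nu}$ for a suitable central $b_\nu$, one may assume $\nu=0$. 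A semistable $G$-bundle of slope $0$ is, for every $V\in\Rep(G)$, a semistable vector bundle of slope $0$ (semistability being preserved under the tensor operations of $\Rep(G)$, as in Ramanan--Ramanathan), hence a trivial bundle; using $H^0(X_F^{sch},\O)=E$, the tensor subcategory of semistable slope-$0$ vector bundles on $X_F$ is equivalent to $\Vect_E$, so such a $G$-bundle is nothing but a $G$-torsor over $\spec E$, i.e.\ a class in $H^1_{\et}(\spec E,G)\subset B(G)$ (Kottwitz). Undoing the twists one recovers on the $\Bun_G$ side precisely the Levi stratification $B(G)=\bigsqcup_\nu B(M_\nu)_{basic}$, together with, for each Levi $M$, the identification of semistable $M$-bundles of central slope $\nu$ with $B(M)_{basic,\nu}$ (obtained by twisting to $\nu=0$ and invoking Kottwitz's isomorphism $H^1_{\et}(\spec E,M)\simeq B(M)_{basic,0}$); this gives surjectivity.

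Injectivity amounts to recovering $[b]$ from the isomorphism class of $\E_b$. The Harder--Narasimhan type of $\E_b$ recovers the Newton point $\nu_b$ and reduces the comparison to the Levi $M=M_{\nu_b}$, where $\E_b$ becomes basic; there the first Chern class $c_1(\E_b)\in\pi_1(G)_\Gamma=\pi_1(M)_\Gamma$ recovers the Kottwitz invariant $\kappa(b)$, and since the pair $(\nu_b,\kappa(b))$ determines $[b]$ by Kottwitz's injectivity, this concludes the argument. Alternatively, one can use that $\underline{\Aut}(\E_b)$ is the $E$-group $J_b$ and run the residual-gerbe and descent argument of \cite{Gtorseurs} directly.

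The one genuinely new ingredient, and the step I expect to require the most care, is Kottwitz's theory itself over the equal-characteristic local field $E=\Fq\llparent\pi\rrparent$: finiteness of $H^1_{\et}(\spec E,G)$ for connected reductive $G$, the identification of $H^1_{\et}(\spec E,G)$ with the set of basic elements of Newton point $0$ in $B(G)$, and the compatibility of the maps $H^1_{\et}(\spec E,M)\to B(M)$ with Levi embeddings. All of this is expected, and much of it is known --- Dieudonn\'{e}--Manin theory and isocrystals with $G$-structure over $\Fqb\llparent\pi\rrparent$ pose no problem, and $H^1$ over a local function field is finite by Bruhat--Tits theory --- but \cite{Ko2} is written for $p$-adic fields and the equal-characteristic version has to be assembled. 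This, together with the bookkeeping needed to match the Harder--Narasimhan stratification of $\Bun_G$ with the Newton decomposition of $B(G)$, is where the real work lies.
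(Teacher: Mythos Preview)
The paper contains no proof of this statement: it is explicitly labelled a \emph{Conjecture}, and immediately after stating it the author writes ``We will admit this conjecture and do as if it were true until the end of this paper.'' There is therefore nothing in the paper to compare your proposal against.

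That said, your sketch is a reasonable outline and follows the architecture of the $p$-adic proof in \cite{Gtorseurs}: canonical Harder--Narasimhan reduction to a parabolic, splitting to the Levi, twisting to central slope zero, identifying the Tannakian category of slope-$0$ semistable bundles with $\Vect_E$, and then invoking Kottwitz's embedding $H^1(E,G)\hookrightarrow B(G)$. You are right that the $\GL_n$ classification on $X_F$ is already available in equal characteristic (the results of \cite{Courbe} that the paper recalls are stated uniformly in $E$), and you correctly flag the equal-characteristic version of Kottwitz's theory as the piece that needs to be assembled from the literature rather than quoted from \cite{Ko2}. One step you pass over too quickly is the splitting of the parabolic reduction to the Levi: this is not a purely Tannakian consequence of the $\GL_n$ splitting of HN filtrations, and in \cite{Gtorseurs} it is obtained by showing the obstruction lies in an $H^1$ that vanishes because the relevant bundles have only positive slopes---an argument that does transport to equal characteristic, but which should be made explicit. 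In any case, all of this is work the paper chose not to do; your proposal is an attack on an open (at the time) conjecture, not a reconstruction of a proof in the text.
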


{\it We will admit this conjecture and do as if it where true until the end of this paper.}
\\

When $G$ is quasi-split, 
this bijection has Harder-Narasimhan features as before. For this fix a triple $A\subset T\subset B$ where $A$ is a maximal split torus, $T$ is a maximal torus and $B$ a Borel subgroup. Kottwitz has defined a slope application
\begin{eqnarray*}
B(G) & \ldrt & \Big [\Hom (\DD, G_{E^{un}})/\text{conj.}\Big ]^{\s=Id}= X_*(A)_\Q^+ \\
\ [b] & \longmapsto & [\nu_b]
\end{eqnarray*}
There is a notion of semi-stability for $G$-torsors on the curve $X^{sch}$
analogous to the usual one for "classical curves" in terms of reduction to standard parabolic subgroups of $G$. By definition, $b$ is basic if $\nu_b$ is central. We then have the following equivalence 
$$ 
b
\text{ is basic }  \lssi \E_b\text{ is semi-stable}.
$$
Moreover, if $w$ is the maximal length element in the Weyl group of $T$, so that $B^w$ is opposite to $B$ then 
$$
[w.(-\nu_b)]\in X_*(A)_\Q^+
$$
is a generalized Harder-Narasimhan polygon of $\E_b$. Here the minus sign is due to the fact that under the correspondence $(D,\ph)\mapsto \E(D,\ph)$ Dieudonné-Manin slopes are the opposite of the Harder-Narasimhan slopes.

\subsubsection{Points of the stack}

Any $b\in G(L)$ defines a morphism 
$$
x_b:\spa (\Fqb) \ldrt \Bun_G.
$$
We set
$$
\Bun_{G,\Fqb} := \Bun_G\otimes_{\Fq} \Fqb.
$$
Define now the following
$$
\big | \Bun_{G,\Fqb}\big | = \Big ( \coprod_{F} \Bun_G (F)\Big )/\sim
$$
where $F$ goes through all perfectoid fields extensions of $\Fqb$. By definition, for $x_1\in \Bun_G(F_1)$ and $x_2\in \Bun_G (F_2)$, $x_1\sim x_2$ if there exists $F_3$ and embeddings $\a:F_1\hookrightarrow F_3$, $\b:F_2\hookrightarrow F_3$ such that $\a_*x\simeq \b_*y$.
\\

As a consequence of the preceding classification theorem we obtain then that there is a canonical bijection
$$
B(G) \iso \big |\Bun_{G,\Fqb}\big |.
$$
We will now equip $|\Bun_{G,\Fqb}\big |$ and thus $B(G)$ with the topology whose open subsets are the $|\mathscr{U}|$ with $\mathscr{U}\subset \Bun_{G,\Fqb}$ is an open substack. One has to be careful this is different from the quotient topology given by the formula $B(G)=G(L)/\s\text{-conj.}$ 
(this one is the discrete topology).

\subsection{Connected components}

Kottwitz has defined a map 
$$
\kappa: B(G)\ldrt \pi_1(G)_\Gamma
$$
where $\Gamma =\Gal (\overline{E}|E)$ and $\pi_1(G)$ is Borovoi's fundamental group. When $G=\GL_n$ this gives the endpoint of the Newton polygon that is to say the opposite of the degree of the corresponding vector bundle. In general this can be interpreted as an $G$-equivariant first Chern class of a $G$-torsor (\cite{Gtorseurs} sec.8). The following theorem is easy when $G_{der}$ is simply connected (this is reduced to the torus case via the projection to $G/G_{der}$) but much more subtle in general.

\begin{theo}
Kottwitz map $\kappa$ is locally constant on $\text{Bun}_{G,\Fqb}$.
\end{theo}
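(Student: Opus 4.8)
\emph{Proposed proof.} The plan is to first reformulate the statement. By definition the topology on $|\Bun_{G,\Fqb}|$ has for opens the $|\mathscr U|$ with $\mathscr U\subset\Bun_{G,\Fqb}$ an open substack, so ``$\kappa$ is locally constant'' is equivalent to: for every $S\in\Perf_{\Fq}$ and every $G$-bundle $\E$ on $X_S$, the map $|S|\ldrt B(G)\xrightarrow{\ \kappa\ }\pi_1(G)_\Gamma$, $s\mapsto\kappa([\E_{|X_{k(s)}}])$ (with $k(s)$ the perfectoid residue field at $s$), is a locally constant function on $|S|$. This property is functorial in $G$: a morphism of reductive groups $H\ldrt G$ gives $\Bun_H\ldrt\Bun_G$ compatibly with $\pi_1(H)_\Gamma\ldrt\pi_1(G)_\Gamma$ and with $\kappa$, so if $\kappa$ is locally constant for $H$ and $\pi_1(H)_\Gamma\ldrt\pi_1(G)_\Gamma$ is injective, it is locally constant for $G$. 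I would use this to reduce to tori, and tori to line bundles.

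First I would treat $G=\Res_{E'|E}\Gm$ with $E'|E$ finite separable: a $G$-bundle on $X_S$ is just a line bundle on $X_S\otimes_EE'$, and $\kappa$ is (up to sign) its fibrewise degree, whose local constancy in families on the relative curve is known (twisting by $\O(d)$, $d\gg0$, reduces it to the degree of an effective relative Cartier divisor; cf.\ \cite{Courbe},\cite{KedlayaLiuRelative1}); products then give all quasi-trivial tori. For a general torus $T$ I would choose a resolution $1\ldrt R\ldrt Q\ldrt T\ldrt1$ with $Q$ quasi-trivial and $R$ a torus, lift a given $T$-bundle on $X_S$ to a $Q$-bundle on $X_{S'}$ for a suitable pro-\'etale cover $S'\ldrt S$ (this is the key lemma in the last paragraph), and conclude: the function attached to the $Q$-bundle is locally constant on $|S'|$ by the previous case, its image along $\pi_1(Q)_\Gamma\ldrt\pi_1(T)_\Gamma$ is the pullback to $|S'|$ of the function attached to $T$, and $|S'|\ldrt|S|$ is a surjective quotient map.

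For general $G$ I would pass to a $z$-extension $1\ldrt Z\ldrt\tilde G\ldrt G\ldrt1$ with $\tilde G_{der}$ simply connected and $Z$ a quasi-trivial torus. Since $\tilde G_{der}$ is simply connected, the coroot lattice of $\tilde G$ is all of $X_*$ of the maximal torus of $\tilde G_{der}$, so $\tilde G\ldrt\tilde G^{ab}=\tilde G/\tilde G_{der}$ induces an \emph{isomorphism} $\pi_1(\tilde G)_\Gamma\iso\pi_1(\tilde G^{ab})_\Gamma$; as $\tilde G^{ab}$ is a torus, the previous step gives local constancy of $\kappa$ for $\tilde G$. Then I would lift a given $G$-bundle on $X_S$ to a $\tilde G$-bundle on $X_{S'}$ over a pro-\'etale cover $S'\ldrt S$ (again the key lemma) and conclude as in the torus step, using that $\pi_1(\tilde G)_\Gamma\ldrt\pi_1(G)_\Gamma$ is surjective. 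The hypothesis $\Fqb$ (algebraically closed residue field) enters through Theorem~\ref{theo:classification des G torseurs} (and its assumed equal-characteristic analogue), and through the geometric-fibre input in the lemma below.

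The one substantial ingredient — and the step I expect to be the main obstacle — is the lifting lemma: \emph{for a central extension $1\ldrt N\ldrt H\ldrt H'\ldrt1$ of reductive $E$-groups with $N$ a torus, $\Bun_H\ldrt\Bun_{H'}$ is an epimorphism of pro-\'etale stacks}, i.e.\ any $H'$-bundle on $X_S$ lifts to an $H$-bundle after a pro-\'etale cover of $S$. The obstruction to lifting lies in $H^2_{\et}(X_{S'},N)$, and one must show such classes die on a pro-\'etale cover of $S$. On a geometric fibre $X_F$ ($F$ algebraically closed) one has $H^i_{\et}(X_F,\Gm)=0$ for $i\geq2$ — the curve over an algebraically closed field behaves like $\P^1$ for $\Gm$-cohomology (\cite{Courbe}) — hence, by d\'evissage through quasi-trivial tori, $H^i_{\et}(X_F,N)=0$ for $i\geq2$. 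The remaining point is a spreading-out argument: pro-\'etale cohomology of $X_{(-)}$ with coefficients in $N$ commutes with the relevant cofiltered limits of affinoid perfectoids, so a class trivial at every geometric point of $S$ is trivialised on a pro-\'etale cover; this is where the finiteness properties of $X_F$ (strong noetherianness, its being ``a curve'') are used, and it is the technically delicate part. I would look for this statement, or a close variant, in \cite{Gtorseurs}. Granting it and the cited local constancy of degrees of line bundles, the rest is formal manipulation of $\pi_1$ and of the functoriality of $\kappa$.
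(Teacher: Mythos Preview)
The paper does not actually give a proof of this theorem: it is an overview article, and all that appears is the remark preceding the statement, namely that the result ``is easy when $G_{der}$ is simply connected (this is reduced to the torus case via the projection to $G/G_{der}$) but much more subtle in general.'' So there is no detailed argument to compare against.

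That said, your strategy is well-aligned with the paper's hint. The reduction $\tilde G\to\tilde G^{ab}$ when $\tilde G_{der}$ is simply connected, together with the local constancy of the degree for line bundles on the relative curve, is exactly the ``easy'' case the paper alludes to. Your proposal for the general case --- pass to a $z$-extension $1\to Z\to\tilde G\to G\to1$ with $\tilde G_{der}$ simply connected and lift $G$-bundles to $\tilde G$-bundles pro-\'etale locally --- is the natural approach, and you have correctly isolated the genuinely hard step: showing that $\Bun_H\to\Bun_{H'}$ is a pro-\'etale epimorphism for a central extension by a torus, i.e.\ that the obstruction class in $H^2_{\et}(X_{S},N)$ dies after a pro-\'etale cover of $S$. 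The pointwise vanishing on geometric fibres $X_F$ is indeed known from \cite{Courbe}, but the passage from pointwise vanishing to vanishing on a cover (your ``spreading-out'') is precisely the place where the paper warns the argument becomes ``much more subtle.'' You are right to expect the relevant input to be in \cite{Gtorseurs} (see the reference to sec.~8 there in the paragraph before the theorem), but you should not expect this step to be routine: the finiteness properties of $X_F$ do not persist over a general base $S$, and the cohomological approximation argument needs real work. One minor point: the claim that $|S'|\to|S|$ is a quotient map for a pro-\'etale cover deserves justification --- it is cleanest to phrase the descent as descent of open substacks along a pro-\'etale cover rather than as a topological quotient statement.
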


We thus have a decomposition in open/closed substacks 
$$
\Bun_{G,\Fqb} = \coprod_{\a\in \pi_1(G)_\Gamma} \Bun_{G,\Fqb}^\a.
$$
The following conjecture is natural by analogy with the "classical case". We won't need it to state our conjecture. 

\begin{conj}
For each $\a\in \pi_1(G)_\Gamma$, $\Bun_{G,\Fqb}^\a$ is connected. 
\end{conj}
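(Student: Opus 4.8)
The plan is to identify the topology on $\bigl|\Bun_{G,\Fqb}^{\a}\bigr|$ with a natural ``down-set'' topology attached to the Harder--Narasimhan order, and then observe that the relevant poset has a least element, the basic class; connectedness will then be formal. Throughout, write $B(G)^{\a}=\kappa^{-1}(\a)\subset B(G)$, identified with $\bigl|\Bun_{G,\Fqb}^{\a}\bigr|$ via the bijection recalled in section~\ref{sec:The stack BunG}, and partially order it by declaring $[b']\preceq[b]$ when the HN polygon of $\E_{b'}$ is dominated by that of $\E_{b}$ (equivalently $\nu_{b'}\leq\nu_{b}$ in the dominance order on $X_{*}(A)^{+}_{\Q}$, the constraint $\kappa=\a$ being automatic here).

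\emph{Step 1: the topology of $B(G)^{\a}$ is the down-set topology.} The geometric input I would use is the foundational package on the HN stratification of $\Bun_{G}$ over the relative curve: first, that the Newton/HN polygon is upper semicontinuous in families, so that for each $[b]$ the substack $\Bun_{G,\Fqb}^{\preceq b}$ of bundles whose HN polygon is $\preceq$ that of $\E_{b}$ is open (for basic $[b]$ this is the openness of the semistable locus); and second, that $\overline{\Bun_{G,\Fqb}^{[b']}}\supseteq\Bun_{G,\Fqb}^{[b]}$ whenever $[b']\preceq[b]$. From the first point, every principal down-set $\{[b']\preceq[b]\}=\bigl|\Bun_{G,\Fqb}^{\preceq b}\bigr|$ is open, hence so is every down-closed subset, being a union of these. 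Conversely, if $|\mathscr U|$ is open, $[b]\in|\mathscr U|$ and $[b']\preceq[b]$, then by the second point $[b]$ lies in the closure of $\{[b']\}$; since an open subset is stable under generization, $[b']\in|\mathscr U|$. Hence the opens of $B(G)^{\a}$ are exactly its down-closed subsets.

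\emph{Step 2: least element and conclusion.} By Kottwitz, $\kappa$ restricts to a bijection from the basic classes onto $\pi_{1}(G)_{\Gamma}$, so $B(G)^{\a}$ contains a unique basic class $[b_{\a}]$, and I claim it is a least element for $\preceq$. Indeed its HN polygon is a straight segment, and for any $[b]\in B(G)^{\a}$ the difference $\nu_{b}-\nu_{b_{\a}}$ is dominant (since $\nu_{b_{\a}}$ is central and $\nu_{b}$ dominant), hence a non-negative rational combination of positive coroots (the inverse Cartan matrix has non-negative entries), so $\nu_{b_{\a}}\leq\nu_{b}$. Now any two nonempty open subsets of $B(G)^{\a}$ are down-closed, hence both contain $[b_{\a}]$, so they meet; there is no partition of $B(G)^{\a}$ into two disjoint nonempty opens, and $\Bun_{G,\Fqb}^{\a}$ is connected.

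\emph{Main obstacle, and an alternative route.} The real content is Step~1: the semicontinuity of the HN polygon in families and the closure relations of the HN strata of $\Bun_{G}$ in this diamond setting. For $\GL_{n}$ this is a statement about families of vector bundles on the relative curve; for general $G$ I would reduce via a faithful representation $G\hookrightarrow\GL_{n}$, using that the condition of admitting a reduction to a parabolic is closed on the curve. If one prefers to avoid the stratification as a black box, one can argue constructively: given a non-basic $[b]$, choose a sub-object $\O(a)$ in the HN filtration of $\E_{b}$ and a quotient line bundle $\O(c)$ with $c>a$; the Banach--Colmez space $\Ext^{1}$ of extensions of $\O(c)$ by $\O(a)$ (as in section~\ref{sec:defi and prop of Bung}) is connected, its origin gives the split bundle while every nonzero class gives a bundle with strictly smaller HN polygon, so the associated family links $[b]$ inside $B(G)^{\a}$ to a point strictly below it; as the HN polygon can drop only finitely often one reaches $[b_{\a}]$. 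The general $G$ then follows by induction on $\dim G$, using that each HN stratum is an iterated Banach--Colmez affine fibration over $\Bun_{M}^{\mathrm{ss}}$ for a Levi $M$, with tori (treated by local class field theory, section~\ref{sec:The abelian case}) as the base case.
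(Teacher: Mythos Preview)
The statement you are trying to prove is explicitly labelled a \emph{conjecture} in the paper, and the paper offers no proof; indeed the surrounding text says ``The following conjecture is natural by analogy with the `classical case'. We won't need it to state our conjecture.'' So there is nothing to compare your argument to on the paper's side.

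As for your argument itself, the genuine gap is the second geometric input in Step~1: you assume that
\[
[b']\preceq [b]\ \Longrightarrow\ \Bun_{G,\Fqb}^{[b]}\subset \overline{\Bun_{G,\Fqb}^{[b']}},
\]
i.e.\ the full closure relations of the Harder--Narasimhan stratification. The paper only states (as a theorem, following Kedlaya--Liu) the semicontinuity direction, which gives that down-sets are open; the converse, that every open is down-closed, needs exactly these closure relations, and these are \emph{not} established anywhere in the paper. This is not a technicality: determining the closure relations of Newton strata on $\Bun_G$ was an open problem at the time the paper was written, and is essentially equivalent in difficulty to the connectedness conjecture itself. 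Your own ``Main obstacle'' paragraph acknowledges this, but the alternative route you sketch (degenerating via $\Ext^1$ of line bundles, induction on Levi subgroups) is only an outline: you do not verify that the connected Banach--Colmez family you build actually lands in $\Bun_G^{\a}$ and links the strata as claimed, nor do you carry out the reduction from general $G$ to $\GL_n$ or to tori.

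Step~2 is fine once Step~1 is granted: the basic class is indeed the minimum of $B(G)^{\a}$ for the Newton order (this is standard in Kottwitz's theory), and a poset whose down-set topology has a least element is trivially connected. But without the closure relations your argument does not close, and you have essentially reduced one open statement to another of comparable depth.
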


\subsection{Harder-Narasimhan stratification of  $\Bun_G$}

{\it We suppose from now on that $G$ is quasi-split.}
\\

Using a result of Kedlaya and Liu (who treated the $\GL_n$-case, see \cite{KedlayaLiuRelative1}) we can prove the following. We equip $X_*(A)_\Q^+$ with the order given by $\nu_1\leq \nu_2$ if $\nu_2-\nu_1$ is a positive $\Q$-linear combination of coroots.

\begin{theo}
The Harder-Narasimhan map $HN:B(G)\drt X_*(A)_\Q^+$ that sends $[b]$ to $[w.(-\nu_b)]$ is semi-continuous in the sense that for all $\nu\in X_*(A)_\Q^+$ the set of $[b]\in B(G)$ such that $HN ([b])\leq \nu$ is closed. 
\end{theo}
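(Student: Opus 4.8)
By definition of the topology on $|\Bun_{G,\Fqb}|$, the assertion amounts to exhibiting the complement $\{[b]:HN([b])\not\leq\nu\}$ as $|\mathscr U|$ for an open substack $\mathscr U\subset\Bun_{G,\Fqb}$. The plan is to take $\mathscr U$ to be the open complement of the substack
$$
\Bun_G^{\leq\nu}:\quad S\ \longmapsto\ \{\,\E\in\Bun_G(S)\ :\ HN(\E_s)\leq\nu\text{ for all }s\in S\,\}.
$$
Since this condition is fibrewise and pro-\'etale local on $S$, and since $B(G)\iso|\Bun_{G,\Fqb}|$ identifies $|\Bun_G^{\leq\nu}|$ with $\{[b]:HN([b])\leq\nu\}$, the whole statement reduces to the geometric assertion: \emph{for every $S\in\Perf_{\Fqb}$ and every $G$-bundle $\E$ on $X_S$, the subset $\{s\in S:HN(\E_s)\leq\nu\}$ is closed in $S$}. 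Because $\kappa$ is locally constant on $\Bun_{G,\Fqb}$ by the theorem above, $S$ decomposes into open and closed pieces along $s\mapsto\kappa(\E_s)$; on the pieces where the image of $HN(\E_s)$ in $\pi_1(G)_\Gamma\otimes\Q$ differs from that of $\nu$ the two elements are incomparable and the locus is empty, so one may assume these images agree throughout $S$.

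First I would treat $G=\GL_n$. Then $HN(\E_s)$ is simply the Harder--Narasimhan polygon of the rank $n$ vector bundle $\E_s$ on the complete curve $X_{k(s)}$, and the required statement — for a fixed polygon with the prescribed endpoint, the locus on the base where $HN(\E_s)$ lies below it is closed — is the relative Harder--Narasimhan semicontinuity theorem for families of vector bundles on the relative Fargues--Fontaine curve, due to Kedlaya and Liu (\cite{KedlayaLiuRelative1}). Concretely it says that the locus where $\E_s$ acquires a sub-bundle of some rank $k$ and degree strictly larger than the $k$-th partial sum of $\nu$ is \emph{open} on $S$ — the direction being opposite to the classical picture, which is precisely the manifestation of the fact that the relevant ``spaces of sub-bundles'' are controlled by negative Banach--Colmez spaces. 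I would simply cite this; it is the analytic core and I do not expect to reprove it.

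For a general quasi-split $G$ I would reduce to $\GL_n$ via the Tannakian formalism of Harder--Narasimhan reductions, available on $X$ because semistable bundles on $X$ are stable under tensor product (\cite{Courbe}). Thus every $G$-bundle $\E$ on $X_{k(s)}$ has a canonical reduction to a standard parabolic, and for every representation $(V,\rho)$ of $G$ the Harder--Narasimhan polygon of the associated bundle $\rho_*\E_s$ is obtained by evaluating the Harder--Narasimhan coweight $HN(\E_s)\in X_*(A)_\Q^+$ on the weights of $V$ (in particular its endpoint depends only on $\dim V$ and on the central part of $HN(\E_s)$, which we have fixed); moreover this operation on polygons is monotone for the respective dominance orders. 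Choosing irreducible representations $\rho_1,\dots,\rho_r$ with highest weights positive multiples of the fundamental weights $\omega_1,\dots,\omega_r$ of $G$, and using that $\omega_1,\dots,\omega_r$ detect the dominance order once the image in $\pi_1(G)_\Gamma\otimes\Q$ is fixed, one obtains
$$
HN(\E_s)\leq\nu\ \Longleftrightarrow\ HN\bigl(\rho_{i*}\E_s\bigr)\leq\rho_{i*}(\nu)\quad(i=1,\dots,r),
$$
where the left-to-right implication uses the monotonicity above and the right-to-left one uses that the first slope of $\rho_{i*}(HN(\E_s))$ is $\langle\omega_i,HN(\E_s)\rangle$. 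Each condition on the right is closed in $S$ by the $\GL$ case applied to the family $\rho_{i*}\E$ on $X_S$, so $\{s:HN(\E_s)\leq\nu\}$ is a finite intersection of closed subsets, hence closed, which proves the theorem.

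The step I expect to demand the most care is this last reduction: setting up cleanly, over the Fargues--Fontaine curve, the dictionary between the $G$-Harder--Narasimhan invariant $[w.(-\nu_b)]$ and the Harder--Narasimhan data of associated vector bundles — the analogue over $X$ of the Atiyah--Bott/Behrend--Schieder picture — and checking that the finitely many vector-bundle inequalities produced are genuinely of the shape to which the Kedlaya--Liu statement applies, with endpoints matched correctly. The $\GL_n$ input itself is substantial, but it is borrowed.
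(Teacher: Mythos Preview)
Your proposal is correct and follows exactly the approach the paper indicates: the paper itself offers no proof beyond the sentence ``Using a result of Kedlaya and Liu (who treated the $\GL_n$-case, see \cite{KedlayaLiuRelative1}) we can prove the following,'' and your write-up supplies precisely the reduction from general quasi-split $G$ to $\GL_n$ via the Tannakian compatibility of the HN reduction with associated bundles (which rests on the tensor-product theorem for semistability on $X$), together with the observation that pairings with fundamental weights detect the dominance order once the $\pi_1(G)_\Gamma$-component is fixed. One small point to tidy in the final step: since $G$ is only quasi-split over $E$, take the $\rho_i$ to have highest weights that are $\Gamma$-stable multiples of sums over Galois orbits of fundamental weights, so that the $\rho_i$ are defined over $E$; the argument is otherwise unchanged.
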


We will only use the following corollary to state our conjecture.

\begin{coro}
The semi-stable locus $\Bun_G^{ss}$ is an open substack.
\end{coro}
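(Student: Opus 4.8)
The plan is to identify $\Bun_G^{ss}$ with the preimage of the bottom Harder--Narasimhan stratum and to conclude by semi-continuity. Recall from the discussion above that $[b]$ is basic if and only if $\E_b$ is semi-stable, that $\kappa\colon B(G)\drt\pi_1(G)_\Gamma$ is locally constant, and that on a connected component $\Bun_{G,\Fqb}^\alpha$ there is a unique basic class, whose image $\nu_\alpha^{\min}:=HN(b_\alpha)\in X_*(A)_\Q^+$ is the smallest value taken by $HN$ on that component. So, as a set, $|\Bun_{G,\Fqb}^{ss}|=\coprod_\alpha\{[b] : \kappa([b])=\alpha,\ HN([b])=\nu_\alpha^{\min}\}$. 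To show that $\Bun_G^{ss}$ is an \emph{open substack} (not merely that this subset of $|\Bun_{G,\Fqb}|$ is open) it suffices to show: for every $S\in\Perf_{\Fq}$, which I may take affinoid perfectoid, and every $G$-bundle $\E$ on $X_S$, the locus $\{s\in|S| : \E_{X_{k(\bar s)}}\text{ is semi-stable on }X_{k(\bar s)}\}$ is open in $|S|$. The case of the universal bundle then recovers openness of $|\Bun_{G,\Fqb}^{ss}|$.

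The input is the family version of the Harder--Narasimhan formalism underlying the theorem: for a $G$-bundle $\E$ on $X_S$ the function $s\mapsto HN(\E_{X_{k(\bar s)}})$ on $|S|$ is semi-continuous (its value can only increase under specialisation, so $\{s : HN(\E_{\bar s})\geq\nu\}$ is closed for every $\nu$); for $\GL_n$ this is Kedlaya--Liu~\cite{KedlayaLiuRelative1}, and general $G$ is reduced to it exactly as in the theorem. I would then argue as follows. First, since $\kappa$ is locally constant, restrict to the open-and-closed part of $|S|$ on which $\kappa(\E_{\bar s})$ is a fixed $\alpha$. Next, use the exhaustion $\Bun_G=\bigcup_n U_n$ by quasi-compact open substacks furnished by the presentations $pres_{\mathcal{D}_n}$ of Proposition~\ref{prop:Pres est un diamant}: if $S\drt\Bun_G$ factors through such a $U_n$, the bundles $\E_{\bar s}$ are quotients of a fixed $\O(d_1)^{m_1}$, so their Harder--Narasimhan polygons form a bounded family and only finitely many values $\nu>\nu_\alpha^{\min}$ are attained, say with minimal elements $\nu_1,\dots,\nu_r$; hence the non-semi-stable locus in $|S|$ is $\bigcup_{i=1}^r\{s : HN(\E_{\bar s})\geq\nu_i\}$, which is closed by semi-continuity, and its complement is open. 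An arbitrary $S\drt\Bun_G$ is covered by the opens pulled back from the $U_n$, so the semi-stable locus is open on each piece and hence on $S$. Since this holds for all charts, $\Bun_G^{ss}\subset\Bun_G$ is open, and as an open substack its functor of points is the expected one.

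The genuine content — and the step I would expect to be the real obstacle — is the passage from the topological space $B(G)$ to arbitrary $p$-adic families, i.e.\ the relative Harder--Narasimhan theorem of Kedlaya--Liu together with the boundedness of Harder--Narasimhan polygons over a quasi-compact open; everything else (local constancy of $\kappa$, the basic $\Leftrightarrow$ semi-stable dictionary, the reduction of general $G$ to $\GL_n$) has already been recorded above, and the deduction from those inputs is formal. One mild point to verify carefully is that over a quasi-compact open of $\Bun_G$ only finitely many Harder--Narasimhan strata occur, which is precisely the boundedness built into the data $\mathcal{D}_n$ defining the presentations $pres_{\mathcal{D}_n}$.
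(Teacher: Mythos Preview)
Your argument is correct and is precisely the deduction the paper has in mind: the corollary is stated there without proof, as an immediate consequence of the semi-continuity theorem just above it, and you have spelled that out carefully.

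Two side remarks. First, you invoke semi-continuity in the form ``$\{s:HN(\E_{\bar s})\geq\nu\}$ is closed'', which is indeed what gives openness of the minimal (basic) stratum; the theorem as written in the paper has the inequality reversed, which would make the basic locus closed rather than open and is presumably a slip --- Kedlaya--Liu prove the statement in your direction. Second, the finiteness step via the exhaustion $\Bun_G=\bigcup_n U_n$ and boundedness of quotients of $\O(d_1)^{m_1}$ is sound but heavier than necessary: on a fixed component $\Bun_{G,\Fqb}^\alpha$ the Newton points occurring in $B(G)$ already form a discrete subset of $X_*(A)_\Q^+$ with a unique minimum $\nu_\alpha^{\min}$, and the minimal elements strictly above $\nu_\alpha^{\min}$ are finite in number by the standard combinatorics of $B(G)$ (Rapoport--Richartz). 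Hence the non-semi-stable locus is a finite union $\bigcup_i\{HN\geq\nu_i\}$ of closed sets without any appeal to the presentations $pres_{\mathcal{D}_n}$.
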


\subsection{The automorphism group of $\E_b$}

Fix $b\in G(L)$ and consider $J_b$ the $\s$-centralizer of $b$. This is an inner form of a Levi subgroup of $G$, the centralizer of the slope morphism $\nu_b$. In particular, if $b$ is basic this is an inner form of $G$.

\begin{defi}
We note $\widetilde{J}_b$ the pro-étale sheaf on $\Perf_{\Fqb}$ defined by 
$$
\widetilde{J}_b (S) = \text{Aut} ( \E_{b|X_S}).
$$
\end{defi}

According to \ref{coro:diagonale diamond} this is a diamond group (to be more precise when restricted to $\Perf_F$ for any perfectoid field $F$). 
We have 
$$
J_b (E) = \Aut ( \mathcal{F}_b)
$$
and thus an inclusion
$$
\underline{J_b(E)}\subset \widetilde{J}_b.
$$
We then have the following proposition.

\begin{prop}
If $b$ is basic then $\underline{J_b(E)}= \widetilde{J}_b$.
\end{prop}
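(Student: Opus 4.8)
The statement to prove is that when $b \in G(L)$ is basic, the natural inclusion $\underline{J_b(E)} \subset \widetilde{J}_b$ is an equality of pro-\'etale sheaves on $\Perf_{\Fqb}$. Equivalently, for every perfectoid $S/\Fqb$, every automorphism of $\E_{b|X_S}$ comes, pro-\'etale locally on $S$, from $J_b(E)$. Since both sheaves are pro-\'etale sheaves and the statement is local on $S$, the plan is to reduce to the case $S = \spa(F)$ for $F|\Fqb$ an algebraically closed perfectoid field, where the curve $X_F^{sch}$ is a Dedekind scheme and all the $p$-adic Hodge theory tools of \cite{Courbe} are available; then one has to promote the fiberwise statement to an isomorphism of sheaves, which is where the diamond structure from Corollary \ref{coro:diagonale diamond} does the work.

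The key steps I would carry out are as follows. \emph{Step 1 (fiberwise computation over an algebraically closed point).} For $F|\Fqb$ algebraically closed, $\E_{b|X_F}$ corresponds to the $G$-isocrystal $\mathcal{F}_b$ via the functor $\E(-)$, and $\End(\E_{b|X_F})$ as an algebra is computed from $\End(\mathcal{F}_b)$ together with the (trivial, by semistability) Harder--Narasimhan structure. Because $b$ is basic, $\E_b$ is semistable of slope $\kappa(b)$-determined degree, hence isoclinic; a semistable $G$-bundle over $X_F$ has automorphism scheme whose $F$-points are exactly $J_b(E) = \Aut(\mathcal{F}_b)$ — the point being that a global section of the adjoint bundle $\mathrm{ad}(\E_b)$, which is again semistable of slope $0$ since $b$ is basic, is a constant element of $\mathfrak{j}_b = \Lie(J_b)$, and invertibility forces it into $J_b(E)$. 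This uses precisely the classification theorem (semistable bundles are sums of $\O(\l)$'s) and the completeness of $X^{sch}$ (degree-$0$ line bundles with a nonzero section are trivial). \emph{Step 2 (from fibers to the sheaf).} By Corollary \ref{coro:diagonale diamond} the sheaf $\widetilde{J}_b = \underline{\mathrm{Isom}}(\E_b,\E_b)$ is a diamond over $\spa(\Fqb)$, and $\underline{J_b(E)}$ is an open (indeed clopen, being a profinite-by-discrete group) sub-diamond. The inclusion $\underline{J_b(E)} \hookrightarrow \widetilde{J}_b$ is a monomorphism of diamonds which, by Step 1, is a bijection on $\bar{F}$-points for every algebraically closed perfectoid $F|\Fqb$. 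A monomorphism of diamonds that is surjective on geometric points is an isomorphism (by the theory of diamonds in \cite{ScholzeBerkeley}: such a map is a universally injective, universally surjective map, hence an isomorphism on underlying v-sheaves once one knows it is e.g. pro-\'etale or at least qcqs). \emph{Step 3 (conclusion).} Assemble Steps 1 and 2 to get $\underline{J_b(E)} = \widetilde{J}_b$ as sheaves on $\Perf_{\Fqb}$.

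The main obstacle I expect is Step 1 in the relative direction — or rather, making Step 2 rigorous without secretly needing a relative version of Step 1. The cleanest route avoids any relative argument: one only needs the fiberwise statement over geometric points plus the fact (Corollary \ref{coro:diagonale diamond}) that $\widetilde{J}_b$ is already known to be a nice geometric object, and then invokes that a mono of diamonds bijective on geometric points is an iso. The subtlety is that "bijective on geometric points" must be checked for \emph{all} algebraically closed perfectoid $F$, not just those of a fixed characteristic or cardinality, and one must know $\widetilde{J}_b$ is separated/qcqs enough for the criterion to apply — but separatedness follows since $\mathrm{Isom} \subset \Hom$ is an open immersion into a Banach--Colmez-type space which is separated. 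A secondary delicate point inside Step 1 is the claim that $H^0(X_F, \mathrm{ad}(\E_b))$ is exactly $\mathfrak{j}_b = \Lie J_b(E)$ when $b$ is basic: this is the $G$-analogue of $H^0(X_F,\O) = E$ and of the slope-zero case of the vector bundle classification, and it is where basicness (central Newton point, so $\mathrm{ad}(\E_b)$ semistable of slope $0$) is genuinely used; for non-basic $b$ the equality fails and $\widetilde{J}_b$ is strictly larger than $\underline{J_b(E)}$, consistent with the hypothesis.
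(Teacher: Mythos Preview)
The paper does not actually prove this proposition; as an overview article it states the result and immediately passes to the example of the trivial bundle ($b=1$), where it records that automorphisms of $\E_1$ over $X_S$ are $\mathscr{C}(|S|,G(E))$. That example, together with the structural proposition that follows ($\widetilde{J}_b=\widetilde{J}_b^{\,0}\rtimes\underline{J_b(E)}$ with $\widetilde{J}_b^{\,0}$ filtered by Banach--Colmez pieces coming from the positive HN slopes of $\mathrm{ad}(\E_b)$), points to a \emph{direct} computation valid over every $S$: for $b$ basic the inner automorphism group scheme $\underline{\Aut}(\E_b)$ over $X_S$ is the constant group scheme $J_b\times_E X_S$, because both are obtained from $G$ by twisting with the very same cocycle $\sigma\mapsto\mathrm{Int}(b)$ (once via $Y_S\to X_S$, once via $L/E$); hence $\widetilde{J}_b(S)=J_b\big(H^0(X_S,\O_{X_S})\big)=J_b(\underline{E}(S))=\underline{J_b(E)}(S)$, using $\mathbb{V}_0=\underline{E}$.

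Your approach is different: you compute on geometric fibres (Step~1) and then try to globalize by an abstract diamond argument (Step~2). Step~1 is fine in substance, though the phrasing ``a global section of the adjoint bundle $\mathrm{ad}(\E_b)$ \dots\ and invertibility forces it into $J_b(E)$'' conflates the Lie algebra bundle with the automorphism group scheme; for general $G$ you must work with the group scheme $\underline{\Aut}(\E_b)$ over $X_F$, not with $\mathrm{ad}(\E_b)$. The real issue is Step~2. The statement ``a monomorphism of diamonds bijective on geometric points is an isomorphism'' is not a theorem without additional finiteness, and you yourself flag the need for a qcqs hypothesis. But $\underline{J_b(E)}$ is not quasi-compact, and you do not verify that the inclusion into $\widetilde{J}_b$ is qcqs. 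This is a genuine gap as written. It can be closed, but the cleanest way to close it already contains the direct proof: since $b$ is basic the internal endomorphism bundle is semistable of slope~$0$, hence canonically $\mathfrak{j}_b\otimes_E\O_{X_S}$ (it is $\E(-)$ of a slope-zero isocrystal), so $f_*$ of it is the totally disconnected sheaf $\underline{\mathfrak{j}_b}$; once you know $\widetilde{J}_b$ sits inside such a $0$-dimensional sheaf, the fibrewise computation immediately upgrades. In other words, making Step~2 rigorous forces you to carry out the relative slope-$0$ computation you were trying to avoid, at which point Step~2 becomes superfluous.

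So: your plan is morally correct but takes a detour through a criterion whose hypotheses you do not check, whereas the argument implicit in the paper (and needed anyway to rescue your Step~2) is simply to observe that for basic $b$ the automorphism group scheme of $\E_b$ over $X_S$ is the constant group $J_b$, and then read off global sections.
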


\begin{exem}
The automorphisms of the trivial $G$-bundle $\E_1$ is given by $\underline{G(E)}$. More precisely, for any $S\in \Perf_{\Fq}$ the automorphisms of the trivial $G$-bundle on $X_S$ is the group of continuous applications $\mathscr{C}(|S|, G(E))$.
\end{exem}

\begin{rema}
Contrary to the "classical situation" (the stack of $G$-bundles on an algebraic curve over a field $k$ with $G/k$) we thus see that {\it the automorphisms of the trivial $G$-bundle is $G(E)$ and not the algebraic group $G$ !}
\end{rema}

But any automorphism of $\E_b$ induces an automorphism of its canonical semi-stable reduction and we thus always have a morphism 
$$
\widetilde{J}_b\ldrt \underline{J_b(E)}.
$$
At the end we have the following structure result for $\widetilde{J}_b$.

\begin{prop}
We have $\pi_0 (\widetilde{J}_b)=J_b(E)$ and $\widetilde{J}_b= \widetilde{J}_b^{0}\rtimes \underline{J_b(E)}$. Moreover the neutral connected component $\widetilde{J}_b^0$ is a unipotent group diamond, resp. 
perfectoid space when $E=\Fq\llparent \pi\rrparent$.
\end{prop}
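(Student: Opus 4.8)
The plan is to organise the structure of $\widetilde{J}_b$ around the morphism $\widetilde{J}_b\ldrt\underline{J_b(E)}$ constructed just above, which restricts an automorphism of $\E_b$ to its canonical semi-stable reduction. First I would check that the composite $\underline{J_b(E)}=\Aut(\mathcal{F}_b)\hookrightarrow\widetilde{J}_b\ldrt\underline{J_b(E)}$ is the identity: the isocrystal $\mathcal{F}_b$ is canonically graded by its Newton slopes, and this grading is exactly the one defining the canonical reduction of $\E_b$, so the action of $\Aut(\mathcal{F}_b)$ on $\E_b$ respects that reduction and induces on it the tautological action of $J_b(E)$ on itself. Setting $\widetilde{J}_b^0:=\ker\big(\widetilde{J}_b\to\underline{J_b(E)}\big)$ this already gives a split extension $1\to\widetilde{J}_b^0\to\widetilde{J}_b\to\underline{J_b(E)}\to 1$, hence $\widetilde{J}_b=\widetilde{J}_b^0\rtimes\underline{J_b(E)}$; everything then reduces to identifying $\widetilde{J}_b^0$, the automorphisms of $\E_b$ inducing the identity on the canonical reduction.

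To do that I would pass to $\Perf_F$ with $F|\Fqb$ algebraically closed (the general case following by descent together with the classification of $G$-bundles) and exhibit $\widetilde{J}_b^0$ as the global sections over $X_S$ of a bundle of unipotent groups. Intrinsically, $\widetilde{J}_b$ is the sheaf of sections of the group scheme $\mathrm{Ad}(\E_b)$ over $X_S$ (the twist of $G$ by $\E_b$ under conjugation); it carries the reduction to the parabolic $P_b$ attached to the dominant $\nu_b$ together with its Levi $M_b$, and the automorphisms trivial on the canonical $M_b$-reduction are the sections of the twisted unipotent radical $\mathrm{Ad}_{U_b}(\E_b)$, a bundle of unipotent groups whose $\mathrm{Ad}$-graded pieces are vector bundles of constant, strictly positive Harder--Narasimhan slope. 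Concretely one reduces to $\GL_n$ (via the reduction to $\GL_n$ recalled above): writing $\E^W_b\cong\bigoplus_k\O(\lambda_k)^{m_k}$ with $\lambda_1>\cdots>\lambda_r$, the internal Hom sheaf $\hom\big(\O(\lambda_i),\O(\lambda_j)\big)$ is semi-stable of slope $\lambda_j-\lambda_i$, so $f_*\hom\big(\O(\lambda_i),\O(\lambda_j)\big)$ vanishes for $\lambda_j<\lambda_i$, is the locally constant sheaf $\underline{D_{\lambda_i}}$ attached to the division algebra $\End(\O(\lambda_i))$ for $i=j$, and is a \emph{positive} Banach--Colmez space for $\lambda_j>\lambda_i$. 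Hence $\underline{\End}(\E^W_b)$ preserves the decreasing slope filtration, the induced map onto the block-diagonal $\prod_k M_{m_k}(\underline{D_{\lambda_k}})$ is exactly the canonical-reduction map, and $\widetilde{J}_b^0=1+N$ with $N$ the slope-raising part, a finite iterated extension of positive Banach--Colmez spaces $f_*\O_{X_S}(d)$, $d>0$, nilpotent of bounded order.

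It then remains to feed in the structure of positive Banach--Colmez spaces: each $f_*\O_{X_S}(d)$ with $d>0$ is representable by a connected ``unipotent'' group diamond, and by a perfectoid space --- indeed by the relative perfectoid open ball $(\varprojlim_{\mathrm{Frob}}\mathscr{G}_d)\times S$ recalled earlier --- when $E=\Fq\llparent\pi\rrparent$; and an iterated extension of connected unipotent group diamonds (resp.\ perfectoid spaces) stays connected, unipotent and representable. Therefore $\widetilde{J}_b^0$ is a unipotent connected group diamond (resp.\ perfectoid space when $E=\Fq\llparent\pi\rrparent$). Since it is connected while $\underline{J_b(E)}$ is totally disconnected, the semidirect product decomposition yields $\pi_0(\widetilde{J}_b)=\pi_0(\underline{J_b(E)})=J_b(E)$ and identifies $\widetilde{J}_b^0$ with the neutral connected component.

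The hard part will be the analysis of $\widetilde{J}_b^0$: one needs that positive Banach--Colmez spaces (resp.\ the universal covers $\varprojlim_{\mathrm{Frob}}\mathscr{G}_d$) are representable, connected and unipotent, that these properties survive iterated extensions and pushforward along $X_S\to S$, and that the construction is functorial enough in $S$ to glue --- in the relative setting the statement should really be read after restriction to $\Perf_F$ for perfectoid fields $F$, as for the diamond property of $\widetilde{J}_b$ itself. A secondary, purely bookkeeping, point is to keep the Dieudonn\'e--Manin and Harder--Narasimhan sign conventions aligned, so that it is indeed the strictly positive-slope part of $\mathrm{ad}(\E_b)$ that is the kernel of the canonical-reduction map.
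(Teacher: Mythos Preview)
The paper does not actually prove this proposition: it is an overview article and the statement is left without proof, accompanied only by the $\GL_2$ example $\E_b=\O\oplus\O(1)$ and the one-line remark that ``in general $\widetilde{J}_b^0$ has a filtration whose graded pieces are Banach--Colmez spaces.'' Your proposal is a correct fleshing-out of exactly this sketch: you use the morphism $\widetilde{J}_b\to\underline{J_b(E)}$ the paper constructs just above, split it via $\underline{J_b(E)}=\Aut(\mathcal{F}_b)\hookrightarrow\widetilde{J}_b$, and identify the kernel as the strictly upper-triangular part of $\underline{\End}(\E_b)$ with respect to the Harder--Narasimhan grading, which is an iterated extension of positive Banach--Colmez spaces $f_*\O(d)$, $d>0$. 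This is precisely the mechanism illustrated by the paper's $\GL_2$ example (where the kernel is the single Banach--Colmez space $\mathbb{B}^{\ph=\pi}$) and named in the subsequent remark, so your approach coincides with what the paper indicates.
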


\begin{exem}\label{ex:auto ordi curve}
Let us treat a simple example. Take $G=\GL_2$ and $b=\text{diag} (1,\pi^{-1})$. We then have $\E_b=\O\oplus \O(1)$. Then
$$
\widetilde{J}_b= \left ( \begin{matrix}
\underline{E}^\times & \mathbb{B}^{\ph=\pi} \\
0 & \underline{E}^\times
\end{matrix}\right ).
$$
As we can see, $\widetilde{J}_b^0= \mathbb{B}^{\ph=\pi}$, a Banach-Colmez space.
\end{exem}

 In general $\widetilde{J}_b^0$ has a filtration whose graded pieces are Banach-Colmez spaces.

\subsection{Uniformization of HN strata}

\subsubsection{The basic case}

We first treat the basic case since it is simpler and this is the only case we need to state our conjecture.
Recall (Kottwitz) that the restriction of $\kappa$ induces a bijection
\begin{equation}
\label{eq:kappa restreint au basic}
\kappa: B(G)_{basic}\iso \pi_1(G)_\Gamma.
\end{equation}
Geometrically this is translated in the following way.

\begin{prop}
For each $\a\in \pi_1(G)_\Gamma$ the basic locus in the component cut out by $\a$, the open subset 
$$
\big | \Bun_{G,\Fqb}^{\a,ss}\big |,
$$
is only one point determined by (\ref{eq:kappa restreint au basic}).
\end{prop}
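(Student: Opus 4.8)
The plan is to unwind the statement into two separate assertions and prove each. The proposition says that for a fixed $\a\in\pi_1(G)_\Gamma$, the topological space $|\Bun_{G,\Fqb}^{\a,ss}|$ underlying the semistable locus in the $\a$-component consists of a single point, and moreover that this point is precisely the point $[b]$ where $[b]\in B(G)_{basic}$ is the unique basic class with $\kappa([b])=\a$. The set-theoretic part is essentially a matter of assembling facts already established in the excerpt; the topological part (that it is genuinely one \emph{point} of the stack, i.e. that the locus is irreducible in the appropriate sense) needs slightly more care.

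First I would recall the three ingredients. By the classification theorem for $G$-bundles (Theorem \ref{theo:classification des G torseurs}, together with the stated extension to the equal characteristic case), over an algebraically closed perfectoid $F|\Fqb$ we have a bijection $B(G)\iso H^1_\et(X^{sch},G)$, and hence the identification $B(G)\iso|\Bun_{G,\Fqb}|$. Under this identification, the decomposition into components $\Bun_{G,\Fqb}=\coprod_{\a}\Bun_{G,\Fqb}^\a$ corresponds on points to the fibres of Kottwitz's map $\kappa:B(G)\to\pi_1(G)_\Gamma$. Next, the semistability criterion stated just before Section \ref{sec:The automorphism group of E b}, namely that $\E_b$ is semistable if and only if $b$ is basic, shows that the points of $|\Bun_{G,\Fqb}^{\a,ss}|$ are exactly the elements of $B(G)_{basic}\cap\kappa^{-1}(\a)$. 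Finally, Kottwitz's bijection (\ref{eq:kappa restreint au basic}), $\kappa:B(G)_{basic}\iso\pi_1(G)_\Gamma$, shows this intersection is a single element $[b]$. So as a set, $|\Bun_{G,\Fqb}^{\a,ss}|$ has exactly one element, determined by (\ref{eq:kappa restreint au basic}).

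The remaining point — and this is where I expect the real work to be — is to check that $\Bun_{G,\Fqb}^{\a,ss}$ is genuinely a one-point substack, i.e.\ that its underlying topological space, equipped with the topology coming from open substacks (as defined at the end of Section \ref{sec:The stack BunG}, \emph{not} the discrete quotient topology), really is the one-point space, so that calling it ``only one point'' is legitimate. Here I would argue that $\Bun_{G,\Fqb}^{\a,ss}$ is the quotient stack $[\spa(\Fqb)/\widetilde J_b]$ for the chosen basic $b$: indeed, any $G$-bundle on $X_S$ whose geometric fibres are all semistable of invariant $\a$ is, pro-étale locally on $S$, isomorphic to the pullback of $\E_b$ — this is the relative form of the classification theorem, and one should invoke the fact that semistability of a fixed slope is an open and closed condition together with the gluing provided by the fact that $\text{Isom}(\E_{b},\cdot)$ is a (surjective, since locally nonempty) diamond torsor over the relevant locus. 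Granting that, the stalk structure (trivial residual gerbe, $\widetilde J_b$ connected-by-$J_b(E)$ as in the structure proposition for $\widetilde J_b$) shows the underlying space is a single point.

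The main obstacle, concretely, is the relative uniformization step: passing from the pointwise classification over algebraically closed fields to the statement that every semistable $G$-bundle of invariant $\a$ over a general perfectoid base is \emph{pro-étale locally} isomorphic to $\E_b$. For $\GL_n$ this is exactly the semicontinuity/constancy of Harder--Narasimhan polygons in families due to Kedlaya--Liu (\cite{KedlayaLiuRelative1}), combined with the fact that a family of semistable bundles all of slope $\l$ on a relative curve with algebraically closed fibres is, locally, a family of trivial twists of $\O(\l)$ — which one deduces from the vanishing of the relevant $H^1$ of $\mathscr{H}om$ bundles having only nonnegative slopes. The general $G$ case is then reduced to $\GL_n$ by a Tannakian argument exactly as in the sketch given after the Hope in Section \ref{sec:defi and prop of Bung}, using that $G\subset\GL_n$ can be cut out as a line-stabilizer. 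Once this local uniformization is in hand, everything else is bookkeeping with the facts recalled above.
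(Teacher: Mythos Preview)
Your first two paragraphs are correct and constitute a complete proof of the proposition; this is exactly how the paper treats it (the paper gives no separate proof, regarding the statement as an immediate unwinding of the identification $B(G)\iso|\Bun_{G,\Fqb}|$, the equivalence $\text{semistable}\Leftrightarrow\text{basic}$, and Kottwitz's bijection (\ref{eq:kappa restreint au basic})).

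Your third and fourth paragraphs, however, are based on a misreading of what is being claimed. The object $|\Bun_{G,\Fqb}^{\a,ss}|$ is by definition a \emph{set} (of equivalence classes of field-valued points) equipped with a topology; once you have shown this set has a single element, it is automatically a one-point topological space and there is nothing further to check. You seem to be worried that ``one point'' should mean something stronger about the stack structure, but the proposition is purely a statement about $|\cdot|$.

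The relative uniformization you sketch --- that every geometrically-fibrewise-$\E_b$ bundle is pro-\'etale locally isomorphic to $\E_b$, reducing to Kedlaya--Liu for $\GL_n$ and then running a Tannakian argument --- is precisely the content of the \emph{next} result in the paper (the theorem that $x_b:[\spa(\Fqb)/\underline{J_b(E)}]\iso\Bun_{G,\Fqb}^{\a,ss}$), not of this proposition. So you have not made an error, but you have proved more than was asked, and attributed the difficulty to the wrong statement.
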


Fix $b\in G(L)$ with $\kappa ([b])=\a$. We then have a morphism
\begin{equation}
\label{eq:morphisme unif basic}
x_b: \big [ \spa (\Fqb)/\underline{J_b(E)}\big ]\ldrt \Bun_{G,\Fqb}^{\a,ss}
\end{equation}
defined by $\E_b$. Here the left hand side is the classifying stack of $J_b(E)$-pro-étale torsors.
As a consequence of results of Kedlaya and Liu (the $\GL_n$-case for $b=1$, see \cite{KedlayaLiuRelative1}) one can prove the following.

\begin{theo}\label{theo:unif basic loc}
The morphism (\ref{eq:morphisme unif basic}) is an isomorphism
$$
\big [ \spa (\Fqb)/\underline{J_b(E)}\big ]\iso \Bun_{G,\Fqb}^{\a,ss}.
$$
\end{theo}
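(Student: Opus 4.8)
The plan is to reduce the statement to the two inputs we have available: the classification theorem (Theorem~\ref{theo:classification des G torseurs}), which tells us that $\Bun_{G,\Fqb}^{\a,ss}$ has a single point represented by $\E_b$, and the relative results of Kedlaya--Liu on families of semi-stable vector bundles on $X_S$. Concretely, I would argue that the morphism $x_b$ of (\ref{eq:morphisme unif basic}) is (i) surjective as a map of pro-\'etale stacks, (ii) a monomorphism, and (iii) "smooth" in the appropriate sense so that being a surjective monomorphism forces it to be an isomorphism. Step~(i) is essentially the surjectivity part of the classification theorem made relative: given $S\in\Perf_{\Fqb}$ and a $G$-bundle $\E$ on $X_S$ whose every geometric fiber is semi-stable of class $\a$, one needs to show that pro-\'etale locally on $S$ there is an isomorphism $\E\cong\E_{b|X_S}$. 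By the $\GL_n$-case of Kedlaya--Liu, semi-stability of all geometric fibers of a vector bundle is an open condition and, on the semi-stable locus, the bundle is pro-\'etale locally isomorphic to a fixed $\bigoplus\O(\l_i)$; transporting this through a faithful representation $G\hookrightarrow\GL_n$ and using the Tannakian description of $\Bun_G$ plus the fact that $\E_b$ is semi-stable exactly because $b$ is basic, one gets the desired local triviality. Step~(ii): once $\E\cong\E_{b|X_S}$ pro-\'etale locally, the set of such isomorphisms is a torsor under $\widetilde J_b(S)=\Aut(\E_{b|X_S})$, and since $b$ is basic, Proposition~(the one identifying $\widetilde J_b=\underline{J_b(E)}$ for $b$ basic) gives $\widetilde J_b=\underline{J_b(E)}$, so the automorphism sheaf on both sides matches and $x_b$ is a monomorphism of stacks.

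For step~(iii), the point is that $[\spa(\Fqb)/\underline{J_b(E)}]\to\Bun_{G,\Fqb}^{\a,ss}$, being a surjection and a monomorphism of pro-\'etale stacks, is an isomorphism provided we know that the target is indeed a sheaf/stack for the pro-\'etale topology (which is Proposition: "$\Bun_G$ is a stack") and that descent can be performed along the cover $\spa(\Fqb)\to\Bun_{G,\Fqb}^{\a,ss}$ induced by $\E_b$. So I would phrase the argument as: the presheaf quotient of $\spa(\Fqb)$ by $\underline{J_b(E)}$ sheafifies to $\Bun_{G,\Fqb}^{\a,ss}$ because (i) every object of the target is, pro-\'etale locally, pulled back from $\spa(\Fqb)$ via $\E_b$, and (ii) the two pullbacks agree up to the action of $\underline{J_b(E)}$; these are exactly the surjectivity-of-objects and faithfulness-plus-fullness-on-isomorphisms statements of a stack equivalence.

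The main obstacle is the relative statement underlying step~(i): one really needs that for a family $\E$ over $X_S$ with all geometric fibers isomorphic to $\E_b$ (equivalently, semi-stable of the given class), the isomorphism can be found pro-\'etale locally on $S$, not merely on geometric points. This is where the Kedlaya--Liu machinery (semi-continuity of the Harder--Narasimhan polygon in families, and the structure of the semi-stable locus) does the real work, and it is the reason the theorem is stated as a consequence of "results of Kedlaya and Liu" rather than proved from scratch here; for general $G$ one bootstraps from their $\GL_n$ result via a faithful representation together with the observation that a $G$-reduction of a trivial-ish $\GL_n$-bundle is controlled by $\underline{J_b(E)}$-valued data. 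A secondary technical point is handling the pro-\'etale sheafification carefully so that $[\spa(\Fqb)/\underline{J_b(E)}]$ means the stack quotient (with $J_b(E)$ given its locally profinite topology, hence $\underline{J_b(E)}$ a pro-\'etale sheaf of groups), matching the residual gerbe computation alluded to in the introduction.
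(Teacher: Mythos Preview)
Your proposal is correct and follows essentially the same approach as the paper. The paper does not give a detailed proof either: it simply unpacks the statement as ``if $\E$ is a $G$-bundle on $X_S$ whose geometric fibers are all isomorphic to $\E_b$, then pro-\'etale locally on $S$ one has $\E\simeq\E_b$, and the resulting $\underline{J_b(E)}$-torsor is $\mathrm{Isom}(\E_b,\E)$'', attributing the key input to Kedlaya--Liu (the $\GL_n$-case for $b=1$). Your steps (i)--(iii) are exactly this argument spelled out, including the reduction to $\GL_n$ via a faithful representation and the identification $\widetilde J_b=\underline{J_b(E)}$ for basic $b$; the one point you might make more explicit is that the Kedlaya--Liu result as cited is for slope~$0$ (i.e.\ $b=1$), and the passage to an arbitrary basic slope for $\GL_n$ uses the cyclic unfolding cover $X_h\to X$ to reduce $\O(d/h)$ to a trivial bundle upstairs.
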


This theorems means simply that if $S\in \Perf_{\Fqb}$ and $\E$ is a $G$-bundle on $X_S$ such that for each geometric point $\bar{s}\drt S$, $\E_{|X_{k(\bar{s})}}\simeq \E_b$, then there exists a pro-étale covering $T\drt S$ such that $\E_{|X_T}\simeq \E_b$. The associated $\underline{J_b(E)}$-torsor over $S$ is then the one of isomorphisms between $\E_b$ and $\E$.

\subsubsection{The general case}

We won't use this general case in the statement of the conjecture and the reader can skip it.
\\
According to Kottwitz, the map 
$$
B(G)\xrig{\ (\kappa,\nu)\ } \pi_1(G)_\Gamma \times X_*(A)_\Q^+
$$
is injective. Geometrically this is translated in the following way.

\begin{prop}
Fix $\a\in \pi_1(G)_\Gamma$. Then for each $\nu\in X_*(A)_\Q^+$ the stratum $\big |\Bun_{G,\Fqb}^{\a,HN=\nu}\big |$ is either empty or one point.  
\end{prop}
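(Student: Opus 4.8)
The plan is to reduce the statement to the known injectivity of the map $B(G)\xrightarrow{(\kappa,\nu)}\pi_1(G)_\Gamma\times X_*(A)_\Q^+$ (Kottwitz), combined with the already-established bijection $B(G)\iso |\Bun_{G,\Fqb}|$ and the classification of $G$-bundles (Theorem \ref{theo:classification des G torseurs}). First I would observe that, by the dictionary between points of $\Bun_{G,\Fqb}$ and elements of $B(G)$, a point of the stratum $|\Bun_{G,\Fqb}^{\a,HN=\nu}|$ corresponds to a class $[b]\in B(G)$ with $\kappa([b])=\a$ and $HN([b])=\nu$, the latter meaning (by the discussion following Theorem \ref{theo:classification des G torseurs}) that $[w.(-\nu_b)]=\nu$, equivalently $\nu_b$ is the unique element of $X_*(A)_\Q^+$ in the orbit $W\cdot(-w\nu)$ — so prescribing $HN([b])$ is the same as prescribing the Newton point $\nu_b$. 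Hence the set of such $[b]$ is exactly the fiber of $(\kappa,\nu)$ over $(\a, \nu_b(\nu))$, which by Kottwitz's injectivity has at most one element.

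The remaining issue is to check that $|\Bun_{G,\Fqb}^{\a,HN=\nu}|$ really is the image of this fiber under $B(G)\iso|\Bun_{G,\Fqb}|$, i.e. that the topological/geometric notion of HN-stratum matches the combinatorial recipe $[b]\mapsto[w.(-\nu_b)]$. This is precisely the content of the semi-continuity theorem for $HN$ stated just above, together with the identification of the Harder--Narasimhan polygon of $\E_b$ with $[w.(-\nu_b)]$; granting those, the locally closed substack $\Bun_{G,\Fqb}^{\a,HN=\nu}$ has underlying set the preimage of $\{[b]\ :\ \kappa([b])=\a,\ HN([b])=\nu\}$, and we are done. So the proof is essentially: translate to $B(G)$, invoke the HN-polygon formula, invoke Kottwitz injectivity.

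The one genuine subtlety — and the step I expect to require care — is the interaction between the two invariants: one must make sure that fixing $\a=\kappa([b])$ and the Newton point $\nu_b$ (not merely the image polygon $[w.(-\nu_b)]$, but these determine each other once $\a$ is pinned down, since $\nu_b$ and $\kappa([b])$ both refine to the image of $\nu_b$ in $\pi_1(G)_\Q$ compatibly) really does cut out a single fiber of $(\kappa,\nu)$ and not an a priori larger set. Concretely, one uses that the natural map $X_*(A)_\Q^+\to\pi_1(G)_{\Gamma,\Q}$ sends $\nu_b$ to the image of $\kappa([b])$, so that the pair $(\a,\nu)$ with $\nu$ in the relevant poset is attained by at most one $\nu_b\in X_*(A)_\Q^+$; then apply Kottwitz's injectivity of $(\kappa,\nu)$ verbatim. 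Everything else is formal, parallel to the basic case (the preceding Proposition), where the poset collapses and one recovers that $|\Bun_{G,\Fqb}^{\a,ss}|$ is a single point.
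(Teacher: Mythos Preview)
Your approach is correct and matches the paper's: the proposition is stated there precisely as the geometric translation of Kottwitz's injectivity of $(\kappa,\nu):B(G)\to\pi_1(G)_\Gamma\times X_*(A)_\Q^+$, via the bijection $B(G)\iso|\Bun_{G,\Fqb}|$, and no further argument is given. Your write-up spells out exactly this translation.

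One remark: the ``subtlety'' you flag in the last paragraph is not actually present. The map $\nu_b\mapsto w.(-\nu_b)$ is the involution $-w_0$ of $X_*(A)_\Q$, which is a bijection of the dominant chamber $X_*(A)_\Q^+$ onto itself. Hence prescribing $HN([b])=\nu$ already determines $\nu_b$ uniquely, with no need to invoke compatibility with $\kappa$ in $\pi_1(G)_{\Gamma,\Q}$. So the reduction to Kottwitz's injectivity is immediate and you can drop that paragraph.
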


Suppose such a stratum is non-empty and fix $b$ such that $\kappa (b)=\a$ and $w.(-\nu_b)=\nu$. Then theorem \ref{theo:unif basic loc} extends in this context:
$$
\big [\spa (\Fqb)/\widetilde{J}_b\big ]\iso \Bun_{G,\Fqb}^{\a,HN=\nu}
$$
where the right hand side is the stack classifying $G$-bundles that are geometrically fiberwise isomorphic to $\E_b$.
\\
In whatever sense one can imagine, one has 
$$
\dim\, \widetilde{J}_b = <2\rho,\mu>
$$
where $\rho$ is the half sum of the positive weights of $T$ in $\Lie\, B$. We thus have 
$$
\dim\, \Bun_{G,\Fqb}^{\a,HN=\nu} = -<2\rho,\mu>
$$
and we conclude that:
\begin{itemize}
\item {\it The basic locus is zero dimensional}
\item The dimension of HN strata goes to $-\infty$ when we go deeper in the Weyl chambers. 
\end{itemize}

\section{The $B_{dR}$-affine Grassmanian and the Hecke stack}
\label{sec:The BdR affine Grassmanian and the Hecke stack}

\subsection{The $B_{dR}$-affine Grassmanian: definition}

When $E=\Fq\llparent\pi\rrparent$ we adopt the convention $\spa (E)^\diamond=\spa (E)$.
The $B_{dR}$-affine Grassmanian is by definition a pro-étale sheaf 
$$
\xymatrix@R=5mm{\text{Gr}\ar[d] \\ \spa (E)^\diamond}
$$
that is to say a pro-étale sheaf on $\text{Perf}_E$ 
defined in the following way. 

\begin{defi}
Let $(R,R^+)$ be affinoid perfectoid over $E$. Then 
$$
\text{Gr}(R,R^+) = \{(\mathscr{T}, \xi )\}/\sim
$$
where $\mathscr{T}$ is a $G$-torsor on $\spec ( B^+_{dR} (R))$ and $\xi$ a trivialization of $\mathscr{T}\otimes_{B^+_{dR}(R)} B_{dR} (R)$.
\end{defi}

\begin{rema}\label{rem:surconv}
By definition $\text{Gr} (R,R^+)$ does only depend on $R$ and not on the choice of $R^+$. The same phenomenon occurs for $\text{Bun}_G (R,R^+)$, and all the sheaves and stacks showing up in this text like $\spa (E)^\diamond$. Geometrically this tells us that {\it all the geometric objects we consider are partially proper}, that is to say overconvergent in the language of Tate rigid geometry, resp. without boundary in Berkovich language.
\end{rema}

\begin{rema}
The reader may be intrigued about the fact that a "$\spec$" occurs in the definition of $\text{Gr}$ and not a "$\spa$". This is not surprising anymore if one thinks about the equivalence for any affinoid perfectoid $(R,R^+)$ (Kedlaya-Liu)
$$
\text{vector bundles on }\spa (R,R^+) \simeq \text{vector bundles on } \spec (R)
$$
and thus {\it $G$-bundles in the Tannakian sense on $\spa (R,R^+)$ are the same as $G$-torsors locally trivial for the étale topology on $\spec (R)$}.
\end{rema}

One checks that any $G$-bundle on $\spa (R,R^+)$ becomes trivial after an étale covering of $\spa (R,R^+)$. From this one deduces that $\Gr$ is the étale sheaf associated to the presheaf
$$
(R,R^+)\longmapsto G( B_{dR}(R))/G(B^+_{dR}(R)).
$$

\subsection{The Beauville-Laszlo uniformization morphism}\label{sec:BL}

Let $(R,R^+)$ be affinoid perfectoid over $\Fq$ and consider an untilt $R^\sharp$ over $E$ of $R$. This defines a Cartier divisor on the relative schematical curve $X_{R}^{sch}$. The formal completion along this divisor is $\spf ( B^+_{dR}(R^\sharp))$. {\it Applying Beauville-Laszlo gluing} (\cite{BeauvilleLaszlo2}) we thus obtain the following.

\begin{prop}
As a sheaf over $\spa (E)^\diamond$ 
$$
\text{Gr}(R,R^+)=\{(R^\sharp,\E,\xi) \}/\sim 
$$
where 
\begin{itemize}
\item $R^\sharp$ is an untilt of $R$
\item $\E$ is a $G$-bundle on $X_{R}^{sch}$
\item if $D$ is the Cartier divisor on $X_R^{sch}$ defined by $R^\sharp$ then $\xi$ is a trivialization of $\E_{|X_{R}^{sch}\setminus D}$.
\end{itemize}
\end{prop}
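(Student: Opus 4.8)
The plan is to identify both sides as the two faces of one Beauville--Laszlo glueing datum along the effective Cartier divisor $D\hookrightarrow X_R^{sch}$ attached to the untilt $R^\sharp$. I would use the facts recalled in the previous subsections: $D$ is, locally on $X_R^{sch}$, cut out by a single non-zero-divisor (the section $\Pi^+(\xi)$ of the line bundle $\L_\xi$), its formal completion is $\spf\big(B^+_{dR}(R^\sharp)\big)$, and the complement of $D$ in that formal disk is $\spec\big(B_{dR}(R^\sharp)\big)$. I would first treat vector bundles and then transfer to $G$-bundles Tannakianly: a $G$-bundle is a $\otimes$-exact functor $\mathrm{Rep}(G)\to\Bun$, and every construction below is functorial and monoidal, so it suffices to have the vector-bundle equivalence compatibly in $\mathrm{Rep}(G)$; one then checks that the $G$-torsor obtained by glueing is again étale-locally trivial, which holds because, étale-locally on $\spa(R,R^+)$, the glueing datum is nothing but an element of $G\big(B_{dR}(R^\sharp)\big)/G\big(B^+_{dR}(R^\sharp)\big)$, exactly matching the étale-sheafified description of $\mathrm{Gr}$ recalled above.

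Concretely, in one direction I would send a pair $(\mathscr T,\xi)$ as in the original definition of $\mathrm{Gr}(R,R^+)$ to the $G$-bundle on $X_R^{sch}$ obtained, via Beauville--Laszlo glueing (\cite{BeauvilleLaszlo2}), from the \emph{trivial} $G$-bundle on $X_R^{sch}\setminus D$, the $G$-torsor $\mathscr T$ on $\spec\big(B^+_{dR}(R^\sharp)\big)$, and the glueing isomorphism over $\spec\big(B_{dR}(R^\sharp)\big)$ given by $\xi$, i.e. by the trivialization of $\mathscr T\otimes_{B^+_{dR}(R^\sharp)}B_{dR}(R^\sharp)$. The output carries a tautological trivialization away from $D$ inherited from the trivial bundle on the open part, so we land in the set of pairs $(\E,\xi)$ of the statement. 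In the other direction I would send $(\E,\xi)$ to the restriction of $\E$ to $\spec\big(B^+_{dR}(R^\sharp)\big)$ together with the generic trivialization that $\xi$ induces over $B_{dR}(R^\sharp)$. That these assignments are mutually inverse and natural in $(R,R^+)$ is precisely the Beauville--Laszlo equivalence --- modules over a ring $A$ versus triples consisting of a module over $A[1/f]$, a module over the $f$-adic completion of $A$, and a glueing isomorphism over the localization of that completion --- applied affine-locally on $X_R^{sch}$ and globalized; here $f$ ranges over the local equations of $D$, and passing to a \emph{pair} $(\E,\xi)$ rather than to an arbitrary triple corresponds to rigidifying the open-part bundle to be the trivial one with its identity trivialization.

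The one genuinely delicate point --- and the step I expect to be the main obstacle --- is that neither $X_R^{sch}$ nor $B^+_{dR}(R^\sharp)$ carries any finiteness property for a general perfectoid $R$, so one must use Beauville--Laszlo glueing in the general form valid for an arbitrary ring and a single non-zero-divisor (applied to flat, here finite locally free, modules) rather than the classical Noetherian statement, and one must carry the whole argument through the Tannakian dictionary, verifying at each stage that the "étale-locally trivial" property of the $G$-torsors is preserved on both sides. Once these two ingredients are in place the rest is formal, and since the construction uses the untilt $R^\sharp$ only through $D$ and the ring $B^+_{dR}(R^\sharp)$, it is compatible with the maps to $\spa(E)^\diamond$, hence an isomorphism of sheaves over $\spa(E)^\diamond$, as claimed.
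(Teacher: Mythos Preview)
Your proposal is correct and matches the paper's own approach: the paper simply says ``Applying Beauville--Laszlo gluing (\cite{BeauvilleLaszlo2}) we thus obtain the following'' with no further detail, and your write-up is exactly a careful unpacking of that sentence. The extra care you take with the non-Noetherian form of Beauville--Laszlo and the Tannakian passage to $G$-bundles is appropriate and does not diverge from what the paper intends.
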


As corollary there is a Beauville-Laszlo morphism
$$
\mathcal{BL}: \text{Gr}\ldrt \Bun_G\times_{\spa (\Fq)} \spa (E)^\diamond.
$$
Recall the following that is an analog of a result of Drinfeld-Simpson (\cite{DrinfeldSimpson}).

\begin{prop}[\cite{Gtorseurs}]
For a perfectoid algebraically closed field $F|\Fq$ and a closed point $\infty\in |X_F^{sch}|$, any $G$-bundle on $X_F^{sch}$ becomes trivial on $X_F^{sch}\setminus \{\infty\}$.
\end{prop}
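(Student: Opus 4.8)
The plan is to reduce the statement to the classification of $G$-bundles on the Fargues--Fontaine curve over an algebraically closed perfectoid field, namely Theorem \ref{theo:classification des G torseurs}. Fix $F|\Fq$ algebraically closed perfectoid and $\infty\in|X_F^{sch}|$ a closed point; since $X_F^{sch}$ is a Dedekind scheme (indeed a complete ``curve'' in the sense recalled above), removing the closed point $\infty$ gives an affine open $U:=X_F^{sch}\setminus\{\infty\}=\spec A$ with $A$ a principal ideal domain. So the first step is: every vector bundle on $U$ is trivial, because a finitely generated projective module over a P.I.D. is free. This already settles the case $G=\GL_n$.

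The second step is to bootstrap from $\GL_n$ to a general reductive $G$ by the Drinfeld--Simpson argument. Embed $G\hookrightarrow\GL_n$ and fix a linear representation $\rho:\GL_n\to\GL(W)$ together with a line $D\subset W$ whose stabilizer is $G$, exactly as in the discussion of the ``smooth diamond stack'' hope above; a $G$-bundle is then the datum of a rank $n$ bundle $\E$ on $X_F^{sch}$ together with a locally-direct-factor sub-line bundle $\mathcal{D}\subset\rho_*\E$. Over $U$ we already know $\E_{|U}$ is trivial, so $\rho_*\E_{|U}\cong W\otimes_E\O_U$; what remains is to trivialize the reduction $\mathcal{D}\subset W\otimes_E\O_U$, i.e.\ to move the sub-line bundle into the standard position $D\otimes_E\O_U$ by an automorphism of the trivial bundle, which is an element of $\GL_n(A)$. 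Here one uses that $A$ is a P.I.D.\ so $\GL_n(A)=E_n(A)$ (generated by elementary matrices), together with the fact that $U$ has infinitely many closed points of a suitable form (any closed point of $X_F^{sch}$ corresponds to an untilt of $F$, and these exist in abundance), so that one can apply the usual inductive matrix-manipulation argument of Drinfeld--Simpson to bring $\mathcal{D}$ into standard position one coordinate at a time. Then the reduction becomes the constant reduction, which is the trivial $G$-bundle.

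The main obstacle is the second step, the reduction-of-structure-group argument: one must check that the Drinfeld--Simpson manipulations go through for the scheme $U=\spec A$, using only that $A$ is a P.I.D.\ with infinitely many maximal ideals together with appropriate weak approximation at finitely many of them (the analog of the curve having enough closed points). The P.I.D.\ property makes this considerably easier than the classical situation over a field, since projectivity collapses to freeness and no genuine cohomological obstruction survives; the only real content is the combinatorial matrix argument, which the excerpt in effect asks the reader to accept by citing \cite{DrinfeldSimpson} as a model. A minor point to address is independence of the choice of $\infty$, but this is immediate since any two closed points differ by an automorphism of $X_F^{sch}$ only after allowing twists, and in any case the statement is for a \emph{given} $\infty$. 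Finally one records that the argument is purely over $X_F^{sch}$ and transfers to $X_F^{ad}$ through GAGA (Theorem \ref{theo:GAGA}), though for the statement as phrased the schematic version suffices.
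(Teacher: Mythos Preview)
The paper does not give its own proof of this proposition; it cites \cite{Gtorseurs} and adds only the remark that the Drinfeld--Simpson argument goes through here without the semisimplicity hypothesis because $\text{Pic}^0(X_F)=0$. That remark is the real hint: the intended method is reduction to a Borel subgroup $B=T\ltimes U_B$, then to the maximal torus $T$, and finally the vanishing $\text{Pic}(X_F^{sch}\setminus\{\infty\})=0$ (which follows from $\text{Pic}^0(X_F)=0$ and $\text{Pic}(X_F)=\Z$) handles the torus case.

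Your $\GL_n$ step is correct: $X_F^{sch}\setminus\{\infty\}=\spec A$ with $A$ a principal ideal domain, so every vector bundle there is free. The problem is the second step. You announce that you will reduce to the classification theorem \ref{theo:classification des G torseurs}, but you never use it; and in the source \cite{Gtorseurs} the logical order runs the other way (Beauville--Laszlo uniformization, hence the classification, rests on exactly this trivialization result), so invoking it would be circular anyway. More seriously, the argument you actually give---moving the sub-line bundle $\mathcal D\subset W\otimes_E\O_U$ into standard position by ``matrix manipulations'' and the fact that $\GL_n(A)$ is generated by elementary matrices---is not a proof. What you need is that the $A$-point of $\GL_n/G\subset\P(W)$ defined by $\mathcal D$ lifts to $\GL_n(A)$; the obstruction is precisely the class of the original $G$-bundle in $H^1_{\et}(\spec A,G)$, so the argument is circular. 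Nothing about $A$ being a PID or $\GL_n(A)=E_n(A)$ addresses this. For a concrete failure mode, take $G=\text{SO}_n$: your claim amounts to asserting that every nondegenerate quadratic form over $A$ is equivalent to the standard one, which certainly does not follow from elementary row operations.

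The genuine content you are missing is the Borel reduction. One first shows that any $G$-bundle on $X_F^{sch}$ admits a reduction to $B$ (this is where the Harder--Narasimhan/slope theory on the curve enters). On the affine open $U$ the unipotent radical contributes nothing (successive extensions by $\mathbb G_a$, and $H^1(U,\mathbb G_a)=0$), so one is left with a $T$-bundle; the key arithmetic input $\text{Pic}^0(X_F)=0$ then gives $\text{Pic}(U)=0$, which trivializes it. This is exactly what the paper's remark is pointing at, and it is a genuinely different mechanism from the stabilizer-of-a-line approach you attempted.
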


\begin{rema}
Drinfeld and Simpson add the assumption that $G$ is semi-simple. This is not necessary in our case since for such an $F$, $\text{Pic}^0(X_F)=0$.
\end{rema}

Here is the translation of the preceding proposition in geometric terms.

\begin{prop}
The Beauville-Laszlo morphism $\mathcal{BL}$ is surjective at the level of geometric points. 
\end{prop}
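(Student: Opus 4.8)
The plan is to unwind the Beauville--Laszlo description of $\Gr$ from the previous proposition and reduce directly to the Drinfeld--Simpson type result just stated.

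First I would describe a geometric point of the target $\Bun_G\times_{\spa(\Fq)}\spa(E)^\diamond$: it is a triple $(F,\E,F^\sharp)$ where $F$ is an algebraically closed perfectoid field over $\Fqb$, $\E$ is a $G$-bundle on $X_F^{sch}$, and $F^\sharp$ is an untilt of $F$ over $E$. By the discussion of untilts as Cartier divisors, $F^\sharp$ is the same datum as a degree $1$ primitive element up to units, and it cuts out a Cartier divisor $D\hookrightarrow X_F^{sch}$; since $X_F^{sch}$ is a Dedekind scheme and $\deg D=1$, the support of $D$ is a single closed point $\infty\in|X_F^{sch}|$, and the formal completion of $X_F^{sch}$ along $D$ is $\spf\,(B^+_{dR}(F^\sharp))$.

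Next, by the Beauville--Laszlo presentation of $\Gr$, a preimage of $(F,\E,F^\sharp)$ under $\mathcal{BL}$ is precisely the datum of a trivialization $\xi$ of $\E_{|X_F^{sch}\setminus D}=\E_{|X_F^{sch}\setminus\{\infty\}}$, the map $\mathcal{BL}$ being $(F^\sharp,\E,\xi)\mapsto(\E,F^\sharp)$. So producing such a preimage amounts exactly to finding such a trivialization, and this is supplied by the Drinfeld--Simpson analog from \cite{Gtorseurs}: $\E_{|X_F^{sch}\setminus\{\infty\}}$ is trivial (here using that $F$ is algebraically closed, so $\text{Pic}^0(X_F)=0$, which is why no semisimplicity hypothesis on $G$ is needed). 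Choosing any trivialization $\xi$ then gives $(F^\sharp,\E,\xi)\in\Gr(F)$ with $\mathcal{BL}(F^\sharp,\E,\xi)=(\E,F^\sharp)$, establishing surjectivity at the level of geometric points.

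In this argument the two substantial inputs --- the Beauville--Laszlo gluing equivalence and the triviality of $G$-bundles on the curve away from a point over an algebraically closed base --- have both already been imported, so the expected main obstacle lies not in this proof but in those cited results. The only genuinely verification-worthy points here are the identification of the degree $1$ Cartier divisor with a single closed point of the Dedekind scheme $X_F^{sch}$, and the bookkeeping between the schematical curve, the adic curve via the GAGA equivalence, and the Beauville--Laszlo gluing over $\spf\,B^+_{dR}(F^\sharp)$, so that all three pictures refer to the same $\E$; both are routine given the earlier sections.
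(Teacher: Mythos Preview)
Your proof is correct and matches the paper's approach exactly: the paper simply presents this proposition as ``the translation of the preceding proposition in geometric terms,'' referring to the Drinfeld--Simpson analog from \cite{Gtorseurs}, and you have faithfully unpacked that translation. The only content is the reduction to triviality of $\E$ on $X_F^{sch}\setminus\{\infty\}$ for $F$ algebraically closed, which is precisely what you invoke.
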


Of course we conjecture the following. 

\begin{conj}
The morphism $\mathcal{BL}$ is "surjective locally for the smooth topology". More precisely, 
for $S\in \Perf_{\Fq}$ and an element of $(\text{Bun}_G\times \spa (E)^\diamond)(S)$ there exists a "smooth" surjective cover $\widetilde{S}\drt S$ such that when restricted to $\widetilde{S}$ this element comes from en element of $\text{Gr}(\widetilde{S})$.
\end{conj}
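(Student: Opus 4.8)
The plan is, given $S\in\Perf_{\Fq}$ and a point $(\E,S^\sharp)$ of $(\Bun_G\times\spa(E)^\diamond)(S)$ with associated relative Cartier divisor $D=S^\sharp\hookrightarrow X_S$, to produce an explicit ``smooth'' surjective cover $\widetilde S\drt S$ over which $\E$ acquires a trivialization away from $D$; by the Beauville--Laszlo description of $\Gr$ recalled above, such a trivialization of $\E_{|X_{\widetilde S}\setminus D}$ is exactly a lift of the point to $\Gr(\widetilde S)$. Following section \ref{sec:defi and prop of Bung} I would first replace $S$ by the cover $S\times\spa(\Fq\llparent T^{1/p^\infty}\rrparent)$ declared smooth there, so that $S$ carries a pseudo-uniformizer, and then imitate the Quot-scheme construction of $\text{Pres}_{\mathcal D}$, but for the ``affine'' locus $X_S\setminus D$ rather than all of $X_S$ (on the schematical curve, $X_S\setminus D$ is the spectrum of the degree-zero part of the localization of $P_R$ at a section cutting $D$, so by GAGA, Theorem \ref{theo:GAGA}, one may freely pass between the adic and schematical pictures).

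The fibrewise input is the Drinfeld--Simpson type proposition stated just above: over any geometric point $\bar s\drt S$ with $k(\bar s)=F$ algebraically closed, the $G$-bundle $\E_{|X_F}$ becomes trivial on $X_F\setminus\{\infty\}$, where $\infty$ is the point cut out by the untilt. Unwinding this through the classification $\E_{|X_F}\simeq\bigoplus_i\O(\lambda_i)$ (after reducing to $\GL_n$ via an embedding $G\subset\GL_n$) together with relative ampleness of $\O(1)$ (Kedlaya--Liu), one moreover obtains a \emph{bounded} trivialization: there is an integer $k$ and $n$ sections of the twist $\E(kD):=\E\otimes\O_{X_S}(kD)$ restricted to $X_F$ which, after division by the $k$-th power of the canonical section cutting $D$, form a basis of $\E_{|X_F\setminus\{\infty\}}$, with $k$ depending only on the Harder--Narasimhan polygon of $\E_{|X_F}$. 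By the Harder--Narasimhan semicontinuity theorem of section \ref{sec:The stack BunG} and quasicompactness, such a $k$ can be chosen constant over a countable cover of $S$ by quasicompact opens.

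Fix such a quasicompact open $U\subseteq S$. The sheaf $T/U\mapsto\Gamma(X_T,\E(kD)_{|X_T})^{\oplus n}$ is representable by a diamond (resp. a perfectoid space when $E=\Fq\llparent\pi\rrparent$) over $U$ — this is the representability of $f_*\E(kD)^{\oplus n}$, a Banach--Colmez-type object, proved as in section \ref{sec:defi and prop of Bung}; call it $\mathcal H_U$. Inside $\mathcal H_U$ the locus $\mathcal P_U$ of those $n$-tuples for which the induced map $\O_{X_T\setminus D}^{n}\drt\E_{|X_T\setminus D}$ is an isomorphism is an \emph{open} sub-diamond: this is again the argument used for $\text{Isom}\subset\Hom$ in section \ref{sec:defi and prop of Bung}, using that $|X_T|\drt|T|$ is closed, applied to the determinant of the map. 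By the fibrewise statement, $\mathcal P_U\drt U$ is surjective on geometric points; and being an open inside a Banach--Colmez-type bundle it should, after a further perfectoid (resp. $\underline E$-) cover, admit a presentation by perfectoid open balls of exactly the shape of Proposition \ref{prop:Pres est un diamant}, hence be ``smooth''. Gluing the $\mathcal P_U$ over the cover $(U_n)_n$, or passing to a quasicompact refinement, yields $\widetilde S$, and the universal $n$-tuple carries the trivialization $\xi$ lifting $(\E,S^\sharp)$ to $\Gr(\widetilde S)$.

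The difficulty sits in two places. First, one needs the uniform bound on $k$ — and on the Banach--Colmez dimension of $\Gamma(X_{k(\bar s)},\E(kD))$ — to hold \emph{over families} with fixed Harder--Narasimhan polygon, which is a genuine boundedness statement requiring more than bare semicontinuity and must be extracted from the relative Kedlaya--Liu theory. Second, and more fundamentally, ``smooth'' and ``surjective locally for the smooth topology'' are not yet defined: one must check that $\mathcal P_U\drt U$ really is of the admissible type (an open locus in an iterated Banach--Colmez bundle with a presentation in the style of $pres_{\mathcal D}$) and that surjectivity on geometric points upgrades to surjectivity of the relevant covers — precisely the foundational gap that already keeps section \ref{sec:defi and prop of Bung} informal. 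One should also handle with care the descent of the $G$-structure when passing through $G\subset\GL_n$ (trivializing the underlying bundle together with the distinguished line subbundle), or else avoid the reduction and run the whole argument for $G$-torsors directly, using the $G$-form of Drinfeld--Simpson throughout. The remaining steps — the Beauville--Laszlo reinterpretation, the openness of the isomorphism locus, and the auxiliary cover by $\spa(\Fq\llparent T^{1/p^\infty}\rrparent)$ — are formal given the results already recorded in the excerpt.
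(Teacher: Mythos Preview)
The statement is a \emph{conjecture}, and the paper does not prove it. What the paper offers instead is a suggested route: immediately after stating the conjecture, it remarks that ``following Drinfeld--Simpson, this would follow from'' a second conjecture, namely that for any $G$-bundle $\E$ on $X_S$ the moduli of reductions of $\E$ to a Borel $B$ is a ``smooth'' diamond over $S$, and that this in turn ``should follow from Quot-diamond techniques as in section \ref{sec:defi and prop of Bung}''. So the paper's strategy is two-step: first produce a $B$-reduction smoothly-locally, then use that $B$-bundles (iterated extensions of line bundles) are easy to trivialize away from a chosen divisor.

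Your proposal takes a genuinely different route. You bypass Borel reductions entirely and try to build the cover directly as the open locus, inside the Banach--Colmez space $(f_*\E(kD))^n$, where $n$ sections trivialize $\E$ on $X_S\setminus D$; the fibrewise surjectivity comes from the pointwise Drinfeld--Simpson result already recorded in the paper. This is closer in spirit to the $\mathrm{Pres}_{\mathcal D}$ presentation of $\Bun_G$ itself than to the paper's suggested reduction. The trade-off: the Drinfeld--Simpson/Borel approach packages the hard geometry into a single clean statement with a classical analogue (smoothness of the scheme of $B$-structures), and once a $B$-reduction is in hand the trivialization step is essentially canonical and needs no uniform twist bound. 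Your approach is more hands-on but must control the twist $k$ and the Banach--Colmez dimension of $\Gamma(X,\E(kD))$ uniformly in families, which --- as you rightly flag --- is a real boundedness input beyond semicontinuity. Both strategies run into the same foundational wall, that ``smooth'' has not been defined, and you are candid about this; so your write-up is best read as an alternative heuristic sketch rather than a proof, on the same footing as the paper's own remarks.
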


Following Drinfeld-Simpson, this would follow from the following.

\begin{conj}
For $S\in \text{Perf}_{\Fq}$ and $\E$ a $G$-bundle on $X_S$ the moduli space or reductions of $\E$ to $B$ is a "smooth" diamond over $S$, resp. perfectoid space when $E=\Fq\llparent\pi\rrparent$. 
\end{conj}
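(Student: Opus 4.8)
The plan is to realise the moduli of $B$-reductions of $\E$ as a fibre of the morphism $\Bun_B\drt\Bun_G$ over the point $[\E]$, and to build it by hand as an iterated tower of Banach-Colmez fibrations, in the spirit of the Quot-type construction of $\text{Pres}_{\mathcal{D}}$ in Proposition~\ref{prop:Pres est un diamant}. First I would reduce to $G=\GL_n$: choosing an embedding $G\hookrightarrow\GL_n$ together with a representation $\GL_n\drt\GL(W)$ and a complete flag $W_\bullet$ in $W$ whose $\GL_n$-stabiliser is $B$ (enlarging $W$, exactly as for $\Bun_G$ itself in section~\ref{sec:defi and prop of Bung}, so that $G$ too is cut out by a tensor), a $B$-reduction of $\E$ becomes the datum of a complete flag $0=\F_0\subset\F_1\subset\cdots\subset\F_n=\mathcal{V}$ of locally direct factor subbundles of a suitable associated vector bundle $\mathcal{V}$ on $X_S$, subject to the closed incidence conditions encoding the further reduction of structure group to $G$. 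Since closed, resp.\ locally Zariski closed, subfunctors of a diamond, resp.\ a perfectoid space, are again of the same type, and since those incidence conditions are smooth over $E$ because $G/B$ is, it is enough to prove the statement for the functor of complete flags of a fixed vector bundle $\mathcal{V}$ on $X_S$.

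For the latter I would stratify by the degree vector $(d_1,\dots,d_n)$, $d_i=\deg(\F_i/\F_{i-1})$, which is just the image of the relative-position cocharacter $\mu\in X_*(T)$ attached to the reduction. The partial sums $\deg\F_i$ are bounded, above and below, in terms of the Harder-Narasimhan polygon of $\mathcal{V}$, and by semicontinuity of the HN polygon in families (Kedlaya-Liu, \cite{KedlayaLiuRelative1}, already invoked in section~\ref{sec:The stack BunG}) each prescribed value of the partial sums defines a locally closed substratum, so one gets a finite locally closed stratification. On a fixed stratum I would peel the flag off one step at a time: after a pro-\'etale cover trivialising the line bundle $\O(d_1)$ on geometric fibres, the sub-line bundles $\F_1\hookrightarrow\mathcal{V}$ of degree $d_1$ that are locally direct factors form an open non-degeneracy locus inside the Banach-Colmez space $\Hom(\O(d_1),\mathcal{V})$, hence an algebraic diamond, resp.\ a perfectoid space when $E=\Fq\llparent\pi\rrparent$, verbatim as in Proposition~\ref{prop:Pres est un diamant}. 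Replacing $\mathcal{V}$ by $\mathcal{V}/\F_1$ and iterating exhibits the stratum as a finite tower of such "smooth" fibrations; gluing the strata, and checking independence of the auxiliary choices together with compatibility with base change in $S$ (routine given the Tannakian formalism and the partial properness of Remark~\ref{rem:surconv}), yields the asserted "smooth" diamond over $S$, resp.\ perfectoid space in equal characteristic.

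The hard part is twofold. Technically, the whole argument rests on a theory of coherent cohomology of the relative curve \emph{in families over an arbitrary $S\in\Perf_{\Fq}$}: one needs that $f_*$ and $R^1f_*$ of all the relevant twisted bundles $\mathcal{V}(d)$ are representable by well-behaved diamonds, that they vary semicontinuously, and that the direct factor loci and the Harder-Narasimhan jumps are controlled uniformly in $S$; this is the Banach-Colmez-in-families package built on Kedlaya-Liu, and it is where most of the work sits. Conceptually, the statement is only as sharp as the still undefined notion of a "smooth diamond": making "smooth" precise so that it is stable under the tower constructions above, and so that the smoothness of $G/B$ over $E$ genuinely transfers to the incidence subfunctors, is a prerequisite to any honest proof---which is exactly why the assertion is recorded here as a conjecture rather than a theorem.
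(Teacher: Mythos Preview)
The paper does not prove this statement: it is recorded as a conjecture, and the only justification offered is the single sentence ``This should follow from Quot-diamond techniques as in section~\ref{sec:defi and prop of Bung}.'' There is therefore no proof in the paper to compare against; what you have written is a plausible expansion of that one-line hint, and you are right to flag at the end that the undefined notion of ``smooth'' and the missing Banach--Colmez--in--families package are exactly what prevent this from being more than a sketch.

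Two places where your outline would need more care if it were to be turned into an argument. First, the reduction to $\GL_n$: for $\Bun_G$ itself the paper uses a single line $D\subset W$ to cut out $G$ inside $\GL_n$, but for $B$-reductions you need $B_G = G\cap B_{\GL_n}$ for a suitable Borel of $\GL_n$, and the claim that the resulting incidence conditions are ``smooth because $G/B$ is'' hides a genuine verification (one is asserting that a closed immersion of flag-type functors over $S$ inherits smoothness from the algebraic fact over $E$, which is precisely the kind of statement that depends on the yet-to-be-defined notion of smoothness for diamonds). Second, your stratify-then-glue step is circular as written: you stratify the moduli of flags by degree vector and then construct each stratum, but the stratification presupposes the total space; what you presumably mean is to build, for each degree vector, the functor of flags with that prescribed vector, show each is representable, and then argue that these assemble into a single object --- the last step (that the union of these locally closed pieces is again a diamond, resp.\ perfectoid space, with the correct ``smooth'' structure) is not automatic and is where the construction could fail.

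None of this is a criticism of your instinct: the paper itself only gestures at Quot techniques, and your sketch is a faithful and more detailed version of that gesture. But it remains a conjecture for the reasons you yourself name.
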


This should follow from Quot-diamond techniques as in section \ref{sec:defi and prop of Bung}.

\subsection{Geometric structure on the $B_{dR}$-affine Grassmanian}
\subsubsection{The equal characteristic case}

We begin with {\it the case $E=\Fq\llparent\pi\rrparent$ since in this case, as the reader should have understood now, diamonds should not appear and we should be able to work only with perfectoid spaces.} 
\\

Recall (sec.\ref{sec:untilt as cart unequal}) that in this case, for $R$ a perfectoid $E$-algebra, $B^+_{dR}(R)= R\llbracket T\rrbracket$ where the structural morphism  $E\drt R\llbracket T\rrbracket$ is given 
by 
$$
\pi\longmapsto T+a
$$
where $\pi\mapsto a \in R^{\circ\circ}\cap R^\times$ via $E\drt R$ and 
$$
T+a = a(1+\tfrac{T}{a}) \in R\llbracket T\rrbracket^\times
$$

{\it Suppose first that $G$ is unramified} and note $\underline{G}$ its reductive model over $\O_E$.
\\
 Note $\text{Nilp}_{\O_E}$ for the category of $\O_E$-algebras on which $\pi$ is nilpotent. We define a functor 
$$
\mathcal{F}:\text{Nilp}_{\O_E}\ldrt \text{Sets}
$$
 Let $R\in \text{Nilp}_{\O_E}$ with $\pi\mapsto a\in R$. Equip $R\llbracket T\rrbracket$ with the $\Fq\llbracket \pi\rrbracket$-algebra structure given by $\pi\mapsto T+a$.
Since $a$ is nilpotent, $R\llparent T\rrparent$ is an $E$-algebra.  
  We then set
 $$
 \mathcal{F} (R) = G \big ( R\llparent T\rrparent \big )/ \underline{G} (R\llbracket T\rrbracket).
 $$
Now let us note $\widetilde{\mathcal{F}}$ for the étale sheaf associated to $\mathcal{F}$. One can then check the following.

\begin{prop}
\begin{enumerate}
\item 
The étale sheaf $\widetilde{\mathcal{F}}$ is represented by an ind $\pi$-adic $\O_E$-formal scheme locally of finite type whose special fiber $\text{Gr}^{PR}:=\widetilde{\mathcal{F}}\otimes \O_E/\pi$ is Pappas-Rapoport twisted affine Grassmanian associated to an hyperspecial subgroup (\cite{Rapoport_Twisted}).
\item There is an isomorphism 
$$
\widetilde{\mathcal{F}} \simeq \Gr^{PR}\hat{\otimes}_{\Fq} \O_E.
$$
\item Let us note $\widetilde{\mathcal{F}}_\eta$ the associated ind-rigid analytic space that is the generic fiber of $\widetilde{\mathcal{F}}$. Then 
$$
\text{Gr} = \underset{\text{Frob}}{\limp} \widetilde{\mathcal{F}}_{\eta}
$$
is an ind perfectoid space that is the perfection of $\widetilde{\mathcal{F}}_{\eta}$.
\end{enumerate}
\end{prop}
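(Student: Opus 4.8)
The plan is to reduce parts (1)--(2) to a change of variables identifying $\mathcal{F}$ with a classical twisted affine Grassmannian, and to reduce part (3) to a Frobenius-limit argument exploiting that perfectoid rings of characteristic $p$ are perfect. Throughout we may assume $G$ unramified with hyperspecial reductive model $\underline{G}$ over $\O_E$. Recall from the previous subsection that for a perfectoid $E$-algebra $R$, with $a\in R^{\circ\circ}\cap R^\times$ the image of $\pi$, one has canonically $B^+_{dR}(R)=R\llbracket T\rrbracket$ with $E$-algebra structure $\pi\mapsto T+a$, that $\ker\theta=(T)$, and $B_{dR}(R)=R\llparent T\rrparent$; recall also that $\Gr$ is the \'etale sheafification of $(R,R^+)\mapsto G(B_{dR}(R))/G(B^+_{dR}(R))$.

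For (2), the point is that the functor $\mathcal{F}$ on $\text{Nilp}_{\O_E}$ is, after a substitution, insensitive to the $\O_E$-structure. If $R\in\text{Nilp}_{\O_E}$ with $\pi\mapsto a$ nilpotent, then $S=T+a$ defines a ring isomorphism $R\llbracket T\rrbracket\iso R\llbracket S\rrbracket$ --- well defined in both directions precisely because $a$ is nilpotent --- carrying the structure $\pi\mapsto T+a$ to the tautological one $\pi\mapsto S$, and identifying the localisations at $T$ and at $T+a$, so that $R\llparent T\rrparent\iso R\llparent S\rrparent$. Hence
\[
\mathcal{F}(R)=G\big(R\llparent T\rrparent\big)/\underline{G}\big(R\llbracket T\rrbracket\big)\;\iso\;G\big(R\llparent S\rrparent\big)/\underline{G}\big(R\llbracket S\rrbracket\big)
\]
functorially in $R$ viewed merely as an $\Fq$-algebra. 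After \'etale sheafification this exhibits $\widetilde{\mathcal{F}}$ as the pullback to $\text{Nilp}_{\O_E}$ of the \'etale sheaf $R\mapsto\big[G(R\llparent S\rrparent)/\underline{G}(R\llbracket S\rrbracket)\big]$ on $\Fq$-algebras, which is by definition the twisted affine Grassmannian $\Gr^{PR}$ of the hyperspecial parahoric $\underline{G}$; this is exactly the isomorphism $\widetilde{\mathcal{F}}\simeq\Gr^{PR}\hat\otimes_{\Fq}\O_E$, and it identifies the special fibre with $\Gr^{PR}$. Since Pappas--Rapoport prove $\Gr^{PR}$ to be an ind-projective ind-scheme over $\Fq$ (\cite{Rapoport_Twisted}), its base change $\widetilde{\mathcal{F}}$ is an ind $\pi$-adic formal $\O_E$-scheme locally of finite type, which is (1).

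For (3), observe first that, $\Gr^{PR}$ being an ind-scheme over $\Fq$, its absolute $q$-Frobenius acts on the structure sheaf by $x\mapsto x^q$, in particular by $\pi\mapsto\pi^q$ on $\widetilde{\mathcal{F}}$; hence $\underset{\text{Frob}}{\limp}\widetilde{\mathcal{F}}$ is the perfection of $\widetilde{\mathcal{F}}$, a perfect formal scheme in which $\pi$ acquires compatible $q$-power roots, so its rigid generic fibre $\underset{\text{Frob}}{\limp}\widetilde{\mathcal{F}}_\eta$ is an ind-perfectoid space over $\widehat{\Fq\llparent \pi^{1/p^\infty}\rrparent}$ --- a base which, on perfectoid test objects, coincides with $\spa(E)$. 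To identify it with $\Gr$: both are \'etale sheaves on $\Perf_E$, and for affinoid perfectoid $(R,R^+)$ a point of $\big(\underset{\text{Frob}}{\limp}\widetilde{\mathcal{F}}_\eta\big)(R,R^+)$ is a Frobenius-compatible system of points of $\widetilde{\mathcal{F}}_\eta(R,R^+)$, i.e. --- unwinding the generic fibre and using (2) together with the substitution above --- a compatible system of points of the loop-group quotient $G(B^+_{dR}(R)[\tfrac{1}{T}])/\underline{G}(B^+_{dR}(R))$ at the finite levels cut out by the pseudo-uniformiser of $R$. Passing to the Frobenius limit supplies the missing $q$-power roots and, $R$ being perfect, assembles such a system into a genuine element of $G(B_{dR}(R))/G(B^+_{dR}(R))$; checking that this correspondence is bijective and compatible with \'etale descent then yields $\Gr\simeq\underset{\text{Frob}}{\limp}\widetilde{\mathcal{F}}_\eta$.

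\textbf{Main obstacle.} The delicate point is this last step of (3): making rigorous the slogan that taking the inverse limit along Frobenius on the generic fibre ``untruncates'' the loop group, i.e. the bijection between $\underset{\text{Frob}}{\limp}\widetilde{\mathcal{F}}_\eta(R,R^+)$ and $G(B_{dR}(R))/G(B^+_{dR}(R))$ for $(R,R^+)$ perfectoid. This requires a careful comparison of $B^+_{dR}(R)=R\llbracket T\rrbracket$ --- an $E$-algebra on which $T$ is topologically nilpotent but $\pi$ is invertible --- with the tower of truncations $R^+\llbracket T\rrbracket/\varpi_R^m$ governing the formal model $\widetilde{\mathcal{F}}$, combined with the facts that $G$-bundles trivialise \'etale-locally and that Beauville--Laszlo gluing applies, so that the \'etale-sheafified statements on the two sides match.
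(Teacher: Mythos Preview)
Your treatment of parts (1) and (2) is correct and coincides with the paper's own argument: the paper simply remarks that point (1) follows from (2), and that (2) holds because for $R\in\text{Nilp}_{\O_E}$ with $\pi\mapsto a$ nilpotent the substitution $T\mapsto T-a$ extends to an automorphism of $R\llbracket T\rrbracket$. Your write-up is a fleshed-out version of exactly this.

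For part (3) the paper gives no argument at all; the statement is left as an assertion, with only the subsequent corollary drawing the conclusion that $\Gr^{B_{dR}}=\underset{\text{Frob}}{\limp}(\Gr^{PR}\otimes_{\Fq}E)^{rig}$. So your sketch for (3) goes beyond what the paper does. Your outline is in the right direction, and you correctly flag the genuine difficulty: the passage from a Frobenius-compatible tower of points of $\widetilde{\mathcal{F}}_\eta$ (which, via ind-properness, amounts to points in $\mathcal{F}(R^\circ/\varpi_R^n)$ for varying $n$) to an actual point of $G(B_{dR}(R))/G(B^+_{dR}(R))$. One small imprecision: the absolute Frobenius on $\widetilde{\mathcal{F}}=\Gr^{PR}\hat\otimes_{\Fq}\O_E$ is not something inherited from $\Gr^{PR}$ alone --- it is the absolute $q$-Frobenius of the whole characteristic-$p$ object, acting on everything including the $\O_E$-coefficients; your phrasing ``$\Gr^{PR}$ being an ind-scheme over $\Fq$, its absolute $q$-Frobenius acts \ldots\ in particular by $\pi\mapsto\pi^q$'' slightly conflates the two. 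The cleaner way to say it is that $\widetilde{\mathcal{F}}_\eta$ is an ind-rigid space over the characteristic-$p$ field $E$, hence carries an absolute Frobenius, and its inverse-limit perfection is automatically ind-perfectoid; the identification with $\Gr$ then comes from the fact that on perfectoid test objects $(R,R^+)$ the Frobenius is already bijective, so $\underset{\text{Frob}}{\limp}\widetilde{\mathcal{F}}_\eta(R,R^+)=\widetilde{\mathcal{F}}_\eta(R,R^+)$, and one compares this directly with the $B_{dR}$-description using ind-properness.
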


Point (1) is deduced from point (2). Point (2) is a consequence of the fact that for $R\in \text{Nilp}_{\O_E}$ with $\pi\mapsto a\in R$ the correspondence $T\mapsto T-a$ defines an automorphism of  $R[T]$
that extends to a continuously to an automorphism of $R\llbracket T\rrbracket$ since $a$ is nilpotent. 

If $Z$ is a proper scheme over $\Fq$ then $(Z\hat{\otimes}_{\Fq} \O_E)_\eta =(Z\otimes_{\Fq} E)^{rig}$. 
Since Pappas-Rapoport twisted affine Grassmanian is an inductive limit of proper schemes we deduces the following.

\begin{coro} Suppose $G$ is unramified. 
The $B_{dR}$-affine Grassmanian $\text{Gr}$ is the ind-perfectoid space that is the perfection of the analytification of the scalar extension from $\Fq$ to $E$ of the Pappas-Rapoport twisted affine Grassmanian associated to a hyperspecial subgroup:
$$
\Gr^{B_{dR}}= \underset{Frob}{\limp} (\Gr^{PR}\otimes_{\Fq} E)^{rig}.
$$
\end{coro}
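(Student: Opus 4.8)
The plan is to obtain this as a formal consequence of the preceding Proposition, so there is essentially no new geometric content to establish; the work lies only in interchanging the three operations ``pass to the generic fibre'', ``take a filtered colimit along closed immersions'' and ``take the Frobenius perfection''. First I would recall from part (3) of the Proposition that $\Gr = \underset{\text{Frob}}{\limp}\widetilde{\mathcal{F}}_\eta$ and that this perfection is already known to be an ind-perfectoid space; hence it suffices to identify the ind-rigid space $\widetilde{\mathcal{F}}_\eta$ over $E$ with $(\Gr^{PR}\otimes_{\Fq} E)^{rig}$ in a way compatible with the $q$-power Frobenius used to form the perfection. By part (2) we have $\widetilde{\mathcal{F}}\simeq \Gr^{PR}\hat{\otimes}_{\Fq}\O_E$ as $\pi$-adic formal $\O_E$-schemes.

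Next I would use that the twisted affine Grassmannian $\Gr^{PR}$ is ind-projective over $\Fq$: write $\Gr^{PR}=\underset{i}{\limi}\, Z_i$ as the increasing union of its closed Schubert subvarieties $Z_i$, each proper over $\Fq$. Completion along $\pi$ and base change to $\O_E$ commute with this colimit along closed immersions, so $\widetilde{\mathcal{F}}=\underset{i}{\limi}\,(Z_i\hat{\otimes}_{\Fq}\O_E)$. Then I would invoke the elementary identity recalled just before the statement, namely $(Z_i\hat{\otimes}_{\Fq}\O_E)_\eta=(Z_i\otimes_{\Fq} E)^{rig}$ for each proper $\Fq$-scheme $Z_i$. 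Since the transition maps remain closed immersions after base change and analytification, passing to generic fibres termwise gives
$$
\widetilde{\mathcal{F}}_\eta=\underset{i}{\limi}\,(Z_i\otimes_{\Fq} E)^{rig}=\big(\underset{i}{\limi}\,Z_i\otimes_{\Fq} E\big)^{rig}=(\Gr^{PR}\otimes_{\Fq} E)^{rig}
$$
as ind-rigid spaces over $E$. Feeding this into the formula $\Gr=\underset{\text{Frob}}{\limp}\widetilde{\mathcal{F}}_\eta$ from part (3) yields the asserted equality $\Gr^{B_{dR}}=\underset{\text{Frob}}{\limp}(\Gr^{PR}\otimes_{\Fq} E)^{rig}$.

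I do not expect any serious obstacle here: the genuinely hard inputs --- representability of $\widetilde{\mathcal{F}}$ by a formal $\O_E$-scheme, its identification with $\Gr^{PR}\hat{\otimes}_{\Fq}\O_E$, and the fact that the Frobenius perfection of a rigid analytic space over $E$ is an (ind-)perfectoid space --- are exactly what the preceding Proposition and Scholze's theory already supply. The only points needing care are that the Schubert stratification genuinely exhausts $\Gr^{PR}$ by proper closed subschemes, which is the ind-projectivity of twisted affine Grassmannians, and that the $q$-Frobenius transported through the isomorphism of part (2) is the one occurring in the construction of $\Gr$ out of $X_S=Y_S/\ph^{\Z}$, so that the final inverse limit is indeed the intended one.
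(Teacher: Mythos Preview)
Your proposal is correct and follows essentially the same route as the paper: combine parts (2) and (3) of the preceding Proposition with the identity $(Z\hat{\otimes}_{\Fq}\O_E)_\eta=(Z\otimes_{\Fq}E)^{rig}$ for proper $Z$, using that $\Gr^{PR}$ is an inductive limit of proper schemes. The paper's argument is a terse one-line version of exactly what you wrote out in detail.
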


One can deal easily with the general case using descent theory from the unramified case. In this proposition there is no hypothesis on $G$.

\begin{prop}
For $R$ an affinoid $E$-algebra equip $R\llbracket T\rrbracket$ with the $E$-algebra structure given by $E\ni \pi \mapsto T+\pi$. 
\begin{enumerate}
\item The étale sheaf associated to the presheaf $R\mapsto G ( R \llparent T\rrparent )/G(R\llbracket T\rrbracket)$ is representable by an ind-$E$-rigid analytic space.
\item The perfection of this ind-rigid-analytic space is an ind-perfectoid space that represents the $B_{dR}$-affine grassmanian.  
\end{enumerate}
\end{prop}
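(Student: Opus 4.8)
\emph{Proof (sketch).} The plan is to deduce the statement by \emph{Galois descent from the unramified case}, just treated. First I would choose a finite Galois extension $E'|E$ over which $G$ becomes split, with $\Gamma=\Gal(E'|E)$; since $E=\Fq\llparent\pi\rrparent$ is of equal characteristic, $E'$ is again of the form $\mathbb{F}_{q'}\llparent\pi'\rrparent$, and $G_{E'}$, being split, is unramified over $E'$ in the sense of the preceding corollary. The $E'|E$-form $G$ of $G_{E'}$ is recorded by a $\Gamma$-semilinear action on $G_{E'}$. On the base, $\spa(E')\drt\spa(E)$ is a finite \'etale Galois cover with group $\Gamma$ (recall $\spa(E)^\diamond=\spa(E)$ here). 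Using $X^{sch}_{R,E'}=X^{sch}_{R,E}\,\hat{\otimes}_E E'$ together with Galois descent for $G$-torsors along this finite \'etale cover of curves, one checks that pull-back along $\spa(E')\drt\spa(E)$ identifies $\Gr_{G/E}$ with $\Gr_{G_{E'}/E'}$ and the presheaf $R\mapsto G(R\llparent T\rrparent)/G(R\llbracket T\rrbracket)$ on affinoid $E$-algebras with its analogue over $E'$, carrying the descent datum on the $E$-objects to the $\Gamma$-semilinear action induced by the one on $G_{E'}$.

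\emph{Part (1).} By the preceding proposition, $\Gr_{G_{E'}/E'}$ is the perfection of an ind-$E'$-rigid analytic space $\widetilde{\mathcal F}'_\eta$, the generic fibre of the $\pi'$-adic formal model $\Gr^{PR}_{G_{E'}}\hat{\otimes}_{\mathbb{F}_{q'}}\O_{E'}$, equivalently $\underset{\text{Frob}}{\limp}(\Gr^{PR}_{G_{E'}}\otimes_{\mathbb{F}_{q'}}E')^{rig}$; and the $\Gamma$-semilinear action on $G_{E'}$ induces a $\Gamma$-semilinear action on $\widetilde{\mathcal F}'_\eta$ commuting with Frobenius. Finite \'etale descent being effective in the category of (ind-)rigid analytic spaces, the descent of $\widetilde{\mathcal F}'_\eta$ along $\spa(E')\drt\spa(E)$ is an ind-$E$-rigid analytic space $\mathcal Y$, with a $\Gamma$-equivariant identification $\mathcal Y\,\hat{\otimes}_E E'=\widetilde{\mathcal F}'_\eta$. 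Finally one checks, again by descent along the Galois cover, that the natural map from $R\mapsto G(R\llparent T\rrparent)/G(R\llbracket T\rrbracket)$ to $\mathcal Y$ exhibits $\mathcal Y$ as the associated \'etale sheaf: this holds after pull-back to $E'$, and \'etale sheaves satisfy descent along $\spa(E')\drt\spa(E)$. This proves (1).

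\emph{Part (2).} The perfection is the inverse limit along Frobenius, which is $\Gamma$-equivariant and compatible with the descent datum; hence the perfection of $\mathcal Y$, base-changed to $E'$, is $\underset{\text{Frob}}{\limp}\widetilde{\mathcal F}'_\eta=\Gr_{G_{E'}/E'}$ by the unramified case, $\Gamma$-equivariantly. Since $\mathcal Y$ is an ind-object of locally of finite type $E$-rigid spaces (finite \'etale descent preserving this property), its perfection is an ind-perfectoid space, exactly as in the unramified case; and since $\Gr_{G/E}\,\hat{\otimes}_{\spa(E)}\spa(E')=\Gr_{G_{E'}/E'}$, descent along $\spa(E')\drt\spa(E)$ identifies this perfection with $\Gr_{G/E}$. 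The hard part here is twofold: making precise and proving effectivity of finite Galois descent in the ind-rigid analytic, resp. ind-perfectoid, setting with a compatible Frobenius --- in particular, that the descended object is again of the required type rather than merely a sheaf; and the base-change compatibility $\Gr_{G/E}\,\hat{\otimes}_{\spa(E)}\spa(E')=\Gr_{G_{E'}/E'}$, which rests on controlling $B^+_{dR}$ and the schematic curve under the possibly ramified extension $E'|E$ and on Galois descent of $G$-torsors along $X^{sch}_{R,E'}\drt X^{sch}_{R,E}$.

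An alternative route avoids Galois descent altogether: choose a closed embedding $G\hookrightarrow\GL_n$ over $E$ with $\GL_n/G$ quasi-affine. Then $\Gr_{\GL_n/E}$ is representable by the split case already done, $\Gr_{G/E}\hookrightarrow\Gr_{\GL_n/E}$ is a locally closed immersion of ind-objects --- the analytic analogue of the Beilinson--Drinfeld/Richarz comparison of affine Grassmannians --- and a locally Zariski closed subobject of an ind-rigid analytic, resp. ind-perfectoid, space is again of that type. Along this route the work is concentrated in the locally-closed-immersion statement and in checking that this sub-ind-object has the expected functor of points $R\mapsto G(R\llparent T\rrparent)/G(R\llbracket T\rrbracket)$ after \'etale sheafification.
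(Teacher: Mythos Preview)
Your approach is exactly what the paper indicates: the paper gives no proof beyond the single sentence ``One can deal easily with the general case using descent theory from the unramified case,'' and your elaboration via Galois descent from a finite extension $E'|E$ over which $G$ splits (hence is unramified, so the previous result applies) is a correct expansion of that hint. The alternative $\GL_n$-embedding route you sketch is also viable but is not the one the paper has in mind.
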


%
%

\begin{rema}
On has to be careful that 
$$
\pi_0 ( \Gr^{B_{dR}}) = \pi_1 (G)/\Gamma.
$$
Of course, when $G$ is unramified this coincides with \cite{Rapoport_Twisted}. But for ramified $G$ this is different from \cite{Rapoport_Twisted}.
\end{rema}

\subsubsection{The equal characteristic case}

Suppose now that $E|\Qp$.
If $C|E$ is complete algebraically closed there is a bijection
$$
X_*(T)^+ \iso G(B^+_{dR}(C))\bc G(B_{dR}(C))/G(B^+_{dR}(C)).
$$
This defines a relative position application 
$$
|\text{Gr}|\ldrt X_*(T)^+/\Gamma
$$
that is semi-continuous. We can then write 
$$
\text{Gr} = \underset{\mu\in X_*(T)^+/\Gamma}{\limi} \text{Gr}^{\leq \mu}.
$$
We then have the following theorem of Scholze.

\begin{theo}[\cite{ScholzeBerkeley}]
For each $\mu\in X_*(T)^+/\Gamma$ the pro-étale sheaf $\text{Gr}^{\leq \mu}$ is a diamond over $\spa (E)^\diamond$.
\end{theo}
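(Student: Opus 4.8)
The plan is to follow Scholze's approach: reduce to $G=\GL_n$, identify the minuscule Schubert cells with diamonds attached to classical flag varieties, and build a general $\Gr^{\leq\mu}$ out of minuscule ones by a Demazure-type resolution along which the diamond property descends. For the reduction, fix a closed embedding $G\hookrightarrow\GL_n$; it induces a closed immersion of pro-\'etale sheaves $\Gr_G\hookrightarrow\Gr_{\GL_n}$, because reducing a $\GL_n$-modification to a $G$-modification (after inverting $\xi$) is a closed condition on $\spec(B^+_{dR}(R))$. Since the relative-position bound is itself a closed condition (semicontinuity of the relative-position map), $\Gr_G^{\leq\mu}$ is a closed subsheaf of $\Gr_{\GL_n}^{\leq\mu'}$ for a suitable dominant $\mu'$, and as a closed subsheaf of a (spatial) diamond is again a (spatial) diamond --- compatibly with the maps to $\spa(E)^\diamond$ --- it is enough to treat $\GL_n$ (the passage to the $\Gamma$-quotient being mild Galois descent).

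For $\GL_n$ I would use the lattice description: for a perfectoid $E$-algebra $R$, a point of $\Gr_{\GL_n}$ is a finitely generated projective $B^+_{dR}(R)$-submodule $\Lambda\subset B_{dR}(R)^n$ of full rank, and the bound $\leq\mu$ forces $\xi^bB^+_{dR}(R)^n\subseteq\Lambda\subseteq\xi^{-a}B^+_{dR}(R)^n$ with $a,b$ read off from $\mu$. After a twist by $\xi$ one may assume $\xi^mB^+_{dR}(R)^n\subseteq\Lambda\subseteq B^+_{dR}(R)^n$. When $\mu$ is minuscule one has $m=1$, and $\Lambda/\xi B^+_{dR}(R)$ is a rank-$k$ direct summand of $(B^+_{dR}(R)/\xi)^n=R^n$; this identifies $\Gr_{\GL_n}^{\mu}$ with $\mathrm{Gr}(k,n)^\diamond$, the diamond attached to the smooth projective Grassmannian $\mathrm{Gr}(k,n)$ over $E$. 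Since the diamond of any quasicompact quasiseparated rigid space over $E$ is a spatial diamond over $\spa(E)^\diamond$, this settles the minuscule case.

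For a general dominant $\mu$ I would write $\mu=\mu_1+\dots+\mu_r$ as a sum of minuscule coweights (possible for $\GL_n$) and form the convolution Grassmannian $\widetilde{\Gr}_{\mu_\bullet}$ parametrizing flags of lattices $\Lambda_0\subset\dots\subset\Lambda_r$ with each successive relative position bounded by $\mu_i$. By Beauville--Laszlo gluing in perfectoid families, each forgetful step of $\widetilde{\Gr}_{\mu_\bullet}$ is a fibration whose fibres are minuscule Schubert cells, so inductively $\widetilde{\Gr}_{\mu_\bullet}$ is a spatial diamond, proper over $\spa(E)^\diamond$. The multiplication map $m\colon\widetilde{\Gr}_{\mu_\bullet}\to\Gr^{\leq\mu}$, $(\Lambda_0\subset\dots\subset\Lambda_r)\mapsto(\Lambda_0\subset\Lambda_r)$, is proper and surjective --- surjective on geometric points by the decomposition of $\mu$, proper by the valuative criterion together with the good behaviour of $B^+_{dR}$ over valuation rings. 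Its two-fold fibre product over $\Gr^{\leq\mu}$ is again a closed subsheaf of a product of convolution Grassmannians, hence again a spatial diamond; since $\Gr^{\leq\mu}$ is quasicompact, quasiseparated and separated over $\spa(E)^\diamond$, $v$-descent of the spatial-diamond property along the proper surjective $v$-cover $m$ yields that $\Gr^{\leq\mu}$ is a spatial diamond over $\spa(E)^\diamond$.

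The hard part is this final descent: a proper surjection is not quasi-pro-\'etale, so one cannot simply pull back a perfectoid cover, and one must invoke Scholze's descent machinery for small $v$-sheaves --- that a quasicompact quasiseparated separated $v$-sheaf admitting a proper surjection from a spatial diamond whose self-fibre-product is also a spatial diamond is itself a spatial diamond. Making this, and the very construction of the convolution Grassmannian, legitimate presupposes the foundational structure theory of $B^+_{dR}$ in perfectoid families --- flatness of the truncations $B^+_{dR}(R)/\xi^k$, Beauville--Laszlo in families, and the behaviour of $B^+_{dR}$ over valuation rings needed for the valuative criterion --- which is the real technical heart of the argument.
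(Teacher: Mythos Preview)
Your outline is essentially Scholze's argument from \cite{ScholzeBerkeley}, and it is correct in its broad strokes: reduce to $\GL_n$, identify minuscule Schubert cells with diamonds of classical flag varieties, build the general case by a Demazure/convolution resolution, and descend via the $v$-topology (what the paper calls the ``faithful topology''). You have also correctly isolated the genuinely hard point, namely the $v$-descent step for spatial diamonds along a proper surjection.

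Note, however, that the paper does \emph{not} give its own proof of this theorem. It simply cites \cite{ScholzeBerkeley} and then remarks that Scholze's proof ``uses a non constructive approach via the so called faithful topology'', expressing the hope that Quot-diamond techniques (as in section~\ref{sec:defi and prop of Bung}) might yield a more constructive argument. So there is nothing further to compare: your proposal reconstructs exactly the approach the paper attributes to Scholze, and the paper offers no alternative beyond a programmatic suggestion.

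One small caution on your write-up: the claim that ``a closed subsheaf of a diamond is a diamond'' is not automatic in general; what one actually uses is that $\Gr_G\hookrightarrow\Gr_{\GL_n}$ is a closed immersion of $v$-sheaves and that closed sub-$v$-sheaves of spatial diamonds are spatial diamonds. This is part of the same foundational package you invoke at the end, so it is not a gap, but the phrasing should be tightened.
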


The proof given by Scholze uses an non constructive approach via the so called faithfull topology. Nevertheless, {\it one can hope that using Quot-diamonds techniques as in section \ref{sec:defi and prop of Bung} we can find another more constructive proof of this theorem} (via the link between the $B_{dR}$-affine Grassmanian and the Hecke stack). 
\\

Here is another hope inspired by the equal characteristic case.

\begin{Hope}
If $G$ is unramified there is an object sitting over $\spa (\O_E)^\diamond$ whose special fiber over $\Fq$ is the Witt vectors affine Grassmanian as defined by Zhu 
(\cite{ZhuAffineGrassmanian}, \cite{ScholzeBhattGrass})
and whose generic fiber over $E^{\diamond}$ is the $B_{dR}$-affine Grassmanian.  
\end{Hope}

\begin{rema}
One has to be careful since {\it $\spa (\O_E)^\diamond$ has no geometric structure à priori}, this is just a pro-étale sheaf but not a diamond.
\end{rema}

\subsection{The Hecke stack}

Fix a $\mu\in X_*(T)^+/\Gamma$ and define the following.

\begin{defi}
For $(R,R^+)$ a perfectoif affinoid $\Fq$-algebra $$Hecke^{\leq \mu} (R,R^+)$$ is the groupoid of  quadruples $(\E_1,\E_2,R^\sharp, f)$ where:
\begin{itemize}
\item $R^\sharp$ is an untilt of $R$ over $E$
\item $\E_1$ and $\E_2$ are $G$-bundles on $X_R^{sch}$
\item if $D$ is the Cartier divisor on $X_R^{sch}$ defined by the untilt $R^\sharp$ then  
$$
f:\E_{1|X_R^{sch}\setminus D} \iso \E_{2|X_R^{sch}\setminus D}.
$$
\item Fiberwise over $\spa (R,R^+)$, the modification $f$ is bounded by $\mu$.
\end{itemize}
\end{defi}

Let us make the last condition more precise. For $\spa (F)\drt \spa (R,R^+)$ a geometric point, the untilt $R^\sharp$ defines an untilt $C$ of $F$ that gives a closed point $\infty \in X^{sch}_F$ with $\widehat{\O}_{X^{sch}_F,\infty}=B^+_{dR}(C)$.
For $i=1,2$, let us fix a trivialization of 
$\E_i\underset{X^{sch}_F}{\times} \spec (B^+_{dR}(C))$. Then the pullback of $f$ to $\spec (B_{dR}(C))$ is given by an element of $G(B_{dR}(C))$. Up to the choice of the preceding trivialization this defines en element of 
$$
X_*(T)^+= G(B^+_{dR}(C))\bc G(B_{dR}(C))/G(B^+_{dR}(C)).
$$
We ask that this element is bounded by en element in the Galois orbit $\mu$.

\begin{rema}
We could have defined directly $Hecke^{\leq \mu} (S)$ for any $S\in \text{Perf}_{\Fq}$ but this definition is more subtle. In fact one has to use the adic curve $X_S^{ad}$ for which the definition of a modification between two $G$-bundles is more complicated since one has to impose that the isomorphism on $X_S^{ad}\setminus D$ is "meromorphic along $D$". Here we thus use a great advantage of the schematical curve: the definition of a modification is more simple (and Beauville-Laszlo gluing applies directly to it contrary to the adic curve where it can be applied but this is a little bit more complicated).
\end{rema}

\begin{defi}
We define the following diagram
$$
\xymatrix{
 & Hecke^{\leq \mu} \ar[ld]_{\overset{\leftarrow}{h}} \ar[rd]^{\hd}
  \\
 \Bun_G & & \Bun_G \times \spa (E)^\diamond.
}
$$
by the formulas $\hg ( \E_1,\E_2,R^\sharp,f)= \E_2$ and $\hd (\E_1,\E_2,R^\sharp,f)= (\E_1,R^\sharp)$.
\end{defi}

Define $L^+G$ as a group on $E^{\diamond}$ by the formula 
$$
L^+G(R,R^+) = G ( B^+_{dR}(R))
$$
for $(R,R^+)$ a perfectoid affinoid $E$-algebra. There is an étale $L^+G$-torsor 
$$
\mathcal{T}\ldrt \text{Bun}_G\times \spa (E)^\diamond.
$$
In fact, given  a perfectoid $\Fq$-algebra $R$ with an untilt $R^\sharp$ over $E$, 
for $\E$ a $G$-bundle over $X_R^{sch}$ its pullback to $\spec (B^+_{dR}(R^\sharp))$
defines a $G$-torsor over $\spec (B^+_{dR}(R^\sharp))$.

The group $L^+G$ acts on the $B_{dR}$-affine grassmanian and we have the following proposition that uses again Beauville-Laszlo gluing.

\begin{prop}\label{prop:loc fibration Gr}
The morphism $\hd:Hecke^{\leq \mu}\drt \Bun_G\times \spa (E)^\diamond$ is identified with 
$$
\mathcal{T}\underset{L^+ G}{\times} \text{Gr}^{\leq \mu}.
$$
\end{prop}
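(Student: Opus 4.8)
The plan is to identify both sides as classifying spaces of the same $\O$-module data on $\spec(B^+_{dR})$ and then quotient appropriately. First I would unwind the definition of the torsor $\mathcal{T} \times_{L^+G} \Gr^{\leq\mu}$: a point over $(R,R^+)$ together with an untilt $R^\sharp$ consists of a $G$-bundle $\E_1$ on $X^{sch}_R$, an $L^+G$-torsor trivialization $\tau$ of $\E_1|_{\spec B^+_{dR}(R^\sharp)}$ (this is the $\mathcal{T}$-part, which exists étale-locally since by the remark after the definition of $\Gr$ any $G$-bundle on $\spa(R,R^+)$ becomes trivial after an étale cover, and the same holds over $\spec B^+_{dR}$), and a point $g \in \Gr^{\leq\mu}$, all modulo the diagonal $L^+G$-action that changes $\tau$. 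Meanwhile a point of $\hd^{-1}$ over the same base is a quadruple $(\E_1,\E_2,R^\sharp,f)$ with $f:\E_1|_{X^{sch}_R\setminus D}\iso \E_2|_{X^{sch}_R\setminus D}$ bounded by $\mu$.

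The core step is a Beauville-Laszlo gluing argument. Given the quadruple $(\E_1,\E_2,R^\sharp,f)$, restrict $\E_2$ to $\spec B^+_{dR}(R^\sharp)$; étale-locally on $(R,R^+)$ both $\E_1$ and $\E_2$ become trivial there, and then $f$, viewed over $\spec B_{dR}(R^\sharp)$ via these trivializations, is an element $g\in G(B_{dR}(R^\sharp))$ whose class in $G(B^+_{dR})\backslash G(B_{dR})/G(B^+_{dR})$ is bounded by $\mu$ fiberwise — i.e. a point of $\Gr^{\leq\mu}$. Conversely, starting from $(\E_1,\tau,g)$, the pair $(\E_1|_{X^{sch}_R\setminus D},\, \text{the }G\text{-torsor on }\spec B^+_{dR}(R^\sharp)\text{ obtained by twisting }\tau\text{ by }g)$ glues by Beauville-Laszlo (\cite{BeauvilleLaszlo2}) — valid here because the formal completion of $X^{sch}_R$ along $D$ is exactly $\spf B^+_{dR}(R^\sharp)$ — to a $G$-bundle $\E_2$ on $X^{sch}_R$ together with the required modification $f$ away from $D$. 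One checks these two constructions are mutually inverse and that the choices of trivialization correspond precisely to the $L^+G$-action on $\mathcal{T}$, so that after quotienting the ambiguity disappears. The boundedness condition matches on both sides by the very definition of $\Gr^{\leq\mu}$ and of $Hecke^{\leq\mu}$ through the common relative-position map to $X_*(T)^+/\Gamma$.

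Finally I would check that this identification is compatible with the maps to $\Bun_G\times\spa(E)^\diamond$: on the Hecke side $\hd$ remembers $(\E_1,R^\sharp)$, and on the right side the structural map $\mathcal{T}\times_{L^+G}\Gr^{\leq\mu}\to \Bun_G\times\spa(E)^\diamond$ is the projection forgetting $\tau$ and $g$ — these agree under the dictionary above. The whole argument is essentially a relative, $B^+_{dR}$-coefficient version of the classical statement that the Hecke stack for $\Bun_G$ on a curve is a twisted product of $\Bun_G$ with the affine Grassmannian; the only genuine input is the Beauville-Laszlo descent, which is legitimate precisely because of the earlier proposition identifying the formal completion of $X^{sch}_R$ at the Cartier divisor with $\spf B^+_{dR}(R^\sharp)$. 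The main obstacle — really a bookkeeping point rather than a deep one — is verifying that the étale-local trivializations can be organized so that the two constructions are inverse as \emph{stacks} (i.e. functorially in $(R,R^+)$ and compatibly with the groupoid structure), and that the $L^+G$-torsor $\mathcal{T}$ is genuinely étale-locally trivial in the relevant sense; both follow from the Kedlaya-Liu equivalence between vector bundles on $\spa(R,R^+)$ and on $\spec R$ together with the corresponding statement over $\spec B^+_{dR}$.
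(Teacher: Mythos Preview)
Your proposal is correct and follows exactly the approach the paper indicates: the paper does not give a detailed proof but simply remarks that the proposition ``uses again Beauville-Laszlo gluing'' and rephrases the statement as saying that $\hd$ is an \'etale locally trivial fibration in $\Gr^{\leq\mu}$ with gluing by $L^+G$. Your write-up is a faithful and essentially complete unpacking of that one-line justification; the only minor imprecision is that in the direction from the Hecke datum to the twisted product you do not need to trivialize $\E_2$ separately --- once $\tau$ trivializes $\E_1$ over $\spec B^+_{dR}$, the pair $(\E_2|_{\spec B^+_{dR}},\ \tau\circ f^{-1})$ is already a point of $\Gr$, and the $L^+G$-ambiguity is exactly that of $\tau$ --- but this does not affect the argument.
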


Said in another way, {\it  $\hd$ is an étale locally trivial fibration in $\text{Gr}^{\leq \mu}$ with  gluing morphisms given by elements in $L^+G$.}

\subsection{A hypothetical geometric Satake correspondence}

Let us fix $\ell\neq p$. Let us note $\,^L G= \widehat{G}\rtimes \Gamma$ for the $\Qlb$-Langlands dual of $G$.

\begin{conj}\label{conj:satake geo}
\begin{enumerate}
\item 
There is a category of $L^+G$-equivariant $\Qlb$-"perverse sheaves" on $\text{Gr}$ that is equivalent to $\text{Rep}_{\Qlb} (\,^L G)$, representations of $\,^L G$ in finite dimensional $\Qlb$-vector spaces whose restriction to $\widehat{G}$ are algebraic and that are discrete on the $\Gamma$-factor of $\,^L G$. 
\item 
Via this equivalence, for  $\mu \in X_*(T)^+/\Gamma$, if $\mu'\in \mu$ with $\text{Stab}_{\Gamma}(\mu')=\Gamma'$, the "intersection cohomology complex" $IC_\mu$ of $\text{Gr}^{\leq \mu}$ corresponds to 
$r_\mu:=\text{Ind}^{\,^ L G}_{\widehat{G}\rtimes \Gamma'} r_{\mu'}$ where $r_{\mu'} \in \text{Rep}_{\Qlb} ( \widehat{G}\rtimes \Gamma')$ is the highest weight $\mu'$ irreducible representation.
\item If $\mu$ is minuscule then $IC_\mu= \Qlb (\langle \rho,\mu\rangle )[\langle 2\rho,\mu\rangle]$ where $\rho$ is the half sum of the positive roots of $T$.
\end{enumerate}
\end{conj}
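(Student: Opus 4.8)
The plan is to follow the Mirkovi\'c--Vilonen strategy for geometric Satake, adapted to the diamond setting. First one needs a workable theory of $\ell$-adic sheaves: the \'etale six-functor formalism for small v-stacks and ind-diamonds, and in particular a perverse $t$-structure on $D_{\et}(\Gr,\Qlb)$ relative to $\spa(E)^\diamond$, normalized against the stratification $\Gr=\bigcup_\mu \Gr^{\leq\mu}$ and the dimension function $\langle 2\rho,\mu\rangle$ read off from Proposition~\ref{prop:loc fibration Gr} and the fibration structure of the Schubert cells (over $E=\Fq\llparent\pi\rrparent$ these are perfections of affine bundles, and over $E\mid\Qp$ they are towers of Banach--Colmez spaces). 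The $L^+G$-equivariance should be encoded as sheaves on $[L^+G\backslash\Gr]$; using that $L^+G$ is pro-(unipotent diamond) over $\underline{G(\O_E)}$-type quotients, one checks that equivariance is a property rather than extra data, so that $\mathrm{Perv}_{L^+G}(\Gr)$ is an abelian category.

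Next I would define the convolution product via the global (Beilinson--Drinfeld) affine Grassmannian over powers of $\spa(E)^\diamond$, for which the Hilbert diamonds $X_S^{[d],\diamond}$ of section~\ref{sec:The curve} supply exactly the moving-points degeneration needed. One sets $\mathcal F_1\star\mathcal F_2=m_*(\mathcal F_1\,\widetilde\boxtimes\,\mathcal F_2)$ along the convolution morphism $m\colon\Gr\,\widetilde\times\,\Gr\to\Gr$, and the key geometric input is that $m$ restricted to $\Gr^{\leq\mu_1}\widetilde\times\Gr^{\leq\mu_2}$ is semismall; this follows from the Schubert-cell dimension count together with the fibration description of $\hd$. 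Interpreting this over the two-point Hilbert diamond as a fusion product furnishes the commutativity constraint, making $(\mathrm{Perv}_{L^+G}(\Gr),\star)$ into a neutral Tannakian $\Qlb$-category. The fiber functor is total cohomology $\omega=\bigoplus_n H^n(\Gr_{\overline E},-)$; its exactness comes from the Braden hyperbolic-localization argument together with a weight filtration attached to a cocharacter of $T$, which should carry over once one knows that the cohomology of the attracting/repelling loci (and of the Banach--Colmez-type strata) behaves as in the classical case.

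Tannakian reconstruction then gives $\mathrm{Perv}_{L^+G}(\Gr)\simeq\Rep_{\Qlb}(\mathbb G)$ for an affine group scheme $\mathbb G/\Qlb$, and the remaining task is to identify $\mathbb G\cong{}^L G=\widehat G\rtimes\Gamma$. One shows $\mathbb G$ is reductive (semisimplicity of the category, via the decomposition theorem once the six-functor formalism supports it), computes its torus and character lattice from the $L^+T$-orbits (MV cycles give the weight multiplicities, yielding $X_*(T)$ as weight lattice), pins down the root datum as Langlands dual to that of $G$ by the rank-one $\mathrm{SL}_2/\mathrm{PGL}_2$ subcomputations on (quasi-)minuscule Schubert varieties, and recovers the $\Gamma$-action from Galois descent $\Gr_G=(\Gr_{G'})^\Gamma$ together with the component group of $\Gr$ recalled after the unramified case. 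For (2), highest-weight theory identifies $IC_\mu$ with the simple object of highest weight $\mu'$, i.e. $r_\mu=\text{Ind}^{{}^L G}_{\widehat G\rtimes\Gamma'} r_{\mu'}$; for (3), when $\mu$ is minuscule the Schubert variety $\Gr^{\leq\mu}=\Gr^\mu$ is a smooth proper partial flag diamond, so $IC_\mu=\Qlb(\langle\rho,\mu\rangle)[\langle 2\rho,\mu\rangle]$.

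The hard part will be the foundations: a perverse $t$-structure on $\ell$-adic sheaves over $\Gr$ with enough finiteness (constructibility, stability under the six operations, the decomposition theorem) in a world where the objects are ind-diamonds over $\spa(E)^\diamond$ rather than finite-type schemes over a field and the Schubert cells are not affine spaces --- this is precisely the theory of perverse sheaves that the introduction flags as still conjectural. A secondary obstacle is making the Beilinson--Drinfeld/fusion degeneration over the Hilbert diamonds precise enough to produce a genuine symmetric monoidal structure, and controlling the hyperbolic-localization computation for the exactness of $\omega$, where one must check that the cohomology of the Banach--Colmez-type strata contributes as expected.
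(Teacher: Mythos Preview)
The statement you are addressing is labeled in the paper as a \emph{Conjecture} (Conjecture~\ref{conj:satake geo}), not a theorem or proposition, and the paper does not attempt to prove it. There is therefore no proof in the paper to compare your proposal against: the author states the geometric Satake equivalence for the $B_{dR}$-affine Grassmannian as a hypothetical input, remarks that the $L^+G$-equivariant complex $IC_\mu$ should transfer to $\Hecke^{\leq\mu}$ via Proposition~\ref{prop:loc fibration Gr}, and then uses the pair $(\Hecke^{\leq\mu},IC_\mu)$ purely formally to define the Hecke eigensheaf property in the main conjecture. No argument toward (1), (2), or (3) is given.

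Your outline is a reasonable sketch of the Mirkovi\'c--Vilonen strategy in this setting, and you are correct that the genuine obstacles are foundational (the $\ell$-adic six-functor formalism for v-stacks, a perverse $t$-structure, semismallness of convolution, hyperbolic localization over Banach--Colmez strata). But you should be aware that, as written in this paper, the statement is not claimed to be proved; the author explicitly flags the required theory of perverse sheaves as conjectural in the introduction and in Section~\ref{sec:defi and prop of Bung}. So your proposal is not a competing proof of something the paper establishes --- it is a program for attacking an open conjecture that the paper leaves open.
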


Thanks to proposition \ref{prop:loc fibration Gr} the intersection cohomology complex $IC_\mu$ should define a "perverse sheaf" still denoted $IC_\mu$ on $\text{Hecke}^{\leq \mu}$ (for this one will need the perversity of $IC_\mu$ so that this satifies descent with respect to étale descent data).
\\

At the end the couple 
$$
(Hecke^{\leq \mu},IC_{\mu})
$$
allows us to define 
{\it cohomological correspondences between sheaves on $\Bun_G$ and on $\Bun_G\times \spa (E)^\diamond$}.

\section{Statement of the conjecture}
\label{sec:Statement of the conjecture}

As before $G$ is quasi-split over $E$ and $\Gamma=\Gal (\Eb|E)$ and we fix $\ell\neq p$.
 We note $\,^L G= \widehat{G}\rtimes \Gamma$ for its $\Qlb$-Langlands dual. Let us note  that this should be defined intrinsically via the conjecture \ref{conj:satake geo}. A Langlands parameter is by definition a continuous morphism
 $$
 \ph:W_E\ldrt \,^L G
 $$
 whose projection toward the $\Gamma$-factor is given by the embedding $W_E\subset \Gamma$. For such a parameter we note
 $$
 S_\ph = \big \{ g\in \widehat{G}\ |\  g\ph g^{-1} =\ph\big \}.
 $$
 This is the $\Qlb$-points of an algebraic group whose neutral connected component is reductive. We always have 
 $$
 Z(\widehat{G})^{\Gamma}\subset S_\ph.
 $$
\begin{defi}
\begin{enumerate}
\item We say $\ph$ is discrete if $S_\ph/Z(\widehat{G})^\Gamma$ is finite.
\item We say $\ph$ is cuspidal if $\ph$ is discrete and moreover the image of the associated $1$-cocyle $I_E\drt \widehat{G}$ is finite.
\end{enumerate}
\end{defi}

Via an hypothetical Langlands correspondence:
\begin{itemize}
\item discrete parameters should parametrize discrete series L-packets
\item cuspidal parameters should parametrize supercuspidal L-packets, that is to say packets all of whose components are supercuspidal.
\end{itemize} 

In all known cases of the local Langlands correspondence those two properties are satisfied.

\begin{rema}
In this context there is no notion of tempered L-parameters. In fact such a notion would depend on the choice of an isomorphism between $\Qlb$ and $\C$. One can define such an arithmetic  notion if we impose that the eigenvalues of the image of Frobenius under $\ph$ are Weil numbers. But this last notion can not be interpolated $\ell$-adically and is not natural from a purely local point of view.
\end{rema}

There is a groupoid of Langlands parameters 
$$
\mathcal{L}_G= \Big [ \Hom ( W_E,\, ^L G) /\widehat{G}\Big ].
$$
From this point of view discrete parameters corresponds  to the points of this "stack of parameters where it is Deligne-Mumford, up to the $Z(\widehat{G})^\Gamma$-factor."

\begin{conj}[rough version]
There is a functor between groupoids 
\begin{eqnarray*}
\mathcal{L}_G^{disc} & \ldrt & \text{Perverse Weil-sheaves on }\Bun_G \\
\ph & \longmapsto & \F_\ph
\end{eqnarray*}
satisfying the following properties:
\begin{enumerate}
\item The stalks of this functor at the residual gerbes at semi-stable points of $\text{Bun}_G$ defines a local Langlands correspondence for extended pure inner forms of $G$.
\item If $\ph$ is cuspidal then the restriction of $\F_\ph$ to the non-semi-stable locus of $\Bun_G$ is zero.
\item $\F_\ph$ is an Hecke eigenvector with eigenvalue $\ph$ 
\end{enumerate}
\end{conj}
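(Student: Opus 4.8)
The plan is not to write $\F_\ph$ down directly but to obtain it from a \emph{spectral action}, following the philosophy of the geometric Langlands program and V. Lafforgue's excursion-operator formalism in the function-field global setting. First I would set up a six-functor formalism of $\ell$-adic sheaves on $\Bun_G$ and on the Hecke stacks $\Hecke^{\leq\mu}$, using Scholze's theory of diamonds to make sense of ``smooth'' morphisms and of the derived category $D(\Bun_G,\Qlb)$; the charts of section~\ref{sec:defi and prop of Bung} and the local fibration structure of Proposition~\ref{prop:loc fibration Gr} are what make this reasonable. Second I would prove the geometric Satake equivalence of Conjecture~\ref{conj:satake geo}, which promotes the correspondences $(\Hecke^{\leq\mu},IC_\mu)$ to a monoidal action of $\Rep_{\Qlb}(\,^L G)$ on $D(\Bun_G)$, compatible with fusion over the Hilbert diamonds $X_S^{[d]}$. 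Third, combining this Hecke action with the action of $W_E$ on the legs (the factors $\spa(E)^\diamond$), I would build Drinfeld-style excursion operators indexed by tuples $(n,V,x_1,\dots,x_n,\gamma_1,\dots,\gamma_n)$, show they commute, and assemble them into an action of the algebra of global functions on the groupoid of parameters $\mathcal{L}_G$ --- better, of the category of perfect complexes on $\mathcal{L}_G$ --- on $D(\Bun_G)$. Finally, for a discrete parameter $\ph$, I would define $\F_\ph$ by applying this spectral action at the point $[\ph]$ to a natural generator of $D(\Bun_G,\Qlb)$, such as the $!$-extension $i_{1!}\Qlb$ of the constant sheaf along the open substack of the trivial bundle; $S_\ph$-equivariance is then automatic, since $S_\ph$ is the automorphism group of $[\ph]$ in $\mathcal{L}_G$.

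With this construction, the Hecke-eigensheaf property (3) is essentially forced: by design the Hecke operator attached to $r_\mu$ acts on the $[\ph]$-component through the fibre of $r_\mu$ at $[\ph]$ twisted by the $W_E$-action on the corresponding leg, which is exactly the local system $r_\mu\circ\ph$; one then checks compatibility with the perverse and Weil structures so that $\F_\ph$ is genuinely a Weil-perverse sheaf on $\Bun_G\otimes\Fqb$. For property (2), cuspidality of $\ph$ means its restriction to $I_E$ has finite image and that $\ph$ does not factor through any proper parabolic of $\,^L G$, so the spectral localization kills the ``constant-term'' directions; geometrically, on a non-semistable stratum $\Bun_{G,\Fqb}^{\a,HN=\nu}\simeq[\spa(\Fqb)/\widetilde{J}_b]$ the automorphism diamond $\widetilde{J}_b$ has a nontrivial unipotent connected component $\widetilde{J}_b^0$ built out of Banach--Colmez spaces, and I would show that a Hecke eigensheaf with cuspidal eigenvalue must restrict to zero on such a stratum --- the sheaf-theoretic shadow of the fact that a supercuspidal packet has no constituents arising from Jacquet modules. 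The technical input is a compatibility of the spectral action with the parabolic-reduction charts $\Bun_B\drt\Bun_G$ of section~\ref{sec:BL}, together with a vanishing of the relative Hecke action along the unipotent diamonds $\widetilde{J}_b^0$.

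Property (1), that the stalks $x_b^*\F_\ph$ at the semistable points recover an honest local Langlands correspondence for the extended pure inner forms $J_b$, is where the real difficulty lies, and I expect it to be the main obstacle. The excursion formalism only \emph{produces} a map from $\Irr(J_b(E))$ to semisimple parameters; identifying it with the expected correspondence requires (a) finiteness --- a universal local acyclicity property of $\F_\ph$ over $\Bun_G$ --- to ensure each stalk is a finite sum of irreducibles with the correct $S_\ph$-equivariant multiplicities, computed via an Euler-characteristic count on the Hecke complexes, and (b) verification of the characterizing properties of the correspondence, notably the character-sheaf property relating $\mathrm{Tr}(\mathrm{Frob};x_\delta^*\F_\ph)$ to the stable distribution attached to $\ph$; for (b) I would exploit the local--global compatibility of section~\ref{sec:localglobal} with Caraiani--Scholze's $R\pi_{HT*}\Qlb$, the comparison with Kottwitz's conjecture of section~\ref{sec:Kottwitz conj}, and the abelian case of section~\ref{sec:The abelian case} as a base point. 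The two genuine foundational bottlenecks behind the whole plan are therefore: establishing geometric Satake on the $B_{dR}$-affine Grassmannian --- the perverse $t$-structure, semisimplicity of the $IC_\mu$, and Tannakian reconstruction of $\,^L G$ together with its $\Gamma$-action --- and proving enough cohomological smoothness of $\Bun_G$ and finiteness of the Hecke kernels for the six-functor formalism and the excursion operators to be well-defined in the first place.
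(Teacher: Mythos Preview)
The statement you are trying to prove is a \emph{conjecture}, not a theorem. The paper is an overview article whose purpose is precisely to formulate this conjecture and explain its motivations and consequences; it does not contain a proof. The only case the paper actually establishes is that of tori (section~\ref{sec:The abelian case}), where the sheaf $\F_\ph$ is written down by hand as a rank-one Weil local system built from $\underline{T(E)}$-torsors and the character $\chi$ attached to $\ph$ by local class field theory, and the Hecke property is checked directly using the fundamental exact sequence and crystalline representations. There is therefore no ``paper's own proof'' against which to compare your proposal for the general statement.

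That said, your outline is not wrong as a research program --- it is essentially the strategy that was subsequently developed by Fargues and Scholze: build a six-functor formalism on v-stacks, prove geometric Satake over the $B_{dR}$-Grassmannian, assemble excursion operators into a spectral action of $\mathrm{Perf}(\mathcal{L}_G)$ on $D(\Bun_G)$, and produce $\F_\ph$ by acting on a Whittaker-normalized generator. You have correctly identified the main bottlenecks (Satake, finiteness/ULA, identification of the resulting parametrization with local Langlands). But you should be aware that at the time of this paper none of these ingredients existed, and even with them in place, property~(1) --- that the stalks at basic points recover the \emph{correct} local Langlands correspondence --- remains open in general. So what you have written is a plausible sketch of how one might attack the conjecture, not a proof, and it cannot be graded against the paper because the paper makes no claim to have one.
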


We are now going to give a more precise formulation. Before beginning, we have to {\it fix a Whittaker datum}. In fact the construction of $\F_\ph$ has to depend on such a choice since the local L-packet of $G$ constructed via $\F_\ph$ will have a distinguished element given by the trivial representation of $S_\ph$. This element has to be the unique generic element of the L-packet.

\begin{conj}
Given a discrete Langlands parameter $\ph$ there is a "$\Qlb$-perverse Weil sheaf" $\F_\ph$ on $\Bun_{G,\Fqb}$ equipped with an action of $S_\ph$ satisfying the following 
properties:
\begin{enumerate}
\item For $\a\in \pi_1(G)_\Gamma$ the action  of $Z(\widehat{G})^\Gamma$ on 
the restriction of $\F_\ph$ to the component 
$\Bun_{G,\Fqb}^\a$ is given by $\a$ via the identification $\pi_1(G)_\Gamma=X^* \big  (Z(\widehat{G})^\Gamma \big )$.
\item {\it (Cuspidality condition)} If $\ph$ is moreover cuspidal then $\F_\ph = j_! j^*\F_\ph$ where $j:\Bun_{G,\Fqb}^{ss}\hookrightarrow \Bun_{G,\Fqb}$.
\item {\it (Realization of local Langlands)} For $b\in G(L)$ basic and $x_b:[\spa (\Fqb)/J_b(E)]\hookrightarrow \Bun_{G,\Fqb}$, as a representation of $S_\ph\times J_b(E)$, smooth on the $J_b(E)$-component, 
$$
x_b^* \F_\ph = \bigoplus_{\rho\in \widehat{S_\ph}\atop \rho_{|Z(\widehat{G})^\Gamma} = \kappa (b)} \rho \otimes \pi_{\ph,b,\rho}
$$
where $\big \{ \pi_{\ph,b,\rho}\big \}_{\rho}$ is an L-packet defining a local Langlands correspondence for the extended pure inner form $J_b$ of $G$. 
\\
For $b=1$, $\pi_{\ph,1,1}$ is the unique generic element of this L-packet associated to the choice of the Whittaker datum.
\item {\it (Hecke eigensheaf property)} For $\mu\in X_*(T)^+/\Gamma$, there is an isomorphism 
$$
\hd_! \Big ( \hg^* \F_\ph \otimes IC_{\mu} \Big ) \simeq \F_\ph \boxtimes r_{\mu} \circ \ph.
$$
Here $r_{\mu}\circ\ph$ is an $\ell$-adic representation of $W_E$ that defines a Weil-étale local system on $\spa (E)^\diamond\times \spa (\Fqb)=\spa (\widehat{E^{un}})^{\diamond}$. 
This isomorphism is compatible with the action of $S_\ph$ where the action of $S_\ph$ on $r_\mu\circ \ph$ is the induced by the one on $\ph$.
\item {\it (Character sheaf property)} For $\delta \in G(E)$ elliptic seen as an element of $G(L)$ the action of Frobenius on $x_\delta^* \F_\ph$ coindices with the action of $\delta \in J_\delta (E)$.
\item {\it (Local global compatibility)} 
Let $(H,X)$ be a Hodge type Shimura datum with $H_{\Qp}=G$ and $\Pi$
an automorphic representation of $H$ such that $\ph_{\Pi_p}=\ph$.   
There is a compatibility between 
the $\Pi^p$-isotypic component of 
Caraiani-Scholze sheaf $R\pi_{HT*}\Qlb[\dim \Sh]$ (\cite{CaraianiScholze}) and a multiple of the restriction of $\F_\ph$ to
the Hodge-Tate period Grassmanian. 
\end{enumerate} 
\end{conj}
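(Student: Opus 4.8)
The plan is to move the problem from the automorphic side --- sheaves on $\Bun_G$ --- to the spectral side by constructing a Hecke action and an attached algebra of excursion operators, and then to \emph{define} $\F_\ph$ as the summand of $D(\Bun_G,\Qlb)$ cut out by $\ph$; properties (1)--(7) then split into a construction part, which becomes automatic from this decomposition, and a verification part, which amounts to computing stalks and Frobenius traces in a few accessible cases and propagating the answer by the eigensheaf property.

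First I would settle the geometric foundations that Sections \ref{sec:The stack BunG} and \ref{sec:The BdR affine Grassmanian and the Hecke stack} leave open: give a working definition of ``smooth diamond stack'' for which the Quot-diamond charts $\text{Pres}_{\mathcal D}\to\Bun_G\times\spa(F)$ form a genuine smooth atlas, prove the analogue for $\Hecke^{\leq\mu}$ using that $\hd$ is an \'etale-locally trivial $\Gr^{\leq\mu}$-fibration (Proposition \ref{prop:loc fibration Gr}), and build on this a six-functor formalism of constructible $\Qlb$-sheaves with a perverse $t$-structure and proper/smooth base change. Inside this formalism one must also prove the hypothetical geometric Satake correspondence of Conjecture \ref{conj:satake geo} for the $B_{dR}$-affine Grassmannian, so that $\Rep_{\Qlb}(\,^{L}G)$ acts by Hecke-equivariant perverse sheaves on $\Gr$ with $r_\mu\mapsto IC_\mu$.

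With that, for each $V\in\Rep_{\Qlb}(\,^{L}G)$ define the Hecke operator $T_V\colon D(\Bun_G)\to D(\Bun_G\otimes\Fqb)\otimes V$ by pushing $IC_\mu$ through $(\hg,\hd)$, and establish two structural facts: \emph{(a) fusion} --- using the Hilbert-diamonds $X_S^{[d],\diamond}$ of Section \ref{sec:diamond product formula} to let points collide and separate, show $T_{V\otimes W}$ factors through $T_V\circ T_W$ compatibly with the $W_E$-actions at the varying points, producing excursion operators in the style of V.\ Lafforgue acting on $D(\Bun_G)$; \emph{(b) finiteness} --- show the Hecke kernels are universally locally acyclic, so these operators preserve a reasonable subcategory and generate a commutative algebra whose spectrum maps to the coarse space $\mathcal L_G/\!/\widehat G$ of semisimplified parameters. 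This gives a decomposition of $D(\Bun_G)$ over $\mathcal L_G/\!/\widehat G$; for a discrete $\ph$ the corresponding summand is supported on the semi-stable locus --- which already yields (2), since a cuspidal $\ph$ has no spectral interaction with a proper parabolic induction --- and $\F_\ph$ is the generator of this summand, normalized by the fixed Whittaker datum so that the generic constituent at $b=1$ is the Whittaker one. Property (1) is then automatic from central characters, and property (4) holds by construction.

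The rest is identification. For (3), compute $x_b^*\F_\ph$ through the uniformization $[\spa(\Fqb)/\underline{J_b(E)}]\iso\Bun_{G,\Fqb}^{\kappa(b),ss}$ of Theorem \ref{theo:unif basic loc}; the eigensheaf property transports the answer between different basic $b$, so it suffices to nail one instance, namely the Lubin--Tate/Drinfeld case of Section \ref{sec:Drinfeld and Lubin Tate case} together with the torus case of Section \ref{sec:The abelian case}, and then match the $J_b(E)$-packets obtained with Kaletha's extended pure inner form formalism. For (5), apply the Lefschetz trace formula to $x_\delta^*\F_\ph$ and use the eigensheaf property to reduce $\mathrm{Tr}(Frob;x_\delta^*\F_\ph)$ to an orbital-integral identity --- i.e.\ to stability of the distribution attached to $\ph$ --- via the fundamental lemma. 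For (7), realize Caraiani--Scholze's $R\pi_{HT*}\Qlb$ as a Hecke module over $\Bun_G$ through the Hodge--Tate period map of a Hodge type Shimura variety and identify its Hecke eigenvalues with $\ph$. The main obstacle is concentrated in the first two steps: there is as yet no definition of ``smooth diamond stack'' nor of perverse $\ell$-adic sheaves in this non-Noetherian diamond setting, geometric Satake on the $B_{dR}$-affine Grassmannian is itself conjectural, and even granting these, the universal local acyclicity of the Hecke kernels --- the input that makes the excursion decomposition, hence the very definition of $\F_\ph$, possible --- is a hard analytic statement with no direct classical model to copy.
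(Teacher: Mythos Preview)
The statement is a \emph{conjecture}, and the paper does not prove it. What the paper does is: state the conjecture, motivate each clause (Sections \ref{sec:The character sheaf property}--\ref{sec:Kottwitz conj}), verify it completely in the abelian case $G=T$ a torus (Section \ref{sec:The abelian case}), and show that it implies Kottwitz's conjecture on the cohomology of basic Rapoport--Zink spaces. There is no proof to compare against.

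Your proposal is not a proof either, and you say so yourself in the final paragraph: the sheaf-theoretic foundations (a workable notion of smooth diamond stack, a perverse $t$-structure in this setting), geometric Satake over $B_{dR}$, and ULA for the Hecke kernels are all open at the time of the paper, and several of these you explicitly label conjectural. What you have written is a \emph{research program}: build the six-functor formalism, prove Satake, run fusion/excursion \`a la V.~Lafforgue to decompose $D(\Bun_G)$ over the stack of $L$-parameters, and extract $\F_\ph$ as the summand over a discrete $\ph$. This is a coherent and, with hindsight, prescient strategy --- it is essentially the line later pursued in Fargues--Scholze --- but it is not a proof of the conjecture, and it is categorically different from what the paper offers. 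The paper treats the conjecture as a target and checks it where it can (tori, by direct construction using class field theory and Lubin--Tate torsors); you are sketching how one might one day attack the general case. Both are legitimate, but neither constitutes a proof, and you should present yours as a strategy rather than a proof attempt. Note also that even in your outline, clauses (3), (5), and (6) are not consequences of the excursion construction: identifying the stalks with the correct $L$-packets, matching the Frobenius trace with the stable character, and the local/global compatibility are separate problems that your reduction to ``one instance plus propagation'' does not resolve --- the eigensheaf property transports \emph{something} between basic points, but does not by itself tell you that what it transports is the local Langlands packet in Kaletha's normalization.
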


We will make point (5) et (6) more precise later.
Let us note that via Beauville-Laszlo morphism
$$
\mathcal{BL}:\Gr\ldrt \Bun_G
$$
the "perverse sheaf" $\F_\ph$ should correspond to a "perverse sheaf" on 
$\Gr$. But contrary to $IC_\mu$ and the other perverse sheaves showing up 
in the geometric Satake isomorphism, this complex of sheaves won't be locally constant 
on open Schubert cells and will be of a much more complicated nature.

\begin{rema}
If the center of $G$ is connected all inner forms of $G$ are extended pure inner forms, that is to say of the form $J_b$ with $b$ basic. That being said this is false in general, for example for $\SL_n$ where the only extended pure inner form is $\SL_n$ itself. For $\SL_n$ the obstruction to reach all inner forms lies in the Galois cohomology group $H^2 (E,\mu_n)$ and is given by the fundamental class of local class field. Motivated by section prop.3.4 of \cite{Gtorseurs} that says that this fundamental class corresponds to the first Chern class of $\O(1)$ one can extend the preceding conjecture to all inner forms for $\SL_n$ by looking at the stack of $\SL_n$-bundles on the gerb of $n$-th roots of $\O(1)$. The author hopes a similar approach will lead to a generalization of the preceding conjecture to all inner forms for all reductive groups $G$ whose center is not connected. 
\end{rema}

\begin{rema}\label{rem:fibre aux points non basiques}
If one takes the fiber $x_b^*\F_\ph$ at some non-basic $b$, this is a representation of 
$\widetilde{J}_b$ that has to factorize through $\pi_0 (\widetilde{J}_b)=\underline{J_b (E)}$ (incompatibility between $\ell$ and $p$). It seems logical to think that this smooth representation of $J_b (E)$ should be linked to some Jacquet functor associated to the Levi subgroup $M_b$, the centralizer of the slope morphism (see rem.\ref{rem:localGlobalCusp} for further comments). 
\end{rema}

\section{The character sheaf property}
\label{sec:The character sheaf property}

Recall that $\F_\ph$ is a Weil sheaf and is thus equipped with a Frobenius descent datum from $\Bun_{G,\Fqb}$ to $\Bun_G$. Let us pick $\delta \in G(E)$ and let us see it as an element of $G(L)$. The morphism $\nu_\delta:\DD\drt G$ is defined over $E$ since the Dieudonné-Manin slope decomposition is defined over $E$ (given an automorphism of a finite dimensional $E$-vector space split its characteristic polynomial as a product according to the absolute values of its roots). Moreover $\delta$ lies in the centralizer of $\nu_\delta$. As a consequence, {\it if $\delta$ is elliptic in $G(E)$ it is basic in $G(L)$.} We thus have a functor between groupoids
$$
\big [ G(E)_{ell}/\text{conjugacy}\big ] \ldrt \big [ G(L)_{basic}/\s\text{-conjugacy}\big ]
$$
and thus an application
$$
\big \{ G(E)\big \}_{ell}\ldrt B(G)_{basic}.
$$

We now suppose {\it $\delta$ is elliptic}. The morphism
$$
x_\delta : \spa (\Fqb)\ldrt \Bun_{G,\Fqb}
$$
is then defined over $\Fq$. Using the Weil-sheaf structure of $\F_\ph$, $x_\delta^* \F_\ph$ is equipped with an action of a Frobenius $\text{Frob}$.

Via the isomorphism 
$$
\big [\spa (\Fqb)/J_\delta (E) \big ] \iso \Bun_{G,\Fqb}^{ss,\kappa (\delta )}
$$
given by $x_\delta$ 
the Frobenius descent datum on the right is given by the couple of morphisms 
$$
(Frob, \s): (\spa (\Fqb), J_\delta (E))\ldrt (\spa (\Fqb),J_\delta (E))
$$
on the left, 
where $\s$ is seen as an automorphism of $G(L)$ that restricts to an automorphism of $J_\delta (E)$. But now, for $g\in J_{\delta} (E)$, $g\delta g^{-\s}=\delta$ with $g\in G(L)$, one has 
$$
g^\s = \delta^{-1} g \delta.
$$
One deduces that the Frobenius descent datum is given by 
$$
(Frob,Int_{\delta^{-1}}): (\spa (\Fqb),J_\delta (E))\ldrt (\spa (\Fqb),J_\delta (E)).
$$
Thus, the action of $Frob$ on the $\Qlb$-vector space $V=x_\delta^* \F_\ph$ is given by an 
automorphism $u\in \GL (V)$ satisfying 
$$
\forall g\in J_\delta (E),\ u\circ \pi (g) = \rho (\delta)^{-1} \circ \pi (g)\circ \rho (\delta)\circ u
$$
where $\pi:J_\delta (E)\drt \GL (V)$. {\it The character sheaf property then asks that 
$u=\pi (\delta)$}.
\\

Here is a rephrasing of this condition. 
There is a decomposition 
$$
\pi=\bigoplus_{\rho\in \widehat{S_\ph}\atop \rho_{|Z(\widehat{G})^\Gamma=\kappa (\delta)}} \rho\otimes \pi_\rho
$$
that commutes with the action of $Frob$ (by hypothesis, the Weil descent datum commutes
with the action of $S_\ph$). Since by hypothesis for each $\rho$ the representation $\pi_\rho$ is irreducible, there exists a collection of scalars $(\l_\rho)_{\rho}$ in $\Qlb^\times$ such that the action of $Frob$ is given by $\oplus_\rho \l_\rho \pi(\delta)$. {\it The character sheaf property then asks that for all $\rho$, $\l_\rho = 1$}.
\\

Here is a consequence of the introduction of the character sheaf property.
\\

{\bf Consequence of the character sheaf property:} 
{\it 
Let $T_\ph$ be the stable distribution on $G(E)$ associated to the Langlands parameter $\ph$. The restriction of $T_\ph$ to the elliptic regular subset in $G(E)$ is given by the trace of Frobenius function 
$$
\delta \longmapsto e(J_\delta) \text{Tr} ( Frob ; x_\delta^* \F_\ph)
$$
via the map $\{G(E)\}_{ell}\drt B(G)_{basic}$ where $e(J_\delta)\in \{\pm 1\}$ is Kottitz sign (\cite{KottwitzSign}).
}
\\

Here one has to be careful with the preceding formula. In fact, when $J_\delta$ is anisotropic modulo its center $x_\delta^*\F_\ph$ is finite dimensional and the trace makes sense. But when this is not the case one has to interpret it as $e(J_\delta)$ times the character of the finite lenght representation $x_\delta^*\F_\ph$ evaluated at $\delta$. More precisely this is defined as the trace of $\frac{1}{\text{vol}(K\delta K)} \boldsymbol{1}_{K\delta K}$ for $K$ sufficiently small.
Of course this may seem artificial in this second case but nevertheless, the fact that this stable character can be interpreted as a trace of Frobenius function fascinates the author.

\begin{rema}
The author has tried to incorporate Kottwitz sign $e(J_\delta)$ in a cohomological shift or elsewhere but was unable to fix something that makes sense. At some point the choice of a sign linked to Frobenius already appeared before when we said that $IC_\mu = \Qlb (\langle \rho,\mu\rangle) [\langle 2\rho,\mu\rangle]$ for $\mu$ minuscule, a formula that involves the choice of a square root $\Qlb ( \tfrac{1}{2})$ of $\Qlb (1)$. All of this seems to be a delicate question.
\end{rema}

\begin{rema}
Of course, if one takes the trace of Frobenius times any element of $S_\ph$ (instead of $1\in S_\ph$) one obtains endoscopic distributions on $G$.
\end{rema}

\begin{rema}
Let $n\geq 1$ and $E_n|E$ the degree $n$ unramified extension of $E$.  There is a map $G(E_n)/\s\text{-conj.}\drt B(G)$. Consider an element $\delta \in G(E_n)$ whose stable conjugacy class in $G(E)$ given by its norm $\delta \delta^\s\dots \delta^{\s^{n-1}}$ is elliptic regular. Then $[\delta ]\in B(G)$ is basic and $x_\delta : \spa (\mathbb{F}_{q^n})\drt \Bun_G$. One can strengthen the preceding character sheaf property by asking that $$\text{Tr} ( Frob_{q^n}; x_{\delta}^*\F_\ph)
=(\text{BC}_{E_n/E} T_\ph) (\delta),$$
 the value at $\delta$ of the base change of the stable character associated to $\ph$.
\end{rema}

\begin{rema}
The character sheaf property is inspired by the method employed in \cite{ScholzeLanglandsLocal}, taking the trace of Frobenius on deformation spaces of $p$-divisible groups to define functions on $p$-adic groups.
\end{rema}

\section{F-Isocrystals, $p$-divisible groups and modifications of vector bundles}
\label{sec:isocris BT et vect} 

In this section we give complements that we will use for sections \ref{sec:localglobal} and \ref{sec:Kottwitz conj}.

\subsection{Compactification of $Y$}
\subsubsection{The affinoid case}

Suppose $S=\spa (R,R^+)$ is affinoid perfectoid. Recall the space $Y=Y_S$ from section 
\ref{sec:the curve}. Suppose first $E=\Fq\llparent \pi\rrparent$. Then one has $Y=\DD^*_S$. This extends naturally to the an $\O_E$-adic space 
$$
\DD_S \ldrt \DD_{\Fq} = \spa (\O_E)
$$
that is nothing else than the open disk over $S$ with 
$$
\DD^*_S= \DD_S \setminus \{\pi=0\}.
$$
But in fact, using the integral structure given by $R^+$ on $R$ this extends to a bigger adic space 
\begin{eqnarray*}
\mathcal{Y} &=& \spa ( R^\circ \llbracket \pi\rrbracket, R^+ + R^\circ \llbracket \pi\rrbracket )_a \\
&=& \spa ( R^\circ \llbracket \pi\rrbracket, R^+ + R^\circ \llbracket \pi\rrbracket ) 
\setminus \{ \pi=0 \} \cup \spa ( R^\circ \llbracket \pi\rrbracket, R^+ + R^\circ \llbracket \pi\rrbracket )  \setminus \{ \varpi_R=0 \}
\end{eqnarray*}
where the subscript "a" means we take the analytic points and $\varpi_R\in R^{\circ\circ}\cap R^\times$. Recall that in this definition, $R^\circ \llbracket \pi\rrbracket$ is equipped with the $(\varpi_R,\pi)$-adic topology. The function $\delta$ given by formula (\ref{eq:fonction delta equal char}) of section \ref{sec:courbe unequal} extends to a function 
$$
\delta : |\mathcal{Y}|\ldrt [0,1].
$$
{\it The space $\mathcal{Y}$ is some kind of compactification of $Y$ over $\O_E$ obtained by adding the divisors $(\pi)$ and $(\varpi_R)$} and 
$$
Y= \delta^{-1} ( ]0,1[).
$$
Note $E^{1/p^\infty}$ the $\pi$-adic completion of the perfection of $E$.
One has 
$$
\mathcal{Y}\hat{\otimes}_{\O_E} \O_E^{1/p^\infty} = \spa ( R^\circ \llbracket \pi^{1/p^\infty}\rrbracket, R^+ + R^\circ \llbracket \pi^{1/p^\infty} \rrbracket )_a.
$$
This is a perfectoid space over $\O_{E}^{1/p^\infty}$.
The open subset $\{\pi\neq 0\}$ is a perfectoid space over the perfectoid field $E^{1/p^\infty}$ and $\{\varpi_R\neq 0\}$ is a perfectoid space over the perfectoid field $\Fq \llparent T^{1/p^\infty}\rrparent$ via $T\mapsto \varpi_R$.
 In this sense $\mathcal{Y}$ is preperfectoid. 
\\

Suppose now that $E|\Qp$. The fact is that the same formula 
\begin{eqnarray*}
\mathcal{Y} &=& \spa (\A, [R^+] + \pi \A)_a \\
&=& \spa (\A, [R^+] + \pi \A) \setminus V(\pi) \cup \spa (\A, [R^+] + \pi \A) \setminus V([\varpi_R])
\end{eqnarray*}
still defines an $\O_E$-adic space. Here $\A = W_{\O_E} (R^\circ)$ and 
$$
\A^+ =\big  \{\sum_{n\geq 0} [x_n]\pi^n \in \A\ |\ x_0\in R^+\big \}.
$$
As before, this is a compactification of $Y$ by the divisors $(\pi)$ and $([\varpi_R])$. Moreover, if $E_\infty$ is the completion of the extension generated by the torsion points of a Lubin-Tate group then $\mathcal{Y}\hat{\otimes}_{\O_E} \O_{E_\infty}$ is perfectoid with tilting the preceding equal-characteristic space associated to $\O_{E_\infty}^\flat$.

\subsubsection{The general case}

Let's come back to the equal characteristic case.
For an interval $I\subset [0,1]$ (different from $\{0\}$ and $\{1\}$) we note $\mathcal{Y}_I$ for the corresponding annulus in $\mathcal{Y}$ defined via the radius function $\delta$.  
 The formula 
$$
\DD_S = \mathcal{Y}_{S,[0,1[} \subset  \mathcal{Y}_{S,]0,1[}= \DD^*_S
$$
tells us that the preceding space $\mathcal{Y}_{S,[0,1[}$ globalizes for any $\Fq$-perfectoid space $S$, not necessarily affinoid perfectoid. {\it Nevertheless this is not the case for $\mathcal{Y}_S$ in general}. For this {\it one needs an integral model of $S$. } 

More precisely, consider a perfect $\Fq$-formal scheme $\mathscr{S}$ that has an open covering by affine subsets $\spf (A)$ with $A$ $\varpi_A$-adic for some regular $\varpi_A\in A$. One can then define its generic fiber 
$\mathscr{S}_\eta$ as an $\Fq$-perfectoid space. For $A$ as before 
$\spf (A)_\eta = \spa (A[\tfrac{1}{\varpi_A}], A^+)$ where $A^+$ is the integral closure of $A$ inside $A[\tfrac{1}{\varpi_A}]$ (this is almost equal to $A$). 

Now given such an $A$ on a can define an $\O_E$-adic space 
$$
\mathcal{Y}_{\mathscr{S}}
$$
such that 
$$
Y_{\mathscr{S}_\eta} = \mathcal{Y}_{\mathscr{S},]0,1[}.
$$
This works for any $E$ of equal or unequal characteristic. We won't enter too much into the details and restrict to the affinoid case in the following to simplify. 

\subsection{F-isocrystals and vector bundles}
\label{sec:F isoc and vb}

Suppose now $E=\Qp$ and let $S=\spa (R,R^+)$ be affinoid perfectoid. Fix some pseudo-uniformizing element $\varpi_R$. {\it The link between crsytalline $p$-adic Hodge theory and the curve} is given by the following. Note $\mathcal{Y}:=\mathcal{Y}_{S}$. Consider the ring 
$$
B^+_{cris} ( R^\circ/\varpi_R) = H^0_{cris} ( \spec ( R^\circ/\varpi_R)/\spec (\Zp), \O ) \unp
$$
where $\O$ is the structural sheaf on the crystalline site. Since the Frobenius on $R^\circ/\varpi_R$ is surjective this site has a final object given by Fontaine's ring $A_{cris} (R^\circ/\varpi_R)$. This is the $p$-adic completion of 
$$
W ( R^\circ) \Big [ \frac{[\varpi_R^n]}{n!}\Big ]_{n\geq 1}.
$$
One then has 
$$
B^+_{cris} = A_{cris} \unp.
$$
The fact is now that a simple computation gives the following:
$$
\GG ( \mathcal{Y}_{[1/p^p,1]},\O) \subset 
B^+_{cris} (R^\circ/\varpi_R)\subset \GG ( \mathcal{Y}_{[1/p^{p-1},1]}, \O).
$$
Recall that the action of Frobenius on $\mathcal{Y}$ satisfies 
$$
\delta (\ph (y))=\delta (y)^{1/p}.
$$
Via the global section functor on the cristalline site 
{\it an $F$-isocrystal on $\spec (R^\circ/\varpi_R)$ is the same as a projective $B^+_{cris} (R^\circ/\varpi_R)$-module of finite type equipped with a semi-linear endomorphism whose linearization is an automorphism.} One deduces the following.

\begin{prop}\label{prop:vecto bundle ass crys}
The category of $F$-isocrystals on $(\spec (R^\circ/\varpi_R)/\spec (\Zp))_{cris}$ is equivalent to the category of $\ph$-equivariant vector bundles on $\mathcal{Y}_{(R,R^+),]0,1]}$. 
\end{prop}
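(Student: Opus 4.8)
\emph{Sketch of proof.}
The plan is to factor the asserted equivalence through the category of $\ph$-modules over $B^+_{cris}(R^\circ/\varpi_R)$, and then to ``spread out'' such a $\ph$-module along the contracting Frobenius action on $\mathcal{Y}$ so as to produce a $\ph$-equivariant bundle on the whole half-open annulus $\mathcal{Y}_{]0,1]}$.

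First I would record the reduction to $\ph$-modules, which is essentially the computation recalled just before the statement. Since $R$ is perfectoid, $R^\circ$ is perfect, hence the absolute Frobenius of $R^\circ/\varpi_R$ is surjective and the crystalline site $(\spec(R^\circ/\varpi_R)/\spec(\Zp))_{cris}$ has a final object given by $A_{cris}(R^\circ/\varpi_R)$. Evaluation at this final object is therefore an equivalence from crystals to modules over $A_{cris}$, and after inverting $p$ it identifies $F$-isocrystals with pairs $(M,\ph_M)$, where $M$ is a finite projective $B^+_{cris}(R^\circ/\varpi_R)$-module and $\ph_M$ is a Frobenius-semilinear endomorphism of $M$ whose linearization $\ph^*M \to M$ is an isomorphism. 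It thus suffices to construct an equivalence between such $\ph$-modules and $\ph$-equivariant vector bundles on $\mathcal{Y}_{]0,1]}$.

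Next I would exploit the geometry of $\mathcal{Y}$. Because $R^\circ$ is perfect, $\ph$ is an automorphism of $\mathcal{Y}$, and the identity $\delta\circ\ph=\delta^{1/p}$ shows that for every $r\in\,]0,1[$ and every $n\in\Z$ it restricts to an isomorphism $\ph^n:\mathcal{Y}_{[r^{p^n},1]}\iso\mathcal{Y}_{[r,1]}$; in particular $\mathcal{Y}_{]0,1]}=\bigcup_{n\geq 0}\mathcal{Y}_{[r^{p^n},1]}$, so that $\mathcal{Y}_{]0,1]}$ is the increasing union of the Frobenius-translates of any one closed sub-annulus. Fix $s'=1/p^p$ and $s=1/p^{p-1}$, so that $\O(\mathcal{Y}_{[s',1]})\subseteq B^+_{cris}(R^\circ/\varpi_R)\subseteq\O(\mathcal{Y}_{[s,1]})$, and note that $\ph$ stabilizes $B^+_{cris}$ (an elementary check on the generators $[\varpi_R^m]/m!$ of $A_{cris}$). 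Given $(M,\ph_M)$, I would set $N:=M\otimes_{B^+_{cris}}\O(\mathcal{Y}_{[s,1]})$, a vector bundle on the closed annulus $\mathcal{Y}_{[s,1]}$ (on which vector bundles are the same as finite projective modules over $\O(\mathcal{Y}_{[s,1]})$), and then glue together the Frobenius-pullbacks $(\ph^n)^*N$, $n\geq 0$, of $N$ over the covering $\mathcal{Y}_{]0,1]}=\bigcup_{n\geq 0}\mathcal{Y}_{[s^{p^n},1]}$: the transition isomorphisms on the (nested) overlaps are furnished by iterating the isomorphism $\ph^*N\iso N$ induced by $\ph_M$, and the cocycle condition holds automatically since all transition maps are powers of the single operator $\ph_M$. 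This produces a vector bundle $\E$ on $\mathcal{Y}_{]0,1]}$ together with a canonical isomorphism $\ph^*\E\iso\E$, i.e.\ a $\ph$-equivariant bundle, functorially and exactly in $(M,\ph_M)$.

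For the quasi-inverse, given a $\ph$-equivariant bundle $\E$ on $\mathcal{Y}_{]0,1]}$ I would restrict it to the closed annulus $\mathcal{Y}_{[s',1]}$ to obtain a finite projective $\O(\mathcal{Y}_{[s',1]})$-module $N'$ together with the Frobenius-semilinear isomorphism coming from the equivariance datum, and then set $M:=N'\otimes_{\O(\mathcal{Y}_{[s',1]})}B^+_{cris}$, which inherits a $\ph$-module structure with bijective linearization. The step I expect to be the real obstacle is to check that these two constructions are mutually quasi-inverse; concretely, one must show that the datum of a $\ph$-module is insensitive to which ring of the chain $\dots\subseteq\O(\mathcal{Y}_{[s',1]})\subseteq B^+_{cris}\subseteq\O(\mathcal{Y}_{[s,1]})\subseteq\dots$ one works over --- equivalently, that a $\ph$-stable vector bundle over $\O(\mathcal{Y}_{[s,1]})$ descends canonically and uniquely to $B^+_{cris}$. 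This is a Frobenius-descent argument of the kind used for $\ph$-modules over the Robba ring (compare \cite{Courbe} and \cite{KedlayaLiuRelative1}): because $\ph$ pushes $\mathcal{Y}$ toward the boundary locus $\delta=1$, pulling back along a sufficiently high power of $\ph$ carries any section of $\O$ over a closed sub-annulus into $B^+_{cris}$, which pins down the descended module. Once this is in place, compatibility with tensor products, duals and short exact sequences is formal, and the two functors yield the asserted equivalence of categories.
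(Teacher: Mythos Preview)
Your proposal is correct and follows exactly the line the paper sets up: the paper does not give a detailed proof but simply records the two ingredients --- the identification of $F$-isocrystals with $\ph$-modules over $B^+_{cris}(R^\circ/\varpi_R)$ via the final object $A_{cris}$, and the sandwich $\GG(\mathcal{Y}_{[1/p^p,1]},\O)\subset B^+_{cris}\subset\GG(\mathcal{Y}_{[1/p^{p-1},1]},\O)$ --- and then states the proposition as an immediate deduction. You have made explicit the Frobenius spreading-out argument (glue the $(\ph^n)^*N$ along the nested annuli) and the descent step that the paper leaves implicit; this is the standard way to unpack such a deduction, and nothing in your sketch deviates from it.
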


\begin{rema}\label{rem:inf thinck change pas}
At the end we see that this category of $F$-isocrystals is independent of the choice of $\varpi_R$. But in fact we could already see this before since the category of $F$-isocrystals is invariant under infinitesimal thickenings.  
\end{rema}

We deduce from this a functor.

\begin{coro}
The restriction from $\mathcal{Y}_{]0,1]}$ to $Y=\mathscr{Y}_{]0,1]}\setminus V([\varpi_R])$ induces a functor 
$$
F\text{-isocrystals on } \spec (R^\circ/\varpi_R) \ldrt \Bun_{X_{R,R^+}}.
$$
\end{coro}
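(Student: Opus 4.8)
The plan is to realize the functor as a composite of three steps, only the first of which has any real content, and that one is exactly Proposition~\ref{prop:vecto bundle ass crys}. Given an $F$-isocrystal on $(\spec(R^\circ/\varpi_R)/\spec(\Zp))_{cris}$, that proposition attaches to it a $\ph$-equivariant vector bundle on $\mathcal{Y}_{(R,R^+),]0,1]}$.

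Second, I would restrict this bundle along the open immersion
$$
Y_{R,R^+}=\mathcal{Y}_{(R,R^+),]0,1[}\ \longhookrightarrow\ \mathcal{Y}_{(R,R^+),]0,1]}
$$
obtained by deleting the closed divisor $V([\varpi_R])=\{\delta=1\}$. Because $\ph$ is an automorphism of $\mathcal{Y}_{]0,1]}$ preserving $Y_{R,R^+}$ --- the identity $\delta(\ph(y))=\delta(y)^{1/q}$ sends $]0,1[$ into itself --- the restriction inherits a canonical $\ph$-equivariant structure, and the operation is plainly functorial. Third, I would descend along the quotient map $Y_{R,R^+}\ldrt X_{R,R^+}=Y_{R,R^+}/\ph^\Z$: as recalled in Section~\ref{sec:The curve}, $\ph$ acts properly discontinuously and without fixed points on $Y_{R,R^+}$, so this map is a $\Z$-covering for the analytic topology and descent of vector bundles along it is effective. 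Hence a $\ph$-equivariant vector bundle on $Y_{R,R^+}$ is the same datum as a vector bundle on $X^{ad}_{R,R^+}$, which by the GAGA equivalence of Theorem~\ref{theo:GAGA} is the same as a vector bundle on $X_R^{sch}$. Composing the three steps gives the desired functor, and by Remark~\ref{rem:inf thinck change pas} it does not depend on the auxiliary choice of $\varpi_R$.

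The only point requiring care --- and the sole genuine obstacle if one wants full rigor --- is the effectivity of descent along $Y_{R,R^+}\to X_{R,R^+}$ for a general affinoid perfectoid $(R,R^+)$. Unlike the case of a perfectoid field, where $X_{R,R^+}$ is a Dedekind scheme and one may argue directly as in \cite{Courbe}, here $Y_{R,R^+}$ has no useful finiteness properties, so one argues purely at the level of the covering: $Y_{R,R^+}\to X_{R,R^+}$ is a torsor under the discrete group $\ph^\Z\cong\Z$ for the analytic topology, vector bundles form an analytic-local stack, and descent along a torsor under a discrete group is automatically effective. Everything else --- functoriality, compatibility of the $\ph$-structure with restriction, and the fact that restriction of a locally free sheaf is locally free --- is formal.
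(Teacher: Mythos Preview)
Your proposal is correct and matches the paper's intended argument. The paper gives no explicit proof of this corollary beyond the phrase ``We deduce from this a functor'' following Proposition~\ref{prop:vecto bundle ass crys}; you have simply made explicit the three steps (the equivalence of that proposition, restriction along the open immersion $Y=\mathcal{Y}_{]0,1[}\hookrightarrow\mathcal{Y}_{]0,1]}$, and descent along the analytic $\ph^\Z$-cover $Y\to X$) that the paper leaves to the reader, together with a helpful remark on why the descent step goes through even without noetherianity.
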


This functor is always fully faithfull but not essentially surjective in general. Nevertheless we have the following (\cite{Courbe}) that we won't use later (although this is used by Scholze and Weinstein in \cite{ScholzeWeinstein} that we will use...).

\begin{theo}\label{theo:equivalence cristaux bun}
When $S$ is a geometric point that is to say $S=\spa (F)$ with $F$ algebraically closed then this is an equivalence
$$
F\text{-isocrystals on } \spec (\O_F/\varpi_F) \iso \Bun_{X_{F}}.
$$
\end{theo}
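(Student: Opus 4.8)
The plan is to factor the functor and reduce its full faithfulness to the explicit description of $\ph$-eigenspaces from Section~\ref{sec:defi and prop of Bung}. First I would apply Proposition~\ref{prop:vecto bundle ass crys} to identify the source category with the category of $\ph$-equivariant vector bundles on $\mathcal{Y}_{F,]0,1]}$; under this identification the functor of the theorem is restriction to $Y_F=\mathcal{Y}_{F,]0,1[}$ followed by $\ph$-descent along $Y_F\drt X_F^{ad}=Y_F/\ph^\Z$, and then GAGA (Theorem~\ref{theo:GAGA}) to pass to $\Bun_{X_F}$. Since $\ph$ acts freely and properly discontinuously, the latter two steps form an equivalence onto $\Bun_{X_F}$, so it is enough to prove that restriction
$$
\{\ph\text{-equivariant vector bundles on }\mathcal{Y}_{F,]0,1]}\}\ldrt\{\ph\text{-equivariant vector bundles on }Y_F\}
$$
is an equivalence.

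For essential surjectivity I would precompose with the base change functor $\phmod_L\ldrt\{F\text{-isocrystals on }\spec(\O_F/\varpi_F)\}$, which makes sense because $F|\Fqb$ provides a map $W(\Fqb)\ldrt B^+_{cris}(\O_F/\varpi_F)$. The composite with the functor of the theorem is exactly $\E(-)$ of \cite{Courbe}: the pullback of $(D,\ph)\otimes_L B^+_{cris}(\O_F/\varpi_F)$ to $Y_F$ is $D\otimes_L\O_{Y_F}$ with descent datum $\ph$ on $D$. As $F$ is algebraically closed, $\E(-)$ is essentially surjective by the theorem of \cite{Courbe} recalled above, hence so is our restriction functor.

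Full faithfulness is the substantial point. Writing $\Hom$ on both sides as the $\ph$-invariant global sections of $\mathcal{E}_1^\vee\otimes\mathcal{E}_2$, it suffices to show that for every $\ph$-equivariant bundle $\mathcal{F}$ on $\mathcal{Y}_{F,]0,1]}$ the restriction $\GG(\mathcal{Y}_{F,]0,1]},\mathcal{F})^{\ph=1}\ldrt\GG(Y_F,\mathcal{F})^{\ph=1}$ is bijective. Here I would use that the residue field of $\O_F/\varpi_F$ is algebraically closed: the $F$-isocrystal $\mathcal{F}$ then admits a Dieudonn\'e--Manin type slope decomposition (classification of $\ph$-modules over $B^+_{cris}(\O_F/\varpi_F)$) whose pieces, after a finite unramified base change $E\drt E_h$ (replacing $\ph$ by $\ph^h$ and $X_F$ by $X_{F,h}:=X_F\otimes_E E_h$), become direct sums of twists of $\O(d)$; and since the composite with $\phmod_L$ is $\E(-)$, this matches the Harder--Narasimhan decomposition of $\mathcal{F}|_{X_F}$ provided by the classification of bundles on $X_F$. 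One is then reduced to $\mathcal{F}=\O(d)$, i.e.\ to comparing, for each $e\in\Z$,
$$
\GG(\mathcal{Y}_{F,]0,1]},\O(d))^{\ph^h=\pi^{e}}\quad\text{and}\quad\GG(Y_F,\O(d))^{\ph^h=\pi^{e}}=\mathbb{B}(F)^{\ph^h=\pi^{e+d}},
$$
which is the isomorphism $B^+_{cris}(\O_F/\varpi_F)^{\ph=\pi^{\bullet}}\iso\mathbb{B}(F)^{\ph=\pi^{\bullet}}$ of Section~\ref{sec:defi and prop of Bung}: a $\ph$-eigenvector over $Y_F$ is automatically ``crystalline'' and so extends across the locus $\delta=1$, while in negative weight both spaces vanish.

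The main obstacle --- and the only place where the hypothesis ``$F$ algebraically closed'' is essentially used, beyond what is already needed to invoke the essential surjectivity of $\E(-)$ --- is the input of the previous paragraph: the Dieudonn\'e--Manin / Kedlaya slope decomposition for $F$-isocrystals on $\spec(\O_F/\varpi_F)$ together with its compatibility, through the functor of the theorem, with the Harder--Narasimhan decomposition of the resulting bundle on $X_F$. Granting this and the $\ph$-eigenspace identity of Section~\ref{sec:defi and prop of Bung}, the rest of the argument is formal.
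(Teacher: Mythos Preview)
The paper does not prove this theorem; it merely records it with a reference to \cite{Courbe} and the remark that the functor ``is always fully faithful but not essentially surjective in general.'' So there is no in-paper proof to compare against, only the surrounding context.

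Your argument for essential surjectivity is correct and is the standard one: the composite $\phmod_L\to F\text{-isoc}(\O_F/\varpi_F)\to\Bun_{X_F}$ is $\E(-)$, which is essentially surjective by the classification theorem already quoted in the paper, hence so is the second arrow.

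For full faithfulness your route is more circuitous than necessary. Note that the paper asserts full faithfulness for \emph{every} perfectoid $F$, not only algebraically closed ones; so a direct argument exists that does not pass through any slope decomposition on the source side. Concretely, one shows that for a $\ph$-equivariant bundle $\mathcal{F}$ on $\mathcal{Y}_{]0,1]}$ the restriction
$\GG(\mathcal{Y}_{]0,1]},\mathcal{F})^{\ph=1}\to\GG(Y_F,\mathcal{F})^{\ph=1}$
is bijective by a contraction argument: since $\delta(\ph(y))=\delta(y)^{1/q}$, a $\ph$-invariant section on $\mathcal{Y}_{]0,1[}$ is determined on any annulus $\mathcal{Y}_{[\rho,\rho^{1/q}]}$ and is pushed by iterates of $\ph$ arbitrarily close to $\delta=1$, forcing the extension (this is the same mechanism behind the identity $B^+_{cris}(\O_F/\varpi_F)^{\ph=\pi^d}\iso\mathbb{B}(F)^{\ph=\pi^d}$ you invoke, but it does not require first decomposing $\mathcal{F}$).

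The input you flag as the ``main obstacle'' --- a Dieudonn\'e--Manin classification of $\ph$-modules over $B^+_{cris}(\O_F/\varpi_F)$ --- is a genuine theorem, but beware that in \cite{Courbe} it lies in the same circle of ideas as the theorem you are proving: it is not a cheaper input you can simply quote to close the argument. If you grant it, your reduction is fine; but then you have essentially assumed the equivalence $\phmod_L\simeq F\text{-isoc}(\O_F/\varpi_F)$, from which the theorem follows immediately by composing with the classification $\E(-)$, and the separate treatment of full faithfulness becomes redundant.
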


This theorem is already false when $F$ is a perfectoid non-algebraically closed field. 

\begin{rema}
One has to be extremely careful with the equivalence of theorem \ref{theo:equivalence cristaux bun} since its inverse is not an exact functor and this is thus not an equivalence of exact categories.
\end{rema}

\begin{Hope}
The stack $\Bun$ of vector bundles on our curve is the stack associated to the prestack of $F$-isocrystals for the faithfull topology on $\Perf_{\Fp}$, that is to say the prestack $(R,R^+)\mapsto F$-isocrystals on $\spec (R^\circ/\varpi_R)$. This would say that in some sense $\Bun$ is the "generic fiber of the stack of $F$-isocrystals" in the sens of rigid analytic geometry.
\end{Hope}

\subsection{$p$-divisible groups and modifications of vector bundles}

Let $S=\spa (R,R^+)$ be affinoid perfectoid as before. Let now $H$ be a $p$-divisible group over $R^\circ/\varpi_R$. Consider its {\it covariant} Dieudonné-crystal $\DD (H)_\Q$. According to prop.\ref{prop:vecto bundle ass crys} one can associate to it a vector bundle 
$$
\E (\DD (H)_\Q)
$$
on $X:=X_{S}$. Suppose now that $S^\sharp = \spa (R^\sharp,R^{\sharp,+})$ is an untilt of $R$. There is an isomorphism
$$
R^{\sharp,\circ}/\varpi_R^\sharp \iso R^\circ/\varpi_R
$$
up to replacing $\varpi_R$ by $\varpi_R^{1/p^n}$ for $n\gg 0$
 (recall the change of $\varpi_R$ does not change the $F$-isocrystal $\DD(H)_\Q$, see rem. \ref{rem:inf thinck change pas}, this change is thus harmless). We note
 $$
i: S^\sharp\hookrightarrow X_S
 $$
 the corresponding Cartier divisor on $X_S$. 
 Let now $\widetilde{H}$ be a lift of $H$ to $R^{\sharp,\circ}$. 
The specialization 
$$
i^* \E ( \DD (H)_\Q)
$$
is then identified with the Lie algebra of the universal extension of $\widetilde{H}$ and is thus equipped with a Hodge filtration
$$
0\ldrt \omega_{\widetilde{H}^D}\unp \ldrt i^* \E ( \DD (H)_\Q) \ldrt \Lie  ( \widetilde{H})\unp\ldrt 0.
$$
Here is now the content of Fontaine's comparison theorem for $p$-divisible groups.

\begin{prop}
Let $V_p (\widetilde{H})$ be the pro-étale $\Qp$-local system 
on $S$ associated to the generic fiber of $\widetilde{H}$ and $V_p ( \widetilde{H})\otimes_{\Qp} \O_{X_S}$ be the corresponding slope $0$ semi-stable vector bundle on $X_S$. There is then an exact sequence 
$$
0\ldrt V_p (\widetilde{H})\otimes_{\Qp} \O_{X_S} \ldrt \E (\DD ( H)_\Q) (1)\ldrt i_* \omega_{\widetilde{H}^D}\unp\ldrt 0
$$
\end{prop}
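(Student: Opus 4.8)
\emph{Plan of proof.} One may assume $S=\spa(R,R^+)$ affinoid perfectoid as in the statement, and use throughout the description of $\E(\DD(H)_\Q)$ furnished by Proposition~\ref{prop:vecto bundle ass crys}: it is obtained by restricting to $Y_S$ the $\ph$-equivariant vector bundle on $\mathcal{Y}_{]0,1]}$ attached to the $F$-isocrystal $\DD(H)_\Q$ and descending along $Y_S\to X_S$; equivalently its $\ph$-module incarnation over $Y_S$ is $\DD(H)_\Q\otimes_{B^+_{cris}(R^\circ/\varpi_R)}\O(Y_S)$ with the tensor Frobenius. The first arrow is to be built from \emph{Fontaine's crystalline comparison morphism for $p$-divisible groups}: the lift $\widetilde H$ over $R^{\sharp,\circ}$ gives the pro-\'etale $\Qp$-local system $V_p(\widetilde H)$ on $S$, and the comparison isomorphism --- in the relative form established by Scholze--Weinstein (\cite{ScholzeWeinstein}, resting on \cite{ScholzeComparaisonHodge}) --- identifies $V_p(\widetilde H)\otimes B_{cris}$ with $\DD(H)_\Q\otimes B_{cris}$ compatibly with Frobenius and filtration. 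After twisting by the crystalline period $t$ (which satisfies $\ph(t)=p\,t$, trivializes $\O(1)$ away from $i(S^\sharp)$ and cuts out that divisor to order one) this yields an $\O_{X_S}$-linear map $V_p(\widetilde H)\otimes_{\Qp}\O_{X_S}\to\E(\DD(H)_\Q)(1)$.

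\emph{Injectivity and the cokernel.} Over $X_S\setminus i(S^\sharp)$ the period $t$ is invertible and the comparison morphism is an isomorphism; since the source is a slope-$0$, hence torsion-free, vector bundle, injectivity on this dense open forces injectivity everywhere, and the cokernel $\mathcal Q$ is an $\O_{X_S}$-module supported on the Cartier divisor $i(S^\sharp)$. To pin down $\mathcal Q$ I would pass to the formal completion along $i(S^\sharp)$, i.e. base change to $B^+_{dR}(R^\sharp)$: the completion of $\E(\DD(H)_\Q)$ is $\DD(H)_\Q\otimes_{B^+_{cris}}B^+_{dR}(R^\sharp)$, whose restriction to the divisor is the Lie algebra of the universal vector extension of $\widetilde H$ with its Hodge filtration $0\to\omega_{\widetilde H^D}\unp\to i^*\E(\DD(H)_\Q)\to\Lie(\widetilde H)\unp\to 0$ recalled just above. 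The image of the lattice $V_p(\widetilde H)\otimes B^+_{dR}(R^\sharp)$ inside $\E(\DD(H)_\Q)(1)\otimes B^+_{dR}(R^\sharp)$ is, after untwisting by $t$, exactly the sublattice matching the step $\Fil^0$ of this filtration --- this is the assertion that the modification at $i(S^\sharp)$ is governed by the Hodge filtration, of the minuscule type attached to $\mu=(1^{\dim H},0^{\htt H-\dim H})$ --- so $\mathcal Q\cong i_*\big(\E(\DD(H)_\Q)(1)/(\text{this lattice})\big)=i_*\,\omega_{\widetilde H^D}\unp$, which gives the desired short exact sequence.

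\emph{The relative statement and the main obstacle.} When $S$ is a geometric point all of the above is contained in \cite{Courbe} (together with Theorem~\ref{theo:equivalence cristaux bun}); the real content is that it holds uniformly in the family. This is where the difficulty lies: one needs the relative crystalline and de~Rham comparison theorems for $p$-divisible groups with uniform control --- that $\omega_{\widetilde H^D}$ is, up to isogeny, a finite projective $R^{\sharp,\circ}$-module, that the Hodge filtration on $i^*\E(\DD(H)_\Q)$ is a locally direct summand, and, crucially, that the modification has the \emph{same shape} over every geometric fibre so that no jumping of the cokernel occurs. Granting this, exactness of the resulting complex of $\O_{X_S}$-modules may be checked after pullback to all geometric points $\spa(C)\to S$ --- using that the two outer terms are vector bundles and that $i_*\,\omega_{\widetilde H^D}\unp$ is flat along $i$ --- which reduces everything to the point case. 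The step I expect to absorb most of the work, and the one the appeal to \cite{ScholzeWeinstein} is really there for, is this uniform identification of the cokernel with $i_*\,\omega_{\widetilde H^D}\unp$ throughout the family.
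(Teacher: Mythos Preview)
The paper does not prove this proposition. It is introduced by the sentence ``Here is now the content of Fontaine's comparison theorem for $p$-divisible groups'' and stated without argument; the paper treats it as a known input, the relative form of Fontaine's theorem (with the references \cite{ScholzeWeinstein}, \cite{Courbe} appearing nearby for the surrounding material). So there is no proof in the paper to compare your proposal against.

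Your sketch is a correct outline of how the result is actually established: build the map from the crystalline comparison $V_p(\widetilde H)\otimes B_{cris}\simeq \DD(H)_\Q\otimes B_{cris}$ (with the twist by $t$ accounting for the $\O(1)$, exactly as the paper remarks just after the statement), observe it is an isomorphism on $X_S\setminus i(S^\sharp)$ because $t$ is a unit there, and identify the cokernel supported on the divisor by passing to $B^+_{dR}(R^\sharp)$ and invoking the Hodge filtration on $i^*\E(\DD(H)_\Q)$. Your identification of the delicate point --- that the relative comparison and the shape of the modification are uniform in the family, which is what the appeal to \cite{ScholzeWeinstein} provides --- is also on target. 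In short, you have supplied more than the paper does; the paper simply asserts the proposition as Fontaine's comparison theorem.
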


Here the twisting by $\O_{X_S} (1)$ is due to the fact that we $\DD ( \widetilde{H}_\Q)$ is the {\it covariant} Dieudonné-module. 
We can now translate this in terms of the Hecke stack. 

\begin{defi}\label{def:le pre champ BT}
We define $\mathcal{BT}^{d,h}$ over $\spa (\Qp)^\diamond$ to be the prestack on $\Qp$-affinoid perfectoids such that $\mathcal{BT} (R,R^+)$ is the groupoid of height $h$ $d$-dimensional  $p$-divisible groups over $R^\circ$.
\end{defi}

Of course $\mathcal{BT}^{d,h}$ is not a stack. We will use it as an intermediate pre-stack to construct morphisms between two stacks.

\begin{rema}
We could have taken $p$-divisible groups over $R^+$ instead of $R^\circ$. As we said before (\ref{rem:surconv}) all the objects we are interested in 
are partially proper and this is thus harmless to impose $\mathcal{BT}$  is partially proper.
\end{rema}

We thus deduces the following.

\begin{coro}\label{coro:comparaison hecke}
Take $G=\GL_h$ and $\mu (z)= \text{diag} ( \underbrace{z,\dots,z}_{d\text{-times}},1,\dots,1)$.
There is a morphism $$u:\mathcal{BT}^{d,h}\ldrt Hecke^{\mu}$$ given by the Hodge filtration of the vector bundle associated to the $F$-isocrystal of a $p$-divisible group. It satisfies:
\begin{enumerate}
\item $\hg\circ u$ is given by the vector bundle associated to  the $F$-isocrystal of the reduction modulo $\varpi_R$ of the $p$-divisible group twisted by $\O(1)$
\item $\hd\circ u$ is given by the slope $0$-semi-stable vector bundle associated to the $\Qp$-pro-étale local system associated to the generic fiber of the $p$-divisible group (alias its rationnal Tate module).
\end{enumerate} 
\end{coro}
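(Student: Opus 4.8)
The plan is to produce $u$ by reading off, from the two exact sequences already recorded — the Hodge filtration of the specialisation $i^{*}\E(\DD(H)_{\Q})$ attached to a lift, and Fontaine's comparison sequence — a $\mu$-bounded modification of vector bundles on the schematical curve equipped with an untilt datum, and then to check functoriality together with the two compatibilities with $\hg$ and $\hd$. Since this is a corollary of Fontaine's comparison theorem for $p$-divisible groups and of Proposition~\ref{prop:vecto bundle ass crys}, no essentially new input should be needed.

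First I would fix a $\Qp$-affinoid perfectoid $(R,R^{+})$ and an object $\widetilde{H}\in\mathcal{BT}^{d,h}(R,R^{+})$, i.e. a height $h$, $d$-dimensional $p$-divisible group over $R^{\circ}$; write $S:=\spa(R,R^{+})^{\flat}$, so that $\spa(R,R^{+})=S^{\sharp}$ is the tautological untilt of $S$, with associated Cartier divisor $i:S^{\sharp}\hookrightarrow X^{sch}_{S}$. Let $H$ be the reduction of $\widetilde{H}$ over $R^{\circ}/\varpi_{R}$ (after, if necessary, replacing $\varpi_{R}$ by a $p$-power root, which is harmless by remark~\ref{rem:inf thinck change pas}). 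By Proposition~\ref{prop:vecto bundle ass crys} and the corollary following it, together with GAGA (Theorem~\ref{theo:GAGA}), the covariant Dieudonn\'e isocrystal $\DD(H)_{\Q}$ yields a rank $h$ vector bundle $\E(\DD(H)_{\Q})$ on $X^{sch}_{S}$; put $\E_{2}:=\E(\DD(H)_{\Q})(1)$ and let $\E_{1}:=V_{p}(\widetilde{H})\otimes_{\Qp}\O_{X_{S}}$ be the slope $0$ semi-stable bundle of rank $h$ attached to the rational $p$-adic Tate module of the generic fibre of $\widetilde{H}$. Fontaine's comparison sequence
$$
0\ldrt \E_{1}\ldrt \E_{2}\ldrt i_{*}\omega_{\widetilde{H}^{D}}\unp\ldrt 0
$$
has cokernel supported on $D:=i(S^{\sharp})$; restricting it to $X^{sch}_{S}\setminus D$ yields an isomorphism $f:\E_{1|X^{sch}_{S}\setminus D}\iso\E_{2|X^{sch}_{S}\setminus D}$. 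I then set $u(\widetilde{H}):=(\E_{1},\E_{2},S^{\sharp},f)$, a quadruple of the kind defining $\Hecke$.

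Next I would check that $u(\widetilde{H})$ lies in $\Hecke^{\mu}$, i.e. compute the relative position of the modification $f$ fibrewise. Passing to a geometric point $\spa(C^{\flat})\ldrt S$, the untilt becomes a complete algebraically closed field $C|\Qp$ defining a closed point $\infty\in X^{sch}_{C}$ with $\widehat{\O}_{X^{sch}_{C},\infty}=B^{+}_{dR}(C)$; over this ring $f$ is an inclusion of free rank $h$ modules whose cokernel, being $i_{*}\omega_{\widetilde{H}^{D}}\unp$, is a free $B^{+}_{dR}(C)/\xi=C$-module of the rank dictated by the Hodge filtration of $\DD(H)_{\Q}$ at $\infty$. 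Hence the invariant factors of $f$ at $\infty$ are $\xi$ in that many directions and $1$ in the others, so the relative position is the $\Gamma$-orbit of a minuscule cocharacter of $\GL_{h}$; matching this against $\mu(z)=\mathrm{diag}(z,\dots,z,1,\dots,1)$ is then a matter of tracking carefully the covariant/contravariant normalisation of Dieudonn\'e theory and the twist by $\O(1)$. Since $\mu$ is minuscule one has $\Hecke^{\le\mu}=\Hecke^{\mu}$, and as the Hodge filtration has constant jump the modification has relative position exactly $\mu$.

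Finally, all the inputs — the Dieudonn\'e crystal, the rational Tate module, and Fontaine's sequence — are functorial in $(R,R^{+})$ and in $\widetilde{H}$ and commute with base change, so $u$ is a morphism of prestacks over $\spa(\Qp)^{\diamond}$. Properties (1) and (2) then hold by construction: $\hg(u(\widetilde{H}))=\E_{2}=\E(\DD(H)_{\Q})(1)$ with $\DD(H)_{\Q}$ the $F$-isocrystal of the reduction of $\widetilde{H}$ modulo $\varpi_{R}$, which is (1); and $\hd(u(\widetilde{H}))=(\E_{1},S^{\sharp})=(V_{p}(\widetilde{H})\otimes_{\Qp}\O_{X_{S}},S^{\sharp})$ with $\E_{1}$ the slope $0$ semi-stable bundle attached to the rational Tate module, which is (2). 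The main point to be careful about will be not of a conceptual nature — this being a corollary — but the bookkeeping of conventions in the $\mu$-boundedness step, together with making sure the integral Dieudonn\'e and Tate-module formalism over $R^{\circ}$ (in the style of Scholze--Weinstein~\cite{ScholzeWeinstein}) is available in families, so that $\E(\DD(H)_{\Q})$ is genuinely a vector bundle on all of $X_{S}$ and the whole construction descends to a well-defined morphism of prestacks.
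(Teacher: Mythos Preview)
Your proposal is correct and follows exactly the route the paper intends: the corollary is stated immediately after Fontaine's comparison exact sequence with the words ``We thus deduces the following'', and your argument simply unpacks that deduction by setting $\E_1=V_p(\widetilde{H})\otimes_{\Qp}\O_{X_S}$, $\E_2=\E(\DD(H)_\Q)(1)$, reading off the modification $f$ from the sequence, and checking the relative position from the rank of $i_*\omega_{\widetilde{H}^D}\unp$. Your caveat about the bookkeeping of normalisations in the $\mu$-boundedness step is well placed but not a gap, since $\mu$ is minuscule and the rank of the cokernel determines the orbit.
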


There is another "dual" way to describe this modification in terms of {\it Hodge-Tate periods}. In fact, with the preceding notations, there is $\Qp$-linear  morphism of pro-étale sheaves 
$$
\a_{\widetilde{H}} : V_p ( \widetilde{H})\ldrt \omega_{\widetilde{H}^D}\unp .
$$
We refer for example to the section 5 of \cite{FarguesCanonique} for the definition and properties of this morphism over a valuation ring. 
Recall the following (the proof is the same as the one of theorem 2, sec. 5.3.2, of \cite{FarguesCanonique} since Faltings integral comparison theorem works over any perfectoid ring, see \cite{MiaofenPeriodes}).

\begin{prop}\label{prop:HTseq}
There is an exact sequence of vector bundles on $S^\sharp$ 
$$
0\ldrt \omega_{\widetilde{H}}^\vee\unp \otimes \Qp(1)\xrig{\ \a_{\widetilde{H}^{D}}^\vee (1)\ } V_p(\widetilde{H})\otimes_{\Qp} \O_{S^\sharp} \xrig{\ \a_{\widetilde{H}}\ } \omega_{\widetilde{H}^D}\unp\ldrt 0
$$
\end{prop}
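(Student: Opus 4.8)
The plan is to transcribe the proof of Theorem~2 in section~5.3.2 of \cite{FarguesCanonique} from the valuation-ring setting to the perfectoid ring $R^{\sharp,\circ}$, the only substantive new input being that Faltings' integral comparison theorem holds over perfectoid rings (\cite{MiaofenPeriodes}). First I would reduce to the case $S=\spa(R,R^+)$ affinoid perfectoid, so that $S^\sharp=\spa(R^\sharp,R^{\sharp,+})$ and vector bundles on $S^\sharp$ are the same as finite projective $R^\sharp$-modules. Then I would construct the two arrows. The Hodge--Tate map $\a_{\widetilde{H}}:V_p(\widetilde{H})\to\omega_{\widetilde{H}^D}\unp$ is obtained, as in section~5 of \cite{FarguesCanonique}, from the logarithm of $\widetilde{H}$ along its universal vector extension; this construction is functorial and makes sense over any $p$-adically complete ring, in particular over $R^{\sharp,\circ}$. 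Applying it to $\widetilde{H}^D$ yields $\a_{\widetilde{H}^D}:V_p(\widetilde{H}^D)\to\omega_{\widetilde{H}}\unp$, and transposing this and twisting by the Weil pairing $V_p(\widetilde{H})\otimes V_p(\widetilde{H}^D)\to\Qp(1)$ produces the first arrow $\a_{\widetilde{H}^D}^\vee(1):\omega_{\widetilde{H}}^\vee\unp\otimes\Qp(1)\to V_p(\widetilde{H})\otimes_{\Qp}\O_{S^\sharp}$.

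Next I would check that the two arrows form a complex, i.e.\ that $\a_{\widetilde{H}}\circ\a_{\widetilde{H}^D}^\vee(1)=0$; this is a formal consequence of the compatibility of the Hodge--Tate maps of $\widetilde{H}$ and of $\widetilde{H}^D$ with Cartier duality and the Weil pairing, exactly as in \cite{FarguesCanonique}, and is insensitive to the base. Denoting by $d$ and $h$ the dimension and height of $\widetilde{H}$, the three terms are finite projective $R^\sharp$-modules of ranks $d$, $h$ and $h-d$, with $d+(h-d)=h$, so it remains to prove exactness. Since $R^\sharp$ is reduced and the ranks add up correctly, exactness of the three-term complex can be checked after base change along every geometric point $\spa(C)\to S^\sharp$: if the right-hand arrow $V_p(\widetilde{H})\otimes\O_{S^\sharp}\to\omega_{\widetilde{H}^D}\unp$ is fibrewise surjective it is split surjective with finite projective kernel, and the left-hand arrow is then a morphism of finite projective modules of equal rank which, being fibrewise an isomorphism, is an isomorphism.

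Finally, over a geometric point $\spa(C)$ with $C$ algebraically closed, the reduction $\widetilde{H}\otimes\O_C$ is a $p$-divisible group over $\O_C$ and the pulled-back sequence is Tate's Hodge--Tate exact sequence
$$0\ldrt\omega_{\widetilde{H}}^\vee\unp\otimes\Qp(1)\ldrt V_p(\widetilde{H})\otimes_{\Qp}C\ldrt\omega_{\widetilde{H}^D}\unp\ldrt 0,$$
which one derives from Faltings' integral comparison theorem (or from Tate's original computation) for $p$-divisible groups over $\O_C$. The hard part, and \emph{the only genuinely non-formal point}, will be to confirm that Faltings' integral comparison for $p$-divisible groups --- the almost isomorphism identifying $T_p(\widetilde{H})\otimes_{\Zp}\O_{S^\sharp}/p$ with an extension of $\omega_{\widetilde{H}^D}/p$ by $(\omega_{\widetilde{H}}^\vee/p)(1)$ --- goes through with the same proof over the perfectoid ring $R^{\sharp,\circ}$, which is the content of \cite{MiaofenPeriodes}. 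Granting this, inverting $p$ removes the $p^{1/p^\infty}$-torsion ambiguity inherent in "almost", giving the asserted exact sequence of honest vector bundles, uniformly in $S^\sharp$.
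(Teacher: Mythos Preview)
Your proposal is correct and follows precisely the approach the paper indicates: the paper itself gives no proof beyond the remark that ``the proof is the same as the one of theorem 2, sec.~5.3.2, of \cite{FarguesCanonique} since Faltings integral comparison theorem works over any perfectoid ring, see \cite{MiaofenPeriodes}'', which is exactly your opening plan. Your fibrewise-exactness detour is a harmless redundancy, since applying Faltings' integral comparison directly over $R^{\sharp,\circ}$ (your final paragraph, and the paper's sole indication) already yields the exact sequence globally without any reduction to geometric points.
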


In the preceding proposition 
$$
V_p ( \widetilde{H})\otimes \O_{S^\sharp} = i^* \big ( V_p (\widetilde{H})\otimes \O_{X_S}\big )
$$
as a vector bundle over $S^\sharp$. 

\begin{prop}
The morphism $u:\mathcal{BT}^{d,h}\drt Hecke^{\mu}$ is induced 
by the modification of $V_p (\widetilde{H})\otimes_{\Qp} \O_{X_S}$ given by the Hodge-Tate filtration of $i^* \big ( V_p (\widetilde{H})\otimes_{\Qp} \O_{X_S}\big )$ associated to the Hodge-Tate exact sequence of proposition 
\ref{prop:HTseq}
\end{prop}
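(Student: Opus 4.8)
The plan is to reconcile the construction of $u$ given in Corollary \ref{coro:comparaison hecke} with the Hodge--Tate description. By that corollary $u$ sends $\widetilde H$ to the modification of the slope-$0$ bundle $V_p(\widetilde H)\otimes_{\Qp}\O_{X_S}$ recorded by the exact sequence
$$
0\ldrt V_p(\widetilde H)\otimes_{\Qp}\O_{X_S}\ldrt \E(\DD(H)_\Q)(1)\ldrt i_*\,\omega_{\widetilde H^D}\unp\ldrt 0,
$$
with $\hd\circ u$ remembering $V_p(\widetilde H)\otimes\O_{X_S}$ together with the untilt $S^\sharp$. Fix a degree-$1$ primitive element $\xi$ cutting out the Cartier divisor $i\colon S^\sharp\hookrightarrow X_S$, so that $\widehat\O_{X_S,\,i(S^\sharp)}=B^+_{dR}(R^\sharp)$ with maximal ideal $(\xi)$. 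Since $\mu$ is minuscule, Beauville--Laszlo gluing and Proposition \ref{prop:loc fibration Gr} (which presents $\hd$ as an \'etale-locally-trivial fibration in $\Gr^{\leq\mu}$) reduce us to the structure of $\Gr^{\leq\mu}$, which for this $\GL_h$-coweight is a relative Grassmannian over $\spa(E)^\diamond$; hence a $\mu$-bounded modification of $V_p(\widetilde H)\otimes\O_{X_S}$ at $i$ is exactly the datum of a sub-bundle, equivalently a quotient bundle, of the fibre $i^*\big(V_p(\widetilde H)\otimes\O_{X_S}\big)=V_p(\widetilde H)\otimes_{\Qp}\O_{S^\sharp}$. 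It therefore suffices to show that the filtration on $V_p(\widetilde H)\otimes\O_{S^\sharp}$ attached in this way to the modification above is the Hodge--Tate filtration of Proposition \ref{prop:HTseq}.

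First I would restrict the displayed sequence along $i$. Using $i^*i_*=\mathrm{id}$, the flatness of $\E(\DD(H)_\Q)(1)$, the identity $\mathrm{Tor}_1^{\O_{X_S}}\!\big(i_*\mathcal{G},\O_{S^\sharp}\big)\simeq \mathcal{G}\otimes_{\O_{S^\sharp}}i^*\O_{X_S}(-i(S^\sharp))$, and the fact that the conormal bundle $i^*\O_{X_S}(-i(S^\sharp))\simeq \mathrm{gr}^1 B^+_{dR}(R^\sharp)$ is canonically $\O_{S^\sharp}$ Tate-twisted by $\Zp(1)$, one obtains a four-term exact sequence of vector bundles on $S^\sharp$ realising $V_p(\widetilde H)\otimes\O_{S^\sharp}$, up to Tate twists, as a successive extension with graded pieces $\omega_{\widetilde H^D}\unp$ and $\Lie(\widetilde H)\unp=\omega_{\widetilde H}^\vee\unp$ --- precisely the graded pieces of the Hodge--Tate sequence of Proposition \ref{prop:HTseq}. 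Comparing the outer terms of this four-term sequence with the Hodge filtration $0\to\omega_{\widetilde H^D}\unp\to i^*\E(\DD(H)_\Q)\to\Lie(\widetilde H)\unp\to 0$ of the universal extension of $\widetilde H$ --- the sequence stated just before Definition \ref{def:le pre champ BT}, which computes $i^*$ of the crystalline bundle --- then fixes which piece is the sub and which the quotient.

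It remains to check that the connecting and projection maps of the four-term sequence really are the maps $\a_{\widetilde H^D}^\vee(1)$ and $\a_{\widetilde H}$ of Proposition \ref{prop:HTseq}, and not merely abstractly isomorphic to them. This is where the $p$-adic Hodge theory enters: by Faltings' integral comparison theorem for $p$-divisible groups over perfectoid rings, in the form used in \cite{ScholzeWeinstein} and recalled before Proposition \ref{prop:HTseq} (see also \cite{MiaofenPeriodes} and \cite{FarguesCanonique}), the Hodge--Tate maps $\a_{\widetilde H}$, $\a_{\widetilde H^D}$ are precisely the reductions modulo $\xi$ of the crystalline comparison isomorphism of Proposition \ref{prop:vecto bundle ass crys} that produces $\E(\DD(H)_\Q)$ from $V_p(\widetilde H)$; since that comparison isomorphism is, by construction, the gluing datum (over $X_S^{sch}\setminus i(S^\sharp)$, via GAGA and Theorem \ref{theo:GAGA}) defining the modification $u$, its reduction modulo $\xi$ is the fibre filtration cut out by $u$, which is therefore the Hodge--Tate filtration. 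I expect the only real difficulty to be the bookkeeping in the middle step: tracking the Tate twists coming from the $(1)$'s and from the conormal bundle, the signs, and above all the normalization of the minuscule Schubert cell, so as to land on the correct side of the sub-bundle/quotient-bundle dictionary rather than on its transpose. Everything genuinely geometric or arithmetic --- Beauville--Laszlo gluing, Proposition \ref{prop:loc fibration Gr}, GAGA, and the compatibility and relative generality of the comparison isomorphisms --- is already available from the cited results, so once that normalization is fixed the argument is a diagram chase.
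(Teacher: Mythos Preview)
The paper does not prove this proposition; in this overview article it is simply asserted, the content being the standard compatibility between the crystalline and Hodge--Tate comparison maps for $p$-divisible groups. Your approach --- pull back the Fontaine exact sequence along $i$ via the $\mathrm{Tor}$ long exact sequence, identify the conormal contribution as a Tate twist, and then match the resulting maps with $\a_{\tilde H}$ and $\a_{\tilde H^D}^\vee(1)$ through the integral comparison theorem --- is exactly the natural one, and is what the paper implicitly has in mind.

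One caution about your middle paragraph. Carrying out the $\mathrm{Tor}$ computation with the paper's displayed Fontaine sequence (cokernel $i_*\omega_{\tilde H^D}\unp$) produces
\[
0\ldrt \omega_{\tilde H^D}\unp\otimes N^\vee \ldrt V_p(\tilde H)\otimes\O_{S^\sharp}\ldrt i^*\E(\DD(H)_\Q)(1)\ldrt \omega_{\tilde H^D}\unp\ldrt 0,
\]
so the sub of $V_p(\tilde H)\otimes\O_{S^\sharp}$ you obtain is $\omega_{\tilde H^D}(1)$, of rank $h-d$, while the Hodge--Tate sub $\omega_{\tilde H}^\vee(1)=\Lie(\tilde H)(1)$ has rank $d$. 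These do not agree, so your sentence ``precisely the graded pieces of the Hodge--Tate sequence'' is too quick. The discrepancy in fact traces to the paper's own normalisation: a degree count ($\deg\E(\DD(H)_\Q)(1)-\deg(V_p(\tilde H)\otimes\O_X)=d$, as one also sees in the Lubin--Tate example of \S\ref{sec:Drinfeld and Lubin Tate case}) shows the Fontaine cokernel should be $\Lie(\tilde H)\unp$ rather than $\omega_{\tilde H^D}\unp$. With that correction your $\mathrm{Tor}$ sequence literally yields the Hodge--Tate filtration, and your final paragraph then identifies the maps. So the ``bookkeeping'' you anticipate is genuine, but it reflects a slip in the source rather than a defect in your method.
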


We thus have two equivalent points of view on the morphism $u$ if we focus on $\hg$ or $\hd$:
\begin{itemize}
\item {\it If we focus on $\hg$ this is defined via the Hodge-de-Rham periods}
\item {\it If we focus on $\hd$ this is defined via the Hodge-Tate periods.} 
\end{itemize}

\section{Local/global compatibility}
\label{sec:localglobal}

All the results we are going to speak about here extend to Shimura varieties of Hodge-type (\cite{CaraianiScholze}). We prefer to restrict ourselves to the PEL case to simplify the exposition.
\\

Let $(H,X)$ be a PEL type Shimura datum.
Suppose $H_{\Qp}=G$, the preceding quasi-split $p$-adic group with $E=\Qp$. We suppose moreover $H_{\Qp}$ is unramified. 
Fix a sufficiently small level $K^p$ outside $p$, an embedding of $\Qb$ into $\Qpb$ and note $E$ the corresponding $p$-adic completion of the reflex field (yes, there is conflict of notations...but the other $E$ is $\Qp$ now). We fix a hyperspecial compact subgroup inside $G(\Qp)$.
We note $\underline{G}$ for the corresponding reductive model of $G$ over $\Zp$.  For $K_p\subset \underline{G}(\Zp)$  a compact open subgroup we note 
$$
\Sh_{K_p}
$$
the rigid analytic space over $E$ that is the locus of good reduction for the universal abelian scheme in the analytification of the Shimura variety with level $K_p K^p$. If $K_p$ is our hyperspecial subgroup this is the generic fiber of the $p$-adic completion of Kottwitz integral model. We note $\mathcal{S}$ this integral model, $\widehat{\mathcal{S}}_\eta =\Sh_{\underline{G}(\Zp)}$.
\\

Recall the following theorem of Scholze (this is a rewriting, Scholze theorem is more powerful  but this is the only thing we need).

\begin{theo}[\cite{ScholzeTorsion}]
The pro-étale sheaf $\underset{K_p}{\limp} \Sh_{K_p}^\diamond$ is representable by an $E$-perfectoid space $\Sh_\infty$. Moreover this perfectoid space represents the sheaf associated to the presheaf on the big analytic site that sends $(R,R^+)$ to the quadruples $(A,\l,\iota,\bar{\eta}^p) \in 
\mathcal{S} (R^+)$ (\cite{Ko1}) together with a trivialization of $A[p^\infty]\otimes_{R^+} R$ compatible with its $\underline{G}$-structure.
\end{theo}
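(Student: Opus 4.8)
The plan is to deduce the theorem from the Siegel case by a Zarhin-type embedding, and to treat the Siegel case by the anticanonical tower method together with the Hecke action at $p$, following Scholze's strategy in \cite{ScholzeTorsion}. First I would reduce to Siegel: applying Zarhin's trick to the universal polarized abelian scheme over $\mathcal{S}$ produces a principally polarized abelian scheme of some genus $g'$ (with prime-to-$p$ level), hence a morphism of $\Zp$-integral models $\mathcal{S}\to\mathcal{S}'$ to a Siegel model at hyperspecial level. Since the extra PEL structure $(\lambda,\iota)$ is a Zariski-closed condition and both models are smooth over $\Zp$ ($\underline G$ unramified, hyperspecial level), this morphism is finite and, on generic fibres, a closed immersion; moreover a $\underline G$-level structure on $A[p^\infty]$ is precisely a full symplectic trivialization of the $p^\infty$-torsion of the Zarhin abelian scheme compatible with the endomorphisms. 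Hence $\varprojlim_{K_p}\Sh_{K_p}^\diamond$ is a Zariski-closed subsheaf of the analogous sheaf for the Siegel variety, and since a Zariski-closed subspace of a perfectoid space is again perfectoid (Scholze) it suffices to prove the statement for the Siegel variety $\Sh_{K_p}$ of good reduction, $G=\mathrm{GSp}_{2g}$, where in addition the classical Hasse invariant is available.

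For the Siegel case, write $\mathcal{S}$ for Kottwitz's smooth integral model carrying the universal principally polarized abelian scheme $A$, $\widehat{\mathcal{S}}$ for its $p$-adic completion over $\spf\mathcal{O}_{\Cp}$, and (for technical convenience) pass to the minimal compactification, on whose special fibre the Hasse invariant $\Hdg$ is a section of an ample line bundle. For $K_p=K(p^n)$ the finite flat group scheme $A[p^n]$ gets trivialized, yielding a tower of finite \'etale covers of $\Sh_{\underline G(\Zp)}$. On the locus $\Sh_{\underline G(\Zp)}(\epsilon)$ where $v(\Hdg)<\epsilon$ (with $\epsilon$ small, $\epsilon<1/p$ say) the canonical subgroup $C_n\subset A[p^n]$ of order $p^{ng}$ exists (Katz, Abbes--Mokrane, Fargues), and I would form the anticanonical locus $\Sh_{\Gamma_0(p^n)}(\epsilon)_{\mathrm a}$ parametrizing order-$p^{ng}$ subgroups $D_n$ with $D_n\cap C_n=0$ pointwise, together with the transition maps $(A,D_{n+1})\mapsto(A/D_1,D_{n+1}/D_1)$. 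The key computation is that quotienting by the generically \'etale subgroup $D_1$ is, on special fibres, dual to a Verschiebung, so it identifies $\overline A/\overline D_1$ with the Frobenius pullback $\overline A^{(1/p)}$, and it divides $v(\Hdg)$ by $p$; therefore along the anticanonical tower the special fibres converge to a perfection while $v(\Hdg)\to 0$. This forces the $p$-adic completion of $\varinjlim_n\O$ along the tower to be perfectoid, so $\varprojlim_n\Sh_{\Gamma_0(p^n)}(\epsilon)_{\mathrm a}$ is perfectoid; adjoining the (pro-finite-\'etale) trivialization of the canonical part $C_n\cong(\Z/p^n)^g$ preserves this, producing a perfectoid open $\mathcal{U}_\infty\subset\Sh_\infty$.

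It then remains to propagate perfectoidness to all of $\Sh_\infty:=\varprojlim_{K_p}\Sh_{K_p}^\diamond$ using the Hecke action of $\mathrm{GSp}_{2g}(\Qp)$ at $p$: by Scholze's covering argument the translates $\gamma\cdot\mathcal{U}_\infty$, $\gamma\in\mathrm{GSp}_{2g}(\Qp)$ and $\epsilon$ varying, cover $\Sh_\infty$, and since $\mathrm{GSp}_{2g}(\Qp)$ acts by isomorphisms and the perfectoid property is local, $\Sh_\infty$ is perfectoid. For the moduli description, each $\Sh_{K(p^n)}$ represents on the analytic site the functor of quadruples $(A,\lambda,\iota,\bar\eta^p)\in\mathcal{S}(R^+)$ equipped with a symplectic level-$p^n$ structure; in the limit these assemble into a trivialization of $T_pA=\varprojlim_nA[p^n]$ compatible with the $\underline G$-structure, and as $\Sh_\infty$ represents the pro-\'etale sheafification of $\varprojlim_n$ of these functors it represents exactly the stated one.

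The main obstacle is the perfectoidness of the anticanonical tower: one needs the theory of canonical subgroups in a genuine (not merely infinitesimal) neighborhood of the ordinary locus, together with uniform-in-$n$ control of the loss of Hasse invariant, so that the special fibres really converge to a perfection — the identification of the anticanonical quotient with a Frobenius pullback and the bookkeeping of valuations are where all the work lies. The Hecke-theoretic spreading-out is also delicate, since one must show every point can be brought into an anticanonical chart by some $\gamma\in\mathrm{GSp}_{2g}(\Qp)$; by contrast the reduction from PEL to Siegel and the final identification of the moduli functor are comparatively formal.
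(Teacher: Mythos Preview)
The paper does not give a proof of this statement at all: it is quoted verbatim as a theorem of Scholze with a reference to \cite{ScholzeTorsion}, prefaced by ``Recall the following theorem of Scholze (this is a rewriting, Scholze theorem is more powerful but this is the only thing we need).'' So there is no in-paper argument to compare against; the result is used as a black box.

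Your sketch is a fair outline of Scholze's actual strategy in \cite{ScholzeTorsion}: reduce to the Siegel case via an embedding, build perfectoidness on the anticanonical tower over a neighborhood of the ordinary locus using canonical subgroups and the Frobenius/Verschiebung dichotomy, then propagate by the $G(\Qp)$-action. A couple of points would need tightening if you wanted to turn this into an honest proof. First, the passage to and from the minimal compactification is not just a technical convenience: Scholze's perfectoidness argument really takes place on the compactification (where the Hasse invariant is a section of an ample line bundle and one has the affinoid cover needed for the Hecke-translate argument), and one then restricts to the open good-reduction locus afterwards; you should say how the open statement is recovered. Second, your description of the Zarhin reduction is slightly off: a $\underline G$-level structure is a trivialization of $A[p^\infty]$ for the \emph{original} abelian scheme compatible with the PEL structure, not a full symplectic trivialization of the Zarhin abelian scheme; what one actually uses is that the PEL tower sits as a Zariski-closed subspace of the Siegel tower at full level, and then invokes that Zariski-closed subspaces of perfectoid spaces are perfectoid. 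Finally, the moduli description at infinite level is not quite ``sheafification of the inverse limit of finite-level functors'' but rather follows from the stronger statement $\Sh_\infty\sim\varprojlim_{K_p}\Sh_{K_p}$ in the sense of Scholze--Weinstein, which gives the density of finite-level functions needed to identify points.
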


We now use the results of section \ref{sec:isocris BT et vect}. Thanks to the last assertion of the preceding theorem we only need to define all the morphisms that will follow on elements of $\mathcal{S}(R^+)$ for $(R,R^+)$ affinoid perfectoid over $E$. Any such element gives us a $p$-divisible group over $R^+$.
\\ 
 The covariant  $F$-isocrystal of this $p$-divisible group (with its $\underline{G}$-structure) together with its Hodge filtration  defines 
a morphism
$$
g: \Sh_{\infty} \ldrt \mathcal{BT}\xrig{ \ u\ } \Hecke^\mu
$$
where $\mu$ is deduced from the global Shimura datum via transfert from $\Qb$ to $\Qpb$. For this we use corollary \ref{coro:comparaison hecke} (the incorporation of PEL type level structures is immediate). Here we have changed the definition \ref{def:le pre champ BT} of the pre-stack  $\mathcal{BT}$ to incorporate the $\underline{G}$-structure. 
We then have a commutative diagram 
$$
\xymatrix{
& & & \Fl_{\mu}^{\diamond} \ar[ld]^-i \ar[rd] \\
\Sh_\infty^\flat \ar@/_1.2ex/[rd] \ar[rr]^-g 
\ar@/^3ex/[rrru]^-{\pi_{HT}^\diamond	} 
&& Hecke^\mu \ar[ld]_-{\hg} \ar[rd]^-{\hd} 
 & \Box & \spa (E)^\diamond \ar[ld]^-{(x_1,can)}
\\
& \Bun_G & & \Bun_G\times \spa (\Qp)^\diamond
}
$$
where:
\begin{itemize}
\item $\Fl_{\mu}=\text{Gr}^{\leq \mu}=\text{Gr}^{\mu}$ is the flag variety over $E$ associated to $\mu$
\item $\pi_{HT}$ is the Hodge-Tate period map (this diagram gives in fact a definition of $\pi_{HT}$)
\item the morphism $(x_1,can)$  is given by $x_1:\spa (\Fp)\drt \Bun_G$ induced by the trivial vector bundle and $can$ associated to $E|\Qp$
\item the right square is cartesian 
\item the factorisation of $g$ via $\pi_{HT}$ is given by the infinite level structure at $p$ on the Tate module of the universal abelian scheme that gives a trivialization of the corresponding $G$-bundle $V_p(\widetilde{H})$ with the notation of \ref{coro:comparaison hecke} and thus a factorization via the cartesian product given by the right square.
\end{itemize}

The local/global compatibility condition is then formulated in the following way.
\\

{\bf Local/global compatibility:} {\it 
Let $\Pi$ be an automorphic representation of $H$ with $\Pi_p$ discrete. 
There is a multiplicity $m\in \N$ such that 
$$
R\pi_{HT*} (\Qlb [\dim \Sh]) [\Pi^p] = m. i^*\hg^*\F_{\ph_{\Pi_p}}
$$
where the left hand side is Caraiani-Scholze perverse sheaf (\cite{CaraianiScholze}.
}

\begin{rema}
The composite 
$$
\Fl^{\diamond}\xrig{\ i\ } Hecke^\mu \xrig{\ \hg\ } \Bun_G
$$
defines a stratification of $\Fl_{\mu}$ via pullback of the HN-stratification of $\Bun_G$. This is the so called Newton stratification of $\Fl_{\mu}$, see \cite{CaraianiScholze}.
\end{rema}

\begin{rema}[Motivation for the cuspidality condition]\label{rem:localGlobalCusp}
Here is one motivation for the cuspidality condition coming from \cite{CaraianiScholze}. 
In \cite{CaraianiScholze} the authors prove that $R\pi_{HT*} (\Qlb [\dim \Sh]) [\Pi^p]$ is locally constant along the Newton stratification with fiber the cohomology of the corresponding Igusa variety (\cite{Har4}, \cite{MantovanAsterisque}). This is a reinterpretation of Mantovan's work in \cite{MantovanAsterisque} in the perfectoid setting where the nearby cycles $\Rpsi$ are replaced by $R\pi_{HT*}$. Using Shin's formula for the alternate sum of the cohomology of Igusa varieties (\cite{ShinTraceFormula} generalizing \cite{Har4}) one sees that the alternate sum of the cohomology of this Igusa variety as a representation of $J_b (E)$ is computed using a Jacquet functor from representations of $G(\Qp)$ toward the one of $M_b(\Qp)$ where $M_b$ is the Levi subgroup that is the centralizer of the slope morphism. {\it This alternate sum is thus zero if $b$ is not basic}. Coupled with the local/global compatibility condition this is a strong motivation for the cuspidality condition.
\end{rema}

\begin{rema}[Equal characteristic case]
Of course on should have the same type of local/compatibility in the equal characteristic case using moduli of Shtukas, the advantage being in this case that $\mu$ does not have to be minuscule which extends the possible cases for local/global compatibility. 
\end{rema}

\begin{rema}[$\ell=p$]
The  $p$-adic Banach sheaf $R\pi_{HT*} \widehat{\O}_{\Sh_\infty}$ and its torsion counterpart $R\pi_{HT*} \O_{\Sh_\infty}^+/p$ make sense. From this point of view one can ask wether our local conjecture has a $p$-adic counterpart in the framework of the so called $p$-adic Langlands program. If this is the case then for $b$ non-basic the action of $\widetilde{J}_b$ on $x_b^*\F_\ph$ won't factorize anymore through an action of $\pi_0 ( \widetilde{J}_b )=\underline{J_b(E)}$ (see rem.\ref{rem:fibre aux points non basiques}) this should lead to interesting phenomenon. For example for $\GL_{2/\Qp}$ and the modular curve for the ordinary $b$, $\widetilde{J}_b=\mathbb{B}^{\ph=p}$ (see ex.\ref{ex:auto ordi curve}) and one can hope to link this to spaces of $p$-adic distributions. 
\end{rema}

\section{Kottwitz conjecture on the cohomology of basic RZ spaces (\cite{Rapoport2})}
\label{sec:Kottwitz conj}

\subsection{The fundamental example: Lubin-Tate and Drinfeld spaces}
\label{sec:Drinfeld and Lubin Tate case}

Consider $G=\GL_{n}$ over the $p$-adic field $E$. And set $\mu (z)= \text{diag} (z,1,\dots, 1)$. Fix 
$$
b=  \left ( 
\begin{matrix}
0 & \hdots & 0 & \pi \\
1 & &  & 0 \\
 & \ddots && \vdots  \\
 & & 1 & 0
\end{matrix}
\right )^{-1}
$$
so that $\E_b  = \O (\tfrac{1}{n})$ and $J_b(E)= D^\times$ where $D$ is a division algebra over $E$ with invariant $1/n$. We note $\breve{E}$ for the completion of the maximal unramified extension of $E$.
\\

 There is a cartesian diagram
$$
\xymatrix{
\big [ \mathbb{P}^{n-1,\diamond}_{E} /\GL_n (E) \big ] \ar[d]_-i\ar[r] & [\spa (E)/\GL_n (E)] \ar[d]_{(x_1,Id)} \\
Hecke^\mu \ar[r]^-{\hd} & \Bun_{G}\times \spa (E)^\diamond.
}
$$
Here $x_1$ is given by the trivial vector bundle. 
In fact, to give one self a modification of the trivial vector bundle of the form 
\begin{equation} \label{eq:modifcalc}
0\ldrt \O^n \ldrt \E \ldrt i_*\F\ldrt 0,
\end{equation}
where $\F$ is locally free of rank $1$ over $R^\sharp$, is the same as to give oneself en element of $\P^{n-1}_E (R^\sharp)$ (as before, $R^\sharp$ is an untilt of $R$ and $i$ is the corresponding Cartier divisor $\spec (R^\sharp)\hookrightarrow X_R^{sch}$). The composite morphism 
$$
f:\big [ \mathbb{P}^{n-1,\diamond}_{E} /\GL_n (E) \big ]  \ldrt Hecke^\mu \xrig{\ \hg\ } \Bun_G
$$
gives the vector bundle $\E$ in the preceding modification (\ref{eq:modifcalc}). Geometrically fiberwise the isomorphism class of $\E$ is $\O^i \oplus \O ( \tfrac{1}{n-i})$ for some integer $i\in \{0,\dots,n-1\}$. The corresponding stratification of $\P^{n-1}$ is the pullback via $f$ of the HN strastification of $\Bun_G$. The open stratum correspond to the locus where $\E$ is isomorphic to $\O(\tfrac{1}{n})$ on each geometric fiber. This is the pullback via $f$ of the semi-stable locus and coincides with Drinfeld's space $\Omega$, more precisely this is the open subset 
$$
\big [\Omega^{\diamond}/\GL_n(E)\big ] \subset \big [ \mathbb{P}^{n-1,\diamond}_{E} /\GL_n (E) \big ].
$$
There is a corresponding morphism 
$$
f_{|\big [\Omega^{\diamond}/GL_n(E)\big ]}: \big [\Omega^{\diamond}_{\breve{E}}/\GL_n(E)\big ] \ldrt \Bun_{G,\Fqb}^{ss} \underset{\sim}{\xleftarrow{\ x_b\ }} [\spa (\Fqb)/\underline{D}^\times].
$$
The pullback via this morphism of the universal $\underline{D}^\times$-torsor on $[\spa (\Fqb)/\underline{D}^\times]$ is Rapoport-Zink version of Drinfeld's covers of $\Omega_{\breve{E}}$. At the top of this tower, the preceding modification (\ref{eq:modifcalc}) of $\O^n$ to $\O(\tfrac{1}{n})$ is the Hodge-Tate exact sequence associated to the universal $p$-divisible group on Lubin-Tate tower in infinite level. 
\\

Let $\ph:W_E\ldrt \GL_n (\Qlb)$ be  a discrete Langlands parameter (that is to say an indecomposable representation in this case). Let $\pi$ be the corresponding square integrable representation of $\GL_n (E)$ via the local Langlands correspondence and $\rho$ the associated representation of $D^\times$ via Jacquet-Langlands, $JL (\rho)=\pi$. According to the conjecture one has 
\begin{eqnarray*}
x_1^*\F_\ph &=& \pi \\ 
x_b^*\F_\ph &=& \rho.
\end{eqnarray*}
Applying proper base change to the Hecke property one finds that 
$$
R\GG_c ( \P^{n-1}_{\Cp}/GL_n (E), i^*\hg^*\F_\ph)(\tfrac{n-1}{2})[n-1] = \pi\otimes \ph. 
$$
This is means the smooth-equivariant cohomology complex of $i^*\F_\ph$
is equal to $\pi\otimes \ph (\tfrac{1-n}{2})$ concentrated in middle degree.

Suppose now moreover that $\ph$ is cuspidal that is to say the representation $\ph$ is irreducible. The cuspidality condition in the conjecture then says that if $j:\Omega\hookrightarrow \P^{n-1}$  
$$
i^*\hg^* \F_\ph = j_! j^* i^*\hg^* \F_\ph.
$$
But now, if $\M_\infty \drt \Omega_{\breve{E}}$ is Drinfeld's tower with infinite level, 
$$
j^* i^*\hg^* \F_\ph  = \M_\infty \underset{D^\times}{\times} \rho.
$$
The conjecture thus predicts that 
$$
R\GG_c ( \M_\infty\otimes \Cp, \Qlb) \otimes_{D^\times} \rho = \pi\otimes \ph (\tfrac{1-n}{2}) [1-n]
$$
which is the well known realization of the local Langlands/Jacquet-Langlands correspondence in the cohomology of Drinfeld tower.

\begin{rema}
One will remark that for $\ph$ discrete non cuspidal, the conjecture predicts the existence of a perverse sheaf on $[\P^{n-1}/\GL_n (E)]$ whose cohomology realizes local Langlands correspondence exactly in middle degree. This is different from the usual realization of the local Langlands correspondence for in the cohomology of Drinfeld tower (in general, for non-supercuspidal parameters, the corresponding representation does not show up in middle degree). The restriction of this perverse sheaf to $\Omega$ should be $\M_\infty \underset{D^\times}{\times}\rho$. From this point of view one can think of the cohomology of $[\P^{n-1}/\GL_n (E)]$ with coefficient in this perverse sheaf as being the intersection cohomology of $\Omega$ with coefficients in $\M_\infty \underset{D^\times}{\times}\rho$.
\end{rema}

\subsection{The general case}
\subsubsection{Local Shtuka moduli spaces}

We are going to see that the conjecture implies Kottwitz conjecture describing the supercuspidal part of the cohomology of basic Rapoport-Zink spaces. In the following we take $E=\Qp$ and thus $G/\Qp$.
\\

Fix $b\in G(L)$ and $\mu\in X_*(T)^+/\Gamma$. We are going to look at a disjoint union over a Galois orbit like $\mu$ of Rapoport-Zink spaces. The advantage is that we don't need to introduce any reflex field, this disjoint union is defined over $\Qp$ (this simplifies the notations). We note $\Qpbreve$ for $L$ when this is the base field of our moduli spaces.
\\

Let us look at the following diagram
$$
\xymatrix{
\spa (\Fpb) \ar[rd]^-{x_b} 
 & & \Hecke^{\leq \mu}\otimes \Fpb \ar[ld]_-{\hg} \ar[rd]^-{\hd} & & \spa (\Qpbreve)^\diamond \ar[ld]_-{(x_1,Id)} \\
 & \Bun_{G,\Fpb} & & \Bun_{G,\Fpb}\times \spa (\Qpbreve)^{\diamond}
}
$$
And define the associated local Shtuka moduli space as a fiber product of this diagram:
$$
\Sht (G,b,\mu) := \spa (\Fpb) \underset{x_b, \Bun_{G,\Fpb}, \hg}{\times} \Hecke^{\leq \mu}\otimes \Fpb  \underset{\hd, \Bun_{G,\Fpb}\times \spa (\Qpbreve)^\diamond, (x_1,Id)} \spa (\Qpbreve)^{\diamond}.
$$
This is a diamond over $\Qpbreve^{\diamond}=\Qp^\diamond\otimes_{\Fp}\Fpb$. It it equipped with an action of $J_b (\Qp)$ via the factorization 
$$
x_b:\spa (\Fpb) \ldrt [\spa (\Fpb)/J_b (\Qp)]\ldrt \Bun_{G,\Fpb}
$$
and a commuting action of $G(\Qp)$ via the factorization 
$$
x_1: \spa (\Fpb)\ldrt [\spa (\Fpb)/G(\Qp)]\ldrt \Bun_{G,\Fpb}.
$$

The diamond $\Sht (G,b,\mu)$ equipped with its action of $G(\Qp)\times J_b (\Qp)$ has a {\it Weil descent datum from $\Qpbreve^{\diamond}$ to $\Qp$}. In fact, $x_1$ is defined over $\Fp$. Moreover $x_b$ is not defined over $\Fp$ but its isomorphism class is. More precisely, $\text{Frob}^*x_b=x_{b^\s}$ and there is a diagram
\UseTwocells
$$
\xymatrix@R=1.8cm{
\spa (\Fpb) \rtwocell^{x_{b^\s}}_{x_b}{} & \Bun_{G,\Fpb}
}
$$
where the vertical arrow is given by the $\s$-conjugacy $b^\s = b^{-1}.b.b^{\s}$. 
\\

Of course, so that $\Sht (G,b,\mu)$ be non-empty one has to suppose
that $[b]\in B(G,\mu)$.
There are two period morphisms 
$$
\xymatrix{
 & \Sht (G,b,\mu) \ar[ld]_-{\pi_{dR}} \ar[rd]^-{\pi_{HT}} \\
 \Gr^{\leq -\mu}\otimes_{\Qp^\diamond} \Qpbreve^\diamond & & \Gr^{\leq \mu}
}
$$
induced by $\hg$ and $\hd$. The morphism $\pi_{dR}$, resp. $\pi_{HT}$, is $G(\Qp)$-invariant, resp. $J_b(\Qp)$-invariant, and commutes wit the action of $J_b(\Qp)$, resp. $G(\Qp)$.
The left hand side can be made defined over $\Qp$ if we suppose $b$ is basic and decent and we twist $\Gr^{\leq -\mu}\otimes \Qpbreve^\diamond$ via $b\s$ (in the minuscule case this gives rise to twisted forms of $G/P_\mu$ like Severi-Brauer varieties). This is just an exercise in checking compatibility with the preceding descent datum. 
\\

Here is a two step construction of $\Sht (G,b,\mu)$ via the so called admissible locus. Consider the diagram
$$
\xymatrix{
\Gr^{\leq -\mu}\otimes \Qpbreve^\diamond \ar[d]\ar[r] & \Hecke^{\leq \mu}\otimes \Fpb \ar[d]^-{\hg} \ar[rd]^-{\hd} \\
\spa (\Fpb) \ar[r]_-{x_b} & 
\Bun_{G,\Fpb} & \Bun_{G,\Fpb} \times \spa (\Qpbreve)^\diamond
}
$$
where the square is cartesian. This defines a morphism
$$
f:\Gr^{\leq - \mu}\otimes \Qpbreve^{\diamond}\ldrt  Hecke^{\leq \mu}\otimes \Fpb \xrig{\ \hd\ } \Bun_{G,\Fpb} \times \spa (\Qpbreve)^{\diamond} \xrig{ \ proj\ } \Bun_{G,\Fpb}.
$$
Define 
$$
(\Gr^{\leq - \mu}_{\Qpbreve^\diamond})^{ad} =f^{-1} \big ( \Bun_{G,\Fpb}^{1,ss}\big )
$$
the so called {\it admissible locus.} This is an open subset. There is thus a morphism
$$
(\Gr^{\leq -\mu}_{\Qpbreve^\diamond})^{ad} \ldrt \Bun_{G,\Fpb}^{1,ss} = \big [ \spa (\Fpb)/ \underline{G(\Qp)}\big ].
$$
This means there is a $\underline{G(\Qp)}$-pro-étale torsor over the admissible locus $(\Gr^{\leq -\mu}_{\Qpbreve^\diamond})^{ad}$. Then, {\it $\Sht(G,b,\mu)$ is the moduli space of trivializations of this $\underline{G(\Qp)}$-torsor}.
\\

We have the following properties:
\begin{itemize}
\item {\it $\pi_{dR}$ is a $\underline{G(\Qp)}$-torsor over $(\Gr^{\leq -\mu}_{\Qpbreve^\diamond})^{ad}$}
\item {\it $\pi_{HT}$ is a $\widetilde{J}_b$-torsor over the Newton stratum 
$\Gr^{\leq \mu,b}_{\Qpbreve^\diamond}\subset \Gr^{\leq -\mu}_{\Qpbreve^\diamond}$.}
\end{itemize}
\

Let us go back to the conjecture now. Suppose $b$ is basic. Let us write the Hecke property
$$
\hd_! \big ( \hg^* \F_\ph \otimes IC_\mu \big ) = \F_\ph\boxtimes r_\mu\circ \ph.
$$
Let us apply {\it proper base change} to the following cartesian diagram
$$
\xymatrix{
\big [ \Gr^{\leq \mu}\otimes \Qpbreve^\diamond /\underline{G(\Qp)}\big ]
\ar[r] \ar[d]_-{i} & \big [\spa (\Qpbreve)^\diamond /\underline{G(\Qp)}\big ] \ar[d]^{(x_1,Id)} \\
Hecke^{\leq \mu}\otimes \Fpb \ar[r]^-{\hd} & \Bun_{G,\Fpb}\times \spa (\Qpbreve)^{\diamond}
}
$$
Projection via the upper horizontal arrow is the smooth equivariant cohomology complex. We note $R\GG_c (\Gr^{\leq \mu}_{\Cp^\flat} /G(\Qp),-)$ for this complex as an element of the derived category of $\Qlb$-vector spaces equipped with a smooth action of $G(\Qp)$ and equipped with an action of $I_{\Qp}\subset W_{\Qp}$.  
One obtains
\begin{eqnarray*}
R\GG_c \Big ( \Gr^{\leq-\mu}_{\Cp^\flat} /G(\Qp), \big (i^* \hg^* \F_\ph \big )
\otimes IC_\mu 
\Big ) &=& x_1^*\F_\ph \otimes r_\mu\circ \ph.
\end{eqnarray*}
One has by hypothesis 
$$
x_1^*\F_\ph = \bigoplus_{\rho\in \widehat{\pi_0 (S_\ph)}} \rho\otimes \pi_\rho
$$
where $\{\pi_\rho\}_\rho$ is the L-packet of representations of $G(\Qp)$ associated to $\ph$ via local Langlands.
\\

Suppose now moreover that {\it $\ph$ is cuspidal and $b$ basic,} that is to say $[b]$ is the unique basic element of $B(G,\mu)$. 
Because $[b]\in B(G,\mu)$
there is then a factorization 
$$
 \xymatrix{
\Gr^{\leq \mu}_{\Qpbreve^{\diamond}} \ar@/_3mm/[rrd] \ar[r]^i & Hecke^{\leq \mu}_{\Fpb} \ar[r]^-{\hg} & \Bun_{G,\Fpb} \\
 \Gr^{\leq \mu,b}_{\Qpbreve^\diamond}  \ar@/_3mm/[rrd] \ar@{^(->}[u]^-j
 & & \Bun_{G,\Fpb}^{\kappa (b)} \ar@{^(->}[u] \\
 && \Bun_{G,\Fpb}^{\kappa (b),ss} \ar@{^(->}[u]
  & \ar[l]_-{x_b}^-{\sim} [\spa (\Fpb)/\underline{J_b(\Qp)}]  
}
$$
where the inclusions are open subsets. {\it Using the cuspidality condition} we then find that via the $\underline{J_b(\Qp)}$-torsor
$$
\pi_{HT}: \Sht (G,b,\mu)\ldrt \Gr^{\leq -\mu}_{\Qpbreve^\diamond}
$$
$$
i^*\hg^* \F_\ph = j_!\Big (\Sht(G,b,\mu)\underset{J_b(\Qp)}{\times} x_b^*\F_\ph \Big ).
$$
Using the formula
$$
x_b^*\F_\ph = \bigoplus_{\rho\in \widehat{S_\ph}\atop \rho_{Z(\widehat{G})^\Gamma}=\kappa (b)} \rho\otimes \pi_\rho 
$$
we find the following (one has to check the compatibility with the action of the Frobenius descent datum using the character sheaf property, this is left to the courageous reader...). To obtain this formula one uses that all
the preceding isomorphisms commute with the action of $S_\ph$.

\begin{prop}
The conjecture predicts the following. Choose $\mu\in X_*(T)^+/\Gamma$ and let $[b]\in B(G,\mu)$ be the unique basic element. Let $\ph: W_{\Qp}\drt \,^L G$ be a discrete cuspidal parameter. Given $\rho \in \widehat{S_\ph}$ satisfying $\rho_{|Z(\widehat{G})^\Gamma}=\kappa (b)$ note $\pi_\rho$ for the corresponding supercuspidal representation of $J_b(\Qp)$ of the $L$-packet associated to $\ph$. Consider the following decomposition 
$$
\hom (\rho , r_\mu\circ \ph) = \bigoplus_{\rho'\in (S_\ph / Z(\widehat{G})^\Gamma)^{\widehat{\ \ \ }} } \rho'\otimes \s_{\rho,\rho',\mu,\ph}
$$
as a representation of $S_\ph\times W_{\Qp}$. As a representation of $G(\Qp)\times W_{\Qp}$ we then have 
$$
H^\bullet_c ( \Sht (G,b,\mu)_{\Cp^\flat}, IC)\otimes_{J_b(\Qp)} \pi_\rho  \simeq \bigoplus_{\rho'\in (S_\ph / Z(\widehat{G})^\Gamma)^{\widehat{\ \ \ }} } \pi_{\rho'}\otimes \s_{\rho,\rho',\mu,\ph}.
$$
where $IC$ is the intersection cohomology complex of $\Sht (G,b,\mu)$. When $\mu$ is minuscule this is reduced to 
$$
H^i_c ( \Sht (G,b,\mu)_{\Cp^\flat},\Qlb)\otimes_{J_b(\Qp)} \pi_\rho =  \begin{cases}
0 \text{ if } i\neq d \\
\bigoplus_{\rho'\in (S_\ph / Z(\widehat{G})^\Gamma)^{\widehat{\ \  }} } \pi_{\rho'}\otimes \s_{\rho,\rho',\mu,\ph}(-d/2) \text{ if } i=d.
\end{cases} 
$$
with $d=\dim \Sht (G,b,\mu)$.
\end{prop}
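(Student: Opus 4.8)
The plan is to derive the stated formula as a formal consequence of the conjecture of Section~\ref{sec:Statement of the conjecture}, by pushing the Hecke eigensheaf property through the cartesian squares and torsor structures already assembled in this section, and using the cuspidality condition to localize everything onto the basic Newton stratum.

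Concretely, I would start from the Hecke eigensheaf isomorphism $\hd_!\big(\hg^*\F_\ph\otimes IC_\mu\big)\iso \F_\ph\boxtimes r_\mu\circ\ph$ and apply proper base change along the cartesian square relating $[\Gr^{\leq\mu}\otimes\Qpbreve^\diamond/\underline{G(\Qp)}]$ to $[\spa(\Qpbreve)^\diamond/\underline{G(\Qp)}]$ over $\Bun_{G,\Fpb}\times\spa(\Qpbreve)^\diamond$, obtaining $R\Gamma_c\big(\Gr^{\leq-\mu}_{\Cp^\flat}/G(\Qp),(i^*\hg^*\F_\ph)\otimes IC_\mu\big)=x_1^*\F_\ph\otimes r_\mu\circ\ph$, into which I substitute $x_1^*\F_\ph=\bigoplus_{\rho'}\rho'\otimes\pi_{\rho'}$ from the realization-of-local-Langlands axiom. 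On the left-hand side I would then invoke the cuspidality condition $\F_\ph=j_!j^*\F_\ph$ together with the Newton stratification of $\Gr^{\leq\mu}$ pulled back along $\hg$: since $\ph$ is cuspidal and $[b]$ is the unique basic class in $B(G,\mu)$, only the basic stratum $\Gr^{\leq\mu,b}_{\Qpbreve^\diamond}$ contributes, and over it $\pi_{HT}\colon\Sht(G,b,\mu)\to\Gr^{\leq-\mu}_{\Qpbreve^\diamond}$ is a $\widetilde J_b$-torsor with $\widetilde J_b=\underline{J_b(\Qp)}$ because $b$ is basic. Hence $i^*\hg^*\F_\ph=j_!\big(\Sht(G,b,\mu)\times_{J_b(\Qp)}x_b^*\F_\ph\big)$, and base change along this $J_b(\Qp)$-torsor rewrites the left-hand side as $R\Gamma_c\big(\Sht(G,b,\mu)_{\Cp^\flat},IC\big)\otimes_{J_b(\Qp)}x_b^*\F_\ph$, where $IC$ denotes the pullback of $IC_\mu$.

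With $x_b^*\F_\ph=\bigoplus_{\rho}\rho\otimes\pi_\rho$ inserted, the equality becomes an identity of complexes of $\Qlb$-representations of $G(\Qp)\times S_\ph\times W_{\Qp}$, smooth on $G(\Qp)$. I would then decompose $\Hom_{\widehat G}(\rho,r_\mu\circ\ph)=\bigoplus_{\rho'}\rho'\otimes\s_{\rho,\rho',\mu,\ph}$ as a representation of $S_\ph\times W_{\Qp}$; because by hypothesis every isomorphism in the chain commutes with the $S_\ph$-action, comparing $\rho$-isotypic components and then extracting the $\pi_\rho$-isotypic part on the $J_b(\Qp)$-side isolates $H^\bullet_c(\Sht(G,b,\mu)_{\Cp^\flat},IC)\otimes_{J_b(\Qp)}\pi_\rho\simeq\bigoplus_{\rho'}\pi_{\rho'}\otimes\s_{\rho,\rho',\mu,\ph}$ as a $G(\Qp)\times W_{\Qp}$-representation. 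For the minuscule refinement I would use that then $\Gr^{\leq\mu}=\Fl_\mu$ is smooth proper of dimension $d=\langle 2\rho,\mu\rangle=\dim\Sht(G,b,\mu)$ and $IC_\mu=\Qlb(\langle\rho,\mu\rangle)[\langle2\rho,\mu\rangle]$ by Conjecture~\ref{conj:satake geo}(3); the identity then holds with constant coefficient $\Qlb$ up to the twist-shift $(d/2)[d]$, and since its right-hand side is an honest ungraded representation placed in a single cohomological degree, the vanishing of $H^i_c$ for $i\neq d$ and the stated value $\bigoplus_{\rho'}\pi_{\rho'}\otimes\s_{\rho,\rho',\mu,\ph}(-d/2)$ for $i=d$ follow.

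The main obstacle, as the author signals by leaving it ``to the courageous reader'', is checking compatibility with the Weil/Frobenius descent datum: one must verify that the character sheaf property precisely supplies the $W_{\Qp}$-action asserted on both sides and that this structure is preserved by proper base change and by passage through the torsor $\pi_{HT}$ (this is where $\text{Frob}^*x_b=x_{b^\s}$ and the $\s$-conjugacy intervene). Secondary difficulties are that every proper base change step presupposes the still hypothetical theory of perverse $\ell$-adic sheaves and its six operations on diamond stacks, and that the localization onto the basic stratum genuinely requires the cuspidality condition to annihilate $\hg^*\F_\ph$ over each non-basic Newton stratum of $\Gr^{\leq\mu}$ — a point that is itself part of the conjecture.
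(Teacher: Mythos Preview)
Your proposal is correct and follows essentially the same argument as the paper: apply proper base change to the Hecke eigensheaf identity along the cartesian square over $(x_1,Id)$, then use cuspidality to localize $i^*\hg^*\F_\ph$ onto the basic Newton stratum, rewrite it via the $\underline{J_b(\Qp)}$-torsor $\pi_{HT}$ as $j_!\big(\Sht(G,b,\mu)\times_{J_b(\Qp)}x_b^*\F_\ph\big)$, and compare $S_\ph$-isotypic components using the decompositions of $x_1^*\F_\ph$ and $x_b^*\F_\ph$. You also correctly flag the Frobenius/Weil compatibility via the character sheaf property as the point the paper leaves ``to the courageous reader'', and your treatment of the minuscule case via $IC_\mu=\Qlb(\langle\rho,\mu\rangle)[\langle 2\rho,\mu\rangle]$ matches the paper's.
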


\begin{rema}
In the preceding proposition we used that the representation $\hom (\rho , r_\mu\circ \ph)$ of $S_\ph$ is trivial on $Z(\widehat{G}^{\Gamma})$ since $[b]\in B(G,\mu)$ implies that the central chararacter of $r_\mu$ is given by $\kappa (b)= \widehat{\mu}_{|Z(\widehat{G})^\Gamma}$.
\end{rema}

\subsubsection{The link with moduli spaces of $p$-divisible groups}

Let us finish by explaining the link between $\Sht (G,b,\mu)$ and Rapoport-Zink spaces. We place ourselves in a PEL situation like in section \ref{sec:localglobal}. Let us fix a local unramified PEL type Rapoport-Zink datum over $\Qp$ (\cite{RZ}). Let $G$ over $\Qp$ be the corresponding reductive group and $\underline{G}$ its reductive integral model. There is fixed some $b\in G(L)$ corresponding to the covariant isocrystal of the  $p$-divisible group we deform. Moreover we have a Hodge chocaracter $\mu$ (as in the preceding section we take its Galois orbit and obtain a disjoint union of Rapoport-Zink spaces). Let us note $$\widehat{\M}$$ the corresponding Rapoport-Zink space as a formal scheme over $\spf ( \breve{\Z}_p)$.
\\
We note $$(\M_K)_{K\subset \underline{G} (\Zp)}$$ for the associated tower of rigid analytic spaces, $\M_{\underline{G}(\Zp)}= \widehat{\M}_\eta$. 
We then have the following theorem of Scholze-Weinstein (\cite{ScholzeWeinstein}).

\begin{theo}
The pro-étale sheaf $\underset{K}{\limp} \M_K^\diamond$ is representable by a perfectoid space $\M_\infty$. Moreover $\M_\infty$ represents the sheaf associated to the presheaf on the big analytic site that associates to $(R,R^+)$ over $\Qpbreve$ an element  $(H,\rho)\in \widehat{\M} (R^+)$ together with a trivialization of the étale $p$-divisible group with its $\underline{G}$-structure $H\otimes_{R^+} R$.
\end{theo}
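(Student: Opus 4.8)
The plan is to pass directly to the infinite--level moduli problem and then feed it into the Scholze--Weinstein classification of $p$-divisible groups over (integral) perfectoid rings, the description of modifications by Beauville--Laszlo gluing from Section~\ref{sec:isocris BT et vect}, and the period morphisms of Section~\ref{sec:Kottwitz conj}.

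First I would recall the classical Rapoport--Zink description of the tower: for $K\subset\und G(\Zp)$ compact open, $\M_K$ represents on $\Qpbreve$-rigid analytic spaces the functor of pairs $(H,\rho)\in\widehat{\M}$ together with a $K$-orbit of trivializations of the étale $p$-divisible group $H\otimes R$ compatible with the $\und G$-structure. Passing to the inverse limit over $K$, a compatible system of $K$-level structures is precisely a genuine trivialization of the (integral, equivalently rational) Tate module with its $\und G$-structure. Hence $\underset{K}{\limp}\M_K^\diamond$ is identified, as a pro-étale sheaf, with the sheafification of the presheaf $\mathcal F$ appearing in the statement, and the problem is reduced to showing that $\mathcal F$ is representable by a perfectoid space over $\Qpbreve^\diamond$.

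Next I would invoke the Scholze--Weinstein classification theorem: over an integral perfectoid ring $R^+$ with $R=R^+\unp$, a $p$-divisible group $H$ up to isogeny, once rigidified by the fixed isocrystal coming from $\rho$, corresponds to a $B^+_{dR}$-modification bounded by $\mu$ of the trivialized bundle, i.e. to a point of $\Gr^{\leq\mu}$ (the integral refinement of Proposition~\ref{prop:HTseq} and Corollary~\ref{coro:comparaison hecke}, glued by Beauville--Laszlo \cite{BeauvilleLaszlo2}), while a trivialization of $V_p(H)$ with its $\und G$-structure is exactly a rigidification of the associated $G$-bundle on $X_R^{sch}$. This realizes $\mathcal F$ as the sheaf $\Sht(G,b,\mu)$ of Section~\ref{sec:Kottwitz conj}, namely the total space of the $\und{G(\Qp)}$-pro-étale torsor $\pi_{dR}$ over the admissible locus $(\Gr^{\leq-\mu}_{\Qpbreve^\diamond})^{ad}\to\Bun_{G,\Fpb}^{1,ss}$. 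In the unramified PEL situation with $\mu$ minuscule, $\Gr^{\leq\mu}$ is the diamond attached to a flag variety over $\Qpbreve$, the admissible locus is an open subspace of a smooth rigid space, and $\{\M_K\}_{K\subset\und G(\Zp)}$ is a tower of smooth rigid spaces over $\M_{\und G(\Zp)}$ whose transition maps are finite étale, pro-$p$ along a cofinal system.

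The perfectoidness of $\M_\infty$ --- the essential point --- I would obtain from the criterion that the inverse limit (in the sense of perfectoid spaces) of such a tower is a perfectoid space provided the tower is ``sufficiently ramified'', the ramification hypothesis being checked along the boundary of a suitable formal model of $\M_{\und G(\Zp)}$ by means of the chain of canonical subgroups of the universal $p$-divisible group: passing to deep pro-$p$ level adjoins compatible $p$-power roots of the relevant coordinates, so that Frobenius becomes almost surjective modulo $p$ on the limit and almost purity applies. After the usual dévissage embedding the local Shimura datum into $\GL$-type data, this reduces to the Lubin--Tate tower, where perfectoidness at infinite level is established in \cite{ScholzeWeinstein}. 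I expect this last step to be the main obstacle: verifying the ``sufficient ramification'' requires controlling the $B^+_{dR}$-lattice attached to $H$ uniformly over the family, which is exactly the classification of $p$-divisible groups over general perfectoid rings --- the hardest input of \cite{ScholzeWeinstein}. Everything that precedes it is formal bookkeeping with the moduli interpretations, the Beauville--Laszlo description of modifications, and the two period maps $\pi_{dR},\pi_{HT}$.
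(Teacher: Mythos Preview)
The paper does not give a proof of this theorem: it is stated with attribution to Scholze--Weinstein \cite{ScholzeWeinstein} and used as a black box. So there is no argument in the paper to compare your proposal against; your sketch is an attempt to reconstruct what happens in \cite{ScholzeWeinstein}, not to reproduce anything the present paper does.

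That said, your sketch has a logical inversion you should be aware of. You invoke the Scholze--Weinstein classification of $p$-divisible groups over integral perfectoid rings in order to identify $\mathcal F$ with $\Sht(G,b,\mu)$, and then separately argue perfectoidness. In \cite{ScholzeWeinstein} the dependencies run the other way: the perfectoidness of the infinite-level tower (for Lubin--Tate and its EL/PEL variants) is established first, by a direct analysis of integral models and an ``almost purity'' argument, and the classification of $p$-divisible groups over $\O_C$ is then \emph{deduced} from this, essentially by reading off the moduli interpretation of the perfectoid limit. Using the classification as an input to perfectoidness would be circular. Note also that in the paper the identification $\M_\infty\iso\Sht(G,b,\mu)$ is stated as a \emph{separate} theorem (the one immediately following), again attributed to \cite{ScholzeWeinstein}, so conflating the two is not how the exposition is organised here.

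Your third paragraph is closer to the actual mechanism: perfectoidness is obtained from the tower, not from $\Sht$. But the phrase ``reduces to the Lubin--Tate tower'' is too glib. The actual argument in \cite{ScholzeWeinstein} for general RZ spaces passes through the universal cover of the $p$-divisible group and an explicit description of the infinite-level moduli problem in terms of $\A_{inf}$-linear algebra; it is not a formal d\'evissage to the one-dimensional case.
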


In this statement the couple $(H,\rho)$ is the $p$-divisible group together with the rigidification of its reduction to $R^+/\varpi_R$, 
$$
\rho: \mathbb{H}\otimes_{\Fqb} R^+/\varpi_R\ldrt H\otimes_{R^+} R^+/\varpi_R
$$
is a quasi-isogeny compatible with all $\underline{G}$-structures.

\begin{rema}
In the preceding, since Rapoport-Zink spaces are partially proper, we can replace $R^+$ by $R^\circ$.
\end{rema}

\begin{rema}
Scholze-Weinstein theorem is stronger, stating that $\M_\infty \sim \underset{K}{\limp} \M_K$ and thus some algebras of functions in the tower are dense in some algebras of functions on $\M_\infty$.
\end{rema}

We use the results of section \ref{sec:isocris BT et vect}. There is a morphism 
$$
g:\M_\infty \ldrt \mathcal{BT}\xrig{\ u\ } Hecke^\mu
$$
given by the preceding modular description of $\M_\infty$. Here, again, we have modified the definition \ref{def:le pre champ BT} of $\mathcal{BT}$ to incorporate the $\underline{G}$-structure. It is now immediate to check that $g$ factorizes through the fiber product defining $\Sht (G,b,\mu)$ using corollary \ref{coro:comparaison hecke}:
\begin{itemize}
\item $\hg\circ g$ factorizes through $x_b:\spa (\Fpb)\drt \Bun_{G,\Fpb}$ thanks to the rigification $\rho$ that identifies the vector bundle associated to the $F$-isocrystal of the $p$-divisible group over $R^+/\varpi_R$ with $\E_b$
\item $\hd\circ g$ factorizes through $x_1$ thanks to the trivialization of the generic fiber of the $p$-divisible group.
\end{itemize}

The following is now a reinterpretation of Scholze-Weinstein results.

\begin{theo}[Scholze-Weinstein]
The morphism $\M_\infty \xrig{\ g\ } \Sht (G,b,\mu)$ is an isomorphism. 
\end{theo}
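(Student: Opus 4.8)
The plan is to unwind both sides into concrete moduli problems and exhibit the identification functorially, point by point on affinoid perfectoids over $\Qpbreve$. Recall from Theorem (Scholze–Weinstein) the modular description of $\M_\infty$: it sends $(R,R^+)$ over $\Qpbreve$ to the set of triples $(H,\rho,\alpha)$, where $H$ is a $p$-divisible group over $R^\circ$ with its $\underline{G}$-structure, $\rho:\Hb\otimes_{\Fpb} R^\circ/\varpi_R\to H\otimes_{R^\circ}R^\circ/\varpi_R$ a $\underline{G}$-compatible quasi-isogeny, and $\alpha$ a $\underline{G}$-trivialization of the étale $p$-divisible group $H\otimes_{R^\circ}R$. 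On the other side, $\Sht(G,b,\mu)(R,R^+)$ parametrizes: a modification $f:\E_1\iso\E_2$ of $G$-bundles on $X_R^{sch}$, meromorphic along the Cartier divisor $i:S^\sharp\hookrightarrow X_R^{sch}$ attached to an untilt $R^\sharp$, bounded fiberwise by $\mu$, together with an isomorphism $\E_1\simeq\E_b$ (the $x_b$-leg) and a trivialization $\E_2\simeq\E_{1,\mathrm{triv}}$ (the $x_1$-leg). First I would recall the morphism $g:\M_\infty\to\Sht(G,b,\mu)$ constructed just above via $g:\M_\infty\to\mathcal{BT}\xrig{u}Hecke^\mu$ of Corollary \ref{coro:comparaison hecke}, and check once more, using the two items listed there ($\hg\circ u$ = bundle of the $F$-isocrystal of the special fiber, twisted by $\O(1)$; $\hd\circ u$ = slope-zero bundle of the rational Tate module), that $\hg\circ g$ factors through $x_b$ via $\rho$ and $\hd\circ g$ factors through $x_1$ via $\alpha$, so that $g$ does land in the fiber product.

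The substance is to produce the inverse, i.e. to show $g$ is bijective on $(R,R^+)$-points and an isomorphism of diamonds. The key input is the equivalence of Theorem \ref{theo:equivalence cristaux bun} and its $\underline{G}$-analogue over geometric points $S=\spa(F)$ with $F$ algebraically closed: an $F$-point of $\Sht(G,b,\mu)$ is precisely a $G$-isocrystal with $\underline{G}$-structure isomorphic to $b$ (from the $x_b$-leg, using Theorem \ref{theo:classification des G torseurs}), a modification bounded by $\mu$ producing a trivial bundle $\E_2$ — which by Scholze–Weinstein's classification of minuscule modifications of the trivial bundle corresponds exactly to a $p$-divisible group with the prescribed quasi-isogeny class and a trivialization of its Tate module. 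Over general affinoid perfectoid $(R,R^+)$, I would argue that the datum of a $\mu$-bounded modification $\E_b\to\E_{1,\mathrm{triv}}$ together with the trivialization is, by Proposition \ref{prop:HTseq} (the Hodge–Tate exact sequence) run in reverse, the same as a $p$-divisible group $H/R^\circ$ with the structures $(\rho,\alpha)$: the modification recovers $\omega_{\widetilde H^D}$ hence the Hodge filtration, and descent of the whole tower from geometric points (Scholze–Weinstein deformation theory) pins down $H$ over $R^\circ$ itself rather than just fiberwise. This gives a two-sided inverse to $g$ on $S$-points functorially in $S$, hence an isomorphism of pro-étale sheaves, and both sides being diamonds (resp. perfectoid spaces after the Scholze–Weinstein representability theorems) the map is an isomorphism of diamonds.

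Finally I would check compatibilities: the two period morphisms $\pi_{dR},\pi_{HT}$ on $\Sht(G,b,\mu)$ match the de Rham and Hodge–Tate period maps of $\M_\infty$ under $g$ — this is exactly the dictionary of the two points of view on $u$ recorded at the end of section \ref{sec:isocris BT et vect} (focus on $\hg$ = Hodge–de Rham periods, focus on $\hd$ = Hodge–Tate periods); and the $G(\Qp)\times J_b(\Qp)$-actions together with the Weil descent datum from $\Qpbreve$ to $\Qp$ correspond, the latter because $x_1$ is defined over $\Fp$ while $\mathrm{Frob}^*x_b\simeq x_{b^\s}$ via the $\s$-conjugacy $b^\s=b^{-1}b\,b^\s$, which on the $\M_\infty$-side is the usual Frobenius on the special fiber intertwining the rigidifications. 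The main obstacle I anticipate is the passage from geometric points to general affinoid perfectoid $(R,R^+)$: one must know that a $\mu$-bounded modification of $G$-bundles on $X_R^{sch}$ integrally descends to an actual $p$-divisible group over $R^\circ$ (not merely a compatible family over geometric fibers), and that $\mathcal{BT}$-data are detected by the pair $(\hg\circ u,\hd\circ u)$ with no extra gluing ambiguity — precisely the content one borrows from Scholze–Weinstein's classification theorem and its faithful-flat descent, so the work is in citing and assembling those inputs correctly rather than in any new computation.
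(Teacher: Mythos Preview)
The paper does not give its own proof of this theorem: it is stated as ``a reinterpretation of Scholze--Weinstein results'' and deferred to \cite{ScholzeWeinstein}. The only proof indication is the remark immediately following, which singles out the key input: $g$ is a bijection on geometric points, equivalently the Scholze--Weinstein classification of $p$-divisible groups over $\O_C$ for $C|\Qpbreve$ algebraically closed.

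Your outline is a reasonable expansion of what the paper leaves as a citation, and you correctly locate the heart of the matter in bijectivity on geometric points. But your framing of the passage from geometric points to general $(R,R^+)$ is off. You describe the obstacle as needing a $\mu$-bounded modification to ``integrally descend to an actual $p$-divisible group over $R^\circ$'' and propose to resolve it by faithful-flat descent. The paper's remark warns against exactly this reading: the theorem does \emph{not} classify $p$-divisible groups over $R^\circ$ (or $R^+$), precisely because $\M_\infty$ represents only the \emph{sheafification} of that presheaf. So one should not try to produce an honest $p$-divisible group over $R^\circ$ as the inverse on $(R,R^+)$-points; there is no such inverse at the presheaf level. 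The argument runs instead at the level of pro-\'etale sheaves: $\M_\infty$ is a perfectoid space (by the Scholze--Weinstein representability theorem quoted just above), $\Sht(G,b,\mu)$ is a diamond, and one uses the Scholze--Weinstein analysis to identify them, the bijectivity on geometric points being the core computation. Your compatibility checks (period maps, $G(\Qp)\times J_b(\Qp)$-actions, Weil descent) are fine and go beyond what the paper spells out.
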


\begin{rema}
This theorem does not give a classification of $p$-divisible groups over $R^+$ for $(R,R^+)$ affinoid perfectoid over $\Qpbreve$ because of the "sheaf associated to the presheaf". Nevertheless this gives a classification over geometric points, that is to say $\O_C$ with $C|\Qpbreve$ algebraically closed. That being said, this classification is exactly what Scholze and Weinstein prove first in \cite{ScholzeWeinstein}
to prove their result: $g$ is a bijection on geometric points.
\end{rema}

\section{The abelian case}
\label{sec:The abelian case}

\subsection{The $\GL_1$-case}

In this section we check the conjecture for $\GL_1$ over a $p$-adic field $E$. In this case there is a decomposition 
$$
\Bun= \coprod_{d\in \Z} \Bun^d
$$
according to the degree $d$ of a line bundle. Let us fix a uniformizing element $\pi$ of $E$. This gives rise to a canonical line bundle $\O(1)$ on the curve which corresponds to $\pi^{-1}\in \GL_1(L)$. This choice being fixed there is an isomorphism 
$$
[\spa (\Fp)/\underline{E}^\times ] \iso \Bun^d.
$$
Let us note $$\mathscr{T}_d$$ for the corresponding universal $\underline{\Q}_p^\times$-torsor over $\Bun^d$. This is the torsor of isomorphisms between $\O(d)$ and the universal line bundle over $\Bun^d$.
Let $\mu (z)=z^k$ and note 
$$
\Hecke_\mu^d= \hd^{-1} (\Bun^d\times \spa (E)^\diamond).
$$
The Hecke correspondence restricts to 
$$
\xymatrix{
 & Hecke_\mu^d \ar[ld]_-{\hg} \ar[rd]^-{\hd} \\
 \Bun^{d+k} && \Bun^d\times \spa (E)^\diamond.
}
$$
Fix a Lubin-Tate group associated to $\pi$ over $\O_E$ and note $E(1)$ its Tate-module and $E(k)=E(1)^{\otimes k}$. This defines a rank $1$ pro-étale local system on $\spa (E)^\diamond$ and thus a $\underline{E}^\times$-torsor $$\LT_k$$
over $\spa (E)^\diamond$. We use the notation $\wedge$ for $\underset{\underline{E}^\times}{\times}$ applied to two $\underline{E}^\times$-torsors. For example one has
$$
\LT_k\wedge \LT_{k'} =\LT_{k+k'}.
$$

\begin{prop}\label{prop:clef cas GL1}
\begin{enumerate}
\item The morphism $\hd$ is an isomorphism.
\item There is a natural isomorphism of $\underline{E}^\times$-torsors
$$
\hg^* \mathscr{T}_{d+k} \simeq \hd^* \mathscr{T}_d \wedge \LT_{-k}
$$
where we still note $\LT_{-k}$ for its pullback from $\spa  (E)^\diamond$ to $\Bun\times \spa (E)^\diamond$.
\end{enumerate}
\end{prop}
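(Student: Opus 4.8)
The idea is to use that $\GL_1$ is abelian, so that Hecke modifications are rigid, and then to translate (2) into a statement about $\underline{E}^\times$-torsors of isomorphisms of line bundles on the curve, which reduces to identifying the torsor attached to the divisor of the universal untilt with a Lubin--Tate torsor. For \emph{Part (1)}, I would first note that for $G=\GL_1$ the $B_{dR}$-affine Grassmanian is $\Gr=\coprod_{k\in\Z}\spa(E)^\diamond$, the relative position being the valuation on $\GL_1(B_{dR})/\GL_1(B^+_{dR})$; for $\mu(z)=z^k$ the Schubert cell $\Gr^{\leq\mu}$ is the single component $\spa(E)^\diamond$, on which $L^+\GL_1$ acts trivially. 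By Proposition~\ref{prop:loc fibration Gr}, $\hd$ is then identified with $\mathcal{T}\underset{L^+G}{\times}\Gr^{\leq\mu}=\mathcal{T}/L^+G=\Bun_G\times\spa(E)^\diamond$, hence is an isomorphism; restricting to $\Hecke^d_\mu$ gives $\hd\colon\Hecke^d_\mu\iso\Bun^d\times\spa(E)^\diamond$. Equivalently and concretely: given a line bundle $\E_1$ on $X_R^{sch}$ and an untilt $R^\sharp$ with Cartier divisor $D$, the modification $f\colon\E_{1|X\setminus D}\iso\E_{2|X\setminus D}$ bounded by $z^k$ exists and is unique up to unique isomorphism, namely $\E_2=\E_1\otimes\O(D)^{\otimes k}$ with $f$ the tautological isomorphism off $D$ (uniqueness of automorphisms follows from density of $X\setminus D$ and $H^0(X_S,\O_{X_S})^{\times}=\underline{E}^\times$), and the degree count $d\mapsto d+k$ matches the restricted correspondence.

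For \emph{Part (2)}, under the identification of (1) the map $\hg$ carries the universal pair $(\E_1,R^\sharp)$ over $\Bun^d\times\spa(E)^\diamond$ to $\E_1\otimes\O(D)^{\otimes k}$, so by the definition of $\mathscr{T}_\bullet$ one has $\hg^*\mathscr{T}_{d+k}=\text{Isom}(\O(d+k),\E_1\otimes\O(D)^{\otimes k})$ and $\hd^*\mathscr{T}_d=\text{Isom}(\O(d),\E_1)$. Tensoring isomorphisms gives a morphism of $\underline{E}^\times$-torsors
\[
\hd^*\mathscr{T}_d\;\wedge\;\text{Isom}(\O(1),\O(D))^{\wedge k}\ \ldrt\ \hg^*\mathscr{T}_{d+k},\qquad (\psi,\chi^{\otimes k})\longmapsto\psi\otimes\chi^{\otimes k},
\]
and a morphism of torsors under one and the same group is automatically an isomorphism. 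Thus I am left to construct a natural isomorphism $\text{Isom}(\O(1),\O(D))\simeq\LT_{-1}$ of $\underline{E}^\times$-torsors over $\spa(E)^\diamond$ (both are torsors: the left one is pro-\'etale locally nonempty, since degree-$1$ line bundles on $X_F$ are isomorphic, and $\text{Aut}(\O(1))=\underline{E}^\times$).

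The key step is this last identification, equivalently $\text{Isom}(\O(D),\O(1))\simeq\LT_1$. Via the canonical section $1\in H^0(X,\O(D))$ with divisor $D$, an isomorphism $\O(D)\to\O(1)$ is the same datum as a global section $s$ of $\O(1)$ with $\div s=D$. Here I would invoke section~\ref{sec:a computation equal} and its unequal-characteristic analog: there is a $\ph$-equivariant, $\O_E$-linear identification $H^0(X,\O(1))=\O(Y)^{\ph=\pi}\simeq\mathscr{G}(R^\circ)$, $\e\mapsto t_\e=\sum_{n\in\Z}[\e^{q^{-n}}]\pi^n$, where the $\O_E$-module structure on the source comes from the Lubin--Tate lift $\widetilde{\mathscr{G}}$, and for $\e\in R^{\circ\circ}\cap R^\times$ the Weierstrass expansion $t_\e=\Pi^-(u_\e)\,\Pi^+(u_\e)$ gives $\div_X t_\e=D_{R_\e}$ with $R_\e=\A\unpi/u_\e$ the attached untilt. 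Hence, for $R^\sharp$ fixed, the sections $s$ with $\div s=D_{R^\sharp}$ are exactly the $t_\e$ with $\e$ ranging over the generators of a fixed rank-one $\O_E$-submodule, and the formula $\big(\theta_\e(\ph^{-n}(\eta))\big)_{n\geq0}$ exhibits these, $\O_E^\times$-equivariantly, as bases of the $\pi$-adic Tate module $T_\pi(\widetilde{\mathscr{G}}\otimes R^{\sharp,\circ})$. Letting $R^\sharp$ vary over $\spa(E)^\diamond$ identifies $\text{Isom}(\O(D),\O(1))$ with the $\underline{E}^\times$-torsor of the Lubin--Tate local system $E(1)$, i.e. with $\LT_1$; dualising and taking $\wedge k$ gives (2).

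The main obstacle is exactly this identification: one must verify that $\e\mapsto t_\e$ is simultaneously compatible with the two $\O_E$-module structures and with the assignment $\e\mapsto u_\e\mapsto R_\e$, so that the torsor comes out with the correct structure-group action and the right twist (this is what pins down the sign $-k$, rather than $+k$), and that the whole construction is functorial in $S$ and compatible with the Weil descent datum from $\widehat{E^{un}}$ to $E$ and with the presentation $\Bun^d\cong[\spa(\Fp)/\underline{E}^\times]$; this last compatibility is where the conjecture for $\GL_1$ reduces to local class field theory for $\pi$. Everything else is formal bookkeeping with $\underline{E}^\times$-torsors on the curve together with Part~(1).
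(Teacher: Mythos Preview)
Your argument for (1) is fine and matches the paper's ``evident''. For (2) your route is correct but genuinely different from the paper's. You reduce everything, via the torsor identity $\hg^*\mathscr{T}_{d+k}\simeq \hd^*\mathscr{T}_d\wedge \text{Isom}(\O(1),\O(D))^{\wedge k}$, to the single identification $\text{Isom}(\O(D),\O(1))\simeq \LT_1$, and then argue this by hand using the Weierstrass factorisation $t_\e=\Pi^-(u_\e)\Pi^+(u_\e)$ and the explicit description of bases of $T_\pi(\widetilde{\mathscr{G}}\otimes R^{\sharp,\circ})$ from the earlier computation. The paper instead invokes directly the \emph{fundamental exact sequence of $p$-adic Hodge theory}
\[
0\ldrt \mathbb{B}^{\ph=\pi^d}\otimes_E E(k)^\diamond \ldrt \mathbb{B}^{\ph=\pi^{d+k}} \ldrt \mathbb{B}_{dR}^+/\Fil^k\ldrt 0
\]
over $\spa(E)^\diamond$, together with $Rf_*\E=\mathscr{T}\underset{\underline{E}^\times}{\times}\mathbb{B}^{\ph=\pi^i}$ for a degree-$i$ line bundle, to read off $\hd^*\mathscr{T}_d=\hg^*\mathscr{T}_{d+k}\wedge \LT_k$ in one stroke for all $d,k\geq 0$.

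In fact your key step \emph{is} this exact sequence in the special case $d=0$, $k=1$: the kernel of $\theta:\mathbb{B}^{\ph=\pi}=H^0(X,\O(1))\to R^\sharp$ is $E(1)$, and since $\O(1)$ has degree $1$ a nonzero section lies in this kernel iff its divisor is exactly $D$. So you could shortcut your entire Weierstrass discussion by citing that sequence; conversely, your explicit formulas make the sign $\LT_{-k}$ (rather than $\LT_{+k}$) visible without bookkeeping. The paper's approach buys uniformity in $d,k$ and avoids the functoriality and compatibility checks you correctly flag as the main obstacle; your approach buys a concrete link to the Lubin--Tate formal group that makes the later verification of the character-sheaf and Hecke properties more transparent.
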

\begin{proof}
Point (1) is evident. Point (2) is a consequence of the {\it fundamental exact sequence of $p$-adic Hodge theory}. 
By twisting by powers of  $\O(1)$ one can suppose $d\geq 0$ (twisting by $\O(1)$ induces isomorphisms $\Bun^d\xrig{\sim}\Bun^{d+1}$). Up to taking the dual modification one can suppose $k\geq 0$ too.
There is an exact sequence of sheaves over $\spa (E)^\diamond$
$$
0\ldrt \mathbb{B}^{\ph=\pi^d}\otimes_E E(k)^\diamond \ldrt \mathbb{B}^{\ph=\pi^{k+d}} \ldrt \mathbb{B}_{dR}^+/\Fil^k \mathbb{B}_{dR}\ldrt 0
$$
where we recall that $\mathbb{B}(S)= \GG (S,\O_{Y_S})$.
Now, if $S$ is a perfectoid space and $f:(X_S)_{pro-\et}\drt S_{pro-\et}$, for $\E$ a degree $i$ line bundle on $X_S$, if $\mathscr{T}$ is the corresponding $\underline{E}^\times$-torsor of isomorphisms between $\O(i)$ and $\mathscr{T}$ one has
$$
Rf_* \E = \mathscr{T}\underset{\underline{E}^\times}{\times}  \mathbb{B}^{\ph=\pi^i}.
$$
Since one has $\mathbb{B}^{\ph=\pi^d}\otimes_E E(k)^{\diamond} = \LT_k\underset{\underline{E}^\times}{\times} \mathbb{B}^{\ph=\pi^k}$ one easily deduces that 
$$
\hd^*\mathscr{T}_d = \hg^*\mathscr{T}_{d+k} \wedge \LT_k
$$
whence the result.
\end{proof}

Let now $\ph:W_E\drt \Qlb^\times$ be a continuous character. Define 
$$
\chi:E^\times \underset{\sim}{\xrig{ \ Art \ }} W_E^{ab}\xrig{\ \ph\ }\Qlb^\times
$$
where "$Art$" stands for Artin reciprocity map normalized so that the Frobenius corresponds to a uniformizer. Define now $\mathscr{G}$ to be the rank $1$ $\Qlb$-local system on $\Bun$ such that 
$$
\mathscr{G}_{|\Bun^d} = \mathscr{T}_d\underset{E^\times,\chi}{\times} \Qlb.
$$
{\it This is not still the local system we are looking for.} In fact, according to the character sheaf property, {\it the Weil sheaf we are looking for on $\Bun_{\Fqb}$ does not always descend to $\Bun$} (in our case in this section because $\chi(\pi)$ may not be an $\ell$-adic unit). Thus, define $\F_\ph$ to be the Weil sheaf on $\Bun_{\Fqb}$ such that without its Weil descent datum 
$$
\F_\ph = \mathscr{G}_{|\Bun_{\Fqb}}
$$
and 
{\it its Frobenius descent datum is given by the canonical one of $\mathscr{G}$ times $\chi (\pi)^{-d}$ on $\Bun^d_{\Fpb}$.}

\begin{prop}
The local system $\F_\ph$ satisfies the hypothesis of the conjecture.
\end{prop}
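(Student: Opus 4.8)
The plan is to verify the six conditions of the main conjecture one at a time for the line bundle $\F_\ph$ just constructed, reducing each to a computation with $\underline{E}^\times$-torsors on the components $\Bun^d_{\Fqb}$ and to local class field theory. First I would record the basic structural facts: $\Bun_{\Fqb}=\coprod_{d\in\Z}\Bun^d_{\Fqb}$, each $\Bun^d_{\Fqb}\simeq[\spa(\Fqb)/\underline{E}^\times]$ via the choice of $\O(1)$, and Kottwitz map $\kappa$ identifies the component $\Bun^d$ with $d\in\Z=\pi_1(\GL_1)_\Gamma$. Since every $b\in \GL_1(L)$ is basic and $J_b=\GL_1$, the semi-stable locus is everything, so the cuspidality condition (2) is vacuous (here $j=\mathrm{Id}$), and $S_\ph=\widehat{\GL_1}=\Qlb^\times$ is connected with $\widehat{S_\ph}$ trivial, which trivializes the packet bookkeeping in (3).

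Next I would treat conditions (1), (3), (5), (6). For (1): $Z(\widehat{G})^\Gamma=\Qlb^\times$ acts on $\mathscr{G}_{|\Bun^d}=\mathscr{T}_d\times_{E^\times,\chi}\Qlb$ through $\chi$ restricted to the center, and the twisted Frobenius descent by $\chi(\pi)^{-d}$ is designed precisely so that the resulting action matches $\alpha=d$ under $\pi_1(G)_\Gamma=X^*(Z(\widehat{G})^\Gamma)$; this is where one checks that the normalization of $\chi$ (Frobenius $\mapsto$ uniformizer) is the correct one. For (3): the stalk $x_b^*\F_\ph$ at the point of $\Bun^d_{\Fqb}$ is the smooth character of $J_b(E)=E^\times$ given by $\chi$, twisted by the Frobenius datum $\chi(\pi)^{-d}$; one checks this smooth character equals $\pi_{\ph,b,1}=\chi\circ\mathrm{Art}^{-1}$, i.e. the local Langlands correspondence for tori, which is literally local class field theory, and genericity is automatic for a torus. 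For (5), the character sheaf property: with $\delta\in E^\times$ of valuation $d$, so $x_\delta:\spa(\Fqb)\to\Bun^d_{\Fqb}$ defined over $\Fq$, the Frobenius action on $x_\delta^*\F_\ph$ is (canonical Frobenius on $\mathscr{G}$) $\times\,\chi(\pi)^{-d}$, and one computes the canonical Frobenius contributes $\chi(\pi)^{d}\chi(u)$ where $\delta=u\pi^d$, so the product is $\chi(u)\chi(\pi)^d=\chi(\delta)$, i.e. exactly the action of $\delta\in J_\delta(E)=E^\times$ — and $e(J_\delta)=1$ for a torus. Condition (6), local/global compatibility, I would either omit in this abelian setting or remark that it reduces to the classical description of the cohomology of a CM point / zero-dimensional Shimura variety attached to a torus, consistent with local class field theory.

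The heart of the argument, and the step I expect to require the most care, is the Hecke eigensheaf property (4). Here the plan is to feed Proposition~\ref{prop:clef cas GL1} into the definition of the Hecke operator. Since $\hd$ is an isomorphism and $\mu(z)=z^k$ is minuscule so that $IC_\mu=\Qlb(\tfrac{\langle\rho,\mu\rangle}{2})[\langle 2\rho,\mu\rangle]$ is (up to shift and Tate twist, trivial in the rank-one flag-variety-a-point case) just the constant sheaf, the operator $\hd_!(\hg^*(-)\otimes IC_\mu)$ becomes, on the component $\Bun^{d}\times\spa(E)^\diamond$, simply $(\hd)_*(\hg)^*$. Applying this to $\F_\ph$ and using $\hg^*\mathscr{T}_{d+k}\simeq \hd^*\mathscr{T}_d\wedge \LT_{-k}$, together with $r_\mu=(\text{standard character})^{\otimes k}$ of $\,^L\GL_1=\Qlb^\times$, so that $r_\mu\circ\ph$ is the rank-one Weil local system on $\spa(E)^\diamond$ corresponding to $E(k)$ twisted by $\chi^k$, one gets the desired identification $\hd_!(\hg^*\F_\ph\otimes IC_\mu)\simeq \F_\ph\boxtimes(r_\mu\circ\ph)$. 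The delicate points, which I would spell out carefully, are: (i) matching the Frobenius descent data on both sides — the twisted descent by $\chi(\pi)^{-d}$ on $\Bun^d$ versus by $\chi(\pi)^{-(d+k)}$ on $\Bun^{d+k}$ differs exactly by $\chi(\pi)^{-k}$, which is the Frobenius eigenvalue of $r_\mu\circ\ph$ at a degree-one place, so the discrepancy is absorbed by the $\boxtimes (r_\mu\circ\ph)$ factor; (ii) keeping the Tate twists and cohomological shifts from $IC_\mu$ consistent; (iii) the sign/square-root ambiguity in $\Qlb(\tfrac12)$ flagged in the remarks — in the $\GL_1$ case $\langle 2\rho,\mu\rangle=0$ so this does not actually arise, which is worth noting. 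With (4) in hand, the proposition follows.
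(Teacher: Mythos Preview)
Your plan matches the paper's: it too checks only the Hecke eigensheaf property (via Proposition~\ref{prop:clef cas GL1} together with the two properties of the Lubin\textendash Tate character, $\mathrm{Art}\circ\chi_{\LT}(\mathrm{Art}(\pi))=1$ and $\mathrm{Art}\circ\chi_{\LT}|_{\mathrm{inertia}}=(-)^{-1}$, the first handling the Frobenius descent datum and the second the inertial part) and the character sheaf property, the remaining items being immediate or vacuous as you observe.

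Two points need tightening. First, the sign convention: since $\pi^{-1}\leftrightarrow\O(1)$, it is $\delta\in E^\times$ with $v_\pi(\delta)=-d$ that lands on $\Bun^d$; rerun your Frobenius bookkeeping with this in mind. Second, and more substantively, your assertion in (5) that ``the canonical Frobenius contributes $\chi(\pi)^d\chi(u)$'' is not a free observation but exactly the computation to be done. The paper supplies it by comparing torsors: the $E^\times$-homogeneous space $\mathrm{Isom}\big((L,\pi^{-d}\s),(L,\delta\s)\big)=\{g\in L^\times\mid g^\s=\delta\pi^d\, g\}$ gives $\mathscr{T}_\delta=\mathscr{T}_d\wedge T_{\delta\pi^d}$, where $T_{\delta\pi^d}$ is the $\underline{E}^\times$-torsor over $\spa(\Fq)$ with Frobenius monodromy $\delta\pi^d$; contracting along $\chi$ then yields the factor $\chi(\delta\pi^d)$, which combined with the imposed twist $\chi(\pi)^{-d}$ gives $\chi(\delta)$. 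Without this torsor comparison your step (5) is an assertion rather than an argument.
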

\begin{proof}
Let us begin with the Hecke property. From proposition \ref{prop:clef cas GL1} one deduces that 
\begin{eqnarray} \label{eq:Hecke GL11}
\hd_! \big ( \hg^* \mathscr{G} \big )_{|\Bun^d\times \spa (E)^\diamond}
&=&  \mathscr{T}_d\underset{E^\times}{\times} \LT_{-k} \underset{E^\times, \chi}{\times} \Qlb \\
\nonumber
&=& \mathscr{G}_{|\Bun^d} \boxtimes (\chi\circ \chi_{\LT})^{-k}
\end{eqnarray}
where $\chi_{\LT}:W_E^{ab}\drt \O_E^\times\subset E^\times$ is the Lubin-Tate character. The realization of local class field theory in the torsion points of the Lubin-Tate group tells us that $Art\circ \chi_{\LT}$ satisfies
\begin{enumerate}
\item $Art\circ \chi_{\LT} (\s)=1$ if $\s=Art (\pi)$
\item $Art\circ \chi_{\LT} ( \s)=\s^{-1}$ if $\s$ is in the image of the inertia subgroup of $W_E$ in $W_E^{ab}$.
\end{enumerate}
From point (2) one deduces that 
$$
\hd_! \big ( \hg^*\F_\ph \big )_{\Bun^d_{\Fpb}\times \spa (\breve{E})^\diamond} = \F_{\ph |\Bun^d_{\Fpb}}\boxtimes r_\mu \circ \ph_{|I_E}
$$
without their Frobenius descent datum. 
From equation (\ref{eq:Hecke GL11}) and point (1) one deduces that this is an isomorphic if we equip $r_\mu\circ\ph_{|I_E}$ with the Weil descent datum given by $r_\mu\circ \ph$.
\\

The character sheaf property is checked in the following way. Let $\delta \in E^\times$ with $v_\pi (\delta)=-d$. Then, on $\Bun^d$, we have two $\underline{E}^\times$-torsors, $\mathscr{T}_\delta$ and $\mathscr{T}_d=\mathscr{T}_{\pi^{-d}}$. Consider the $E^\times$-homogeneous space  
$$
\text{Isom} (( L,\pi^{-d}\s), (L,\delta\s)) = 
\{g\in L^\times\ |\ g\delta\s =\pi^{-d}\s g\}.
$$
For such a $g$, $g^\s= \delta\pi^d g$. We deduce from this that 
$$
\mathscr{T}_\delta =\mathscr{T}_d \wedge T_{\delta\pi^d}
$$
where $T_{\delta\pi^d}$ is the $\underline{E}^\times$-torsor over $\Fq$ whose monodromy is given by $Frob_q\mapsto \delta \pi^d$.  The character sheaf property is easily deduced since we "forced it" for $\delta=\pi^{-d}$.
\end{proof}

\subsection{The case of tori}

Let $T$ be a torus over $E$ and $\Bun$ be the corresponding stack. According to Kottwitz
$$
\kappa : B(T)\iso X_*(T)_\Gamma.
$$
There is a decomposition 
$$
\Bun = \coprod_{[\mu]\in X_*(T)_\Gamma}  \Bun^{[\mu]}
$$
where we note $\Bun^{[\mu]} = \{ \kappa=[-\mu]\}$ (we take the opposite of the sign convention in the conjecture like in the $\GL_1$-case, the degree of a vector bundle being the opposite of $\kappa$, maybe we should do this in the entire conjecture).
For $\mu'\in X_*(T)$ we note $\bar{\mu'}\in X_*(T)/\Gamma$ and $[\mu']\in X_*(T)_{\Gamma}$ its class. Given $\mu$ and $\mu'$ there is a Hecke correspondence 
$$
\xymatrix{ & Hecke^{\bar{\mu'}} \ar[ld]_-{\hg} \ar[rd]^-{\hd} \\ 
\Bun^{[\mu+\mu']} &  & \Bun^{[\mu]}\times \spa (E)^\diamond.
}
$$
One checks easily the following proposition.

\begin{prop}
If $E_{\mu'}|E$ is such that $\text{Stab}_{\Gamma} (\mu')=\Gamma_{E_{\mu'}}$ there is an identification $Hecke^{\bar{\mu'}} = \Bun \times  \spa (E_{\mu'})^\diamond$ where $\hd$ is given by $Id\times can_{E_{\mu'}|E}$.
\end{prop}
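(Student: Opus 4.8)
The plan is to reduce the statement to two inputs: that the morphism $\hd$ is a \emph{trivial} fibration because $T$ is commutative, and that the $\bar{\mu'}$-Schubert variety of the $B_{dR}$-affine Grassmannian of a torus is, as a sheaf over $\spa (E)^\diamond$, the finite \'etale cover $\spa (E_{\mu'})^\diamond$.

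First I would apply Proposition~\ref{prop:loc fibration Gr}, which identifies $\hd\colon Hecke^{\bar{\mu'}}\to\Bun_T\times\spa (E)^\diamond$ with the twist $\mathcal{T}\times_{L^+T}\Gr^{\leq\bar{\mu'}}_T$ of the Schubert variety by the canonical $L^+T$-torsor $\mathcal{T}$. The key observation is that, $T$ being commutative, $L^+T$ acts \emph{trivially} on $\Gr_T=LT/L^+T$ by left translation: for $g\in L^+T$ one has $g\cdot hL^+T = ghL^+T = hgL^+T = hL^+T$. Hence the twist is the plain product and $\hd$ becomes the projection
$$
Hecke^{\bar{\mu'}}\;\simeq\;\big(\Bun_T\times\spa (E)^\diamond\big)\times_{\spa (E)^\diamond}\Gr^{\leq\bar{\mu'}}_T\;=\;\Bun_T\times_{\spa (E)^\diamond}\Gr^{\leq\bar{\mu'}}_T .
$$
I would also record that for a torus every Schubert cell is already closed, so $\Gr^{\leq\bar{\mu'}}_T=\Gr^{=\bar{\mu'}}_T$, which justifies the notation $Hecke^{\bar{\mu'}}$ and makes the boundedness condition in the definition of the Hecke stack automatic.

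Next I would compute $\Gr_T$ over $\spa (E)^\diamond$. For the split torus $\Gm$ the valuation gives $B_{dR}(R)^\times/B^+_{dR}(R)^\times\simeq\underline{\Z}$, hence $\Gr_T\simeq\underline{X_*(T)}$ for $T$ split; in general, choosing a finite Galois splitting field $E'/E$, the sheaf $\Gr_T$ is the $\Gamma$-descent of $\underline{X_*(T)}$ from $\spa (E')^\diamond$, i.e.\ the corresponding twisted constant sheaf. Such a sheaf is the disjoint union over the $\Gamma$-orbits in $X_*(T)$ of finite \'etale covers of $\spa (E)^\diamond$, the component indexed by the orbit of $\mu'$ being the cover attached to the transitive $\Gamma$-set $\Gamma/\text{Stab}_\Gamma(\mu')=\Gamma/\Gamma_{E_{\mu'}}$. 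Since the diamond functor $X\mapsto X^\diamond$ sends finite \'etale $E$-algebras to finite \'etale covers of $\spa (E)^\diamond$ compatibly with the Galois correspondence, this cover is exactly $\spa (E_{\mu'})^\diamond$, with structure map to $\spa (E)^\diamond$ the canonical one $\mathrm{can}_{E_{\mu'}|E}$. Plugging this into the previous display yields $Hecke^{\bar{\mu'}}\simeq\Bun_T\times\spa (E_{\mu'})^\diamond$ with $\hd=\Id\times \mathrm{can}_{E_{\mu'}|E}$; the compatibility with the degree decomposition ($\hg$ carrying $\Bun^{[\mu]}$ into $\Bun^{[\mu+\mu']}$) then follows from the modular description of $\hg$ as the modification "twist by $\mu'$ along the Cartier divisor $D_{R^\sharp}$", which indeed only makes sense after base change to $E_{\mu'}$.

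The step I expect to be the main obstacle is the middle one: making the Galois-descent description of $\Gr_T$ fully rigorous, i.e.\ checking that the \'etale sheafification over $\spa (E)^\diamond$ of $(R,R^+)\mapsto T(B_{dR}(R^\sharp))/T(B^+_{dR}(R^\sharp))$ really is the twisted constant sheaf $\underline{X_*(T)}^{\mathrm{tw}}$. This requires controlling $B^+_{dR}$ of a general, not algebraically closed, untilt and knowing that $T$ splits over it after an \'etale cover, together with Scholze's equivalence of pro-\'etale sites under tilting in order to descend from $E'$ to $E$. In equal characteristic, where $\spa (E)^\diamond=\spa (E)$ and $B^+_{dR}(R^\sharp)=R\llbracket T\rrbracket$, this is entirely transparent; the unequal-characteristic case is where the bookkeeping lives, but it is routine given the material recalled earlier in the paper.
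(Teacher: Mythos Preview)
Your proposal is correct and is the natural way to unpack the statement; the paper itself gives no proof beyond the phrase ``One checks easily the following proposition.'' Your two inputs --- that Proposition~\ref{prop:loc fibration Gr} degenerates to a product because $L^+T$ acts trivially on $LT/L^+T$ for $T$ commutative, and that the $\bar{\mu'}$-component of $\Gr_T$ over $\spa(E)^\diamond$ is the finite \'etale cover $\spa(E_{\mu'})^\diamond$ via the twisted constant sheaf $\underline{X_*(T)}$ --- are exactly what the ``easy check'' amounts to.
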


Let $\ph:W_E\ldrt \,^L T$ be an $L$-parameter. According to local class field theory
$H^1 (W_E,\widehat{T})=\Hom ( T(E),\Qlb)$. We thus have a character
$$
\chi:T(E)\ldrt \Qlb^\times.
$$
For $b\in T(L)$ there is an identification 
$$
x_b :[\spa (\Fqb)/ \underline{T(E)} ]\iso \Bun^{-\kappa (b)}_{\Fqb}.
$$
This defines a $T(E)$-torsor $\mathscr{T}_b$ over $\Bun^{-\kappa (b)}_{\Fqb}$. 
One has $\text{Frob}^* x_b=x_{b^\s}$ and $\text{Frob}^*\mathscr{T}_b=\mathscr{T}_{b^\s}$. 
The $\s$-conjugation $b^\s= b^{-1}.b.b^\s$ defines a natural transformation between $x_b$ and $x_{b^\s}$. It thus defines a Weil descent datum 
\begin{eqnarray}
\label{eq:descent tore}
Weil_b:\text{Frob}^*\mathscr{T}_b \iso \mathscr{T}_b.
\end{eqnarray}
\begin{exem}
If $b\in T(E_n)$ then $\text{Frob}^{n*}x_b=x_b$. This defines an identification between $\text{Frob}^{n*}\mathscr{T}_b$ and $\mathscr{T}_b$ that is to say $\mathscr{T}_b$ is defined over $\Bun_{\mathbb{F}_{q^n}}^{-\kappa ([b])}$. The $n$-th iterate of the descente datum (\ref{eq:descent tore}) is this identification times $ N_{E_n/E} ( b)$. For example, if $b\in T(E)$ this Weil descent datum on $\mathscr{T}_b$ is never effective unless $b$ is a root of unity. 
\end{exem}

If $b'$ is $\s$-conjugate to $b$ let 
$$
T_{b,b'} = \{ g\in T(L)\ | \ gbg^{-\s} =b'\}.
$$
This is an homogenous space under $T(E)$ and we see it as a $T(E)$-torsor on $\spa (\Fqb)$. 
 There is an identification 
$$
\mathscr{T}_b\wedge T_{b,b'} = \mathscr{T}_{b'}
$$
of torsors over $\Bun_{\Fqb}^{-\kappa ([b])}$.
Fix a trivialization of $T_{b,b,'}$ that is to say an element $g\in T(L)$ such that $gbg^{-\s}=b'$. This defines an isomorphism of Weil sheaves 
$$
g:\mathscr{T}_b \iso \mathscr{T}_{b'}
$$
which means $g\circ Weil_b = Weil_{b'}\circ Frob^* g$ where $Frob^* g=g^\s$.
\\

Let $$\ph:W_E\ldrt \,^L T$$ be a Langlands parameter and 
$$
\chi:T(E)\ldrt \Qlb^\times
$$
be the corresponding character given by class field theory, $H^1(W_E,\widehat{T})=\Hom ( T(E),\Qlb^\times)$. 
We define the Weil rank one local system  $\F$ on $\Bun_{\Fqb}$ by the formula 
$$
\F_{|\Bun^{[\mu]}_{\Fqb}} = \mathscr{T}_b \underset{\underline{T(E)},\chi}{\times} \Qlb
$$
for a choice of some $b\in T(L)$ such that $\kappa ([b])= -[\mu]$. Another choice of a $b$ does not change the isomorphism class of our Weil local system (and all the asked properties in the conjecture depend only on the isomorphism class of our Weil perverse sheaf, not on the Weil perverse sheaf itself). 
\\

We now want to prove that $\F$ satisfies the hypothesis of the conjecture. The character sheaf property is immediate. The only thing that we have to prove is the Hecke property.
Consider $\mu,\mu'\in X_*(T)$ and let $E_{\mu'}$ be the field of definition of $\mu'$ as before. Let 
$$
\delta= N_{E_{\mu'}/E_n} ( \mu' (\pi_{\mu'}))\in T(E_n)
$$
where $E_n|E$ is the maximal unramified extension in $E_{\mu'}$. One has $\kappa (\delta ) = [\mu']$ and the pair $(\delta,\mu')$ is admissible. This defines a cristalline representation 
$$
\rho_{\delta,\mu'}:\GG_{E_{\mu'}}^{ab}\ldrt T(E).
$$
Note 
$$
N_{\mu'}: E_{\mu'}^\times \xrig{\ \mu'\ } T(E_{\mu'})\xrig{\ N_{E_{\mu'}/E}\ } T(E).
$$
Composed with Artin reciprocity map $Art:E_{\mu'}^\times \drt \GG_{E_{\mu'}}^{ab}$ this satisfies 
\begin{itemize}
\item $\rho_{\delta,\mu'}\circ Art (x)=N_{\mu'} (x)^{-1}$ for $x\in \O_{E_{\mu'}}^\times$
\item $\rho_{\delta,\mu'}\circ Art  (\pi_{\mu'})=1$.
\end{itemize}

We now note
$$
T_{\delta,\mu'}^{cris}
$$
for the corresponding $\underline{T(E)}$-torsor over $\spa (E_{\mu'})^\diamond$. 

\begin{prop}
For $b\in T(L)$ here is an isomorphism of Weil $\underline{T(E)}$-torsors on $Hecke^{\bar{\mu'}}_{\Fqb}$  
$$
\hg^* \mathscr{T}_{b\delta} \simeq \hd^* \mathscr{T}_b \wedge T_{\delta,\mu'}^{cris}.
$$
\end{prop}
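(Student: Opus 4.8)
The plan is to run the argument of Proposition~\ref{prop:clef cas GL1}(2) essentially verbatim, the Lubin--Tate torsor $\LT_{-k}$ being replaced by the crystalline torsor $T^{cris}_{\delta,\mu'}$; the statement is in the end a reformulation of local class field theory through the crystalline comparison for tori. First I would invoke the preceding proposition to identify $Hecke^{\bar{\mu'}}$ with $\Bun\times\spa(E_{\mu'})^\diamond$, so that $\hd$ becomes $\Id\times can_{E_{\mu'}|E}$ and $\hg$ becomes the morphism $m_{\mu'}$ which modifies a $T$-bundle of type $\mu'$ along the tautological untilt over $E_{\mu'}$; after base change to $\Fqb$ the assertion then reads $m_{\mu'}^*\mathscr{T}_{b\delta}\simeq\mathrm{pr}_1^*\mathscr{T}_b\wedge\mathrm{pr}_2^*T^{cris}_{\delta,\mu'}$ as Weil $\underline{T(E)}$-torsors on $\Bun_{\Fqb}\times\spa(E_{\mu'})^\diamond$. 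Since $\mathscr{T}_b$ depends up to isomorphism only on $\kappa([b])$ and $\Bun_{\Fqb}$ is the disjoint union of the components $\Bun^{[\mu]}_{\Fqb}=[\spa(\Fqb)/\underline{T(E)}]$, it suffices to produce on each component a natural isomorphism of $\underline{T(E)}$-torsors and then to check its compatibility with the Weil descent data.

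For the underlying isomorphism of torsors I would use the fundamental exact sequence of $p$-adic Hodge theory for $T$: the modification $m_{\mu'}$ realises $\E_{b\delta}$ as the extension of a twist of $\E_b$ by a crystalline $T$-bundle along the divisor, and the resulting comparison is already a morphism of $\underline{T(E)}$-torsors, hence an isomorphism. To make this precise and to identify the correction term I would push forward along characters, reducing to $\GL_1$: by the standard d\'evissage for tori --- embedding $T$ into, or expressing it by exact sequences in terms of, induced tori $\Res_{E_i/E}\GL_1$, for which $\Bun_T$ is the $\GL_1$-stack over $E_i$ --- the computation is reduced to Proposition~\ref{prop:clef cas GL1}(2) applied with $k=\langle\lambda,\mu'\rangle$ over the relevant finite extension. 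Here one needs the compatibility $\lambda_*T^{cris}_{\delta,\mu'}\cong\LT_{-\langle\lambda,\mu'\rangle}$ (pulled back to $\spa(E_{\mu'})^\diamond$), which follows from comparing the crystalline character $\rho_{\delta,\mu'}$ with a power of the Lubin--Tate character: both are pinned down by the two properties recorded just before the statement, namely triviality on the uniformiser $\pi_{\mu'}$ and the formula $\rho_{\delta,\mu'}\circ Art(x)=N_{\mu'}(x)^{-1}$ on $\O_{E_{\mu'}}^\times$ --- exactly the shape of the realisation of local class field theory in the torsion points of the Lubin--Tate group that was used in the $\GL_1$-case.

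The last and, I expect, \emph{only genuinely delicate step} is the matching of the Weil descent data. The datum on $\mathscr{T}_{b\delta}$, resp.\ on $\mathscr{T}_b$, is $Weil_{b\delta}$, resp.\ $Weil_b$, coming from $\sigma$-conjugation, while the one on $T^{cris}_{\delta,\mu'}$ comes from the $W_E$-action on the crystalline representation; I would verify, as in the proof of the $\GL_1$-case, that for $g\in T(L)$ implementing a $\sigma$-conjugacy the relation $g^\sigma=(b\delta)^{-1}g\,b$ makes the discrepancy between the naive identification of torsors and the one respecting the descent data equal to precisely $T^{cris}_{\delta,\mu'}$; this is where the element $\delta=N_{E_{\mu'}/E_n}(\mu'(\pi_{\mu'}))$ enters. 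The hard part is not any single computation but keeping all the sign and normalisation conventions mutually consistent --- $\kappa$ versus degree, the normalisation of the Artin reciprocity map, $\mu'$ versus $-\mu'$ in the Hecke correspondence, and the Lubin--Tate and crystalline characters --- so that the two descent data agree on the nose; once that bookkeeping is done the proposition follows formally from Proposition~\ref{prop:clef cas GL1}.
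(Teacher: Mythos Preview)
Your approach and the paper's are both valid but organized differently. The paper argues Tannakianly: for each representation $(V,\rho)\in\Rep_E(T)$ it invokes the crystalline comparison theorem directly --- the admissible filtered isocrystal $(V_{E_n},\rho(\delta)\sigma,\rho\circ\mu')$ corresponds to the crystalline Galois representation $\rho\circ\rho_{\delta,\mu'}$ --- to produce a $\Gamma_{E_{\mu'}}$-equivariant modification between $V\otimes_E\O_{X_{\C_p^\flat}}$ and $\E(V_{E_n},\rho(\delta)\sigma)$, then twists by $\rho_*\E_b$. Naturality in $\rho$ and Tannakian reconstruction give the isomorphism of $T$-bundles, hence of the $\underline{T(E)}$-torsors. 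There is no d\'evissage and no reduction to $\GL_1$; the general crystalline comparison does the work in one stroke, and the Weil structure is absorbed into the $\Gamma$-equivariance.

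Your route via induced tori is a reasonable alternative, but the d\'evissage is less routine than you indicate. From an isomorphism of $\underline{T'(E)}$-torsors obtained by pushing along an embedding $T(E)\hookrightarrow T'(E)$ one cannot in general descend to an isomorphism of $\underline{T(E)}$-torsors: the obstruction lies in the image of $H^0(-,T'(E)/T(E))$ in $H^1(-,T(E))$. What makes your argument go through is that you are really checking the statement on every simple object of $\Rep_E(T)$ --- the maps $T\to\Res_{E_\lambda/E}\Gm$ indexed by $\Gamma$-orbits in $X^*(T)$ --- and that \emph{is} sufficient, by Tannakian reconstruction. Once phrased this way you have essentially recovered the paper's argument; the compatibility $\lambda_*T^{cris}_{\delta,\mu'}\simeq\LT_{-\langle\lambda,\mu'\rangle}$ you need is exactly the crystalline comparison for $\Res_{E_\lambda/E}\Gm$. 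So the two proofs draw on the same input, but the paper packages it more directly and avoids the descent subtlety. Your explicit bookkeeping of the Weil descent datum is more detailed than what the paper writes down.
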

\begin{proof}
Take $(V,\rho) \in \Rep_E (T)$ a linear representation of $T$. Note $\Cp=\widehat{\overline{E}}$. The cristalline representation $\rho\circ \rho_{\delta,\mu'}$ is associated to the filtered isocrystal corresponding to $(V_{E_n},\rho (\delta)\s,\rho\circ \mu')$. This means there is a $\GG_{E_{\mu'}}$-equivariant modification at $\infty\in |X_{\Cp^\flat}|$ of vector bundles (see \cite{Courbe} chap.10)
$$
\big ( V\otimes_E \O_{X_{\Cp^\flat}} \big )_{|X_{\Cp^\flat} \setminus \infty} \iso \E ( V_{E_n},\rho (\delta)\s )_{|X_{\Cp^\flat}\setminus \infty}
$$ 
where the action of $\GG_{E_{\mu'}}$ on the left term is given by $\rho\circ \rho_{\delta,\mu'}$. The type of this modification is given by 
$\rho\circ\mu'$. Twisting by $\rho_*\E_b= \E ( V_L,\rho(b)\s)$ one obtains a $\Gamma$-equivariant 
modification 
$$
\rho_*\big ( \E_b\wedge T_{\delta,\mu'}^{cris}\big )_{|X_{\Cp^\flat}\setminus \infty} \iso \rho_* ( \E_{b\delta})_{|X_{\Cp^\flat\setminus \infty}}. 
$$
Projecting via $(X_{\Cp^{\flat}})_{pro-\et}\drt \spa (\Cp^{\flat})_{pro-\et}$, taking into account the $\Gamma$-action and making $\rho$ vary one obtains the result as in \ref{prop:clef cas GL1}.
\end{proof}

The Hecke property is then proved in the following way. The chocaracter $\mu'$ defines 
$$
\hat{\mu'}\in X^* (\widehat{T})^{\Gamma_{E_{\mu'}}}
$$
which defines a character 
$$
\,^L \mu': \,^L T_{E_{\mu'}}\ldrt \Qlb^\times
$$
where $\,^L T_{E_{\mu'}}= \widehat{T}\rtimes \Gamma_{E_{\mu'}}$ and $\,^L \mu'$ is trivial on the $\Gamma_{E_{\mu'}}$-factor. One then has
$$
r_{\mu'} = \text{Ind}_{E_{\mu'}|E} \,^L \mu'.
$$
The local class field theory isomorphism
$$
H^1 (W_E, \widehat{T}) = \Hom (T(E),\Qlb^\times )
$$ 
is compatible with base change. This means that $\ph_{|W_{E_{\mu'}}}$ corresponds to 
$$
\chi\circ N_{E_{\mu'}/E}:T(E_{\mu'})\ldrt \Qlb^\times.
$$
From this one deduces that 
$$
\chi\circ N_{E_{\mu'}/E}: 
E_{\mu'}^\times \underset{\sim}{\xrig{\ Art\ }} W_{E_{\mu'}}^{ab} \xrig{ \ \,^L \hat{\mu'}
\circ \ph_{W_{E_{\mu'}}}\ } \Qlb^\times .
$$
The Hecke property is deduced from this and the preceding formulas for $\rho_{\delta,\mu'}\circ Art$.

\bibliographystyle{plain}
\bibliography{biblio}
\end{document}